\theoremstyle{definition}
\newtheorem{defin}{Definition}[section]
\newtheorem{ex}[defin]{Example}
\newtheorem{rem}[defin]{Remark}
\theoremstyle{plane}
\newtheorem{thm}[defin]{Theorem}
\newtheorem{prop}[defin]{Proposition}
\newtheorem{coroll}[defin]{Corollary}
\newtheorem{lemma}[defin]{Lemma}
\newcommand{\mbb}{\mathbb}
\newcommand{\mbf}{\mathbf}
\newcommand{\mc}{\mathcal}
\newcommand{\veps}{\varepsilon}
\newcommand{\what}{\widehat}
\newcommand{\wtilde}{\widetilde}
\newcommand{\vphi}{\varphi}
\newcommand{\oline}{\overline}
\newcommand{\ra}{\rightarrow}
\newcommand{\hra}{\hookrightarrow}
\newcommand{\longra}{\longrightarrow}
\newcommand{\g}{\gamma}
\newcommand{\vrho}{\varrho}
\newcommand{\lan}{\langle}
\newcommand{\ran}{\rangle}
\newcommand{\R}{\mathbb{R}}
\newcommand{\C}{\mathbb{C}}
\newcommand{\N}{\mathbb{N}}
\newcommand{\Z}{\mathbb{Z}}
\newcommand{\z}{\zeta}
\renewcommand{\Re}{{\rm Re}\,}
\newcommand{\Id}{{\rm Id}\,}
\def\d{\partial}
\title{\Large{\bfseries{\textsc{On the Cauchy problem for microlocally symmetrizable \\ \vspace{.2cm} hyperbolic systems 
with log-Lipschitz coefficients}}}}
\author{\normalsize\textsl{Ferruccio Colombini}$\,^1\;$, \textsl{Daniele Del Santo}$\,^2\;$, \textsl{Francesco Fanelli}$\,^3\;$,
\textsl{Guy M\'etivier}$\,^{4}$ \vspace{.5cm} \\
\footnotesize{$\,^1\;$ \textsc{Universit\`a di Pisa}} \\ {\footnotesize Dipartimento di Matematica} \\
\footnotesize{\ttfamily{colombini@dm.unipi.it}} \vspace{0.3cm} \\
%
\footnotesize{$\,^2\;$ \textsc{Universit\`a di Trieste}} \\ {\footnotesize Dipartimento di Matematica e Geoscienze} \\
\footnotesize{\ttfamily{delsanto@units.it}} \vspace{0.3cm} \\
%
\footnotesize{$\,^3\;$ \textsc{Universit\'e de Lyon, Universit\'e Claude Bernard Lyon 1}} \\
{\footnotesize Institut Camille Jordan, UMR 5208} \\
\footnotesize{\ttfamily{fanelli@math.univ-lyon1.fr}} \vspace{0.3cm} \\
%
\footnotesize{$\,^4\;$ \textsc{Universit\'e de Bordeaux}} \\ {\footnotesize Institut de Math\'ematiques de Bordeaux, UMR 5251} \\
\footnotesize{\ttfamily{guy.metivier@math.u-bordeaux1.fr}}}
\date\today
\begin{document}

\maketitle

\subsubsection*{Abstract}
{\footnotesize The present paper concerns the well-posedness of the Cauchy problem for microlocally symmetrizable hyperbolic systems
whose coefficients and symmetrizer are log-Lipschitz continuous, uniformly in time and space variables. For the global in space problem
we establish energy estimates with finite loss of derivatives, which is linearly increasing in time. This implies well-posedness
in $H^\infty$, if the coefficients enjoy enough smoothness in $x$. From this result, by standard arguments (i.e. extension and convexification) we deduce also local existence
and uniqueness. A huge part of the analysis is devoted to give an appropriate sense to the
Cauchy problem, which is not evident a priori in our setting, due to the very low regularity of coefficients and solutions.}

\paragraph*{2010 Mathematics Subject Classification:}{\small 35L45 (primary); 35B45, 35B65 (secondary).}

\paragraph*{Keywords:}{\small hyperbolic systems, microlocal symmetrizability, log-Lipschitz regularity,
loss of derivatives, global and local Cauchy problem, well-posedness.}

\section{Introduction}

In the present paper we study local and global questions related to the well-posedness of the Cauchy problem for $m\times m$
first order hyperbolic systems
\begin{equation} \label{intro_eq:L}
Lu(t,x)\,:=\,A_0(t,x)\,\d_tu(t,x)\,+\,\sum_{j=1}^n A_j(t,x)\,\d_ju(t,x)\,+\,B(t,x)\,u(t,x)
\end{equation}
under low regularity assumptions on its coefficients.

In \cite{Iv-Pet}, Ivri\u{\i} and Petkov proved that a necessary condition for the well-posedness in the energy space $L^2(\R^n)$
is the existence of a bounded microlocal symmetrizer $S(t,x,\xi)$ for $L$ (see Definition \ref{d:micro_symm} below).
We remark that this is equivalent to the hypothesis of strong hyperbolicity of the operator (see \cite{M-2014} and the references therein).
Nonetheless, this condition is far from being sufficient, even for $\mc{C}^\infty$ well-posedness: see counterexamples in
\cite{Strang}, \cite{M-2014} and \cite{C-M_2015}.

On the other hand, in the simplest case when the first order coefficients $A_j$ are symmetric matrices, i.e. the system is symmetric in
the sense of Friedrichs, if they are also Lipschitz continuous over $[0,T]\times\R^n$, energy estimates
(and then well-posedness) in $L^2$ are quite easy to obtain. Notice that, in this instance, the symmetrizer is simply the identity matrix, and it is
constant (and then smooth) in $(t,x,\xi)$.

Keeping the $W^{1,\infty}([0,T]\times\R^n)$ regularity for the coefficients of the principal part, the previous result was extended
by M\'etivier in \cite{M-2008} to the case of microlocally symmetrizable systems, under the assumption that
the symmetrizer $S$ is Lipschitz in $(t,x)$ and smooth with respect to $\xi$.
More recently, in \cite{M-2014} M\'etivier also proved that, under a stronger $W^{2,\infty}([0,T]\times\R^n)$ condition on the $A_j$'s,
the existence of a Lipschitz continuous symmetrizer \emph{in all the variables} $(t,x,\xi)$ is sufficient for proving energy estimates
in $L^2$. Whether the additional regularity for the coefficients of $L$ is merely a technical requirement or rather a necessary
hypothesis is not clear at present.

Indeed, patological phenomena may be produced by the lack of suitable regularity of the coefficients. To explain this assertion,
let us make a brief parallel with scalar strictly hyperbolic operators of second order
\begin{equation} \label{intro_eq:wave}
 Wu(t,x)\,:=\,\d_t^2u(t,x)\,-\,\sum_{j,k=1}^n\d_j\Bigl(a_{jk}(t,x)\,\d_ku(t,x)\Bigr)\,.
\end{equation}
It is well-known that, if the coefficients $a_{jk}$ are Lipschitz continuous in $t$ and just bounded in $x$, then the Cauchy problem for $W$
is well-posed in the energy space $H^1\times L^2$. On the contrary, whenever the Lipschitz regularity in $t$ is not met,
additional smoothness in $x$ is required, and the Cauchy problem is no more well-posed in the usual sense. Namely, the solution
loses regularity in the evolution, and energy estimates are recovered in weaker spaces $H^s\times H^{s-1}$, for $|s|<1$, 
which deteriorates with the passing of time: actually, $s=s(t)$, with $s'(t)<0$. Therefore, this justifies the additional regularity in
$x$, which is not just a technical requirement to make energy estimates work, but which is really needed to give sense to the Cauchy problem.

Many are the relevant papers on this subject: see e.g. \cite{C-DG-S}, \cite{C-L}, \cite{C-M}, \cite{Tar}, \cite{C-DS}. We refer to
\cite{C-DS-F-M_tl} and \cite{C-DS-F-M_wp} for an overview and recent results. In passing, we mention that this loss of smoothness
produces relevant effects also at the level of the control of waves. 
In addition, we have to point out that similar phenomena were proved to occur also for transport equations with non-regular coefficients (see e.g.
\cite{B-C_1994}, \cite{D_2005}): we refer e.g. to Chapter 3 of \cite{B-C-D} for a review of previous results in this direction and
for further references.

There is an important feature to point out about the wave operator \eqref{intro_eq:wave}. The work of Tarama \cite{Tar} showed that Zygmund type conditions in time
are well-adapted to this kind of analysis. These are smoothness assumptions which are made on the second variation of the function,
i.e. on the symmetric difference $|f(t+\tau)+f(t-\tau)-2f(t)|$, rather than on its modulus of continuity; besides, they can be related with special Besov type regularities.
It turns out that these conditions are weaker than the corresponding ones made on the first variation, namely on $|f(t+\tau)-f(t)|$:
in particular, one can recover well-posedness (with no loss) for the pure Zygmund condition
$$
|f(t+\tau)+f(t-\tau)-2f(t)|\,\leq\,C\,|\tau|\,, \leqno(Z)
$$
which is weaker than the Lipschitz one. In order to deal with this worse behaviour in hyperbolic Cauchy problems,
one has to introduce a lower order corrector in the definition of the energy: this additional term is necessary to produce special algebraic
cancellations in the estimates, erasing bad remainders arising in the time derivative of the energy. We refer to the above mentioned
works \cite{C-DS}, \cite{C-DS-F-M_tl}, \cite{C-DS-F-M_wp} (where the coefficients depend also on $x$) for further progress in this direction.

\medbreak
Let us come back to the operator $L$, defined in \eqref{intro_eq:L}. In the present paper we aim at investigating the problem of its
well-posedness looking at minimal regularity conditions  in time and space variables for coefficients and symmtrizers:
namely, we will consider the case of non-Lipschitz dependence on $(t,x)$, keeping however the smoothness of $S(t,x,\xi)$ with respect
to $\xi$ (see the discussion above).  Indeed, our analysis makes a broad use of paradifferential calculus, which requires
smoothness of symbols in the dual variable.

In a preliminary study (see \cite{C-DS-F-M_Z-sys}), we focused on time-dependent coefficients: $A_j(t)$ and $B=0$ for
simplicity. Inspired by \cite{Tar}, it was natural to formulate Zygmund type hypotheses on them; on the other hand, in this context
it is out of use to know a priori the existence of a microlocal symmetrizer $S(t,\xi)$ with the same regularity in time.
As a matter of fact, analogoulsy to the case of the wave equation, the time derivative
of the energy $E(t)\,\sim\,\bigl(Su,u\bigr)_{L^2}$ produces bad remainders, and one needs to introduce correctors to cancel them out:
the challenge there was to build up a \emph{suitable} microlocal symmetrizer, for which energy estimates work well.
Hence, we supposed the system to be hyperbolic with constant multiplicities, which means that all the eigenvalues of the principal symbol
are real, semi-simple and with constant multiplicities in $t$ and $\xi$; this condition implies in particular strong hyperbolicity.

In the end, in \cite{C-DS-F-M_Z-sys} we proved well-posedness in any $H^s$ for Zygmund type assumptions (even of integral type), while
energy estimates with a finite time-increasing loss of derivatives for log-Zymgund type assumptions (in the right-hand side of $(Z)$
an extra logarithmic factor $|\log\tau|$ is added), which entail well-posedness
just in $H^\infty$.

We have to point out that the hypothesis of dependence of the $A_j$'s just on time was crucial in \cite{C-DS-F-M_Z-sys},
in order to construct a good symmetrizer, and it is far to be clear at present how to deal with the more general case of dependence also on $x$.

As a first step in this direction, in the present paper we consider coefficients $A_j(t,x)$ with slightly better regularity conditions
in $t$, and non-zero matrices $B(t,x)$: we will see that, now, the presence of $0$-th order term makes some differences in the analysis.
More precisely, we assume $B$ to be $L^\infty\bigl([0,T];\mc{C}^\g(\R^n)\bigr)$,
for some H\"older exponent $0<\g<1$, and the $A_j$'s to be bounded and uniformly log-Lipschitz continuous in their variables.
\begin{defin} \label{d:LL}
Let $\Omega\subset\R^N$ be an open domain.
A function $f\in L^\infty(\Omega)$ is said to be \emph{log-Lipschitz} in $\Omega$, and we write $f\in LL(\Omega)$, if the quantity
$$
|f|_{LL}\,:=\,\sup_{y,z\in\Omega,\,|y-z|<1}
\left(\frac{\left|f(y)\,-\,f(z)\right|}{|y-z|\,\log\bigl(1\,+\,1/|y-z|\bigr)}\right)\,<\,+\infty\,.
$$
We define $\|f\|_{LL}\,:=\,\|f\|_{L^\infty}\,+\,|f|_{LL}$.
\end{defin}

The same regularity hypothesis is assumed in $(t,x)$ also for the microlocal symmetrizer (of course, it is taken smooth
in $\xi$). Indeed, as we will see, corrector terms are no more needed in this case, and the energy can be defined in a classical way,
in terms (roughly speaking) of the $L^2$ scalar product with respect to $S$. Then, the microlocal simmetrizability assumption will be
still suitable for our purposes.

For operator $L$, supplemented with these additional hypotheses, in Theorem \ref{th:en_LL} we establish energy estimates on the whole $[0,T]\times\R^n$ with
time-increasing loss of derivatives, which are the exact couterpart of similar inequalities for the wave operator \eqref{intro_eq:wave}
(see \cite{C-L}, \cite{C-M}). These estimates, however, hold true in low regularity Sobolev spaces $H^s$, for $0<s<\g$, due to the weak
smoothness of the coefficients in $x$. For the same reason, the result is just local in time: if the loss of derivatives is too high, 
$u$ ends up in very weak classes, for which the product with log-Lipschitz or H\"older functions is no more well-defined.

Let us explain better this point. One has to remark (see also Proposition \ref{p:Hol-Sob} and Corollary \ref{c:LL-H^s} below) that
multiplication by a log-Lipschitz function is a self-map of $H^s$ if and only if $|s|<1$, so that the first order part
makes sense in $H^{s-1}$ if and only if $-1<s-1<0$. On the other hand, the $0$-th order term will be treated as a remainder, and so
we need it to be in $H^s$: this is true for $|s|<\g$. Notice here that the argument is symmetric for the conservative counterpart
$$
L^*u\,=\,-\,\d_tu\,-\,\sum_{j=1}^n\d_j\bigl(A_j^*\,u\bigr)\,+\,B^*\,u\,,
$$
and in fact, for $L^*$ one gets a sort of ``dual'' version of the previous result,
in $H^s$ classes for $-\g<s<0$ (but the loss is always increasing in time, of course).

Our result strongly relies on paradifferential calculus in logarithmic Sobolev classes, and especially on a fine analysis of paradifferential
operators associated to symbols which are log-Lipschitz (or H\"older) continuous in $x$, for which we develop also a symbolic calculus.
This analysis allows us to approximate the principal part $\mc{A}$ of $L$ by its paradifferential operator $T_{\mc A}$, and to define an
approximate symmetrizer for $T_{\mc A}$ by taking, roughly speaking, the paradifferential operator associated to the original symmetrizer
$S$ (up to small modifications, required in order to deal with low frequencies).

On the other hand, the time dependence comes into play as well, and the weak smoothness in $t$ causes some troubles in view of energy
estimates. Then, following the approach initiated in \cite{C-DG-S} by Colombini, De Giorgi and Spagnolo for the scalar wave equation, we
need to introduce a regularization in time, and to link the regularization parameter (say) $\veps$ with the dual variable $\xi$.
More precisely, given a simbol $a=a(t,x,\xi)$, in a first moment we smooth it out by convolution with a smoothing kernel: we define the family
$a_\veps=\rho_\veps\,*_t\,a$. Then, in view of closing energy estimates, we have to make the key choice $\veps=1/|\xi|$: this means
that the approximation is different, depending on the size of the frequencies we are looking at.
Therefore, the previous paradifferential calculus construction has to be completely revisited and adapted to treat new symbols
$\wtilde{a}(t,x,\xi)$ which are obtained by the family $\bigl(a_\veps\bigr)_\veps$ performing the choice $\veps=|\xi|^{-1}$.

Let us point out that, in our analysis, the log-Lipschitz continuity in $(t,x)$ of \emph{both} the first order coefficients and the
symmetrizer is exploited in a fundamental way. Also, the counterexamples established in \cite{M-2014} and \cite{C-M_2015} imply somehow
the sharpness of our result. In particular, in \cite{C-M_2015} Colombini and M\'etivier were able to exhibit explicit examples of
$2\times2$ microlocally symmetrizable systems with smooth time-dependent coefficients, for which the following phenomena occur:
if the symmetrizer is $\omega$-continuous for some modulus of continuity $\omega$ which is even slightly worse than Lipschitz, a loss
of derivatives has to occur in the energy estimates (then ill-posedness in $L^2$); if $\omega$ is worse than log-Lipschitz, then
the loss is in general infinite (which shows ill-posedness of the Cauchy problem in $\mc{C}^\infty$).

This having been done, we turn our attention to local in space existence and uniqueness questions. Indeed, our regularity hypotheses
are invariant under smooth change of variables, and thus they are suitable for local analysis. On the other hand, we will prove that
also the microlocal symmetrizability assumption (reformulated in Definition \ref{d:full-symm} in a coordinate
independent way) is invariant under change of variables. So, in Theorems \ref{t:local_e} and \ref{t:local_u} we show
respectively local existence and uniqueness of solutions to the Cauchy problem for our operator $L$ (written in a coordinate independent
way, see formula \eqref{eq:def_P} below). These results are the analogue of what established in \cite{C-M}
for wave operators $W$. In passing, we mention that local uniqueness will be derived from a result about propagation of zero across space-like manifolds,
which also implies finite speed of propagation and a sharp description of the propagation of supports by standard arguments (see e.g. \cite{J-M-R_2005}, \cite{Rauch_2005}).

Of course, since we are in a low regularity framework, the sense of the local Cauchy problem is not clear a priori: therefore, the initial
efforts (see Subection \ref{ss:sense}) will be devoted to explain the setting, how one has to interpret the Cauchy problem
under our assumptions and in which sense one can aim at solving it. 
Then, by a change of variables we will reconduct the analysis to operator $L$ written in the form \eqref{intro_eq:L} above,
and, by a classical convexification argument, we will be able to reduce the proof of the local statements to the global in space
results, previously established.

\medbreak
Before going on, let us give a brief overview of the paper.

In the next section we collect our hypotheses, and we state our main results, first for the global Cauchy problem, and then for the local one.
Section \ref{s:tools} is devoted to Littlewood-Paley analysis of log-Lipschitz and Sobolev classes, and to developing paradifferential calculus
associated to low regularity symbols. This analysis will be the key to the proof of the global statement, which will be presented in Section
\ref{s:en-est}. Finally, in Section \ref{s:local} we discuss the local questions.

\subsection*{Notations}
Before going on, let us introduce some notations.

First of all, given two vectors $v$ and $w$ in $\C^m$, we will denote by $v\cdot w$ the usual hermitian product in $\C^m$ and
by $|v|$ the usual norm of a vector in $\C^m$:
$$
v\,\cdot\,w\,=\,\sum_{j=1}^m v_j\,\oline{w_j}\qquad\mbox{ and }\qquad
|v|^2\,=\,v\,\cdot\,v\,.
$$

On the contrary, given a infinite-dimensional Banach space $X$, we will denote by $\|\,\cdot\,\|_{X}$ its norm and,
if it's Hilbert, by $(\,\cdot\,,\,\cdot\,)_{X}$ its scalar product. Tipically, for us $X=L^2(\R^n;\R^m)$ or $H^s(\R^n;\R^m)$.

We will also set $\mc{M}_m(\C)$ the set of all $m\times m$ matrices whose components are complex numbers, and we will denote
by $|\,\cdot\,|_{\mc{M}}$ its norm:
$$
|A|_{\mc{M}}\,:=\,\sup_{|v|=1}|Av|\,\equiv\,\sup_{|v|\leq1}|Av|\,\equiv\,\sup_{v\neq0}\frac{|Av|}{|v|}\,.
$$

\subsubsection*{Acknowledgements}

The first two authors are members of the Gruppo Nazionale per l'Analisi Matematica, la Probabilit\`a
e le loro Applicazioni (GNAMPA) of the Istituto Nazionale di Alta Matematica (INdAM).

\section{Basic definitions and main results} \label{s:results}

We state here our main results. Let us start by the global in space questions.

\subsection{The global Cauchy problem}
For $m\geq1$, let us consider the $m\times m$ linear first order system
\begin{equation} \label{def:L}
Lu(t,x)\,=\,\d_tu(t,x)\,+\,\sum_{j=1}^nA_j(t,x)\,\d_ju(t,x)\,+\,B(t,x)\,u(t,x)
\end{equation}
defined on a strip $[0,T]\times\R^n$, for some time $T>0$ and $n\geq1$.
We suppose $u(t,x)\in\R^m$ and, for all $1\leq j\leq n$, the matrices $A_j(t,x)\in\mc{M}_m(\C)$ as well as $B(t,x)$.

We define the principal symbol $\mc A$ associated to the operator $L$: for all $(t,x,\xi)\in[0,T]\times\R^n\times\R^n$,
\begin{equation} \label{def:symbol}
\mc{A}(t,x,\xi)\,:=\,\sum_{j=1}^n\xi_j\,A_j(t,x)\,.
\end{equation}
Then, for all $(t,x,\xi)$ fixed, $\mc{A}(t,x,\xi)$ is an $m\times m$ matrix which has complex-valued coefficients.
We denote by $\bigl(\lambda_k(t,x,\xi)\bigr)_{1\leq k\leq m}\subset\C$ its eigenvalues at any point $(t,x,\xi)$.

Analogously, we consider also its conservative counterpart, i.e. the operator $\wtilde{L}$ defined by
\begin{equation} \label{def:L*}
\wtilde{L}u(t,x)\,=\,\d_tu(t,x)\,+\,\sum_{j=1}^n\d_j\bigl(A_j(t,x)\,u(t,x)\bigr)\,+\,B(t,x)\,u(t,x)
\end{equation}
and whose principal symbol is still $\mc{A}(t,x,\xi)$, with a different quantization.

Let us state now our working hypothesis.
First of all, in the sequel we will always assume that our operator $L$ is \emph{hyperbolic}, namely
$\bigl(\lambda_k(t,x,\xi)\bigr)_{1\leq k\leq m}\subset\R$.
As for the coefficients of $L$, we always suppose boundedness: we suppose that, for all
$M\,\in\,\left\{A_1\ldots A_n,B\right\}$,
\begin{equation} \label{hyp:bound}
\bigl\|M\bigr\|_{L^\infty([0,T]\times\R^n;\mc{M}_m(\C))}\,:=\,\sup_{(t,x)}\bigl|M(t,x)\bigr|_{\mc{M}}\,\leq\,K_0\,.
\end{equation}
Concerning their regularity, let us start by focusing on first order coefficients: we assume that, for all $1\leq j\leq n$, the matrix-valued
functions $A_j$ are \emph{log-Lipschitz} continuous ($LL$ in brief) in their variables: there exists
a constant $K_1>0$ such that, for all $\tau>0$ and all $y\in\R^n\setminus\{0\}$, one has
\begin{equation} \label{hyp:LL}
\sup_{(t,x)}\bigl|A_j(t+\tau,x+y)\,-\,A_j(t,x)\bigr|_{\mc{M}}\,\leq\,
K_1\,\bigl(|\tau|+|y|\bigr)\,\log\left(1\,+\,\frac{1}{|\tau|+|y|}\right)\,.
\end{equation}
Notice that, since we are in finite dimension, this is equivalent to require the same condition on each component of $A_j$.

Concerning the coefficient of the lower order term, we will assume $\g$-H\"older continuity in space, uniformly in time. More precisely,
we suppose that there exist a $\g\in\,]0,1[\,$ and a constant $K_2>0$ such that, for all $y\in\R^n\setminus\{0\}$, one has
\begin{equation} \label{hyp:Holder}
\sup_{(t,x)}\bigl|B(t,x+y)\,-\,B(t,x)\bigr|_{\mc{M}}\,\leq\,K_2\,|y|^\g\,.
\end{equation}

Finally, we will require that the system is \emph{uniformly microlocally symmetrizable}, in the sense of M\'etivier (see \cite{M-2008},
Chapter 7). The word \textit{uniformly} here means with respect to $(t,x)\in[0,T]\times\R^n$ (see also Section 4 of \cite{M-2014}).
\begin{defin} \label{d:micro_symm}
System \eqref{def:L} is \emph{uniformly symmetrizable} if there exists a $m\times m$ matrix $S(t,x,\xi)$,
 homogeneous of degree $0$ in $\xi$, such that:
 \begin{itemize}
  \item $\xi\,\mapsto\,S(t,x,\xi)$ is $\mc{C}^\infty$ for $\xi\neq0$;
  \item for any point $(t,x,\xi)$, the matrix $S(t,x,\xi)$ is self-adjoint;
  \item there exist constants $0<\lambda\leq\Lambda$ such that $\lambda\,\Id\,\leq\,S(t,x,\xi)\,\leq\,\Lambda\,\Id$ for any $(t,x,\xi)$;
  \item for any point $(t,x,\xi)$, the matrix $S(t,x,\xi)\,\mc{A}(t,x,\xi)$ is self-adjoint.
 \end{itemize}
The matrix valued function $S$ is called a \emph{microlocal symmetrizer} for system \eqref{intro_eq:L}.
\end{defin}

\begin{ex} \label{ex:symm}
Obviously, symmetric systems are microlocally symmetrizable hyperbolic systems, whose symmetrizer is simply the identity matrix.
\end{ex}

\begin{ex} \label{ex:const}
Also hyperbolic systems with constant multiplicities  (and in particular strictly hyperbolic systems, for which multiplicities are all equal to $1$) are microlocally symmetrizable.
Indeed, a symmetrizer can be easily constructed (see e.g. \cite{M-2008}) in terms of the eigenvalues and projection operators onto the eigenspaces related to $\mc{A}(t,x,\xi)$.

In addition, standard perturbation theory for linear operators (see e.g. \cite{K}, \cite{M-2014},\cite{Rauch}) entails that
the eigenvalues and eigenprojectors inherit the same regularity in $(t,x)$ as the coefficients of $L$.
In particular, this implies that, in general, one cannot expect to find a symmetrizer having more smoothness than the one of the
coefficients of $L$ or $\wtilde{L}$.
\end{ex}

In what follows, we are going to consider the case when also the symmetrizer $S$ is log-Lipschitz continuous in $(t,x)$,
in the sense that it verifies an inequality of the same type as \eqref{hyp:LL} at any $\xi\neq0$ fixed. Such a regularity hypothesis for $S$
will be exploited in a fundamental way in order to get our result.
Without loss of generality, we can assume that the constants $K_0$ in \eqref{hyp:bound} and $K_1$ in \eqref{hyp:LL}
are large enough, to control also the corresponding quantities computed for the symmetrizer $S$.

Under the previous hypotheses, we can show an energy estimate with finite loss of derivatives for $L$ and $\wtilde{L}$.
We point out that such a loss is linearly increasing in time, which implies that the solution becomes more and more irregular in the
time evolution.

The estimates are stated for smooth enoug $u$ at this level. However, as a consequence of a ``weak $=$ strong'' type result (see Theorem \ref{th:w-s} below),
they remain true for tempered distributions in a much broader class.
\begin{thm} \label{th:en_LL}
Let us consider the first-order system \eqref{def:L}, and assume it to be microlocally symmetrizable, in the sense of Definition
\ref{d:micro_symm}. Suppose moreover that the coefficients $\bigl(A_j\bigr)_{1\leq j\leq n}$ and the symmetrizer $S$ satisfy
the boundedness and log-Lipschitz conditions \eqref{hyp:bound}-\eqref{hyp:LL}. Suppose also that the coefficient
$B$ verifies hypotheses \eqref{hyp:bound}-\eqref{hyp:Holder}, for some $\g\in\,]0,1[\,$.

Then, for all $s\in\,]0,\g[\,$, there exist positive constants $C_1$, $C_2$ (depending just on $s$, $K_0$ and $K_1$), a $\beta>0$
(depending just on  $K_1$) and a time $T_*\in\,]0,T]$, with $\beta\,T_*\,<\,s$, such that the estimate
\begin{equation} \label{est:u_LL}
\sup_{t\in[0,T_*]}\|u(t)\|_{H^{s-\beta t}}\,\leq\,C_1\,e^{C_2\,T}\,\left(\|u(0)\|_{H^s}\,+\,\int^{T_*}_0
\bigl\|Lu(\tau)\bigr\|_{H^{s-\beta\tau}}\,d\tau\right)
\end{equation}
holds true for any tempered distribution $u\,\in\,L^2\bigl([0,T];H^1(\R^n;\R^m)\bigr)\,\cap\,H^1\bigl([0,T];L^2(\R^n;\R^m)\bigr)$.

An analogous estimate holds true also for operator $\wtilde{L}$ defined in \eqref{def:L*}, but for any $s\,\in\,]-\g,0[\,$ and under the condition $\beta\,T_*\,<\,\g+s$.
\end{thm}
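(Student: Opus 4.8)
The plan is to prove the energy estimate \eqref{est:u_LL} by reducing $L$ to its paradifferential approximation, building an approximate symmetrizer, and then running a (time-dependent) energy argument in the frequency-localized picture. First I would pass, via a dyadic Littlewood--Paley decomposition, from $L$ acting on $u$ to the paradifferential operator $T_{\mc A}$ acting on $\Delta_\nu u$, using the log-Lipschitz symbolic calculus developed in Section \ref{s:tools}. Since $\mc A$ is only $LL$ in $(t,x)$, the error $\mc A\,\d u - T_{\mc A}\,\d u$ is controlled only in Sobolev spaces $H^{s-1}$ with $|s-1|<0$, i.e. $0<s<1$; this is the structural reason for the restriction $s\in\,]0,\g[\,$. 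The $0$-th order term $Bu$, with $B\in L^\infty_t\mc C^\g_x$, is kept as a genuine remainder: paraproduct estimates give $\|Bu\|_{H^s}\lesssim \|u\|_{H^s}$ only for $|s|<\g$, which is where the H\"older exponent enters. To handle the weak regularity in $t$, I would follow the Colombini--De Giorgi--Spagnolo device: regularize $\mc A$ and $S$ in time by convolution, $a_\veps=\rho_\veps *_t a$, and then—crucially—tie $\veps$ to frequency by setting $\veps = 2^{-\nu}$ (i.e. $\veps=1/|\xi|$) on the $\nu$-th dyadic block, so that each block sees a symbol $\wtilde a$ that is smooth enough in $t$ for the energy computation but differs from $a$ by an admissible error of size $O(2^{-\nu}\log 2^\nu)$.

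Next I would construct an approximate symmetrizer $\wtilde S$ for $T_{\mc A}$ by taking the paradifferential operator associated to the time-regularized, frequency-truncated symmetrizer $S$, modified at low frequencies so that $\lambda\,\Id\le \wtilde S\le\Lambda\,\Id$ holds as operators; the symbolic calculus then yields that $\wtilde S\,T_{\mc A}$ is self-adjoint up to a remainder of order $0$ coming from the $LL$ regularity (plus a time-derivative remainder $\d_t\wtilde S$ of size $O(2^\nu\log 2^\nu)$ in operator norm, by the standard $LL$ bound $\|\d_t a_\veps\|_{L^\infty}\lesssim \log(1/\veps)$). Define the energy on the $\nu$-th block as $E_\nu(t) := \bigl(\wtilde S\,\Delta_\nu u,\Delta_\nu u\bigr)_{L^2}$, which is equivalent to $\|\Delta_\nu u(t)\|_{L^2}^2$. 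Differentiating in $t$, the principal term is purely imaginary up to the self-adjointness remainder, and the worst term is $(\d_t\wtilde S\,\Delta_\nu u,\Delta_\nu u)$, of size $2^\nu\log 2^\nu\,\|\Delta_\nu u\|_{L^2}^2$. Summing the weighted energy $\sum_\nu 2^{2\nu(s-\beta t)}E_\nu(t)$ and absorbing this term by the derivative of the weight $2^{2\nu(s-\beta t)}$ forces the loss rate $\beta \sim K_1$ (the $\log 2^\nu = \nu\log 2$ factor is exactly matched by differentiating $2^{-2\beta t\nu}$), together with the Gronwall contribution from the $0$-th order term and the $O(1)$ self-adjointness remainders, producing the factor $e^{C_2 T}$. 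Requiring the weight exponent $s-\beta t$ to stay in the admissible range $]0,\g[\,$ throughout $[0,T_*]$ gives the constraint $\beta T_* < s$; imposing also that both the source term $Lu$ and the data live in spaces where the estimates are valid closes \eqref{est:u_LL}.

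For the conservative operator $\wtilde L$, I would argue by duality/transposition: $\wtilde L$ differs from $L$ by moving the derivative onto $A_j$, so $\wtilde L = L^* + (\text{lower order})$ up to sign, and the paradifferential picture is the adjoint one. Equivalently, one redoes the same construction with the other quantization $T_{\mc A}\,\d$ replaced by $\d\,T_{\mc A}$; the symbolic calculus is symmetric, but the paraproduct remainder estimates now read $\|u\|_{H^s}$-bounds for $\d(A_j u) - \partial T_{\mc A}u$ valid in $H^{s-1}$ with $-1<s<0$, and for $Bu$ in $H^s$ with $-\g<s<0$. Intersecting gives $s\in\,]-\g,0[\,$, and the admissibility of the moving index $s-\beta t$ in $]-\g,0[\,$ (we need $s-\beta t > -\g$) yields the condition $\beta T_* < \g + s$, with everything else identical. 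The main obstacle I expect is the careful bookkeeping in the combined regularization: one must show that replacing $a$ by the frequency-dependent $\wtilde a(t,x,\xi)=a_{|\xi|^{-1}}(t,x,\xi)$ still gives an operator obeying a usable symbolic calculus (composition, adjoint, and the time-derivative bound) in the logarithmic Sobolev scale, and that all the error terms generated—by the $LL$ defect of the symbolic calculus, by the low-frequency modification of $S$, and by the time regularization—are simultaneously absorbable into the Gronwall loop with a single loss rate $\beta$. This is precisely the analysis that Section \ref{s:tools} is set up to provide.
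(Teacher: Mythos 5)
Your plan is essentially the paper's strategy: paralinearize $L$ as $\d_t+iT_{\mc A}+T_B+\mc R_L$ (with the remainder controlled exactly on the range $0<s<\g$, cf.\ Lemma \ref{l:L->T}), regularize in time by convolution with the frequency-dependent choice $\veps=1/\lan\xi\ran$, build the energy from a paradifferential quantization of the regularized symmetrizer with a low-frequency cut-off $1-\theta_\mu$, compute $\frac{d}{dt}E$ and cancel the principal term via microlocal symmetrizability, and absorb the remaining logarithmic-order losses into the negative contribution $-\beta E_{\log}$ produced by $s'(t)=-\beta$; this forces $\beta\sim K_1$ and the restriction $\beta T_*<s$ so the moving index stays in $]0,\g[$. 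For $\wtilde L$ the paper actually takes the second of your two suggested routes (direct paralinearization of $\wtilde L$, with $\mc R_{\wtilde L}$ of the same order but valid for $-\g<s<0$, see Remark \ref{r:adj}); the duality route would also work.

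Two points need more care. First, you define the energy by $E_\nu(t)=(\wtilde S\,\Delta_\nu u,\Delta_\nu u)$ and assert $\lambda\,\Id\le\wtilde S\le\Lambda\,\Id$ ``as operators'' after the low-frequency cut-off. Symbol positivity does not automatically give positivity of the associated paradifferential operator; that would require a sharp G\aa rding-type inequality. The paper avoids this by quantizing $\wtilde S^{1/2}$ rather than $\wtilde S$: the energy $\|T_{\wtilde S^{1/2}(1-\theta_\mu)}u\|^2_{H^{s(t)}}$ is non-negative for free, and the only nontrivial direction of the equivalence (the lower bound $\|u\|\lesssim E[u]$) is proved in Lemma \ref{l:energy} by composing with $T_{\wtilde S^{-1/2}(1-\theta)}$ and absorbing the commutator via symbolic calculus. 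As written, your construction leaves a genuine gap here; take the square root. Second, your estimate $\|\d_t\wtilde S\|=O(2^\nu\log 2^\nu)$ is off by a factor $2^\nu$: $S$ is a symbol of order $0$, so $\|\d_t S_\veps\|_\infty\lesssim\log(1/\veps)$ with $\veps\sim 2^{-\nu}$ gives $O(\nu)$. Your absorption argument only uses the $\nu$ factor, so this is a slip rather than a conceptual error; but it is worth noting because the block-wise framing you use also glosses over the $\xi/|\xi|$-dependence of $S$, which makes ``$\wtilde S$ acting on $\Delta_\nu u$'' ambiguous. Working with the global paradifferential operator $T_{\wtilde\Sigma}$ as in the paper handles the cosphere dependence automatically and is cleaner than a block-by-block bookkeeping.
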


Some remarks on the previous statement are in order.

\begin{rem} \label{r:no-lower}
\begin{itemize}
 \item[(i)] The technical limitation $|s|<\g$ is dictated by product continuity properties (see Proposition
\ref{p:Hol-Sob} below). The same can be said about the
conditions on the time $T_*$.
\item[(ii)] A careful but easy inspection of our proof reveals that, if $B\in L^\infty\bigl([0,T];LL(\R^n)\bigr)$,
then Theorem \ref{th:en_LL} holds true replacing $\g$ by $1$ (see also Corollary \ref{c:LL-H^s}).
On the other hand, having additional regularity for $B$ does not help to improve the result: in particular, this is the case when the
operator is homogeneous of first order, i.e. if $B\equiv0$.
\item[(iii)] In the case of operator $\wtilde{L}$, the H\"older regularity of the $0$-th order term imposes an additional limitation on
the lifespan $T_*$. We notice that this is coherent with what is known for scalar wave equations (see e.g. \cite{C-DS-F-M_Birk}).
\end{itemize}
\end{rem}

From the previous theorem, we can deduce the existence and uniqueness of a local in time solution to the Cauchy problem
associated to $L$ and $\wtilde{L}$.

\begin{thm} \label{t:global_e}
Let us consider the first-order system \eqref{def:L}, and assume it to be microlocally symmetrizable, in the sense of Definition
\ref{d:micro_symm}. Suppose moreover that the coefficients $\bigl(A_j\bigr)_{1\leq j\leq n}$ and the symmetrizer $S$ satisfy
the boundedness and log-Lipschitz conditions \eqref{hyp:bound}-\eqref{hyp:LL}. Suppose also that the coefficient
$B$ verifies hypotheses \eqref{hyp:bound}-\eqref{hyp:Holder}, for some $\g\in\,]0,1[\,$.

For $s\in\,]0,\g[\,$, let $\beta>0$ and $T_*>0$ respectively the loss parameter and the existence time given by Theorem \ref{th:en_LL}.
Set $s_0\,:=\,s-\beta T_*$.

Then, for any fixed $u_0\in H^s(\R^n;\R^m)$ and $f\in L^1\bigl([0,T];H^s(\R^n;\R^m)\bigr)$, there exists a unique solution $u\,\in\,\mc{C}\bigl([0,T_*];H^{s_0}(\R^n;\R^m)\bigr)$
to the Cauchy problem
\begin{equation} \label{eq:Cauchy}
\left\{\begin{array}{l}
        Lu\;=\;f \\[1ex]
	u_{|t=0}\;=\;u_0\,,
       \end{array}\right.
\end{equation}
which satisfies the energy inequality \eqref{est:u_LL}. In particular, for any $t\in[0,T_*]$, one has
$$
u(t)\,\in\,H^{s-\beta t}(\R^n;\R^m)\,,
$$
and the map $t\,\mapsto\,u(t)$ is continuous between the respective functional spaces.

An analogous statement still holds true for the conservative operator $\wtilde{L}$.
\end{thm}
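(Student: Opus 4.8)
The plan is to obtain both existence and uniqueness as by now standard consequences of the energy inequalities of Theorem~\ref{th:en_LL}: uniqueness from \eqref{est:u_LL} applied to the difference of two solutions, and existence from a duality argument based on the ``dual'' estimate for the adjoint operator. A preliminary observation is needed: the formal adjoint $L^{*}u = -\d_tu - \sum_{j}\d_j(A_j^{*}u) + B^{*}u$ is, up to the time reversal $t\mapsto T_*-t$, a conservative operator of the type $\wtilde{L}$ whose first order coefficients $-A_j^{*}$ are again log-Lipschitz, whose zero-th order coefficient $B^{*}$ is still $\g$-H\"older, and whose principal symbol $-\mc{A}^{*}$ is microlocally symmetrized by $S^{-1}$ (indeed $S\mc{A}$ self-adjoint gives $\mc{A}^{*}=S\mc{A}S^{-1}$, whence $S^{-1}\mc{A}^{*}=\mc{A}S^{-1}$ is self-adjoint, and $S^{-1}$ is self-adjoint, positive and log-Lipschitz in $(t,x)$). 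Therefore the $\wtilde{L}$-part of Theorem~\ref{th:en_LL} applies to $L^{*}$: for every $\sigma\in\,]-\g,0[\,$ with $\beta T_*<\g+\sigma$ and every $\phi\in\mc{C}^\infty_c([0,T_*[\,\times\R^n;\R^m)$, one has
$$
\sup_{t\in[0,T_*]}\|\phi(t)\|_{H^{\sigma-\beta(T_*-t)}}\,\leq\,C\int_0^{T_*}\|L^{*}\phi(t)\|_{H^{\sigma-\beta(T_*-t)}}\,dt\,.
$$

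\emph{Uniqueness.} If $u_1,u_2\in\mc{C}([0,T_*];H^{s_0})$ both solve \eqref{eq:Cauchy}, then $w:=u_1-u_2$ lies in the class to which, via the ``weak $=$ strong'' result (Theorem~\ref{th:w-s}), the estimate \eqref{est:u_LL} applies; since $Lw=0$ and $w_{|t=0}=0$, that inequality forces $w\equiv0$ on $[0,T_*]$. The argument for $\wtilde{L}$ is identical.

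\emph{Existence.} On the subspace $\{L^{*}\phi:\ \phi\in\mc{C}^\infty_c([0,T_*[\,\times\R^n;\R^m)\}$ of $L^{1}([0,T_*];H^{\sigma-\beta(T_*-t)}(\R^n))$ I would define the linear functional
$$
\ell(L^{*}\phi)\,:=\,\int_0^{T_*}\bigl(f(t),\phi(t)\bigr)_{L^2}\,dt\,+\,\bigl(u_0,\phi(0)\bigr)_{L^2}\,.
$$
Taking $\sigma=-s_0$ (admissible, since $0<s_0<s<\g$ forces $-\g<\sigma<0$, and $\beta T_*<s$ together with $s<\g$ gives $\sigma<0$ and $\beta T_*<\g+\sigma$), the dual estimate bounds $\sup_t\|\phi(t)\|_{H^{-s}}$ by $\int_0^{T_*}\|L^{*}\phi(t)\|_{H^{\sigma-\beta(T_*-t)}}\,dt$ (with the two indices actually coinciding at $t=0$). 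Hence $\ell$ is well defined — it vanishes whenever $L^{*}\phi=0$ — and bounded,
$$
|\ell(L^{*}\phi)|\,\leq\,C\bigl(\|u_0\|_{H^s}+\|f\|_{L^1([0,T_*];H^s)}\bigr)\int_0^{T_*}\|L^{*}\phi(t)\|_{H^{\sigma-\beta(T_*-t)}}\,dt\,.
$$
By Hahn--Banach, $\ell$ extends to a bounded functional on all of $L^{1}([0,T_*];H^{\sigma-\beta(T_*-t)})$, hence is represented by an element $u$ of the dual space $L^{\infty}([0,T_*];H^{-\sigma+\beta(T_*-t)})=L^{\infty}([0,T_*];H^{s-\beta t})$, the last identity being exactly the choice $\sigma=-s_0$. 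Unwinding the definition of $\ell$ and integrating by parts in $t$ and $x$ shows that this $u$ solves $Lu=f$ in $\mc{D}'$ and attains $u_0$ at $t=0$ in the weak sense discussed in Subsection~\ref{ss:sense}; the products $A_j\,\d_ju$ and $Bu$ are well defined precisely because $s-\beta t$ stays inside $\,]0,\g[\,$, by the continuity properties of Proposition~\ref{p:Hol-Sob}.

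\emph{Conclusion and main difficulty.} The weak solution $u$ just produced belongs to the broad class covered by the ``weak $=$ strong'' result, so by Theorem~\ref{th:w-s} it satisfies \eqref{est:u_LL} and, after modification on a $t$-null set, $t\mapsto u(t)$ is continuous with values in $H^{s_0}$ with $u(t)\in H^{s-\beta t}$ for every $t\in[0,T_*]$; the conservative case follows by the same scheme, the adjoint of $\wtilde{L}$ being, after time reversal, an operator of type $L$ covered by Theorem~\ref{th:en_LL}. The hard part is not the abstract duality scheme but the bookkeeping of Sobolev indices: one has to match the admissible windows $s\in\,]0,\g[\,$, $\beta T_*<s$ for $L$ with $\sigma\in\,]-\g,0[\,$, $\beta T_*<\g+\sigma$ for $L^{*}$ so that $\ell$ is bounded in precisely the $L^{1}_tH^{\sigma-\beta(T_*-t)}$ norm whose predual duality delivers $u$ in the prescribed space $L^{\infty}_tH^{s-\beta t}$ — and one must carefully justify, through Proposition~\ref{p:Hol-Sob} and Theorem~\ref{th:w-s}, that the distributional identities hold and can be upgraded to the regularity asserted in the statement.
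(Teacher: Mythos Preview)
Your approach is correct but genuinely different from the paper's, and the comparison is worth recording.

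For existence, the paper does \emph{not} use duality. Instead it regularizes the equation: with $J_\veps(D)=(1-\veps\Delta)^{-1/2}$ it solves the approximate linear ODE system $\d_tu_\veps+A(t,x,D)J_\veps(D)u_\veps+B J_\veps(D)u_\veps=f$ by Cauchy--Lipschitz, then sets $w_\veps=J_\veps(D)u_\veps$ and observes that $w_\veps$ satisfies an equation of the same type with an extra commutator term $h_\veps$ controlled by $\|w_\veps\|_{H^{s(t)}}$ (via a lemma from \cite{C-M}). Since $J_\veps(\xi)$ is scalar, $S$ is still a symmetrizer for the regularized symbol, so the energy machinery of Theorem~\ref{th:en_LL} applies \emph{uniformly in $\veps$}; this yields a bounded family in $\mc{C}_{s,\beta}(T_*)$, from which a weak limit is extracted and shown to solve \eqref{eq:Cauchy}. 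The advantage is that one never leaves the primal problem: no identification of the dual of the time-weighted space $\mc{L}_{\sigma,\beta}(T_*)$ is needed, and the same energy functional is reused. The cost is the commutator analysis $[A_j,J_\veps]\d_j$, $[B,J_\veps]$.

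Your Hahn--Banach route is the classical alternative and works here because Theorem~\ref{th:en_LL} already provides the matched estimate for $\wtilde{L}$-type operators, and you correctly observe that $L^{*}$ is of this type with symmetrizer $S^{-1}$. Your index bookkeeping ($\sigma=-s_0$ so that $-\sigma+\beta(T_*-t)=s-\beta t$) is right. Two points deserve more care than you give them: first, the dual of $L^1\bigl([0,T_*];H^{\sigma-\beta(T_*-t)}\bigr)$ is not a standard Bochner dual since the target varies with $t$ --- one should really work with the spaces $\mc{L}_{\sigma,\beta}$ of the paper and check the representation directly; second, uniqueness in the class $\mc{C}([0,T_*];H^{s_0})$ requires noting that this space embeds in $\mc{H}_{s_0,\beta}(T_*)$ (since $s_0-\beta t\leq s_0$), after which Theorem~\ref{th:w-s} with $s$ replaced by $s_0$ applies on a possibly shorter interval, and one iterates. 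Finally, your reference to ``Subsection~\ref{ss:sense}'' is misplaced: that subsection treats the local problem, whereas the relevant trace statement for the global problem is Lemma~\ref{p:reg_global}.
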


\begin{rem} \label{r:Cauchy}
Let us denote by $\mc{C}_b^\infty(\R^n)$ the space of $\mc{C}^\infty(\R^n)$ functions which are uniformly bounded with all their derivatives.

Theorems \ref{th:en_LL} and \ref{t:global_e} immediately imply the following statement: if the coefficients of $L$ (respectively $\wtilde{L}$) and the symmetrizer $S$ are
$L^\infty\bigl([0,T];\mc{C}^\infty_b(\R^n)\bigr)$, with the first order coefficients and $S$ log-Lipschitz continuous in time, then the Cauchy
problem for $L$ (respectively $\wtilde{L}$) is well-posed in the space $H^\infty(\R^n;\R^m)$, with a finite loss of derivatives.
\end{rem}

\subsection{Local in space results} \label{ss:local_th}
We turn now our attention to the local in space problem. For simplicity of exposition, we will always consider \emph{smooth} bounded
domains and manifolds.

Hence, we fix a smooth open bounded domain $\Omega\subset\R^{1+n}$, and
we suppose that the coefficients $A_j=A_j(z)$ are log-Lipschitz in $\Omega$ (keep in mind Definition \ref{d:LL}).

We recall also that $H^s(\Omega)$ is defined (see Chapter 3 of \cite{T_1983}, where the more general context of Besov spaces is treated) as
\begin{equation} \label{def:H^s}
H^s(\Omega)\,:=\,\biggl\{u\in\mc{D}'(\Omega)\;\biggl|\;\mbox{ there exists }\;\wtilde{u}\,\in\,H^s(\R^{1+n})
\;\mbox{ such that }\;\wtilde{u}_{|\Omega}\,\equiv\,u\biggr\}\,,
\end{equation}
endowed with the norm
$$
\|u\|_{H^s(\Omega)}\,:=\,\inf\,\biggl\{\left\|\wtilde{u}\right\|_{H^s(\R^{1+n})}\;\biggl|\quad\wtilde{u}\,\in\,H^s(\R^{1+n})\quad
\mbox{ and }\quad\wtilde{u}_{|\Omega}\,\equiv\,u\biggr\}\,.
$$
As for the boundary $\d\Omega$, partition of unity leads to a similar definition for $H^s(\d\Omega)$, by use of local charts
and extension operator; the same can be said for smooth submanifolds $\varSigma$.

Let us consider the operator $P$, defined in $\Omega$ by the formula
\begin{equation} \label{eq:def_P}
P(z,\d_z)u\,:=\,\sum_{j=0}^nA_j(z)\d_{z_j}u\,+\,B(z)u\,,
\end{equation}
where, for all $1\leq j\leq n$, the $A_j$'s and $B$ are real $m\times m$ matrices; we will specify later on their regularity. For the time being,
let us introduce the principal symbol $P_1$ of $P$, identified by the formula
$$
P_1(z,\z)\,=\,\sum_{j=0}^ni\,\z^j\,A_j(z)\,,
$$
and recall some basic definitions (see Section 4 of \cite{M-2014}). At this level, the dependence of the coefficients on the variable $z\in\Omega$ is not really important,
so let us omit it from the notations here.
\begin{defin} \label{d:hyperbolic}
The operator $P$ is said to be \emph{hyperbolic} in the direction $\nu\in\R^{1+d}$ if the following conditions are verified:
\begin{itemize}
 \item[(i)] $\det\bigl(P_1(\nu)\bigr)\,\neq\,0$;
 \item[(ii)] there exists a $\eta_0>0$ such that $\det\bigl(P_1(i\tau\nu+\z)+B\bigr)\,\neq\,0$ for all $\z\in\R^{1+d}$ and all $\tau\in\R$ such that $|\tau|>\eta_0$.
\end{itemize}
The principal operator $P_1(\d_z)$ is \emph{strongly hyperbolic} in the direction $\nu$ if for all matrix $B\in\mc{M}_m(\R)$, then $P_1+B$ is hyperbolic in the direction $\nu$.
\end{defin}

Let us recall that Proposition 4.2 of \cite{M-2014} gives a characterization of the strong hyperbolicity. We do not enter into the details here; however, we will come
back to this notion in a while.

\medbreak
Now, we are interested in considering the dependence on $z\in\Omega$: more precisely, for any $z\in\Omega$, we assign a direction $\nu(z)\in\mbb{S}^d$
(where $\mbb{S}^d$ is the unitary sphere in $\R^{1+d}$) in a smooth way.
In what follows, we are going to assume that
\begin{itemize}
 \item[\bf (H-1)] $P_1(z,\cdot)$ is \emph{uniformly strongly hyperbolic} in the direction $\nu(z)$, for all $z\in\Omega$.
\end{itemize}
By Theorem 4.10 of \cite{M-2014}, hypothesis \textbf{(H-1)} is equivalent to the following conditions:
\begin{enumerate}
 \item one has the property
\begin{equation} \label{eq:det_pos}
C_\Omega\,:=\,\inf_{z\in\Omega}\,\Bigl|{\rm det}\,P_1\bigl(z,\nu(z)\bigr)\Bigr|\,>\,0\,;
\end{equation}
 \item $P_1$ admits a bounded family of \emph{full symmetrizers}
$\mbf{S}(z,\cdot)$ which is \emph{uniformly positive} in the direction $\nu(z)$.
\end{enumerate}
Hence, let us recall Definition 4.7 of \cite{M-2014} here below.
\begin{defin} \label{d:full-symm}
A bounded family of \emph{full symmetrizers} for $P_1(z,\z)$ is a family of $m\times m$ matrices $\mbf{S}(z,\z)$, homogeneous of degree $0$ in
$\z\neq 0$, such that the following conditions are satisfied:
\begin{itemize}
\item uniform  boundedness: there exists a constant $\Lambda>0$ such that $\sup_{(z,\z)}|\mbf{S}(z,\z)|_{\mc{M}}\,\leq\,\Lambda$;
\item symmetrizability: for any $(z,\z)$, the matrix $\mbf{S}(z,\z)\,P_1(z,\z)$ is self-adjoint.
 \end{itemize}
The symmetrizer $\mbf{S}(z,\z)$ is \emph{positive} in the direction $\nu\neq0$ if there exists a constant $\lambda>0$
such that, for all $\z\neq0$, one has
$$
v\,\in\,{\rm Ker}\,P_1(z,\zeta)\qquad\Longrightarrow\qquad
\Re\Bigl(\mbf{S}(z,\z)\,P_1\bigl(z,\nu\bigr)v\,\cdot\,v\Bigr)\,\geq\,\lambda\,|v|^2\,.
$$
\emph{Uniform positivity} means positivity of $\mbf{S}(z,\cdot)$ in the direction $\nu(z)$ for all $z\in\Omega$, for a constant $\lambda$ independent of $z$.
\end{defin}

We are going to need also the following assumptions concerning the family of symmetrizers:
\begin{itemize}
 \item[\bf (H-2)] the map $\z\,\mapsto\,\mbf{S}(z,\z)$ is $\mc{C}^\infty$ for $\z\neq0$ (smoothness in $\z$);
\item[\bf (H-3)] the map $z\,\mapsto\,\mbf{S}(z,\z)$ is uniformly $LL$ in $\Omega$ (log-Lipschitz regularity in $z$).
\end{itemize}
As for the coefficients of the operator $P$, defined in \eqref{eq:def_P}, we suppose instead that:
\begin{itemize}
 \item[\bf (H-4)] the matrices $A_j$ have coefficients in the $LL(\Omega)$ class;
\item[\bf (H-5)] $B$ has coefficients in the  H\"older space $\mc{C}^\g(\Omega)$.
\end{itemize}

Let us now fix a smooth hypersurface $\varSigma\subset\Omega$.
As in \cite{C-M}, up to  shrink $\Omega$, we can assume that $\varSigma$ is defined by the equation $\vphi=0$, for a smooth $\vphi$
such that $d\vphi\neq0$. Finally, we suppose that, for any $z\in\varSigma$, the vector $\nu(z)$ coincides
with the normal to $\varSigma$ in $z$, i.e. $d\vphi(z)$.

We introduce the notations $\Omega_\geq\,:=\,\Omega\,\cap\,\{\vphi\geq0\}$ and
$\Omega_>\,:=\,\Omega\,\cap\,\{\vphi>0\}$.
For $s\in\R$, we say that $u\in H^s_{loc}(\Omega_\geq)$ if, for any open $\Omega'\subset\Omega$, relatively compact in $\Omega$, the
restriction of $u$ to $\Omega'\,\cap\,\{\vphi>0\}$ belongs to $H^s(\Omega_>)$. In a similar way, we say that $v\in H^s_{comp}(\Omega_\geq)$
if $v\in H^s(\Omega_>)$ has compact support in $\Omega_\geq$.

For a $z_0\in\varSigma$, we are interested in solving the Cauchy problem for $P$ in a neighborhood of $z_0$.
We have the following \emph{local existence} result.

\begin{thm} \label{t:local_e}
Let $1/2<\g<1$ and $s\in\,]1-\g,\g[\,$. Let $P$ be the operator defined in \eqref{eq:def_P}, satisfying all hypotheses from \emph{\textbf{(H-1)}} to \emph{\textbf{(H-5)}}.

Fix a neighborhood $\omega$ of $z_0$ in $\varSigma$. Then, there exist a $s_0\in\,]1-\g,s[\,$ and a neighborhood $\Omega_0$ of $z_0$ in $\Omega$
such that, for any $u_0\in H^s(\omega)$ and any $f\in H^s(\Omega_0\cap\{\vphi>0\})$, there exists a solution
$u\in H^{s_0}(\Omega_0\cap\{\vphi>0\})$ to the Cauchy problem
$$
\left\{\begin{array}{l}
        Pu\;=\;f \\[1ex]
        u_{|\varSigma}\;=\;u_0\,.
       \end{array}\right. \leqno{(C\!P)}
$$
\end{thm}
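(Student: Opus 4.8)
The strategy is the classical "extension + convexification + global result" scheme, adapting the argument of \cite{C-M} for scalar wave operators to our first-order system $P$. The starting point is that, near $z_0\in\varSigma$, after a smooth local change of variables (which is harmless, since all our hypotheses \textbf{(H-1)}--\textbf{(H-5)} are invariant under smooth changes of coordinates, including the microlocal symmetrizability, as discussed after Definition \ref{d:full-symm}) we may assume $\varSigma=\{t=0\}$, $\nu(z_0)=(1,0,\dots,0)$, and that in a small box $\Omega_0=(-T_0,T_0)\times V$ the operator takes the form $Pu=A_0(z)\d_t u+\sum_{j=1}^n A_j(z)\d_j u+B(z)u$ with $A_0$ invertible (by \eqref{eq:det_pos} and continuity, shrinking $\Omega_0$). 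Multiplying by $A_0^{-1}$ — which preserves the $LL$ regularity of the first-order coefficients and the $\mc{C}^\g$ regularity of the zero-order term, and transforms the full symmetrizer $\mbf{S}$ into a microlocal symmetrizer in the sense of Definition \ref{d:micro_symm}, still $LL$ in $z$ and smooth in the cospatial variable — we reduce to an operator $L$ of the form \eqref{def:L}.

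**Convexification and extension.** The next step is to replace the genuinely local data by global-in-space data for which Theorem \ref{t:global_e} applies. First I extend the coefficients $A_j$, $B$ and the symmetrizer $S$ from the box $V$ to all of $\R^n$, preserving the $LL$ (resp. $\mc{C}^\g$) norms and the symmetrizability structure: this is done by the standard Whitney/Stein-type extension for $LL$ and Hölder classes, combined with a cut-off that gradually deforms $\mc{A}$ towards a constant-coefficient symmetric symbol outside a slightly larger box, so that microlocal symmetrizability is retained globally. Then the convexification trick: choosing $T_0$ small enough and $V$ a sufficiently thin slab around the projection of $z_0$, the domain of determinacy of $L$ emanating backward from $\omega\subset\{t=0\}$ stays inside $\Omega_0$, so that the behaviour of the extended coefficients outside $\Omega_0$ is irrelevant; concretely one picks the slab so that the characteristic cone (controlled by $K_0$) does not reach $\d V$ before time $T_0$. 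The initial datum $u_0\in H^s(\omega)$ is extended to some $\wtilde u_0\in H^s(\R^n)$ and $f\in H^s(\Omega_0\cap\{t>0\})$ to $\wtilde f\in L^1([0,T_0];H^s(\R^n))$ by a bounded extension operator. Applying Theorem \ref{t:global_e} to this globalized Cauchy problem on $[0,T_*]\times\R^n$ (with $1/2<\g<1$ guaranteeing the admissible window $1-\g<s<\g$ is non-empty, and with $s_0:=s-\beta T_*\in\,]1-\g,s[\,$ after further shrinking $T_0$ so that $\beta T_0<s-(1-\g)$) yields a unique global solution $u\in\mc{C}([0,T_*];H^{s_0}(\R^n))$ satisfying \eqref{est:u_LL}.

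**Localization and finite speed of propagation.** It remains to check that the restriction of this global solution to $\Omega_0\cap\{t>0\}$ solves the original local problem, i.e. that it does not "feel" the modifications made outside $\Omega_0$. This is exactly a finite speed of propagation / domain of dependence statement, which — as the excerpt announces — follows from the propagation-of-zero result across space-like hypersurfaces (the same ingredient invoked for uniqueness in Theorem \ref{t:local_u}). Concretely: if one modifies data and coefficients outside a backward cone, the difference of the two solutions vanishes inside the cone by that propagation result applied to $L$ and to its conservative adjoint $\wtilde L$ (the energy estimate in $H^s$ for $L$ and the dual one in $H^{-\g<s<0}$ for $L^*$, both from Theorem \ref{th:en_LL}, are precisely what powers the local energy argument). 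Handling the reduced time interval and the low regularity $s_0$ simultaneously forces a joint smallness condition on $T_0$; then $\Omega_0$ is shrunk a last time to lie inside this backward cone, and we take $u\big|_{\Omega_0\cap\{t>0\}}\in H^{s_0}(\Omega_0\cap\{t>0\})$ as the desired solution, with $u_{|\varSigma}=u_0$ in the trace sense made precise in Subsection \ref{ss:sense}.

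**Main obstacle.** The delicate point is not the convexification bookkeeping but making the finite-speed-of-propagation argument rigorous at the very low regularity $s_0\in\,]1-\g,\g[\,$: the solution is only in $H^{s_0}$ with $s_0<1$, products with $LL$ or $\mc{C}^\g$ coefficients are only borderline well-defined (Proposition \ref{p:Hol-Sob}), so one cannot freely integrate by parts, and the localization must be carried out through the paradifferential machinery of Section \ref{s:tools} rather than by naive energy identities — essentially one has to re-run the energy estimate of Theorem \ref{th:en_LL} with spatial weights supported in the cone, checking that commutators of the paradifferential symmetrizer with such cut-offs are lower order. Ensuring that the loss parameter $\beta$ and the characteristic speed (both governed only by $K_0,K_1$) can be simultaneously accommodated in a single small $T_0$, compatibly with $s_0>1-\g$, is where the hypothesis $\g>1/2$ is genuinely used.
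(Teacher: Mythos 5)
Your overall scheme (reduction to the form $L=\d_t+\sum_jA_j\d_j+B$, extension to a global problem, application of Theorem \ref{t:global_e}, then restriction) is the right one, but your proposal has a genuine conceptual gap in the ``localization'' step. You assert that one must invoke finite speed of propagation to check that the restriction of the global solution to $\Omega_0\cap\{t>0\}$ ``does not feel the modifications made outside $\Omega_0$,'' and you then correctly observe that making such a propagation argument rigorous at regularity $s_0<1$ is hard. But for \emph{existence} there is nothing to feel: if the extended operator $L^\sharp$ coincides with $L$ on a neighborhood $\Omega_1\supset\Omega_0$, and if $f^\sharp$ (resp.\ $u_0^\sharp$) is an $H^s$ extension of $f$ (resp.\ $u_0$) --- which is exactly what the definition \eqref{def:H^s} of $H^s(\Omega_0\cap\{\vphi>0\})$ provides --- then the global solution $u^\sharp$ of $L^\sharp u^\sharp=f^\sharp$, $u^\sharp_{|t=0}=u_0^\sharp$ tautologically satisfies $Lu^\sharp=f$ and $u^\sharp_{|t=0}=u_0$ on $\Omega_0\cap\{t>0\}$, so its restriction is the desired local solution. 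No domain-of-dependence argument is needed; propagation of zero across space-like surfaces (Lemma \ref{l:0-prop}) is the ingredient for \emph{uniqueness} (Theorem \ref{t:local_u}), not for existence. Your ``main obstacle'' is therefore self-inflicted.

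Two smaller points. First, your extension construction (Whitney/Stein extension of each $A_j$, $B$, $\mbf{S}$ separately, then a cut-off deforming toward a constant symmetric symbol) is not clearly compatible: extending $A_j$ and $\mbf{S}$ independently need not preserve self-adjointness of $\mbf{S}P_1$, and interpolating between two microlocally symmetrizable symbols does not in general give a symmetrizable one. The paper's construction avoids this entirely by building a smooth map $\Phi:\R^{1+n}\to\Omega$ with $\Phi\equiv\mathrm{id}$ on a ball $\Omega_1\ni z_0$ and $\Phi\equiv z_0$ for large $|y|$, and setting $A_j^\sharp:=A_j\circ\Phi$, $\mbf{S}^\sharp:=\mbf{S}(\Phi(\cdot),\cdot)$; this manifestly keeps both the log-Lipschitz/H\"older regularity and the full-symmetrizer structure, with no deformation needed. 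Second, Theorem \ref{th:en_LL} gives $u^\sharp\in L^2([0,T];H^{s_1})$ with a time-dependent index, while the claimed conclusion is the isotropic statement $u\in H^{s_0}(\Omega_0\cap\{\vphi>0\})$; you do not address this. The paper closes the gap by noting $\d_tu^\sharp\in L^2(]0,T[;H^{s_1-1})$ via the equation and product rules, hence $u^\sharp\in\mc{H}^{1,s_1-1}\hookrightarrow H^{s_1}$ in H\"ormander's scale, from which restriction gives the desired $H^{s_1}(\Omega_0\cap\{t>0\})$ regularity. Your remark that $\g>1/2$ is ``genuinely used'' in balancing the loss against the characteristic speed is also off target: it is used so that the window $]1-\g,\g[$ is nonempty and, more importantly, so that the trace onto $\varSigma$ and the Green formula make sense (Lemma \ref{l:duality}).
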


\begin{rem} \label{r:sense}
Notice that, due to the low regularity of the coefficients, the meaning of $(C\!P)$ is not clear \textsl{a priori}: we will precise it in Subsection \ref{ss:sense}.
In particular, one step of the proof is devoted exactly to giving sense to the trace operator in a weak smoothness framework.
\end{rem}

For the problem $(C\!P)$ we have also \emph{local uniqueness} of a solution. From this statement, which establish propagation of zero across any space-like hypersurface, one can deduce
also further local results, for instance about finite propagation speed and domain of dependence, by use of classical arguments (see e.g. \cite{J-M-R_2005} and \cite{Rauch_2005}). 
\begin{thm} \label{t:local_u}
Let $1/2<\g<1$ and $s\in\,]1-\g,\g[\,$. Let $P$ be the operator defined in \eqref{eq:def_P}, satisfying all hypotheses from \emph{\textbf{(H-1)}} to \emph{\textbf{(H-5)}}.

If $u\in H^{s}(\Omega_>)$ satisfies $(C\!P)$ with $f=0$ and Cauchy datum
$u_0=0$, then $u\equiv0$ on a neighborhood of $z_0$ in $\Omega_\geq$.
\end{thm}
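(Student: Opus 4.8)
The plan is to reduce the local uniqueness statement to the global energy estimate of Theorem \ref{th:en_LL} by a standard convexification argument, exactly as in the companion result for wave operators in \cite{C-M}. First I would work in local coordinates near $z_0$: since the hypotheses \textbf{(H-1)}--\textbf{(H-5)} are invariant under smooth changes of variables (this invariance being established earlier in the paper for the microlocal symmetrizability assumption, and being clear for the $LL$ and H\"older conditions), I may straighten the hypersurface $\varSigma$ so that, near $z_0$, it becomes $\{t=0\}$ with $z_0$ the origin, and the direction $\nu(z_0)$ becomes the time direction $e_0$. By condition \eqref{eq:det_pos} and \textbf{(H-1)}, after multiplying $P$ by $\bigl(P_1(z,\nu(z))/i\bigr)^{-1}$ — a matrix with $LL$ entries, hence preserving all regularity assumptions and turning the $0$-th order perturbation into another $\mc{C}^\gamma$ term — the operator takes the form $\d_t + \sum_{j=1}^n A_j(z)\d_j + B(z)$ of \eqref{def:L}, and the transported full symmetrizer $\mbf S$ becomes a uniform microlocal symmetrizer in the sense of Definition \ref{d:micro_symm} which is $LL$ in $(t,x)$; the uniform positivity in the direction $\nu$ guarantees that $t=0$ is non-characteristic and that the reduced system is hyperbolic in $t$.

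Next I would set up the convexification. Following \cite{C-DG-S} and \cite{C-M}, I replace the flat hypersurface $\{t=0\}$ by a family of concave (space-like) surfaces $\{t = \delta |x|^2\}$, or more precisely introduce new coordinates $t' = t - \delta|x|^2$ so that the region $\{\varphi>0\}$ locally contains a lens-shaped domain bounded below by a strictly space-like surface and above by the slice $\{t'=0\}$, with $z_0$ on the boundary. Because $u\in H^s(\Omega_>)$ with $s\in\,]1-\gamma,\gamma[\,$ vanishes on $\varSigma$ together with $f=0$, I extend $u$ by zero below the space-like surface; the key point is that the extension $\bar u$ is still a solution of $L\bar u = 0$ in the distributional sense across that surface — this uses that the surface is space-like for $L$, that $u_{|\varSigma}=0$ so there is no jump contribution, and that the product of the $LL$ (resp.\ $\mc{C}^\gamma$) coefficients with a function in the relevant $H^s$ class is well-defined precisely in the range $|s|<\gamma$ fixed by Proposition \ref{p:Hol-Sob}. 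One then cuts off in $x$ to obtain a compactly supported datum and applies, after convexification, the global energy estimate \eqref{est:u_LL} of Theorem \ref{th:en_LL} on the strip in the $t'$ variable: with zero initial data at the bottom space-like slice and zero right-hand side, the estimate forces $\bar u\equiv 0$ on $[0,T_*']\times\R^n$, hence $u\equiv 0$ on a neighborhood of $z_0$ in $\Omega_\geq$. The constraint $s_0 > 1-\gamma$ and the smallness of the convexified time interval are what allow the loss of derivatives $\beta t$ to be absorbed while keeping all Sobolev indices in the admissible window $]1-\gamma,\gamma[$, which explains the hypothesis $1/2<\gamma<1$.

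The main obstacle I expect is the rigorous justification of the extension-by-zero step in this very low regularity setting, i.e.\ proving that $L\bar u = 0$ holds across the space-like surface when $\bar u$ is merely in a negative-order-adjacent Sobolev class and the coefficients are only $LL$ or H\"older. This is precisely the delicate "weak $=$ strong" and trace issue flagged in Remark \ref{r:sense}: one must give meaning to the restriction of $u$ (and of $A_j u$) to the space-like surface, show the trace vanishes, and check that no spurious surface distribution is created — which is where the paradifferential machinery of Section \ref{s:tools}, the product estimates of Proposition \ref{p:Hol-Sob}, and the "weak $=$ strong" Theorem \ref{th:w-s} are indispensable. Once that transmission-type statement is in hand, the remainder is the by-now classical convexification geometry plus a direct application of Theorem \ref{th:en_LL}.
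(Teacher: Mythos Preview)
Your proposal is correct and follows essentially the same route as the paper: pass to local coordinates, extend by zero across the initial surface, convexify, globalize the coefficients, and apply the energy estimate of Theorem \ref{th:en_LL} from a time slice where the solution is identically zero. The only organizational difference is that the paper extends $u$ by zero across the \emph{flat} surface $\{t=0\}$ first (isolated as Lemma \ref{l:0-prop}, which is exactly the transmission statement you flag as the main obstacle) and only afterwards convexifies via $\wtilde{t}=t+|x|^2$; with this sign the zero set of the extension becomes the open region $\{\wtilde{t}<|\wtilde{x}|^2\}$, so the subsequent global extension of $\wtilde{u}$ is trivial and no cutoff in $x$ is needed to obtain vanishing Cauchy data at $\wtilde{t}=-\delta$.
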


In Section \ref{s:local}, after rigorously justify the good formulation of the Cauchy problem $(C\!P)$, we will show the invariance
of our hypotheses with respect to smooth changes of variables. This fact will enable us to pass in $(t,x)$ coordinates: then, the proof
of Theorems \ref{t:local_e} and \ref{t:local_u} will be deduced from the global in space results. Therefore, let us focus first
on these latter properties.

\section{Tools from Littlewood-Paley theory} \label{s:tools}

We collect here some notions and results which turn out to be useful in our proof. 
First, by use of the Littlewood-Paley decomposition, we describe some properties of Sobolev spaces and of log-Lipschitz
functions. Then, we recall some notions of Paradifferential Calculus, focusing on operators whose symbol is in the log-Lipschitz class.

\subsection{Dyadic analysis of Sobolev and log-Lipschitz classes} \label{ss:L-P}

Let us first define the so called \emph{Littlewood-Paley decomposition} in $\R^N$ (for any $N\geq1$), based on a non-homogeneous dyadic
partition of unity with respect to the Fourier variable. We refer to \cite{B-C-D} (Chapter 2) and \cite{M-2008} (Chapters 4 and 5)
for the details.

So, fix a smooth radial function $\chi$ supported in the ball $B(0,2)\subset\R^N$, equal to $1$ in a neighborhood of $B(0,1)$
and such that $r\mapsto\chi(r\,e)$ is nonincreasing over $\R_+$ for all unitary vectors $e\in\R^N$. Set
$\varphi\left(\xi\right)=\chi\left(\xi\right)-\chi\left(2\xi\right)$ and $\vphi_j(\xi):=\vphi(2^{-j}\xi)$ for all $j\geq0$.

The dyadic blocks $(\Delta_j)_{j\in\Z}$ are defined by\footnote{Throughout we agree  that  $f(D)$ stands for 
the pseudo-differential operator $u\mapsto\mc{F}^{-1}\bigl(f(\xi)\,\mc{F}u(\xi)\bigr)$.}
$$
\Delta_j:=0\ \hbox{ if }\ j\leq-1,\quad\Delta_{0}:=\chi(D)\quad\hbox{and}\quad
\Delta_j:=\varphi(2^{-j}D)\ \text{ if }\  j\geq1.
$$
We  also introduce the following low frequency cut-off:
$$
S_ju\,:=\,\chi(2^{-j}D)\,u\,=\,\sum_{k\leq j}\Delta_{k}u\quad\text{for}\quad j\geq0.
$$

By use of the previous operators, for any $u\in\mc{S}'$, we have the Littlewood-Paley decomposition of $u$: namely,
the equality $u=\sum_{j}\Delta_ju$ holds true in $\mc{S}'$.

Let us recall the fundamental \emph{Bernstein's inequalities}.
  \begin{lemma} \label{l:bern}
Let  $0<r<R$.   A constant $C$ exists so that, for any nonnegative integer $k$, any couple $(p,q)$ 
in $[1,+\infty]^2$ with  $p\leq q$  and any function $u\in L^p$,  we  have, for all $\lambda>0$,
$$
\displaylines{
{\rm supp}\, \widehat u \subset   B(0,\lambda R)\quad
\Longrightarrow\quad
\|\nabla^k u\|_{L^q}\, \leq\,
 C^{k+1}\,\lambda^{k+N\left(\frac{1}{p}-\frac{1}{q}\right)}\,\|u\|_{L^p}\;;\cr
{\rm supp}\, \widehat u \subset \{\xi\in\R^N\,|\, r\lambda\leq|\xi|\leq R\lambda\}
\quad\Longrightarrow\quad C^{-k-1}\,\lambda^k\|u\|_{L^p}\,
\leq\,
\|\nabla^k u\|_{L^p}\,
\leq\,
C^{k+1} \, \lambda^k\|u\|_{L^p}\,.
}$$
\end{lemma}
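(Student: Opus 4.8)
\emph{Proof plan.} The statement is classical, and the plan is to reduce both implications to Young's convolution inequality applied to rescaled kernels, the only care being to keep the constants of the advertised exponential shape $C^{k+1}$. I would start with the first implication. Fix once and for all $\theta\in\mc{C}^\infty_c(\R^N)$ with $\theta\equiv1$ on $B(0,R)$; if $\what u$ is supported in $B(0,\lambda R)$ then $\what u=\theta(\cdot/\lambda)\,\what u$, so for each multi-index $\alpha$ with $|\alpha|=k$ one has $\partial^\alpha u=k^\alpha_\lambda*u$ with $k^\alpha_\lambda(x)=\lambda^{k+N}g^\alpha(\lambda x)$ and $g^\alpha:=\mc{F}^{-1}\bigl[(i\eta)^\alpha\theta(\eta)\bigr]$. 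Picking $m\in[1,+\infty]$ with $1/m=1-(1/p-1/q)$ (admissible since $p\le q$) and invoking Young's inequality gives
$$
\|\partial^\alpha u\|_{L^q}\,\le\,\|k^\alpha_\lambda\|_{L^m}\,\|u\|_{L^p}\,=\,\lambda^{k+N(1/p-1/q)}\,\|g^\alpha\|_{L^m}\,\|u\|_{L^p}\,.
$$
Summing over $|\alpha|=k$ then yields the first inequality, provided one controls $\|g^\alpha\|_{L^m}$ by $C^{k+1}$ uniformly in $\alpha$, a point I comment on below.

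For the second (annulus) implication, the upper bound is merely the case $p=q$ of the first inequality, since $\{r\lambda\le|\xi|\le R\lambda\}\subset B(0,\lambda R)$. For the lower bound I would exploit that on the annulus $|\xi|$ is comparable to $\lambda$, so that derivatives of order $k$ can be ``inverted'' there. Fixing $\psi\in\mc{C}^\infty_c(\R^N\setminus\{0\})$ with $\psi\equiv1$ on $\{r\le|\eta|\le R\}$ and using the multinomial identity $\sum_{|\alpha|=k}\binom{k}{\alpha}\eta^{2\alpha}=|\eta|^{2k}$, one sets $\wtilde m_\alpha(\eta):=\binom{k}{\alpha}\,i^{-k}\,\eta^\alpha\,\psi(\eta)\,|\eta|^{-2k}$ (smooth, compactly supported away from the origin), so that $\sum_{|\alpha|=k}\wtilde m_\alpha(\eta)(i\eta)^\alpha=\psi(\eta)$. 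With $m_\alpha(\xi):=\lambda^{-k}\wtilde m_\alpha(\xi/\lambda)$ and $h^\alpha:=\mc{F}^{-1}\wtilde m_\alpha$, spectral localization of $u$ gives $u=\sum_{|\alpha|=k}h^\alpha_\lambda*\partial^\alpha u$ with $h^\alpha_\lambda(x)=\lambda^{N-k}h^\alpha(\lambda x)$, and Young's inequality yields
$$
\|u\|_{L^p}\,\le\,\lambda^{-k}\sum_{|\alpha|=k}\|h^\alpha\|_{L^1}\,\|\partial^\alpha u\|_{L^p}\,\le\,C^{k+1}\,\lambda^{-k}\,\|\nabla^k u\|_{L^p}\,,
$$
which is the desired lower bound after rearranging (using that there are at most $(k+1)^N$ multi-indices of length $k$).

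The only slightly delicate point, and the one I would spend most effort on, is the uniformity in $k$ of the kernel bounds $\|g^\alpha\|_{L^m}\le C^{k+1}$ and $\|h^\alpha\|_{L^1}\le C^{k+1}$. These follow from the standard estimate $\|\mc{F}^{-1}\phi\|_{L^1}+\|\mc{F}^{-1}\phi\|_{L^\infty}\le C_N\sum_{|\beta|\le N+1}\|\partial^\beta\phi\|_{L^1}$ for $\phi\in\mc{C}^\infty_c(\R^N)$, together with the interpolation $\|\,\cdot\,\|_{L^m}\le\|\,\cdot\,\|_{L^1}^{1/m}\|\,\cdot\,\|_{L^\infty}^{1-1/m}$: since $\theta$ and $\psi$ are fixed and supported respectively in a fixed ball and a fixed annulus bounded away from the origin, each of the symbols $(i\eta)^\alpha\theta(\eta)$ and $\binom{k}{\alpha}i^{-k}\eta^\alpha\psi(\eta)|\eta|^{-2k}$, together with its derivatives up to order $N+1$, is bounded pointwise on its (fixed) support by $C^{k+1}$ — each differentiation of the monomial $\eta^\alpha$ or of the negative power $|\eta|^{-2k}$ costing at most an additional factor $O(k)$, and $N$ being fixed the finitely many resulting powers of $(k+1)$ being absorbed into $C^{k+1}$. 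No genuine obstacle arises: the content of the lemma is entirely this combination of Fourier-support localization, scaling and Young's inequality, and the remaining work is bookkeeping.
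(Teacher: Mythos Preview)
Your proof is correct and is the standard textbook argument (essentially the one in Bahouri--Chemin--Danchin, reference \cite{B-C-D} in the paper). Note, however, that the paper does not prove this lemma at all: it is stated as a recalled classical fact (``Let us recall the fundamental \emph{Bernstein's inequalities}'') with no proof given, so there is nothing to compare against beyond observing that your writeup supplies exactly the argument the cited references contain.
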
   


Let us now introduce the class of \emph{logarithmic Sobolev spaces}, which naturally come into play in the study
of hyperbolic operators with low regularity coefficients (see \cite{C-M}, \cite{C-DS-F-M_tl} and \cite{C-DS-F-M_wp}).
Let us set $\Pi(D)\,:=\,\log(2+|D|)$, i.e. its symbol is $\pi(\xi)\,:=\,\log(2+|\xi|)$.
\begin{defin} \label{d:log-H^s}
 For all $(s,\alpha)\in\R^2$, we define the space $H^{s+\alpha\log}(\R^N)$ as $\Pi^{-\alpha}H^s(\R^N)$, i.e.
$$
f\,\in\,H^{s+\alpha\log}\quad\Longleftrightarrow\quad\Pi^\alpha f\,\in\,H^s\quad\Longleftrightarrow\quad
\pi^\alpha(\xi)\left(1+|\xi|^2\right)^{s/2}\what{f}(\xi)\,\in\,L^2\,.
$$
\end{defin}
Obviously, for $\alpha=0$ one recovers the classical Sobolev space $H^s$.

We have the following dyadic characterization of these classes (see \cite{M-2008}, Proposition 4.1.11),
which generalizes the classical property for the $H^s$ scale.
\begin{prop} \label{p:log-H}
 Let $s$, $\alpha\,\in\R$. Then $u\in\mc{S}'$ belongs to the space $H^{s+\alpha\log}$ if and only if:
\begin{itemize}
 \item[(i)] for all $k\in\N$, $\Delta_ku\in L^2(\R^N)$;
\item[(ii)] set $\,\delta_k\,:=\,2^{ks}\,(1+k)^\alpha\,\|\Delta_ku\|_{L^2}$ for all $k\in\N$, the sequence
$\left(\delta_k\right)_k$ belongs to $\ell^2(\N)$.
\end{itemize}
Moreover, $\|u\|_{H^{s+\alpha\log}}\,\sim\,\left\|\left(\delta_k\right)_k\right\|_{\ell^2}$.
\end{prop}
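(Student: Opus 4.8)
The plan is to reduce everything to Plancherel's theorem together with the elementary $L^2$ Littlewood--Paley equivalence, the only genuine computation being a two-sided comparison of the Fourier multiplier $m(\xi):=\pi^\alpha(\xi)\,(1+|\xi|^2)^{s/2}$ with $2^{ks}(1+k)^\alpha$ on each dyadic shell. This is the exact logarithmic analogue of the classical dyadic description of $H^s$, and the proof is the classical one with the extra scalar weight $(1+k)^\alpha$ carried along.

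\textbf{Step 1 (weight on dyadic shells).} Adopting the convention $\varphi_0:=\chi$, so that $\Delta_k=\varphi_k(D)$ for every $k\ge 0$, I would first show that there are constants $0<c_0\le C_0$, depending only on $s$, $\alpha$ and $N$, with
$$c_0\,2^{ks}(1+k)^\alpha\;\le\;m(\xi)\;\le\;C_0\,2^{ks}(1+k)^\alpha\qquad\text{for all }\xi\in\operatorname{supp}\varphi_k,\ k\ge0.$$
Indeed, for $k\ge1$ one has $2^{k-1}\le|\xi|\le2^{k+1}$, whence $(1+|\xi|^2)^{s/2}\sim 2^{ks}$ and $\log(2+|\xi|)\sim(1+k)$ with absolute constants, while for $k=0$ both factors of $m$ are bounded above and below by positive constants; raising to the power $\alpha$ (of either sign) preserves two-sided bounds. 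This is the only place where an actual estimate is performed.

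\textbf{Step 2 (the two multipliers are comparable block by block).} Since $\Pi^\alpha$, $\langle D\rangle^s$ and $\Delta_k$ are all Fourier multipliers they commute, and $\widehat{\Delta_k(\Pi^\alpha\langle D\rangle^s u)}(\xi)=m(\xi)\,\varphi_k(\xi)\,\widehat u(\xi)$ for any $u\in\mc S'$. Combining this with Step 1 and Plancherel's theorem yields, for every $k$,
$$c_0\,\delta_k\;\le\;\bigl\|\Delta_k\bigl(\Pi^\alpha\langle D\rangle^s u\bigr)\bigr\|_{L^2}\;\le\;C_0\,\delta_k,\qquad \delta_k:=2^{ks}(1+k)^\alpha\,\|\Delta_k u\|_{L^2},$$
each side being finite precisely when the other is (here one uses that $m$ is bounded both above and below on $\operatorname{supp}\varphi_k$ to pass between $\varphi_k\widehat u\in L^2$ and $m\varphi_k\widehat u\in L^2$).

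\textbf{Step 3 (conclusion).} Finally I would invoke the standard $L^2$ Littlewood--Paley equivalence (see \cite{B-C-D}, Chapter 2, or \cite{M-2008}): a tempered distribution $v$ belongs to $L^2$ if and only if each $\Delta_k v\in L^2$ and $\sum_k\|\Delta_k v\|_{L^2}^2<\infty$, in which case $\|v\|_{L^2}^2\sim\sum_k\|\Delta_k v\|_{L^2}^2$. Applied to $v:=\Pi^\alpha\langle D\rangle^s u$ and combined with Step 2 termwise, this gives that $u\in H^{s+\alpha\log}$ (i.e. $v\in L^2$) if and only if every $\Delta_k u\in L^2$ and $(\delta_k)_k\in\ell^2(\N)$, together with
$$\|u\|_{H^{s+\alpha\log}}=\|v\|_{L^2}\;\sim\;\Bigl(\textstyle\sum_k\|\Delta_k v\|_{L^2}^2\Bigr)^{1/2}\;\sim\;\bigl\|(\delta_k)_k\bigr\|_{\ell^2(\N)},$$
which is the assertion. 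For the ``if'' direction one defines $v$ through $\widehat v:=m\,\widehat u$ (a tempered distribution, since $m$ is $\mc C^\infty$ with polynomial growth), checks $\sum_k\|\Delta_k v\|_{L^2}^2<\infty$ via Step 2 and hypothesis (ii), and then concludes $v\in L^2$ from the converse half of the equivalence. No serious obstacle arises; the only delicate points are making the comparison $\log(2+|\xi|)\sim1+k$ in Step 1 uniform down to $k=0$ and for both signs of $\alpha$, and the bookkeeping in Step 3 that lets one read off actual membership in $H^{s+\alpha\log}$, not merely the norm equivalence, from the sequence $(\delta_k)_k$.
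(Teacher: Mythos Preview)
Your proof is correct and is exactly the standard argument one would expect. The paper does not actually supply a proof of this proposition but simply cites \cite{M-2008}, Proposition 4.1.11; your write-up is essentially the argument given there (and the classical $H^s$ case with the extra logarithmic weight tracked through), so there is nothing to compare.
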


The previous proposition can be summarized by the equivalence $H^{s+\alpha\log}\,\equiv\,B^{s+\alpha\log}_{2,2}$, where,
for any $(s,\alpha)\in\R^2$ and $1\leq p,r\leq+\infty$, the \emph{non-homogeneous logarithmic Besov space}
$B^{s+\alpha\log}_{p,r}$ is the subset of tempered distributions $u$ for which
\begin{equation} \label{eq:log-Besov}
\|u\|_{B^{s+\alpha\log}_{p,r}}\,:=\,
\left\|\left(2^{js}\,(1+j)^{\alpha}\,\|\Delta_ju\|_{L^p}\right)_{j\in\N}\right\|_{\ell^r}\,<\,+\infty\,.
\end{equation}
In addition, we point out that an analogous characterization holds true also for H\"older classes: namely, for any $\gamma\in\,]0,1[\,$
one has $\mc{C}^\gamma\,\equiv\,B^{\gamma}_{\infty,\infty}\,=\,B^{\gamma+0\log}_{\infty,\infty}$.

We recall that the previous definition and properties do not depend on the choice of the cut-off functions used in a Littlewood-Paley
decomposition: in the logarithmic framework, this comes from Lemma 3.5 of \cite{C-DS-F-M_Z-sys}, which we recall here.
\begin{lemma} \label{l:log-B_ind}
 Let $\mc{C}\subset\R^d$ be a ring, $(s,\alpha)\in\R^2$ and $(p,r)\in[1,+\infty]^2$. Let $\left(u_j\right)_{j\in\N}$ be
a sequence of smooth functions such that
$$
{\rm supp}\,\what{u}_j\,\subset\,2^j\,\mc{C}\qquad\quad\mbox{ and }\qquad\quad
\left\|\left(2^{js}\,(1+j)^\alpha\,\|u_j\|_{L^p}\right)_{j\in\N}\right\|_{\ell^r}\,<\,+\infty\,.
$$

Then $u:=\sum_{j\in\N}u_j$ belongs to $B^{s+\alpha\log}_{p,r}$ and
$\|u\|_{B^{s+\alpha\log}_{p,r}}\,\leq\,C_{s,\alpha}\,\left\|\left(2^{js}\,(1+j)^\alpha\,
\|u_j\|_{L^p}\right)_{j\in\N}\right\|_{\ell^r}$.
\end{lemma}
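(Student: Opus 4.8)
The plan is to follow the classical quasi\-orthogonality argument for non\-homogeneous Besov spaces (as in \cite{B-C-D}, Lemma~2.23), the only additional point being the bookkeeping of the logarithmic weight $(1+j)^\alpha$.

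First I would check that $u:=\sum_{j\in\N}u_j$ is a well\-defined tempered distribution. Since ${\rm supp}\,\what u_j\subset 2^j\mc C$, one can fix once and for all a smooth function $\wtilde\vphi$, supported in a slightly larger ring and equal to $1$ on $\mc C$, so that $u_j=\wtilde\vphi(2^{-j}D)u_j$; then for any $\phi\in\mc S$ duality gives $|\lan u_j,\phi\ran|\le\|u_j\|_{L^p}\,\|\wtilde\vphi(2^{-j}D)\phi\|_{L^{p'}}$. The frequency localisation of $\wtilde\vphi(2^{-j}D)\phi$ around $|\xi|\sim2^j$, together with the Schwartz decay of $\phi$, yields $\|\wtilde\vphi(2^{-j}D)\phi\|_{L^{p'}}\le C_M\,2^{-jM}$ for every $M\in\N$, whereas the hypothesis forces $\|u_j\|_{L^p}\le C\,2^{-js}(1+j)^{-\alpha}$; choosing $M$ large enough makes $\sum_j|\lan u_j,\phi\ran|$ finite, so the partial sums converge in $\mc S'$ and $u$ is meaningful.

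Next comes the quasi\-orthogonality. Writing $\mc C=\{r\le|\xi|\le R\}$ and recalling that ${\rm supp}\,\vphi_k\subset\{2^{k-1}\le|\xi|\le2^{k+1}\}$ (resp. ${\rm supp}\,\chi\subset B(0,2)$), the Fourier supports of $\Delta_ku$ and of $u_j$ are disjoint unless $|j-k|\le N_0$, for an integer $N_0=N_0(r,R)$ depending only on the ring $\mc C$. Hence $\Delta_k u_j=0$ for $|j-k|>N_0$ and, by the previous step, $\Delta_k u=\sum_{|j-k|\le N_0}\Delta_k u_j$. Since each $\Delta_k$ is convolution against an $L^1$ kernel of norm independent of $k$, Young's inequality gives $\|\Delta_k u\|_{L^p}\le C\sum_{|j-k|\le N_0}\|u_j\|_{L^p}$. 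Multiplying by $2^{ks}(1+k)^\alpha$ and using the elementary bounds $2^{ks}\le2^{|s|N_0}\,2^{js}$ and $(1+k)^\alpha\le(1+N_0)^{|\alpha|}(1+j)^\alpha$ — both valid for $j,k\ge0$ with $|j-k|\le N_0$, the latter simply because $1+k\le(1+N_0)(1+j)$ and, symmetrically, $1+j\le(1+N_0)(1+k)$ — we obtain
$$
2^{ks}(1+k)^\alpha\,\|\Delta_k u\|_{L^p}\ \le\ C'\!\!\sum_{|j-k|\le N_0}\!\!2^{js}(1+j)^\alpha\,\|u_j\|_{L^p}\,.
$$
Taking the $\ell^r(\N)$ norm in $k$ and applying the discrete Young inequality (convolution with the finitely supported sequence $\mathbf 1_{\{|m|\le N_0\}}$) bounds the right\-hand side by $C'(2N_0+1)\,\big\|\big(2^{js}(1+j)^\alpha\|u_j\|_{L^p}\big)_{j}\big\|_{\ell^r}$; by the definition \eqref{eq:log-Besov} this is precisely the asserted inequality, and in particular it shows $u\in B^{s+\alpha\log}_{p,r}$.

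I do not expect a genuine obstacle here: the argument is the standard one, and the only thing to watch is that the logarithmic factors $(1+k)^\alpha$ and $(1+j)^\alpha$ stay comparable, uniformly in $j,k\ge0$, as long as $|j-k|\le N_0$ — which is immediate — so that the weighted estimate reduces to the usual one.
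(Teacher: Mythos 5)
Your proof is correct and is exactly the standard quasi-orthogonality argument for non-homogeneous Besov spaces (convergence in $\mc S'$, band-limited overlap $\Delta_k u_j=0$ for $|j-k|>N_0$, Young's inequality for $\Delta_k$, comparability of the weights $2^{ks}(1+k)^\alpha$ and $2^{js}(1+j)^\alpha$ when $|j-k|\le N_0$, and discrete Young in $k$), with the logarithmic factor handled by the elementary observation that $(1+k)/(1+j)\in[(1+N_0)^{-1},1+N_0]$ on the relevant band. This is precisely the route taken in the cited source (Lemma~3.5 of \cite{C-DS-F-M_Z-sys}), which in turn mirrors the classical Lemma~2.23 of \cite{B-C-D}; the paper under review merely quotes the statement.
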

This fact will be used freely throughout the paper.

Now we mention a couple of results which will be useful in the sequel. They are classical (see \cite{B-C-D},
Chapter 2), and their extension to the logarithmic framework is proved in \cite{C-DS-F-M_Z-sys}.
\begin{lemma} \label{l:log-S_j}
Fix $(s,\alpha)\in\R^2$ and let $u\in\mc{S}'$ given.
\begin{itemize}
\item[(i)] If the sequence $\bigl(2^{js}\,(1+j)^\alpha\,\|S_ju\|_{L^2}\bigr)_{j\in\N}$
belongs to $\ell^2$, then $u\in H^{s+\alpha\log}$ and
$$
\|u\|_{H^{s+\alpha\log}}\;\leq\;C\,\left\|\bigl(2^{js}\,(1+j)^\alpha\,\|S_ju\|_{L^2}\bigr)_{j\in\N}\right\|_{\ell^2}\,,
$$
for some constant $C>0$ depending only on $s$ and $\alpha$, but not on $u$.
\item[(ii)] Suppose $u\in H^{s+\alpha\log}$, with $s<0$. Then the sequence
$\bigl(2^{js}\,(1+j)^\alpha\,\|S_ju\|_{L^2}\bigr)_{j\in\N}\,\in\,\ell^2$, and
$$
\left\|\bigl(2^{js}\,(1+j)^\alpha\,\|S_ju\|_{L^2}\bigr)_{j\in\N}\right\|_{\ell^2}\;\leq\;
\wtilde{C}\,\|u\|_{H^{s+\alpha\log}}\,,
$$
for some constant $\wtilde{C}>0$ depending only on $s$ and $\alpha$.
\end{itemize}.
\end{lemma}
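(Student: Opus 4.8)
The plan is to reduce both assertions to the dyadic characterisation of $H^{s+\alpha\log}$ provided by Proposition \ref{p:log-H}, exactly along the lines of the classical argument for the $H^s$ scale.

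For part (i), I would start from the telescoping identities $\Delta_0u=S_0u$ and $\Delta_ju=S_ju-S_{j-1}u$ for $j\geq1$. Since each $S_ju$ belongs to $L^2$, so does each $\Delta_ju$, which yields item (i) of Proposition \ref{p:log-H}; moreover the triangle inequality gives $\|\Delta_ju\|_{L^2}\leq\|S_ju\|_{L^2}+\|S_{j-1}u\|_{L^2}$. Because the weights $2^{js}(1+j)^\alpha$ and $2^{(j-1)s}(1+(j-1))^\alpha$ are comparable, up to a multiplicative constant depending only on $s$ and $\alpha$ (one has $(1+1/j)^\alpha\leq 2^{|\alpha|}$ for $j\geq1$), I conclude that $\bigl(2^{js}(1+j)^\alpha\|\Delta_ju\|_{L^2}\bigr)_{j\in\N}$ lies in $\ell^2$ with norm controlled by $C\,\bigl\|\bigl(2^{js}(1+j)^\alpha\|S_ju\|_{L^2}\bigr)_j\bigr\|_{\ell^2}$. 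Proposition \ref{p:log-H} then gives $u\in H^{s+\alpha\log}$ together with the stated inequality. No sign condition on $s$ is needed in this direction.

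For part (ii), I would write $S_ju=\sum_{k\leq j}\Delta_ku$ and set $c_k:=2^{ks}(1+k)^\alpha\|\Delta_ku\|_{L^2}$, which belongs to $\ell^2$ with $\|(c_k)_k\|_{\ell^2}\leq C\,\|u\|_{H^{s+\alpha\log}}$ by Proposition \ref{p:log-H}. The point is to obtain a convolution estimate: from the triangle inequality,
$$
2^{js}(1+j)^\alpha\|S_ju\|_{L^2}\,\leq\,\sum_{k\leq j}\frac{2^{js}(1+j)^\alpha}{2^{ks}(1+k)^\alpha}\,c_k\,,
$$
and for $0\leq k\leq j$ the weight ratio is at most $2^{(j-k)s}(1+(j-k))^{|\alpha|}$: here I use the sub-multiplicativity $1+j\leq(1+k)(1+(j-k))$ to control the logarithmic factor, and crucially $s<0$ to get $2^{(j-k)s}\leq1$. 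This produces $2^{js}(1+j)^\alpha\|S_ju\|_{L^2}\leq\sum_{l\geq0}\theta_l\,c_{j-l}$ with $\theta_l:=2^{ls}(1+l)^{|\alpha|}$; since $s<0$ makes $\theta\in\ell^1(\N)$ with $\|\theta\|_{\ell^1}$ depending only on $s,\alpha$, Young's inequality $\ell^1\ast\ell^2\hookrightarrow\ell^2$ closes the estimate and gives the constant $\wtilde{C}$.

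The argument is essentially routine. The only step that needs a little care is the treatment of the weight ratio $(1+j)^\alpha/(1+k)^\alpha$ when $\alpha>0$ (for $\alpha\leq0$ it is trivially bounded by $1$), which is precisely where the sub-multiplicativity inequality turns the weighted sum into an honest convolution. The sign restriction $s<0$ in part (ii) is genuinely used — both to bound $2^{(j-k)s}$ and to ensure $\ell^1$-summability of $\theta$ — and it is the reason the statement is not symmetric between the two directions.
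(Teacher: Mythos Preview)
Your argument is correct. Note that the paper does not actually give its own proof of this lemma: it simply states that the result is classical (referring to \cite{B-C-D}) and that the extension to the logarithmic setting is carried out in \cite{C-DS-F-M_Z-sys}. The proof you wrote is precisely the standard one --- telescoping $\Delta_j=S_j-S_{j-1}$ for part (i), and a discrete Young inequality with kernel $\theta_l=2^{ls}(1+l)^{|\alpha|}\in\ell^1$ for part (ii) --- so it matches what one finds in those references.
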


Observe that, in general, the second property fails in the endpoint case $s=0$. Indeed, for $s=0$ one can only infer, for any $\alpha\leq0$,
$$
\left\|\biggl((1+j)^\alpha\,\|S_ju\|_{L^2}\biggr)_{j\in\N}\right\|_{\ell^\infty}\;\leq\;\wtilde{C}\,\|u\|_{B^{0+\alpha\log}_{2,1}}\,.
$$

The second result we want to mention ia a sort of ``dual'' version of the previous lemma.
\begin{lemma} \label{l:log-ball}
 Let $\mc{B}$ be a ball of $\R^N$ and take $s>0$ and $\alpha\in\R$.
Let $\left(u_j\right)_{j\in\N}$ be a sequence of smooth functions such that
$$
{\rm supp}\,\what{u}_j\,\subset\,2^j\mc{B}\qquad\mbox{ and }\qquad
\bigl(2^{js}\,(1+j)^\alpha\,\left\|u_j\right\|_{L^2}\bigr)_{j\in\N}\,\in\,\ell^2\,.
$$

Then the function $\,u\,:=\,\sum_{j\in\N}u_j\,$ belongs to the space $H^{s+\alpha\log}$, and there exists a constant
$C$, depending only on $s$ and $\alpha$, such that
$$
\|u\|_{H^{s+\alpha\log}}\,\leq\,C\,\left\|\left(2^{js}\,(1+j)^\alpha\,\left\|u_j\right\|_{L^2}\right)_{j\in\N}\right\|_{\ell^2}\,.
$$
\end{lemma}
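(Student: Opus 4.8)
The strategy is the usual "almost orthogonality" argument adapted to the logarithmic Besov scale, which reduces the estimate for $u=\sum_j u_j$ to the already established dyadic characterization in Proposition \ref{p:log-H}. First I would fix a slightly larger ring $\mc{C}$ (together with a ball if needed) such that, applying a fixed dyadic block $\Delta_k$ to the function $u_j$ whose spectrum lies in $2^j\mc{B}$, one has $\Delta_k u_j\equiv 0$ unless $2^j\leq C\,2^k$, i.e. $j\leq k+N_0$ for some fixed integer $N_0$ depending only on $\mc{B}$ and on the support of $\vphi$. (Here the hypothesis $s>0$ is what will make this one-sided truncation summable; there is no lower bound on $j-k$ because $\mc{B}$ is a ball, not a ring, so low-frequency pieces of all the $u_j$ may contribute to a given $\Delta_k u$.) Then $\Delta_k u=\sum_{j\geq k-N_0}\Delta_k u_j$, and by the boundedness of $\Delta_k$ on $L^2$ (uniformly in $k$),
\begin{equation*}
\|\Delta_k u\|_{L^2}\,\leq\,C\sum_{j\geq k-N_0}\|u_j\|_{L^2}\,.
\end{equation*}

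Next I would multiply by the weight $2^{ks}(1+k)^\alpha$ and estimate the $\ell^2$ norm in $k$. Writing $c_j:=2^{js}(1+j)^\alpha\|u_j\|_{L^2}$, which by hypothesis is in $\ell^2(\N)$, we have
\begin{equation*}
2^{ks}(1+k)^\alpha\|\Delta_k u\|_{L^2}\,\leq\,C\sum_{j\geq k-N_0}2^{(k-j)s}\,\frac{(1+k)^\alpha}{(1+j)^\alpha}\,c_j\,.
\end{equation*}
Since $j\geq k-N_0$, the ratio $(1+k)^\alpha/(1+j)^\alpha$ is bounded by a constant depending only on $\alpha$ and $N_0$ (it is $\leq(1+N_0)^{|\alpha|}$ times $1$ when $\alpha\geq 0$, and similarly in the other case). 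Hence the right-hand side is bounded by $C\sum_{j}\theta_{k-j}\,c_j$ where $\theta_\ell:=2^{\ell s}\mathbf 1_{\{\ell\leq N_0\}}$ is a summable sequence precisely because $s>0$. By Young's inequality for convolutions, $\|(2^{ks}(1+k)^\alpha\|\Delta_k u\|_{L^2})_k\|_{\ell^2}\leq C\|\theta\|_{\ell^1}\|(c_j)_j\|_{\ell^2}$. Each $\Delta_k u\in L^2$ since the sum defining it is finite in $\mc{S}'$ and converges in $L^2$ by the same computation, so condition (i) of Proposition \ref{p:log-H} holds; condition (ii) is exactly what we just proved. Applying that proposition (and the equivalence $H^{s+\alpha\log}\equiv B^{s+\alpha\log}_{2,2}$) gives $u\in H^{s+\alpha\log}$ with $\|u\|_{H^{s+\alpha\log}}\leq C\|(c_j)_j\|_{\ell^2}$, which is the claim.

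The only genuinely delicate point is the treatment of the logarithmic weight $(1+k)^\alpha$ for negative $\alpha$: one must be sure that replacing $(1+j)^{-|\alpha|}$ by $(1+k)^{-|\alpha|}$ in the tail $j\geq k-N_0$ costs only a fixed multiplicative constant and does not destroy the convolution structure. Because the index shift $N_0$ is a fixed finite number, this is harmless, and the geometric decay $2^{(k-j)s}$ coming from $s>0$ dominates everything. I would also note that convergence of $\sum_j u_j$ in $\mc{S}'$ (hence the meaning of $u$) follows a posteriori, or can be checked directly from the $\ell^2$ bound on $(c_j)$ together with $s>0$. This lemma is the natural "$s>0$, spectra in balls" counterpart of Lemma \ref{l:log-B_ind} (where the spectra sit in rings and no sign condition on $s$ is needed), and its proof is the standard one, merely bookkeeping the extra $(1+j)^\alpha$ factor.
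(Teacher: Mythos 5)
Your overall strategy---one-sided spectral truncation plus Young's inequality for convolutions, reducing to the dyadic characterization of Proposition \ref{p:log-H}---is the right one, but your argument contains a genuine error. The claim that, over the range $j\geq k-N_0$, the ratio $(1+k)^\alpha/(1+j)^\alpha$ is bounded by a constant depending only on $\alpha$ and $N_0$ is false when $\alpha<0$: in that case it equals $\bigl((1+j)/(1+k)\bigr)^{|\alpha|}$, and since there is no upper bound on $j$, the ratio is unbounded for $j\gg k$. Consequently your proposed kernel $\theta_\ell=2^{\ell s}\mathbf{1}_{\{\ell\leq N_0\}}$ fails to dominate the summand, and the reassurance that ``replacing $(1+j)^{-|\alpha|}$ by $(1+k)^{-|\alpha|}$ costs only a fixed constant because the shift $N_0$ is finite'' is exactly where the argument breaks: the finite shift controls the boundary $j=k-N_0$, not the unbounded tail $j\to\infty$.

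The repair is along the lines you hint at with ``the geometric decay dominates everything'': writing $\ell=k-j\leq N_0$, one has $(1+j)/(1+k)\leq C(1+|\ell|)$ uniformly in $k$, so the correct majorizing kernel is $\theta_\ell=2^{\ell s}(1+|\ell|)^{|\alpha|}\mathbf{1}_{\{\ell\leq N_0\}}$; this is still in $\ell^1$ precisely because $s>0$ makes $2^{\ell s}$ beat the polynomial $(1+|\ell|)^{|\alpha|}$ as $\ell\to-\infty$, and Young's inequality then gives the claimed bound. Also note a sign slip in the opening sentence: $\Delta_k u_j\equiv 0$ unless $2^j\geq c\,2^k$, i.e.\ $j\geq k-N_0$, not $j\leq k+N_0$ as you first write; your very next line $\Delta_k u=\sum_{j\geq k-N_0}\Delta_k u_j$ and the parenthetical about there being no lower bound on $j-k$ are the correct version, but as written the two statements contradict each other.
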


Once again, the previous statement is not true in the endpoint case $s=0$: then, one can just infer, for any $\alpha\geq0$,
$$
\|u\|_{B^{0+\alpha\log}_{2,\infty}}\,\leq\,C\,\left\|\biggl((1+j)^\alpha\,\left\|u_j\right\|_{L^p}\biggr)_{j\in\N}\right\|_{\ell^1}\,.
$$

We now turn our attention to the study of the class of \emph{log-Lipschitz functions}. We have given the general definition
in Definition \ref{d:LL}; now, we restrict to the case $\Omega=\R^N$, for some $N\geq1$.

Let us recall some properties which can be deduced by use of dyadic decomposition
(see \cite{C-L} and \cite{C-M} for the proof), and which are true in any dimension $N\geq1$.
\begin{prop} \label{p:dyadic-LL}
 There exists a positive constant $C$ such that, for all $a\in LL(\R^N)$ and all integers $k\geq0$, we have
\begin{eqnarray*}
 \left\|\Delta_k a\right\|_{L^\infty} & \leq & C\,(k+1)\,2^{-k}\,\|a\|_{LL} \\ 
 \left\|a\,-\,S_k a\right\|_{L^\infty} & \leq & C\,(k+1)\,2^{-k}\,\|a\|_{LL} \\ 
\left\|S_k a\right\|_{\rm Lip}\,:=\,\left\|S_k a\right\|_{L^\infty}\,+\,\left\|\nabla S_k a\right\|_{L^\infty} & \leq &
C\,(k+1)\,\|a\|_{LL}\,. 
\end{eqnarray*}
\end{prop}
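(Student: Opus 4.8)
The final statement to prove is Proposition~\ref{p:dyadic-LL}, which collects three dyadic estimates for log-Lipschitz functions. Here is how I would approach it.

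\medskip

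\textbf{Plan of proof.} The strategy is standard Littlewood-Paley analysis, exploiting that for a log-Lipschitz function the modulus of continuity $\omega(r)=r\log(1+1/r)$ controls both the size of high-frequency blocks and the growth of gradients of low-frequency truncations. For the first estimate, I would write $\Delta_k a = \varphi(2^{-k}D)a$ with convolution kernel $h_k(x) = 2^{kN}h(2^k x)$, where $h=\mc{F}^{-1}\varphi$ is a Schwartz function with $\int h = \widehat h(0) = \varphi(0) = 0$ (since $\varphi$ is supported in a ring away from the origin). Using this cancellation,
\begin{equation*}
\Delta_k a(x) \,=\, \int h_k(y)\,a(x-y)\,dy \,=\, \int h_k(y)\,\bigl(a(x-y)-a(x)\bigr)\,dy\,,
\end{equation*}
so that $|\Delta_k a(x)| \leq \int |h_k(y)|\,|a|_{LL}\,\omega(|y|)\,dy$ for $|y|$ small, plus a harmless contribution from $|y|\geq 1$ bounded by $\|a\|_{L^\infty}$ times the rapidly-decaying tail of $h_k$. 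Rescaling $y\mapsto 2^{-k}y$ gives $\int |h(y)|\,\omega(2^{-k}|y|)\,dy$; since $\omega(2^{-k}|y|) \leq C\,2^{-k}(k+1)\,|y|\,\log(1+|y|)$ for $k\geq 0$ (splitting $\log(1+2^k/|y|) \lesssim \log(2^{k}) + \log(1+1/|y|)$ on the relevant ranges, hence $\lesssim (k+1)\log(1+|y|)$ up to the polynomial factor absorbed by the Schwartz decay of $h$), the integral converges and yields the bound $C(k+1)2^{-k}\|a\|_{LL}$.

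\medskip

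\textbf{The second and third estimates.} The second inequality follows by summation: $a - S_k a = \sum_{j>k}\Delta_j a$ in $L^\infty$ (the series converges by the first estimate), so $\|a-S_k a\|_{L^\infty} \leq C\sum_{j>k}(j+1)2^{-j}\|a\|_{LL} \leq C(k+1)2^{-k}\|a\|_{LL}$, using $\sum_{j>k}(j+1)2^{-j} \sim (k+1)2^{-k}$. For the gradient estimate in the third inequality, $\|S_k a\|_{L^\infty}\leq \|a\|_{L^\infty}$ is immediate (up to a constant from the cut-off), and for $\nabla S_k a = \chi(2^{-k}D)\nabla a$ I would again use the kernel representation with the cancellation $\int \nabla(\chi(2^{-k}D))\text{-kernel} = 0$: writing $\nabla S_k a(x) = \int (\nabla g_k)(y)\bigl(a(x-y)-a(x)\bigr)dy$ with $g_k(x)=2^{kN}g(2^kx)$, $g=\mc{F}^{-1}\chi$, the same rescaling argument gives $\int 2^k|(\nabla g)(y)|\,|a|_{LL}\,\omega(2^{-k}|y|)\,dy \leq C(k+1)\|a\|_{LL}$ (the extra factor $2^k$ from differentiating cancels against the $2^{-k}$ in $\omega$). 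Alternatively, one can use Bernstein's inequality (Lemma~\ref{l:bern}): $\|\nabla S_k a\|_{L^\infty} \leq \|\nabla\Delta_0 a\|_{L^\infty} + \sum_{1\leq j\leq k}\|\nabla\Delta_j a\|_{L^\infty} \leq C\|a\|_{L^\infty} + \sum_{1\leq j\leq k}C\,2^{j}\|\Delta_j a\|_{L^\infty} \leq C\|a\|_{L^\infty} + \sum_{j\leq k}C(j+1)\|a\|_{LL} \leq C(k+1)\|a\|_{LL}$, which is cleaner.

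\medskip

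\textbf{Main obstacle.} The only genuinely delicate point is the bookkeeping in the first estimate: one must check that $\int |h(y)|\,\omega(2^{-k}|y|)\,dy$ really produces exactly the factor $(k+1)2^{-k}$ and not something worse, and this requires carefully splitting the integral into the region $|y|\leq 2^k$ (where $\log(1+2^k/|y|) \leq \log(2^{k+1}) \leq C(k+1)$, so $\omega(2^{-k}|y|) \leq C(k+1)2^{-k}|y|$) and the region $|y|>2^k$ (where $\omega(2^{-k}|y|) \leq 2^{-k}|y|\log 2 \leq C\,2^{-k}|y|$ and, moreover, the Schwartz decay of $h$ kills the polynomial growth). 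In both regions the rapid decay of $h$ makes the integral finite uniformly in $k$, so the constant is as claimed. Once this estimate is in hand, the rest is routine summation, so I would expect the write-up to be short, essentially deferring to the cited references \cite{C-L} and \cite{C-M} for the details.
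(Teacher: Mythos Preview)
Your overall strategy is the standard one (the paper itself gives no proof and simply defers to \cite{C-L} and \cite{C-M}), and the direct kernel arguments you sketch for the first and third inequalities are correct. Two points need fixing, though.

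First, your ``cleaner'' Bernstein alternative for the gradient bound does \emph{not} give the stated estimate. You write
\[
\sum_{1\leq j\leq k} C\,2^{j}\,\|\Delta_j a\|_{L^\infty}\;\leq\;\sum_{j\leq k} C\,(j+1)\,\|a\|_{LL}\;\leq\;C\,(k+1)\,\|a\|_{LL}\,,
\]
but $\sum_{j=0}^{k}(j+1)=(k+1)(k+2)/2\sim(k+1)^2$, so this route only yields $\|\nabla S_k a\|_{L^\infty}\leq C(k+1)^2\|a\|_{LL}$, which is too weak. The direct kernel estimate using $\int\nabla g_k=0$ is not just an alternative: it is essential, because it exploits cancellation among the pieces $\nabla\Delta_j a$ that is lost when you sum their individual $L^\infty$ norms. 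Drop the Bernstein route and keep only the kernel argument.

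Second, in your ``Main obstacle'' paragraph you claim $\log(1+2^k/|y|)\leq\log(2^{k+1})$ on the whole region $|y|\leq 2^k$; this is false for small $|y|$, where the left side blows up like $\log(1/|y|)$. The correct bound is the two-term one you already wrote earlier,
\[
\log\bigl(1+2^k/|y|\bigr)\;\leq\;C\bigl((k+1)+\log(1+1/|y|)\bigr)\,,
\]
and then the second contribution is handled by the finiteness of $\int|h(y)|\,|y|\,\log(1+1/|y|)\,dy$ (Schwartz decay beats the mild logarithmic singularity at the origin). Once you make the splitting consistent with this, the argument goes through as you intended.
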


\begin{rem} \label{r:LL_char}
By Proposition 3.3 of \cite{C-L}, the last property is a characterization of the space $LL$.
\end{rem}

We conclude this part by showing continuity propertis of multiplication of Sobolev distributions by H\"older-type functions.
\begin{prop} \label{p:Hol-Sob}
Let $b\in B^{\g+\vrho\log}_{\infty,\infty}$, where $\g>0$ and $\vrho\in\R$, or $\g=0$ and $\vrho>1$. Then the multiplication operator
$\;u\,\mapsto\,b\,u\;$ is a continuous self-map of $H^{s+\alpha\log}(\R^N)$ if:
\begin{itemize}
 \item $|s|<\g$, no matter the value of $\alpha\in\R$;
 \item $s=\g$ and $\alpha<\vrho-1/2$, or $s=-\g$ and $\alpha>1/2-\vrho$.
\end{itemize}
\end{prop}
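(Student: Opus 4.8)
The plan is to use Bony's paradifferential decomposition, writing the product as
\[
b\,u\;=\;T_b u\,+\,T_u b\,+\,R(b,u)\,,
\]
where $T_b u=\sum_k S_{k-2}b\,\Delta_k u$ is the paraproduct, $T_u b$ the symmetric paraproduct, and $R(b,u)=\sum_{|k-l|\le 2}\Delta_k b\,\Delta_l u$ the remainder. I would estimate each of the three pieces separately, using in an essential way the dyadic characterisation of $H^{s+\alpha\log}$ from Proposition \ref{p:log-H} (i.e. the identification with $B^{s+\alpha\log}_{2,2}$), Lemma \ref{l:log-B_ind} to reassemble spectrally localised pieces, and Lemma \ref{l:log-ball} for the remainder term. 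The key point throughout is that the logarithmic weight $(1+j)^\alpha$ is \emph{slowly varying}: for $|k-l|$ bounded one has $(1+k)^\alpha\sim(1+l)^\alpha$, so in the two ``easy'' pieces the $\alpha$-exponent simply rides along and never affects convergence; it only becomes delicate in the endpoint cases $s=\pm\g$.

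First, for $T_b u$: the block $S_{k-2}b\,\Delta_k u$ is spectrally supported in a ring $2^k\mc C$, and $\|S_{k-2}b\|_{L^\infty}\lesssim \|b\|_{B^{\g+\vrho\log}_{\infty,\infty}}$ (here one uses $\g>0$, or $\g=0$ with $\vrho>1$ via $\sum_k (1+k)^{-\vrho}<\infty$, to sum the low-frequency part of $b$). Hence $2^{ks}(1+k)^\alpha\|S_{k-2}b\,\Delta_k u\|_{L^2}\lesssim \|b\|\cdot 2^{ks}(1+k)^\alpha\|\Delta_k u\|_{L^2}$, which is $\ell^2$ in $k$ since $u\in H^{s+\alpha\log}$; Lemma \ref{l:log-B_ind} then gives $T_b u\in H^{s+\alpha\log}$. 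This step needs no restriction on $s$ or $\alpha$. Next, for $R(b,u)$: here $\Delta_k b\,\Delta_l u$ with $|k-l|\le 2$ is supported in a \emph{ball} of radius $\sim 2^k$, not a ring, so one must invoke Lemma \ref{l:log-ball}, which requires $s>0$ — but by the symmetry of the statement one may assume $s\ge 0$ (the case $s<0$ follows by duality, trading the roles in the paraproduct, or is handled directly since then $T_u b$ is the problematic term instead). Using $\|\Delta_k b\|_{L^\infty}\lesssim 2^{-k\g}(1+k)^{-\vrho}\|b\|$ one bounds $2^{ls}(1+l)^\alpha\|\Delta_k b\,\Delta_l u\|_{L^2}\lesssim \|b\|\,2^{-k\g}(1+k)^{-\vrho}\cdot 2^{ls}(1+l)^\alpha\|\Delta_l u\|_{L^2}$; summing over $|k-l|\le 2$ and using $s\le\g$ (with the weight slowly varying) puts this in $H^{s+\alpha\log}$ when $s<\g$, and at $s=\g$ one picks up the borderline series $\sum(1+k)^{2(\alpha-\vrho)}$-type condition forcing $\alpha<\vrho-1/2$.

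The remaining and most delicate piece is $T_u b=\sum_k S_{k-2}u\,\Delta_k b$, which carries the regularity of $b$ and is the one governing the upper constraint $s<\g$. The term $S_{k-2}u\,\Delta_k b$ is again ring-supported at frequency $2^k$, so Lemma \ref{l:log-B_ind} applies with no sign restriction on $s$. One estimates $\|S_{k-2}u\|_{L^2}\lesssim 2^{k(\,\cdot\,)}$ using Bernstein and the membership $u\in H^{s+\alpha\log}$ — more precisely, when $s<0$ one uses Lemma \ref{l:log-S_j}(ii) to get $\|S_{k-2}u\|_{L^2}\lesssim 2^{-ks}(1+k)^{-\alpha}\,c_k\,\|u\|$ with $(c_k)\in\ell^2$, while for $0\le s<\g$ one instead sums the high frequencies of $u$ and writes $\|S_{k-2}u\|_{L^2}\lesssim \sum_{l\le k-2} 2^{-ls}(1+l)^{-\alpha}\,\delta_l\,2^{ls}$... — pairing this with $\|\Delta_k b\|_{L^\infty}\lesssim 2^{-k\g}(1+k)^{-\vrho}\|b\|$ yields a term of the form $2^{k(s-\g)}$ times a slowly-varying log factor times a convolution of $\ell^2$ sequences, hence summable precisely when $s<\g$, with the endpoint $s=\g$ again demanding $\alpha<\vrho-1/2$ to kill the logarithmic divergence. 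The symmetric endpoint $s=-\g$, $\alpha>1/2-\vrho$ is obtained by the duality $H^{-\g-\alpha\log}=(H^{\g+\alpha\log})^*$ together with the fact that multiplication by $b$ is (formally) self-adjoint, so boundedness on $H^{\g+(\text{something})\log}$ transfers to $H^{-\g+(\text{something})\log}$.

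I expect the main obstacle to be the careful bookkeeping of the logarithmic weights in $T_u b$ and in the remainder at the endpoints $s=\pm\g$: one must track exactly how the exponents $\vrho$ (from $b$) and $\alpha$ (from $u$, and from the target space) combine across a convolution of two sequences indexed by dyadic scales, and verify that the resulting series $\sum_k (1+k)^{2(\alpha-\vrho)}$ — or its Schur-test analogue — converges iff $\alpha<\vrho-1/2$. Away from the endpoints, i.e. for $|s|<\g$, the gain of a genuine power $2^{k(s-\g)}$ or $2^{ks}$ makes every sum absolutely convergent and the value of $\alpha$ is irrelevant, which is exactly the dichotomy asserted in the statement.
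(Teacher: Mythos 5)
Your proposal follows the same overall route as the paper (Bony decomposition, $T_bu$ harmless, $T_ub$ and $R(b,u)$ estimated via the dyadic characterisation of $H^{s+\alpha\log}$ and Lemmas \ref{l:log-B_ind}, \ref{l:log-ball}, \ref{l:log-S_j}), but with two organisational deviations. First, for $s>0$ the paper does not split off $R(b,u)$ at all: it treats the \emph{combined} operator $T'_ub=T_ub+R(b,u)$, whose building blocks $S_{j+3}u\,\Delta_jb$ are ball-supported, in a single application of Lemma \ref{l:log-ball} using only $\|S_{j+3}u\|_{L^2}\leq\|u\|_{L^2}$. Splitting $T_ub$ from $R(b,u)$ as you propose is not wrong, but it is redundant for $s>0$ and forces you to justify ring-support of $S_{k-2}u\,\Delta_kb$, which with your indexing actually fails (the product can reach frequency zero); you would need the paper's convention $S_{j-3}$ or wider to get a ring. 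Second, for $s<0$ you invoke duality in place of the paper's direct estimate; this is legitimate since $\bigl(H^{s+\alpha\log}\bigr)^*=H^{-s-\alpha\log}$ and multiplication by $b$ is formally self-adjoint, and it does reproduce the symmetric endpoint condition $\alpha>1/2-\vrho$ at $s=-\g$.

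There is, however, a genuine gap at $s=0$. You assume $s\geq 0$ and point at Lemma \ref{l:log-ball} for the remainder, but you also correctly note that Lemma \ref{l:log-ball} \emph{requires $s>0$}; and the duality step only transfers the cases $s>0$ to $s<0$, leaving $s=0$ untouched (it is self-dual). In your plan the case $s=0$ is therefore not actually proved. It is not a hard hole to fill — when $\g>0$ the factor $2^{-\g\nu}$ in $\|\Delta_\nu b\|_{L^\infty}$ supplies geometric decay that makes the remainder series converge by Cauchy--Schwarz even without Lemma \ref{l:log-ball}, and when $\g=0$ the hypothesis $\vrho>1$ plays that role — but you should say so explicitly. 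The paper avoids the problem entirely by bundling $s=0$ into its $s\leq 0$ branch, where it uses the finer splitting $T_ub+R(b,u)$, estimates $T_ub$ via Lemma \ref{l:log-S_j}(ii) (which applies since $s-m<0$ is replaced by the $S_{j-3}u$ estimate at $s\leq0$, with the endpoint $s=0$ handled by the extra decay from $\g$ or $\vrho$), and controls $R(b,u)$ by a direct Cauchy--Schwarz on the dyadic tail.
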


\begin{proof}
We use Bony's paraproduct decomposition (see \cite{Bony}, \cite{B-C-D} and \cite{M-2008}) to write
$$
b\,u\,=\,T_bu\,+\,T_ub\,+\,R(b,u)\,=\,T_bu\,+\,T'_ub\,,
$$
where the previous operators are defined by the formulas
\begin{equation} \label{eq:paraprod}
T_bu\,=\,\sum_jS_{j-3}b\,\Delta_ju\,,\qquad R(b,u)\,=\,\sum_j\sum_{|j-k|\leq3}\Delta_jb\,\Delta_ku\,\qquad
T'_ua\,=\,\sum_jS_{j+3}u\,\Delta_jb\,.
\end{equation}

We remark that the conditions on $\g$ and $\vrho$ imply the chain of embeddings
$B^{\g+\vrho\log}_{\infty,\infty}\,\hookrightarrow\,B^{0}_{\infty,1}\,\hookrightarrow\,L^\infty$.
Then, by classical properties of paraproduct, we immediately have that $T_bu\,\in\,H^{s+\alpha\log}$, with the estimate
$\|T_bu\|_{H^{s+\alpha\log}}\,\leq\,C\,\|b\|_{L^\infty}\,\|u\|_{H^{s+\alpha\log}}$.

Let us now focus on the case $s>0$ (and then $\g>0$), and let us consider the operator $T'_ub$. For any $j\in\N$,
using Bernstein inequalities and definition \eqref{eq:log-Besov}, we deduce
$$ 
\left\|S_{j+3}u\,\Delta_jb\right\|_{L^2}\,\leq\,\left\|S_{j+3}u\right\|_{L^2}\,\left\|\Delta_jb\right\|_{L^\infty}\,\leq\,
C\,2^{-j\g}\,(j+1)^{-\vrho}\,\|u\|_{H^{s+\alpha\log}}\,\|b\|_{B^{\g+\vrho\log}_{\infty,\infty}}\,.
$$ 
Since the term $S_{j+3}u\,\Delta_jb$ is supported in dyadic balls $2^j\mc{B}$, we conclude by use of Lemma \ref{l:log-ball}.
We point out that, for $s=\g$, the condition $\alpha<\vrho-1/2$ is needed to have the right-hand side of the previous
inequality in $\ell^2$.

For $s\leq0$ (and then $\g$ can be taken even $0$), instead, we employ the finer decomposition in $T_ub+R(b,u)$.
First of all, by Lemma \ref{l:log-S_j}  and \eqref{eq:log-Besov} again, we have the estimate
$$
\left\|S_{j-3}u\,\Delta_jb\right\|_{L^2}\,\leq\,\left\|S_{j-3}u\right\|_{L^2}\,\left\|\Delta_jb\right\|_{L^\infty}\,\leq\,
C\,\|u\|_{H^{s+\alpha\log}}\,\|b\|_{B^{\g+\delta\log}_{\infty,\infty}}\,2^{-j(\g+s)}\,(1+j)^{-(\alpha+\vrho)}\,\zeta_j\,,
$$
for some $\bigl(\zeta_j\bigr)_j\,\in\,\ell^2$ of unitary norm. Observe that, for $s=0$, we have no more the presence
of $\bigl(\zeta_j\bigr)_j$, but the right-hand side still belongs to $\ell^2$, thanks to our hypotheses on $\g$, $\alpha$ and
$\delta$.
Since the generic term $S_{j-3}u\,\Delta_jb$ is supported in dyadic
rings $2^j\mc{C}$, from Lemma \ref{l:log-B_ind} we infer
$\|T_ub\|_{H^{s+\alpha\log}}\,\leq\,C\,\|u\|_{H^{s+\alpha\log}}\,\|b\|_{B^{\g+\vrho\log}_{\infty,\infty}}$.

For the remainder term, we use again Lemma \ref{l:log-B_ind}: focusing just on the ``diagonal'' term 
$R_0(b,u)\,=\,\sum_j\Delta_jb\,\Delta_ju$ (the other ones being similar),
we have to bound, for any $\nu\geq0$, the quantity $2^{s\nu}\,(1+\nu)^\alpha\,\left\|\Delta_\nu R_0(b,u)\right\|_{L^2}$.
By use of Proposition \ref{p:log-H} we have
$$ 
\left\|\Delta_\nu R_0(b,u)\right\|_{L^2}\,\leq\,\sum_{j\geq\nu-3}\left\|\Delta_jb\,\Delta_ju\right\|_{L^2}\,\leq\,
C\,\|u\|_{H^{s+\alpha\log}}\,\|b\|_{B^{\g+\vrho\log}_{\infty,\infty}}\,\sum_{j\geq\nu-3}2^{-(\g+s)j}\,(1+j)^{-(\alpha+\vrho)}\,\zeta_j\,,
$$ 
where the sequence $\bigl(\zeta_j\bigr)_j$ is as above. Then, by Cauchy-Schwarz inequality we get
\begin{eqnarray*}
2^{s\nu}\,(1+\nu)^\alpha\,\left\|\Delta_\nu R_0(b,u)\right\|_{L^2} & \leq & C\,\|u\|_{H^{s+\alpha\log}}\,
\|b\|_{B^{\g+\vrho\log}_{\infty,\infty}}\,\times \\
& & \qquad \times\,2^{s\nu}\,(1+\nu)^\alpha\,\left(\sum_{j\geq\nu-3}2^{-2(\g+s)j}\,
(1+j)^{-2(\vrho+\alpha)}\right)^{\!\!1/2}\,,
\end{eqnarray*}
and this completes the proof of the case $s\leq0$, and therefore of the proposition.
\end{proof}

We point out that, by Proposition \ref{p:dyadic-LL}, one has $LL\,\hookrightarrow\,B^{1-\log}_{\infty,\infty}$. Hence,
from the previous statement, we immediately infer continuity properties of multiplication by log-Lipschitz functions,
which generalize Proposition 3.5 of \cite{C-L} to the framework of logarithmic Sobolev spaces.
\begin{coroll} \label{c:LL-H^s}
Let  $a\in LL(\R^N)$. Then the multiplication operator $\;u\,\mapsto\,a\,u\;$ is a continuous map of $H^{s+\alpha\log}(\R^N)$
into itself if:
\begin{itemize}
 \item $|s|<1$, no matter the value of $\alpha\in\R$;
 \item $s=1$ and $\alpha<-3/2$, or $s=-1$ and $\alpha>3/2$.
\end{itemize}
\end{coroll}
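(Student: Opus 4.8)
The plan is to derive the statement as an immediate corollary of Proposition \ref{p:Hol-Sob}, once the correct logarithmic Besov class containing a log-Lipschitz function has been identified. The key point is the embedding $LL(\R^N)\hookrightarrow B^{1-\log}_{\infty,\infty}(\R^N)$, after which one simply specialises Proposition \ref{p:Hol-Sob} to the parameters $\g=1$ and $\vrho=-1$.

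To establish the embedding, recall that, by definition \eqref{eq:log-Besov} with $p=r=\infty$, $\g=1$ and $\vrho=-1$,
\[
\|a\|_{B^{1-\log}_{\infty,\infty}}\,=\,\sup_{k\in\N}\,2^{k}\,(1+k)^{-1}\,\|\Delta_k a\|_{L^\infty}\,.
\]
Hence the first estimate of Proposition \ref{p:dyadic-LL}, namely $\|\Delta_k a\|_{L^\infty}\leq C\,(k+1)\,2^{-k}\,\|a\|_{LL}$ for all $k\geq0$, yields at once $\|a\|_{B^{1-\log}_{\infty,\infty}}\leq C\,\|a\|_{LL}$, so that $LL(\R^N)\subset B^{1-\log}_{\infty,\infty}(\R^N)$ continuously (this is the observation recorded just before the statement).

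With this in hand, I apply Proposition \ref{p:Hol-Sob} with $b=a$, in the admissible regime $\g=1>0$, $\vrho=-1\in\R$. The three alternatives provided by that proposition translate verbatim into: multiplication by $a$ is a continuous self-map of $H^{s+\alpha\log}(\R^N)$ whenever $|s|<\g=1$, for any $\alpha\in\R$; or $s=\g=1$ and $\alpha<\vrho-1/2=-3/2$; or $s=-\g=-1$ and $\alpha>1/2-\vrho=3/2$. These are exactly the three cases of the corollary, and, in the classical case $\alpha=0$, $|s|<1$, they recover Proposition 3.5 of \cite{C-L}. I do not expect any real obstacle here: the proof is essentially a matter of matching indices, the only delicate point being to check that the extra logarithmic loss encoded in $LL\hookrightarrow B^{1-\log}_{\infty,\infty}$ (the shift $\vrho=-1$) is precisely what moves the admissible ranges of $\alpha$ at the endpoints $s=1$ and $s=-1$ to $\alpha<-3/2$ and $\alpha>3/2$, respectively.
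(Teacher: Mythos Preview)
Your proof is correct and follows exactly the paper's own argument: the paper notes that Proposition~\ref{p:dyadic-LL} gives $LL\hookrightarrow B^{1-\log}_{\infty,\infty}$, and then deduces the corollary immediately from Proposition~\ref{p:Hol-Sob} with $\g=1$, $\vrho=-1$. Your verification of the endpoint thresholds $\alpha<-3/2$ and $\alpha>3/2$ via $\vrho-1/2$ and $1/2-\vrho$ is precisely the matching of indices the paper leaves implicit.
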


\subsection{Paradifferential Calculus in the log-Lipschitz class} \label{ss:paradiff}

In our study, we need to resort to tools from Paradifferential Calculus, as introduced by J.-M. Bony in the celebrated paper \cite{Bony}.
We refer to \cite{B-C-D} (Chapter 2) for a complete treatement, and to papers \cite{M-1986}-\cite{M-Z} for a construction depending
on parameters. Here, we follow the approach of \cite{M-2008} (see Chapters 4 and 5).

The first part of this section is devoted to recall basic properties: we adapt the classical construction to consider symbols having
log-Lipschitz smoothness with respect to the space variable, and we define general paradifferential operators associated to them,
for which we develop also a symbolic calculus. In the final part, we consider the case of time-dependent symbols,
which are log-Lipschitz in $t$: at this point, time cannot be considered as a parameter anymore, and we need to establish some properties
for paradifferential operators whose symbols belong to this class.


\subsubsection{Symbols having log-Lipschitz regularity} \label{sss:LL}

Fix a cut-off function $\psi\in\mc{C}^\infty(\R^N\times\R^N)$ which verifies the following properties:
\begin{itemize}
 \item there exist $0<\veps_1<\veps_2<1$ such that
$$
\psi(\eta,\xi)\,=\,\left\{\begin{array}{lcl}
                           1 & \mbox{for} & |\eta|\leq\veps_1\left(1+|\xi|\right) \\ [1ex]
			   0 & \mbox{for} & |\eta|\geq\veps_2\left(1+|\xi|\right)\,;
                          \end{array}
\right.
$$
\item for all $(\beta,\alpha)\in\N^N\times\N^N$, there exists a constant $C_{\beta,\alpha}>0$ such that
$$
\left|\d^\beta_\eta\d^\alpha_\xi\psi(\eta,\xi)\right|\,\leq\,C_{\beta,\alpha}\left(1+|\xi|\right)^{-|\alpha|-|\beta|}\,.
$$
\end{itemize}

For instance, it is easy to verify (see Ex. 5.1.5 \cite{M-2008}) that
$$
\psi(\eta,\xi)\,\equiv\,\psi_{-3}(\eta,\xi)\,:=\,\sum_{k=0}^{+\infty}\chi_{k-3}(\eta)\,\vphi_k(\xi)\,,
$$
where $\chi$ and $\vphi$ are the localization (in phase space) functions associated to a Littlewood-Paley decomposition,
satisfies the previous requirements.

Define now $G^\psi$ as the inverse Fourier transform of $\psi$ with respect to the variable $\eta$:
$$
G^\psi(x,\xi)\,:=\,\left(\mc{F}^{-1}_\eta\psi\right)(x,\xi)\,.
$$
We have the following result (see Lemma 5.1.7 of \cite{M-2008}).

\begin{lemma} \label{l:G}
 For all $(\beta,\alpha)\in\N^N\times\N^N$, there exist constants $C_{\beta,\alpha}>0$ such that:
\begin{eqnarray*}
\left\|\d^\beta_x\d^\alpha_\xi G^\psi(\cdot,\xi)\right\|_{L^1(\R^N_x)} & \leq & 
C_{\beta,\alpha}\left(1+|\xi|\right)^{-|\alpha|+|\beta|} \\ 
\left\||\cdot|\,\log\left(1+\frac{1}{|\cdot|}\right)\,\d^\beta_x\d^\alpha_\xi G^\psi(\cdot,\xi)\right\|_{L^1(\R^N_x)}
& \leq & C_{\beta,\alpha}\left(1+|\xi|\right)^{-|\alpha|+|\beta|-1}\,\log(2+|\xi|)\,.
\end{eqnarray*}
\end{lemma}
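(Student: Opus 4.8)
The plan is to prove the two estimates for $G^\psi$ directly from the hypotheses on the cut-off $\psi$, passing through the Fourier transform in $\eta$. The key observation is that $\psi(\cdot,\xi)$ is, for each fixed $\xi$, a smooth function supported in the ball $|\eta|\leq\veps_2(1+|\xi|)$, so its inverse Fourier transform $G^\psi(\cdot,\xi)$ is a Schwartz-type function whose natural length scale is $(1+|\xi|)^{-1}$. First I would record the elementary rescaling: writing $\psi(\eta,\xi)=\widetilde\psi\bigl(\eta/(1+|\xi|),\xi\bigr)$, where $\widetilde\psi(\cdot,\xi)$ is supported in the fixed ball $B(0,\veps_2)$ and has all $\eta$- and $\xi$-derivatives bounded uniformly in $\xi$ (this is exactly what the two bullet conditions on $\psi$ encode), we get
$$
G^\psi(x,\xi)\,=\,(1+|\xi|)^N\,\widetilde{G}\bigl((1+|\xi|)x,\xi\bigr)\,,\qquad \widetilde{G}(\cdot,\xi)\,=\,\mc{F}^{-1}_\eta\widetilde\psi(\cdot,\xi)\,.
$$
Differentiating in $x$ brings down one factor of $(1+|\xi|)$ per derivative; differentiating in $\xi$ is slightly more delicate because $\xi$ appears both in the scaling factor and inside $\widetilde\psi$, but in every case a $\xi$-derivative costs at most a factor $(1+|\xi|)^{-1}$, consistently with the claimed exponents.

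Next I would reduce the two $L^1$ bounds to weighted estimates on $\widetilde{G}$. For the first inequality, after the change of variables $y=(1+|\xi|)x$ one has $\|\d^\beta_x\d^\alpha_\xi G^\psi(\cdot,\xi)\|_{L^1}\lesssim (1+|\xi|)^{|\beta|-|\alpha|}\,\|\d^\beta_y\d^\alpha_\xi\widetilde G(\cdot,\xi)\|_{L^1}$ plus lower-order terms of the same nature, so it suffices to see that $\widetilde G(\cdot,\xi)$ and all its derivatives are bounded in $L^1(\R^N_y)$ uniformly in $\xi$. That is standard: $\widetilde\psi(\cdot,\xi)\in\mc C^\infty_c(B(0,\veps_2))$ with seminorms uniform in $\xi$, hence $\widetilde G(\cdot,\xi)$ is Schwartz with uniformly bounded seminorms (one gets $L^1$ decay in $y$ by writing $\langle y\rangle^{N+1}\widetilde G$ as the Fourier transform of $(1-\Delta_\eta)^{(N+1)/2}\widetilde\psi$, using compact support to keep everything controlled). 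For the second inequality I would multiply by the weight $w(x)=|x|\log(1+1/|x|)$ and change variables: since $w$ is concave-like near the origin and grows sublinearly, one has the pointwise bound $w(x)\leq C\,(1+|\xi|)^{-1}\,w\bigl((1+|\xi|)|x|\bigr)\cdot\log(2+|\xi|)$ — more precisely $|x|\log(1+1/|x|)=(1+|\xi|)^{-1}|y|\log\bigl(1+(1+|\xi|)/|y|\bigr)\leq (1+|\xi|)^{-1}\bigl(|y|\log(1+1/|y|)+|y|\log(1+|\xi|)\bigr)$ after splitting the logarithm. This produces exactly the extra factor $(1+|\xi|)^{-1}\log(2+|\xi|)$ in front, and what remains is to check that both $\|\,|y|\log(1+1/|y|)\,\d^\beta_y\d^\alpha_\xi\widetilde G(\cdot,\xi)\|_{L^1}$ and $\|\,|y|\,\d^\beta_y\d^\alpha_\xi\widetilde G(\cdot,\xi)\|_{L^1}$ are finite uniformly in $\xi$, which again follows from the Schwartz decay of $\widetilde G$.

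The one genuinely delicate point — and the step I expect to be the main obstacle — is the bookkeeping of $\xi$-derivatives when $|\alpha|>0$. Applying $\d_{\xi_k}$ to $G^\psi(x,\xi)=(1+|\xi|)^N\widetilde G((1+|\xi|)x,\xi)$ by the chain and product rules generates three types of terms: (a) $\d_{\xi_k}$ hitting the prefactor $(1+|\xi|)^N$, costing nothing in scaling but producing $(1+|\xi|)^{N-1}$; (b) $\d_{\xi_k}$ hitting the argument $(1+|\xi|)x$, which produces a factor $x$ times $(1+|\xi|)^N(\d_{y_j}\widetilde G)$, and the spare factor $x$ is precisely absorbed by rewriting $x = (1+|\xi|)^{-1}y$, yielding the correct $(1+|\xi|)^{-1}$ gain; (c) $\d_{\xi_k}$ hitting the explicit $\xi$-dependence of $\widetilde\psi$, which by the uniform seminorm bounds on $\widetilde\psi$ again costs $(1+|\xi|)^{-1}$. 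One then iterates: after $|\alpha|$ derivatives one has a finite sum of terms, each of the schematic form $(1+|\xi|)^{N-|\alpha|+(\text{number of }y\text{-derivatives})}$ times a derivative of $\widetilde G$ evaluated at $(1+|\xi|)x$, and the change of variables closes the estimate. I would organize this as a short induction on $|\alpha|$ (with $|\beta|$ carried along as a harmless additive shift), invoking only the uniform-in-$\xi$ Schwartz bounds on $\widetilde G$ established at the start. The weighted version in the second inequality follows by the same induction with the extra weight $w(y)$, using that $w(y)|\d^\beta_y\d^\alpha_\xi\widetilde G(\cdot,\xi)|$ and $|y|\,|\d^\beta_y\d^\alpha_\xi\widetilde G(\cdot,\xi)|$ are uniformly $L^1$, plus the elementary logarithm-splitting inequality noted above to extract the $\log(2+|\xi|)$ factor.
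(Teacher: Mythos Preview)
The paper does not actually prove this lemma: it is stated with a citation to Lemma~5.1.7 of M\'etivier's book \cite{M-2008}, and no argument is given in the paper itself. So there is no proof here to compare against.

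That said, your argument is correct and is essentially the standard one. The rescaling $\psi(\eta,\xi)=\widetilde\psi(\eta/(1+|\xi|),\xi)$, together with the observation that the hypotheses on $\psi$ translate precisely into uniform-in-$\xi$ $\mc C^\infty_c$ bounds on $\widetilde\psi(\cdot,\xi)$ (and $(1+|\xi|)^{-|\alpha|}$ decay for its $\xi$-derivatives), reduces both estimates to uniform Schwartz bounds on $\widetilde G$. Your treatment of the $\xi$-derivatives via the three-term product/chain rule is the right bookkeeping, and the logarithm-splitting inequality
$$
\log\!\left(1+\frac{1+|\xi|}{|y|}\right)\,\leq\,\log(1+|\xi|)\,+\,\log\!\left(1+\frac{1}{|y|}\right)
$$
(which follows from $1+(1+|\xi|)/|y|\leq(1+|\xi|)(1+1/|y|)$) is exactly what produces the extra factor $(1+|\xi|)^{-1}\log(2+|\xi|)$ in the second estimate. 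One cosmetic point: rather than invoking a fractional power $(1-\Delta_\eta)^{(N+1)/2}$, it is cleaner to use an integer power $(1-\Delta_\eta)^M$ with $2M>N$ to get the $L^1$ decay of $\widetilde G$; this avoids any ambiguity and costs nothing.
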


Let us now take a symbol $a=a(x,\xi)$: thanks to $G^\psi$, we can smooth it out in the space variable, and then define the paradifferential
operator associated to $a$ as the pseudodifferential operator related to this smooth function.

First of all, let us specify the class of symbols we are interested in.
\begin{defin} \label{d:symbols}
Let $X\subset L^\infty(\R^N)$ a Banach space and fix $(m,\delta)\in\R^2$.
\begin{itemize}
\item[(i)] We denote by $\Gamma^{m+\delta\log}_{X}$ the space of functions $a(x,\xi)$ which are locally
bounded over $\R^N\times\R^N$, of class $\mc{C}^\infty$ with respect to $\xi$ and which satisfy
the following property: for all $\alpha\in\N^N$ and all $\xi\in\R^N$, the map $x\,\mapsto\,\d^\alpha_\xi a(x,\xi)$ belongs to
$X$, and, for some $C_\alpha>0$,
$$
\left\|\d^\alpha_\xi a(\,\cdot\,,\xi)\right\|_{X}\;\leq\;C_\alpha\,(1+|\xi|)^{m-|\alpha|}\,\log^\delta(2+|\xi|)\,.
$$
\item[(ii)] $\Sigma^{m+\delta\log}_{X}$ is the space of symbols $\sigma\in\Gamma^{m+\delta\log}_{X}$ which
satisfy the following spectral condition: there exists a $0<\epsilon<1$ such that, for all $\xi\in\R^N$, the spectrum of the function
$x\,\mapsto\,\sigma(x,\xi)$ is contained in the ball $\left\{|\eta|\,\leq\,\epsilon\,(1+|\xi|)\right\}$.
\end{itemize}
\end{defin}

In a quite natural way, we can equip $\Gamma^{m+\delta\log}_X$ with the family of seminorms
\begin{equation} \label{eq:L-inf_sem}
\|a\|^{(m,\delta)}_{(X,k)}\;:=\;\sup_{|\alpha|\leq k}\,\sup_{\R^N_\xi}
\left((1+|\xi|)^{-m+|\alpha|}\,\log^{-\delta}(2+|\xi|)\,\left\|\d^\alpha_\xi a(\,\cdot\,,\xi)\right\|_{X}\right)\,.
\end{equation}
%

Tipically, $X=L^\infty(\R^N)$ or $X=LL(\R^N)$ for us. In the former case, for convenience we will use the notations
$\Gamma^{m+\delta\log}_\infty$, $\Sigma^{m+\delta\log}_\infty$ and $\|\,\cdot\,\|^{(m,\delta)}_{(\infty,k)}$.
In the final part of the present section (see the end of Paragraph \ref{sss:operators}), for the sake of generality we will consider
also the case $X=B^{\g+\vrho\log}_{\infty,\infty}(\R^N)$ (recall \eqref{eq:log-Besov} for its definition).

In the particular case $X=LL$, we explicitly notice the following fact: for $a\in\Gamma^{m+\delta\log}_{LL}$, there exists $K>0$ such that,
for all $\xi\in\R^N$ and all $y\in\R^N\!\setminus\!\{0\}$, one has
\begin{equation} \label{eq:LL_sem} 
\sup_{\R^N_x}\left|a(x+y,\xi)\,-\,a(x,\xi)\right|\,\leq\,K\,(1+|\xi|)^{m}\,
\log^{\delta}(2+|\xi|)\,|y|\,\log\!\left(1+\frac{1}{|y|}\right)\,.
\end{equation} 
Hence we can we set $|a|_{LL}\,=\,|a|^{(m,\delta)}_{(LL,0)}$ to be the smallest constant $K$ such that the previous inequality holds true.
In a quite natural way, we can also define the $LL$ seminorms $|a|^{(m,\delta)}_{(LL,k)}$.

When $X=B^{\g+\vrho\log}_{\infty,\infty}$, instead, we set $\Gamma^{m+\delta\log}_{\g+\vrho\log}\,:=\,\Gamma^{m+\delta\log}_{X}$.
Moreover, introducing a Littlewood-Paley decomposition $\bigl(\Delta_\nu\bigr)_{\nu\geq0}$ in the $x$-variable (not in $\xi$),
we have
\begin{equation} \label{eq:B_sem} 
\sup_{\R^N_x}\left|\Delta_\nu a(\,\cdot\,,\xi)\right|\,\leq\,K\,(1+|\xi|)^{m}\,
\log^{\delta}(2+|\xi|)\;2^{-\g\nu}\,(1+\nu)^{-\vrho}\,,
\end{equation} 
for a constant $K>0$, for all $\xi\in\R^N$ and all $\nu\geq0$.
Once again, in a natural way we can introduce the seminorms $\|a\|^{(m,\delta)}_{(\g+\vrho\log,k)}$.

Finally, we explicitly point out that, by spectral localization and Paley-Wiener Theorem, a symbol
$\sigma\in\Sigma^{m+\delta\log}_{X}$ is smooth also in the $x$ variable.

\medbreak
Now let us consider a symbol $a\in\Gamma^{m+\delta\log}_{X}$: we can associate to it a the classical symbol according to the formula
\begin{equation} \label{eq:classical-symb} 
\sigma^\psi_a(x,\xi)\,:=\,\left(\,\psi(D_x,\xi)\,a\,\right)(x,\xi)\,=\,\left(G^\psi(\cdot,\xi)\,*_x\,a(\cdot,\xi)\right)(x)\,.
\end{equation} 
The following proposition holds true.
\begin{prop} \label{p:par-op}
Let $(m,\delta)\in\R^2$.
\begin{itemize}
\item[(i)] For $X\subset L^\infty(\R^N)$ a Banach space, the smoothing operator $\mbb{S}:\,a(x,\xi)\,\mapsto\,\sigma_a(x,\xi)$
maps continuously $\Gamma^{m+\delta\log}_{X}$ into $\Sigma^{m+\delta\log}_{X}$.
\item[(ii)] For $a\in\Gamma^{m+\delta\log}_{LL}$, then the difference symbol $a\,-\,\sigma_a$ belongs to
$\Gamma^{(m-1)+(\delta+1)\log}_\infty$.
\item[(iii)] In particular, if $\psi_1$ and $\psi_2$ are two admissible cut-off functions, then the difference of the two
smoothed symbols, $\sigma^{\psi_1}_a\,-\,\sigma^{\psi_2}_a$, belongs to $\Sigma^{(m-1)+(\delta+1)\log}_{\infty}$.
\end{itemize}
\end{prop}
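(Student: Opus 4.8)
The plan is to work directly from the definitions of the symbol classes together with the $L^1$ estimates on $G^\psi$ provided by Lemma \ref{l:G}. For part (i), fix $a\in\Gamma^{m+\delta\log}_X$ and set $\sigma_a=\sigma_a^\psi$ as in \eqref{eq:classical-symb}. Since convolution in $x$ commutes with $\d_\xi$ only up to the $\xi$-dependence of $G^\psi$, I would apply the Leibniz rule to $\d_\xi^\alpha\bigl(G^\psi(\cdot,\xi)*_x a(\cdot,\xi)\bigr)$, distributing derivatives between the two factors. Estimating in $X$ (which, being continuously embedded in $L^\infty$, satisfies Young's convolution inequality $\|G*_x f\|_X\le \|G\|_{L^1}\|f\|_X$ for the relevant $X$'s — a point I would record explicitly), and combining the first bound of Lemma \ref{l:G} with the symbol estimate $\|\d_\xi^\beta a(\cdot,\xi)\|_X\le C(1+|\xi|)^{m-|\beta|}\log^\delta(2+|\xi|)$, every term in the sum is $\lesssim (1+|\xi|)^{m-|\alpha|}\log^\delta(2+|\xi|)$: the powers of $(1+|\xi|)$ from $\d_\xi$ landing on $G^\psi$ (gain $-|\alpha'|$ but loss $+|\alpha'|$, net zero beyond the ambient scaling) balance out. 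This shows $\sigma_a\in\Gamma^{m+\delta\log}_X$ with continuity of the seminorms \eqref{eq:L-inf_sem}, and the spectral support of $\sigma_a(\cdot,\xi)=\psi(D_x,\xi)a(\cdot,\xi)$ is contained in $\{|\eta|\le\veps_2(1+|\xi|)\}$ by the support property of $\psi$, hence $\sigma_a\in\Sigma^{m+\delta\log}_X$.

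For part (ii), the key computation is to exploit that $\psi(\eta,\xi)=1$ near $\eta=0$, so that $\int G^\psi(x,\xi)\,dx=\psi(0,\xi)=1$; therefore
$$
a(x,\xi)-\sigma_a(x,\xi)\,=\,\int G^\psi(y,\xi)\,\bigl(a(x,\xi)-a(x-y,\xi)\bigr)\,dy\,.
$$
Now I would insert the log-Lipschitz modulus of continuity \eqref{eq:LL_sem} for the difference $a(x,\xi)-a(x-y,\xi)$, which is controlled by $(1+|\xi|)^m\log^\delta(2+|\xi|)\,|y|\log(1+1/|y|)$, and then invoke the \emph{second} estimate of Lemma \ref{l:G}, which bounds precisely $\bigl\||\cdot|\log(1+1/|\cdot|)\,G^\psi(\cdot,\xi)\bigr\|_{L^1}$ by $C(1+|\xi|)^{-1}\log(2+|\xi|)$. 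Multiplying, one obtains an $L^\infty$ bound of order $(1+|\xi|)^{m-1}\log^{\delta+1}(2+|\xi|)$, which is exactly membership in $\Gamma^{(m-1)+(\delta+1)\log}_\infty$ at the level $|\alpha|=0$. For $|\alpha|\ge1$, I would differentiate the integral representation in $\xi$ via Leibniz, noting that $\d_\xi$ hitting $G^\psi$ uses the second estimate of Lemma \ref{l:G} with $|\alpha|\ge1$ (gain $-|\alpha|$ in the power) while $\d_\xi$ hitting the difference $a(x,\xi)-a(x-y,\xi)$ uses \eqref{eq:LL_sem} for $\d_\xi^\beta a$ in the $LL$-seminorm; all terms close with the claimed orders.

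Part (iii) is then a corollary of (i) and (ii): writing $\sigma_a^{\psi_1}-\sigma_a^{\psi_2}=(a-\sigma_a^{\psi_2})-(a-\sigma_a^{\psi_1})$, each summand lies in $\Gamma^{(m-1)+(\delta+1)\log}_\infty$ by (ii), so the difference does too; and since both $\sigma_a^{\psi_1}$ and $\sigma_a^{\psi_2}$ satisfy a spectral condition (each supported in $\{|\eta|\le\veps_2(1+|\xi|)\}$), so does their difference, giving membership in $\Sigma^{(m-1)+(\delta+1)\log}_\infty$. The main obstacle I anticipate is purely bookkeeping: tracking the interplay between $\d_\xi$-derivatives landing on $G^\psi$ versus on $a$ in the Leibniz expansions, and verifying that the logarithmic powers assemble correctly (in particular that the single extra $\log$ factor in the second bound of Lemma \ref{l:G} is what upgrades $\delta$ to $\delta+1$ in (ii) and nothing worse appears). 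There is no deep difficulty, but one must be careful that in part (ii) it is genuinely the \emph{log-Lipschitz} modulus — not mere boundedness — that is consumed, which is why (ii) requires $X=LL$ whereas (i) holds for any Banach $X\hookrightarrow L^\infty$ closed under convolution by $L^1$ kernels.
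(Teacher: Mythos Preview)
Your proposal is correct and follows exactly the approach the paper has in mind: the paper's own proof is a three-sentence sketch (``The first property is classical. The second one immediately follows from the log-Lipschitz continuity assumption and Lemma~\ref{l:G}. The last statement is a straightforward consequence of the previous ones.''), and you have faithfully unpacked each of those sentences. In particular, your key identity $a-\sigma_a=\int G^\psi(y,\xi)\bigl(a(x,\xi)-a(x-y,\xi)\bigr)\,dy$ together with the second estimate of Lemma~\ref{l:G} is precisely the intended mechanism for (ii), and your derivation of (iii) from (i)--(ii) matches the paper's.
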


\begin{proof}
The first property is classical. The second one immediately follows from the log-Lipschitz continuity assumption
and Lemma \ref{l:G}. The last statement is a straightforward consequence of the previous ones.
\end{proof}

We conclude this  part by noting that, at this level, the time variable can be treated as a parameter
in the construction. In particular, the previous properties still hold true for symbols in
$L^\infty\bigl([0,T];\Gamma^{m+\delta\log}_{X}\bigr)$.

\subsubsection{Operators, symbolic calculus} \label{sss:operators}

As announced above, we can use the previous construction to associate to any symbol $a(x,\xi)$, which is non-regular in $x$,
an operator: this will be the pseudodifferential operator associated to the smooth symbol $\sigma^\psi_a$.

Let us formalize the discussion: for $a\in\Gamma^{m+\delta\log}_{\infty}$, we define the  paradifferential operator
associated to $a$ via the formula
$$
T^\psi_a\,:=\,\sigma^\psi_a(x,D_x)\,:\quad u\;\longmapsto\;T^\psi_au(x)\,=\,\frac{1}{(2\pi)^N}\int_{\R^N_\xi}e^{ix\cdot\xi}\,
\sigma^\psi_a(x,\xi)\,\what{u}(\xi)\,d\xi\,.
$$

\begin{rem} \label{r:p-prod}
Notice that, if $f=f(\xi)$ is a Fourier multiplier, then $T_f\,\equiv\,f(D_x)$ (see e.g. \cite{M-2008}).

Let us also point out that if $a=a(x)\in L^\infty$ and if we take the cut-off function $\psi_{-3}$, then $T^\psi_a$ is actually the
classical paraproduct operator, defined first in \cite{Bony}.
\end{rem}

Let us recall some basic definitions and properties in Paradifferential Calculus. The corresponding proofs in the logarithmic
setting are analogous to the classical case, and they are not detailed here.
\begin{defin} \label{d:op_order}
 We say that an operator $P$ is of order $\,m+\delta\log\,$ if, for every $(s,\alpha)\in\R^2$,
$P$ maps $H^{s+\alpha\log}$ into $H^{(s-m)+(\alpha-\delta)\log}$ continuously.
\end{defin}

With slight modifications to the proof of Proposition 2.9 of \cite{M-Z}, stated for the classical Sobolev class,
we get the next fundamental result.
\begin{lemma} \label{l:action}
 For all $\sigma\in\Sigma^{m+\delta\log}_{\infty}$, the corresponding operator $\sigma(\,\cdot\,,D_x)$ is of order $\,m+\delta\log$.
\end{lemma}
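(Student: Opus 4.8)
The plan is to reduce the estimate to a standard Littlewood--Paley almost-orthogonality argument, following the scheme of Proposition 2.9 of \cite{M-Z}, but keeping careful track of the logarithmic weights $(1+j)^\alpha$. Fix $\sigma\in\Sigma^{m+\delta\log}_\infty$ and $u\in H^{s+\alpha\log}$. Since $\sigma$ satisfies the spectral condition, the function $x\mapsto\sigma(x,\xi)$ has Fourier support (in $x$) contained in a ball $\{|\eta|\leq\epsilon(1+|\xi|)\}$ with $\epsilon<1$; consequently, decomposing $u=\sum_k\Delta_ku$, each piece $\sigma(\cdot,D_x)\Delta_ku$ has $x$-Fourier support contained in a fixed dilate $2^k\mc{C}'$ of a ring (for $k$ large) plus a bounded contribution from low frequencies, so that the blocks $u_k:=\sigma(\cdot,D_x)\Delta_ku$ are spectrally localized in the sense required by Lemma \ref{l:log-B_ind} (or Lemma \ref{l:log-ball}). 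Thus it suffices to prove the single-block bound
$$
\|\sigma(\cdot,D_x)\Delta_ku\|_{L^2}\,\leq\,C\,2^{km}\,(1+k)^\delta\,\|\Delta_ku\|_{L^2}
$$
with a constant depending only on finitely many seminorms $\|\sigma\|^{(m,\delta)}_{(\infty,k')}$, and then to conclude by Proposition \ref{p:log-H}: summing $\bigl(2^{(s-m)j}(1+j)^{\alpha-\delta}\|u_j\|_{L^2}\bigr)_j$ against the above bound reproduces exactly $\bigl(2^{sj}(1+j)^\alpha\|\Delta_ju\|_{L^2}\bigr)_j\in\ell^2$.

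The single-block estimate is where the work is. First I would write $\sigma(\cdot,D_x)\Delta_ku(x)=\frac1{(2\pi)^N}\int e^{ix\cdot\xi}\sigma(x,\xi)\vphi_k(\xi)\what u(\xi)\,d\xi$ and expand $\sigma(x,\xi)$ in its own Fourier variable, $\sigma(x,\xi)=\frac1{(2\pi)^N}\int e^{ix\cdot\eta}\what\sigma(\eta,\xi)\,d\eta$, using the spectral condition to restrict $|\eta|\lesssim 2^k$. This turns $\sigma(\cdot,D_x)\Delta_ku$ into a convolution-type operator whose kernel can be controlled via the seminorms of $\sigma$: integrating by parts in $\xi$ (exploiting that $\vphi_k$ localizes $\xi$ to a ring of size $2^k$, so each $\d_\xi$ gains a factor $2^{-k}$, consistent with the symbol class decay $(1+|\xi|)^{m-|\alpha|}$) one produces enough decay in $x$ to get an $L^1$-kernel bound of size $2^{km}(1+k)^\delta$. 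Then Young's inequality (or a Schur-test / Cotlar--Stein argument if one prefers an operator-norm estimate directly) gives the claimed $L^2\to L^2$ bound with the correct power $2^{km}$ and logarithmic factor $(1+k)^\delta$; the factor $\log^\delta(2+|\xi|)\sim (1+k)^\delta$ on the ring $|\xi|\sim 2^k$ is precisely what accounts for the $\delta\log$ loss in the order.

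The main obstacle I anticipate is purely bookkeeping of the logarithmic weights: one has to check that the extra factors $(1+k)^\delta$ (from the symbol) and $(1+k)^\alpha$ (from $u\in H^{s+\alpha\log}$) combine cleanly and that no logarithmic loss is silently dropped or doubled when passing from blocks to the full sum --- this is exactly the point where the classical proof in \cite{M-Z} must be ``slightly modified''. A secondary, routine issue is the treatment of the low-frequency block $\Delta_0u$ and of the overlap region where the $x$-spectrum of $\sigma(\cdot,D_x)\Delta_ku$ is a ball rather than a ring; here one invokes Lemma \ref{l:log-ball} in place of Lemma \ref{l:log-B_ind} and notes the estimate is even easier since $m$ can be absorbed. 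Since these are all by-now-standard manipulations in the logarithmic Littlewood--Paley calculus developed in Subsection \ref{ss:L-P} and in \cite{C-DS-F-M_Z-sys}, I would state them briefly and refer to \cite{M-Z} and \cite{M-2008} for the parts identical to the classical theory.
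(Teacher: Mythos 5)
Your plan is correct and is essentially the argument the paper has in mind: the paper proves nothing here, simply asserting that Lemma \ref{l:action} follows ``with slight modifications to the proof of Proposition 2.9 of \cite{M-Z}'', and your block-by-block scheme (spectral localization of $\sigma(\cdot,D_x)\Delta_ku$ from the spectral condition, a single-block $L^2\to L^2$ bound of size $2^{km}(1+k)^\delta$ obtained by $\xi$-integrations by parts and a Schur-type kernel estimate, then resummation via Lemma \ref{l:log-B_ind} and Proposition \ref{p:log-H}) is exactly that adaptation, with the logarithmic weights tracked correctly. One minor imprecision: for the finitely many low-frequency blocks whose $x$-spectrum is a ball, Lemma \ref{l:log-ball} is not the right tool since it requires $s>0$; one simply estimates those finitely many terms directly, which is trivial.
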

The following result is an immediate consequence of the previous lemma and Proposition \ref{p:par-op}.
\begin{thm} \label{t:action}
Given a symbol $a\in\Gamma^{m+\delta\log}_{\infty}$, for any admissible cut-off function $\psi$, the operator
$T^\psi_a$ is of order $m+\delta\log$.
\end{thm}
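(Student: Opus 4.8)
The plan is to deduce Theorem \ref{t:action} directly from the composition $T^\psi_a = \sigma^\psi_a(\cdot,D_x)$ together with the two preceding results. First I would invoke Proposition \ref{p:par-op}(i): since $a\in\Gamma^{m+\delta\log}_\infty$, the smoothed symbol $\sigma^\psi_a = \mbb{S}a$ belongs to $\Sigma^{m+\delta\log}_\infty$, i.e. it satisfies the same symbol bounds as $a$ and in addition obeys the spectral condition (support of $x\mapsto\sigma^\psi_a(x,\xi)$ in $\eta$ contained in a ball of radius $\epsilon(1+|\xi|)$). Then I would apply Lemma \ref{l:action}, which says precisely that an operator $\sigma(\cdot,D_x)$ with $\sigma\in\Sigma^{m+\delta\log}_\infty$ is of order $m+\delta\log$, meaning it maps $H^{s+\alpha\log}$ continuously into $H^{(s-m)+(\alpha-\delta)\log}$ for every $(s,\alpha)\in\R^2$. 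Composing these two statements gives the conclusion.

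Concretely I would write: fix an admissible cut-off $\psi$. By Proposition \ref{p:par-op}(i) the map $\mbb{S}$ sends $\Gamma^{m+\delta\log}_\infty$ into $\Sigma^{m+\delta\log}_\infty$, so $\sigma^\psi_a\in\Sigma^{m+\delta\log}_\infty$ with seminorms controlled by those of $a$ (and of $\psi$). By Lemma \ref{l:action} the operator $T^\psi_a=\sigma^\psi_a(\cdot,D_x)$ is of order $m+\delta\log$ in the sense of Definition \ref{d:op_order}, which is exactly the assertion. The continuity of $\mbb{S}$ also yields that the operator norm $H^{s+\alpha\log}\to H^{(s-m)+(\alpha-\delta)\log}$ is bounded by a seminorm $\|a\|^{(m,\delta)}_{(\infty,k)}$ for some $k=k(s,\alpha,N)$, which is worth recording since the energy estimates later will need the quantitative dependence.

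There is essentially no obstacle here — this is a bookkeeping corollary — but if one wanted to be careful, the only point to check is that the independence of the cut-off is consistent: by Proposition \ref{p:par-op}(iii), changing $\psi$ alters $T^\psi_a$ by an operator with symbol in $\Sigma^{(m-1)+(\delta+1)\log}_\infty$, hence of order $(m-1)+(\delta+1)\log$, which is lower order, so the statement ``$T^\psi_a$ is of order $m+\delta\log$'' holds uniformly in the choice of admissible $\psi$. I would include a one-line remark to that effect. Thus the proof is simply: (1) smoothing puts the symbol in $\Sigma^{m+\delta\log}_\infty$; (2) Lemma \ref{l:action} gives the mapping property; (3) the cut-off dependence is lower order.
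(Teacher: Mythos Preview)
Your proof is correct and matches the paper's approach exactly: the paper simply states that the theorem is an immediate consequence of Proposition \ref{p:par-op} and Lemma \ref{l:action}, which is precisely your steps (1)--(2). One small caveat: your side remark invoking Proposition \ref{p:par-op}(iii) is not quite available here, since that item (like (ii)) is stated for $a\in\Gamma^{m+\delta\log}_{LL}$, not merely $\Gamma^{m+\delta\log}_\infty$; but this remark is unnecessary anyway, as the theorem asserts the order of $T^\psi_a$ for each fixed admissible $\psi$, and your steps (1)--(2) already deliver that.
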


Notice that, in Lemma \ref{l:action} and Theorem \ref{t:action}, the $LL$ hypothesis is not necessary: these results hold true if the symbol
is even just $L^\infty$ with respect to $x$. On the contrary, we are going to exploit the additional regularity in space in the next result.
It states that the whole construction does not depends on the cut-off function $\psi$.
\begin{prop} \label{p:act-psi}
If $\psi_1$ and $\psi_2$ are two admissible cut-off functions and $a\in\Gamma^{m+\delta\log}_{LL}$,
then the difference $\,T^{\psi_1}_a\,-\,T^{\psi_2}_a\,$ is of order $(m-1)+(\delta+1)\log$.
\end{prop}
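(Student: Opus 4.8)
The plan is to reduce the statement to results already established in the excerpt, namely Proposition \ref{p:par-op} and Lemma \ref{l:action}, so that essentially no new computation is needed. First I would record the trivial embedding $LL(\R^N)\hookrightarrow L^\infty(\R^N)$, which gives $\Gamma^{m+\delta\log}_{LL}\subset\Gamma^{m+\delta\log}_{\infty}$; hence both $T^{\psi_1}_a$ and $T^{\psi_2}_a$ are well defined, and by linearity of the quantization $\sigma\mapsto\sigma(x,D_x)$ one has
$$
T^{\psi_1}_a\,-\,T^{\psi_2}_a\,=\,\bigl(\sigma^{\psi_1}_a\,-\,\sigma^{\psi_2}_a\bigr)(x,D_x)\,.
$$

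The core of the argument is then just an invocation of Proposition \ref{p:par-op}(iii): since $a\in\Gamma^{m+\delta\log}_{LL}$, the difference of the two smoothed symbols $\sigma^{\psi_1}_a-\sigma^{\psi_2}_a$ belongs to $\Sigma^{(m-1)+(\delta+1)\log}_{\infty}$. This is precisely the step where the log-Lipschitz regularity in $x$ is used in an essential way — for a merely $L^\infty$ symbol one would gain only a logarithmic factor and no derivative, whereas the $LL$ bound encoded in the second estimate of Lemma \ref{l:G} produces the full gain of one order at the cost of an extra $\log$. Having put the difference symbol into the class $\Sigma$ (which carries the spectral localization condition), I would finish by applying Lemma \ref{l:action} to $\sigma:=\sigma^{\psi_1}_a-\sigma^{\psi_2}_a\in\Sigma^{(m-1)+(\delta+1)\log}_{\infty}$: it tells us that $\sigma(\cdot,D_x)$ maps $H^{s+\alpha\log}$ continuously into $H^{(s-(m-1))+(\alpha-(\delta+1))\log}$ for every $(s,\alpha)\in\R^2$, which is exactly the assertion that $T^{\psi_1}_a-T^{\psi_2}_a$ is of order $(m-1)+(\delta+1)\log$.

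I do not expect a genuine obstacle here, since all the analytic content has been front-loaded into Lemma \ref{l:G} and Proposition \ref{p:par-op}; the proof is a two-line chaining of those facts. The only point worth flagging — and which I would state explicitly as a remark — is that the $LL$ hypothesis on $a$ cannot be relaxed to mere $L^\infty$ in $x$: without it Proposition \ref{p:par-op}(iii) degenerates and the difference $T^{\psi_1}_a-T^{\psi_2}_a$ need not be smoothing of order one. (If one wanted a self-contained proof bypassing Proposition \ref{p:par-op}(iii), the alternative would be to write $\sigma^{\psi_1}_a-\sigma^{\psi_2}_a=\bigl(G^{\psi_1}(\cdot,\xi)-G^{\psi_2}(\cdot,\xi)\bigr)*_x a(\cdot,\xi)$, use $\int (G^{\psi_1}-G^{\psi_2})(\cdot,\xi)\,dx=0$ to subtract $a(x,\xi)$ inside the convolution, and then estimate with the weighted $L^1$ bound of Lemma \ref{l:G} together with \eqref{eq:LL_sem} — but this merely re-proves Proposition \ref{p:par-op}(ii)–(iii), so invoking them directly is cleaner.)
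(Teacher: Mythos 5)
Your proof is correct and is precisely the argument the paper has in mind: Proposition~\ref{p:par-op}(iii) places $\sigma^{\psi_1}_a-\sigma^{\psi_2}_a$ in $\Sigma^{(m-1)+(\delta+1)\log}_\infty$, and Lemma~\ref{l:action} then gives the mapping property, so the difference operator is of order $(m-1)+(\delta+1)\log$. Your accompanying remark that the $LL$ hypothesis is essential (an $L^\infty$ symbol gives no gain of a derivative) matches the paper's own comment preceding the proposition.
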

Therefore, changing the cut-off function $\psi$ doesn't change the paradifferential operator associated to $a$,
up to lower order terms. So, in what follows we fix the cut-off function $\psi=\psi_{-3}$ (defined in Paragraph \ref{sss:LL}) and
we will miss out the dependence of $\sigma_a$ and $T_a$ on it.

We want now to develop symbolic calculus in the $LL$ class. A preliminary result is in order: it can be viewed as a generalization
of Proposition \ref{p:dyadic-LL}.
\begin{lemma} \label{l:ll-symb}
Let $a\in\Gamma^{m+\delta\log}_{LL}$, and denote by $\sigma_a$ the classical symbol associated to it via formula
\eqref{eq:classical-symb}. Then the following estimates hold true:
\begin{eqnarray*}
 \left|\d^\alpha_\xi\sigma_{a}\right| & \leq & C_\alpha\left(1+|\xi|\right)^{m-|\alpha|}\,\log^\delta\left(2+|\xi|\right) \\
\left|\d^\beta_x\d^\alpha_\xi\sigma_{a}\right| & \leq & C_{\beta,\alpha}\left(1+|\xi|\right)^{m-|\alpha|+|\beta|-1}\,
\log^{\delta+1}\left(2+|\xi|\right)\,.
\end{eqnarray*}
The constants $C_\alpha$ just depend on the quantities $\|a\|^{(m,\delta)}_{\infty,k}$ defined in \eqref{eq:L-inf_sem},
where $|\alpha|\leq k$. \\
The constants $C_{\beta,\alpha}$, instead, depend only on the quantities $\|a\|^{(m,\delta)}_{LL,k}$, where again $|\alpha|\leq k$.
\end{lemma}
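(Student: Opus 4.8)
The plan is to estimate $\sigma_a = G^\psi(\cdot,\xi)*_x a(\cdot,\xi)$ directly, using the convolution structure and the two $L^1$ bounds on $G^\psi$ from Lemma \ref{l:G}. First I would dispose of the $\xi$-derivative-only estimate. Since $\psi$ (hence $G^\psi$) is smooth in $\xi$ and differentiation in $\xi$ commutes with the $x$-convolution, Leibniz in $\xi$ gives $\d^\alpha_\xi\sigma_a = \sum_{\alpha'\le\alpha}\binom{\alpha}{\alpha'} \bigl(\d^{\alpha'}_\xi G^\psi(\cdot,\xi)\bigr)*_x \bigl(\d^{\alpha-\alpha'}_\xi a(\cdot,\xi)\bigr)$. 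Applying Young's inequality $\|f*_x g\|_{L^\infty_x}\le\|f\|_{L^1_x}\|g\|_{L^\infty_x}$ together with the first bound of Lemma \ref{l:G} (with $\beta=0$) and the symbol bound $\|\d^{\alpha-\alpha'}_\xi a(\cdot,\xi)\|_{L^\infty}\le C(1+|\xi|)^{m-|\alpha-\alpha'|}\log^\delta(2+|\xi|)$, each summand is bounded by $C(1+|\xi|)^{-|\alpha'|}\cdot(1+|\xi|)^{m-|\alpha|+|\alpha'|}\log^\delta(2+|\xi|) = C(1+|\xi|)^{m-|\alpha|}\log^\delta(2+|\xi|)$. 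Summing over $\alpha'$ yields the first inequality, with constants depending only on the $L^\infty$ seminorms $\|a\|^{(m,\delta)}_{(\infty,k)}$ for $|\alpha|\le k$.

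The second estimate is the substantive one, and it is where the $LL$ hypothesis enters. The point is that $\d^\beta_x$ cannot be moved onto $a$ (which is only log-Lipschitz, not differentiable), so instead one integrates $\d^\beta_x$ onto the kernel: $\d^\beta_x\d^\alpha_\xi\sigma_a = \sum_{\alpha'\le\alpha}\binom{\alpha}{\alpha'}\bigl(\d^\beta_x\d^{\alpha'}_\xi G^\psi(\cdot,\xi)\bigr)*_x\bigl(\d^{\alpha-\alpha'}_\xi a(\cdot,\xi)\bigr)$. For $|\beta|\ge1$, the kernel $\d^\beta_x G^\psi(\cdot,\xi)$ has mean zero in $x$ (its $x$-Fourier transform is $(i\eta)^\beta\psi(\eta,\xi)$, which vanishes at $\eta=0$), so we can subtract a constant and write, for each fixed $x$,
$$
\bigl(\d^\beta_x\d^{\alpha'}_\xi G^\psi(\cdot,\xi)*_x\d^{\alpha-\alpha'}_\xi a(\cdot,\xi)\bigr)(x)
= \int_{\R^N}\d^\beta_x\d^{\alpha'}_\xi G^\psi(y,\xi)\,\Bigl(\d^{\alpha-\alpha'}_\xi a(x-y,\xi)-\d^{\alpha-\alpha'}_\xi a(x,\xi)\Bigr)\,dy\,.
$$
Now apply the $LL$-seminorm bound \eqref{eq:LL_sem} for $\d^{\alpha-\alpha'}_\xi a$, namely $|\d^{\alpha-\alpha'}_\xi a(x-y,\xi)-\d^{\alpha-\alpha'}_\xi a(x,\xi)|\le K(1+|\xi|)^{m-|\alpha-\alpha'|}\log^\delta(2+|\xi|)\,|y|\log(1+1/|y|)$, and pull out the $(1+|\xi|)$-factor; the remaining integral is exactly $\bigl\||\cdot|\log(1+1/|\cdot|)\,\d^\beta_x\d^{\alpha'}_\xi G^\psi(\cdot,\xi)\bigr\|_{L^1_x}$, which by the second bound of Lemma \ref{l:G} is $\le C(1+|\xi|)^{-|\alpha'|+|\beta|-1}\log(2+|\xi|)$. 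Multiplying through, each summand is bounded by $C(1+|\xi|)^{m-|\alpha|+|\beta|-1}\log^{\delta+1}(2+|\xi|)$, and summing over $\alpha'$ gives the claimed bound, with constants controlled by the $LL$-seminorms $\|a\|^{(m,\delta)}_{(LL,k)}$ for $|\alpha|\le k$.

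The one point requiring a little care — the main (minor) obstacle — is the case $|\beta|=0$ of the second inequality, where the mean-zero/cancellation trick is unavailable and one would seem to get only the first estimate's bound, i.e.\ $\log^\delta$ rather than $\log^{\delta+1}$. This is harmless since $\log^\delta\le\log^{\delta+1}$ for $2+|\xi|\ge e$, so the second inequality holds a fortiori for $\beta=0$ as well; alternatively one may simply state that the second estimate is intended for $|\beta|\ge1$, which is all that is used in the symbolic calculus below. I would also remark that the splitting $\d^\alpha_\xi$ of $G^\psi$ versus $a$ is just Leibniz and that the seminorm dependence claims follow by tracking which derivatives of $a$ appear, namely only $\d^{\alpha'}_\xi a$ with $|\alpha'|\le|\alpha|$, measured in $L^\infty$ for the first estimate and in the $LL$ seminorm for the second.
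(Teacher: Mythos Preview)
Your proof is correct and is precisely the classical argument that the paper alludes to (the paper omits the proof, referring to Lemma 3.16 of \cite{C-DS-F-M_tl} and Lemma 3.15 of \cite{C-DS-F-M_wp}): Leibniz in $\xi$ plus Young's inequality with the first bound of Lemma \ref{l:G} for the first estimate, and the mean-zero cancellation trick on $\d^\beta_x\d^{\alpha'}_\xi G^\psi$ combined with the $LL$ modulus of continuity \eqref{eq:LL_sem} and the second bound of Lemma \ref{l:G} for the second. Your handling of the $|\beta|=0$ case is also correct.
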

The proof of the previous result is somehow classical, as it follows the same lines of Lemma 3.16 in \cite{C-DS-F-M_tl}
and Lemma 3.15 in \cite{C-DS-F-M_wp}. Therefore we omit it.

From Lemma \ref{l:ll-symb} we immediately deduce the following properties.
\begin{thm} \label{t:symb_calc}
 \begin{itemize}
 \item[(i)] Let us take two symbols $a\in\Gamma^{m+\delta\log}_{LL}$ and $b\in\Gamma^{n+\vrho\log}_{LL}$ and denote by $T_a$, $T_b$ the
respective associated paradifferential operators. Then one has
$$ 
 T_a\,\circ\,T_b\,\,=\,\,T_{a\,b}\,\,+\,\,R_\circ\,.
$$ 
The principal part $T_{a\,b}$ is of order $(m+n)+(\delta+\vrho)\log$. \\
The remainder operator $R_\circ$ has order $(m+n-1)+(\delta+\vrho+1)\log$.
\item[(ii)] Let $a\in\Gamma^{m+\delta\log}_{LL}$.
The adjoint operator (over $L^2$) of $T_a$ is given by the formula
$$ 
 \left(T_a\right)^*\,\,=\,\,T_{\oline{a}}\,\,+\,\,R_*\,.
$$ 
The order of $T_{\oline{a}}$ is still $m+\delta\log$. \\
The remander operator $R_*$ has order $(m-1)+(\delta+1)\log$. 
 \end{itemize}
\end{thm}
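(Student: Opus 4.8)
The plan is to derive both statements from Lemma \ref{l:ll-symb} by comparing the composition (resp.\ adjoint) of the pseudodifferential operators $\sigma_a(x,D_x)$ and $\sigma_b(x,D_x)$ with the operator associated to the exact symbol $ab$, reducing everything to the standard pseudodifferential symbolic calculus applied to the \emph{smoothed} symbols $\sigma_a$, $\sigma_b$, which lie in $\Sigma$-type classes and therefore enjoy the usual asymptotic expansions. More precisely, for part (i) I would write $T_a\circ T_b=\sigma_a(x,D_x)\circ\sigma_b(x,D_x)$, and recall the classical composition formula: the composite is the pseudodifferential operator with symbol $\sigma_a\,\#\,\sigma_b$, whose principal term is the product $\sigma_a\,\sigma_b$ and whose first correction is $\frac1i\sum_j\d_{\xi_j}\sigma_a\,\d_{x_j}\sigma_b$, the full remainder being controlled by one further $\d_x$ applied to $\sigma_a$ times one $\d_\xi$ of $\sigma_b$ (plus iterated terms). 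Using the two estimates of Lemma \ref{l:ll-symb}, each $\d_x$ falling on a smoothed symbol costs a factor $(1+|\xi|)\,\log(2+|\xi|)$ relative to what an extra $\d_\xi$ gains, so the correction term $\frac1i\d_\xi\sigma_a\cdot\d_x\sigma_b$ is a symbol of order $(m-1)+\delta\log$ times order $n+(\vrho+1)\log$, i.e.\ order $(m+n-1)+(\delta+\vrho+1)\log$; the higher-order terms in the expansion are of the same or lower order by the same bookkeeping. Thus $R_\circ:=T_a\circ T_b-\sigma_{ab}(x,D_x)$ has a symbol in $\Gamma^{(m+n-1)+(\delta+\vrho+1)\log}_\infty$ (only $L^\infty$ regularity in $x$ survives, which is all that is claimed), hence by Theorem \ref{t:action} is of the asserted order. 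Finally one must replace $\sigma_{ab}(x,D_x)$ by $T_{ab}$: since $ab\in\Gamma^{(m+n)+(\delta+\vrho)\log}_{LL}$, Proposition \ref{p:par-op}(ii) gives $ab-\sigma_{ab}\in\Gamma^{(m+n-1)+(\delta+\vrho+1)\log}_\infty$, so the difference $\sigma_{ab}(x,D_x)-T_{ab}$ is of the same remainder order and can be absorbed into $R_\circ$.

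For part (ii) the argument is entirely parallel: the $L^2$-adjoint of the pseudodifferential operator $\sigma_a(x,D_x)$ has symbol $\sigma_a^{*}$ with principal term $\overline{\sigma_a(x,\xi)}$ and first correction $\frac1i\sum_j\d_{\xi_j}\d_{x_j}\overline{\sigma_a}$. By Lemma \ref{l:ll-symb} the correction is a symbol of order $(m-1)+(\delta+1)\log$ (one $\d_x$ costs the $(1+|\xi|)\log$ factor, one $\d_\xi$ saves a $(1+|\xi|)$), and iterated terms are no worse. Hence $(\sigma_a(x,D_x))^{*}-\overline{\sigma_a}(x,D_x)$ is of order $(m-1)+(\delta+1)\log$. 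It remains to pass from $\overline{\sigma_a}$ to $\overline a$: note $\sigma_{\overline a}=\overline{\sigma_a}$ since conjugation commutes with the real convolution kernel $G^\psi$, and then Proposition \ref{p:par-op}(ii) applied to $\overline a\in\Gamma^{m+\delta\log}_{LL}$ gives $\overline a-\sigma_{\overline a}\in\Gamma^{(m-1)+(\delta+1)\log}_\infty$, so $\overline{\sigma_a}(x,D_x)-T_{\overline a}$ is a remainder of the claimed order. Collecting the two contributions yields $(T_a)^{*}=T_{\overline a}+R_{*}$ with $R_{*}$ of order $(m-1)+(\delta+1)\log$.

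The one point requiring genuine care—and the main obstacle—is justifying the pseudodifferential expansions \emph{with the logarithmic loss correctly tracked}: the symbols $\sigma_a$, $\sigma_b$ are not in a classical Hörmander class but in the $\Sigma^{m+\delta\log}$ class, where each $\d_x$ is only controlled by Lemma \ref{l:ll-symb} at the price of an extra $(1+|\xi|)\log(2+|\xi|)$. One must check that the remainder in the finite Taylor expansion of the composition (resp.\ adjoint) symbol, after the principal and first-order terms are split off, genuinely gains one full power of $(1+|\xi|)^{-1}$ at the cost of only one extra logarithm, uniformly, and that the oscillatory-integral remainder estimates (integration by parts in the phase, as in Proposition 2.9 of \cite{M-Z} and the computations of Lemma 3.16 of \cite{C-DS-F-M_tl}) go through with these weights. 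Since Lemma \ref{l:ll-symb} is precisely tailored to make this bookkeeping close, and the analogous statements in \cite{C-DS-F-M_tl} and \cite{C-DS-F-M_wp} have been carried out in detail, I would cite those and only indicate the degree counting here rather than redo the oscillatory integral estimates.
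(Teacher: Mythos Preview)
Your approach is correct and is exactly what the paper does: it derives the theorem directly from Lemma \ref{l:ll-symb} via the standard composition/adjoint expansions for the smoothed symbols $\sigma_a,\sigma_b\in\Sigma$-classes, with the logarithmic bookkeeping provided by that lemma. The paper in fact gives no further detail than the sentence ``From Lemma \ref{l:ll-symb} we immediately deduce the following properties,'' so your write-up is already more explicit.

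One notational slip to clean up: by definition $T_{ab}=\sigma_{ab}(x,D_x)$, so your ``final step'' of replacing $\sigma_{ab}(x,D_x)$ by $T_{ab}$ is vacuous as written. What you actually need after the expansion of $\sigma_a\#\sigma_b$ is to pass from the principal term $\sigma_a\,\sigma_b$ to $\sigma_{ab}$. This is immediate from Proposition \ref{p:par-op}(ii): writing
$$
\sigma_a\sigma_b-\sigma_{ab}\,=\,(\sigma_a-a)\,\sigma_b\,+\,a\,(\sigma_b-b)\,-\,(\sigma_{ab}-ab)\,,
$$
each of the three differences lies in $\Gamma^{(\cdot-1)+(\cdot+1)\log}_\infty$ by that proposition, so the whole thing is in $\Gamma^{(m+n-1)+(\delta+\vrho+1)\log}_\infty$ and can be absorbed into $R_\circ$. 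With this correction your argument closes.
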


The last statement of this paragraph is a fundamental paralinearization result, in the general instance $X\,=\,B^{\g+\vrho\log}_{\infty,\infty}$, which
will allow us to treat both the first and lower order terms of operator $L$ in energy estimates. In order to give sense to all terms, we have to restrict to
differential operators, and then operator $T_a$ reduces to the classical paraproduct operator (keep in mind also Remark \ref{r:p-prod}).
\begin{thm} \label{t:paralin}
Let $m\in\N$, and $\eta\in\N^N$ of lenght $|\eta|= m$. Take a pair $(\g,\vrho)\in\R^2$ such that $\g\geq0$, and consider a function
$a\in B^{\g+\vrho\log}_{\infty,\infty}$, introduced in \eqref{eq:log-Besov}. Define the difference operator
$\mc{D}:\,u\,\longmapsto\,a\,\d_x^\eta u\,-\,T_a\d_x^\eta u\,=\,a\,\d_x^\eta u\,-\,T_{a\,\xi^\eta}u$.
\begin{itemize}
 \item[(i)] If $s>m$, then $\mc D$ maps continuously $H^{s+\alpha\log}$ into
$H^{\g+(\vrho-h)\log}$, for any $h>1/2$.
 \item[(ii)] If $s=m$ and $\alpha\geq0$, the previous statement remains true.
\item[(iii)] For any $s\in\,]m-\g,m[\,$ (and then $\g>0$) and any $\alpha\in\R$, $\mc D$ maps continuously $H^{s+\alpha\log}$ into
$H^{\sigma+h\log}$, where $\sigma=s-m+\g$ and $h=\alpha-\delta+\vrho$.
\end{itemize}
The norms of the operators just depend on the quantity $\|a\|_{B^{\g+\vrho\log}_{\infty,\infty}}$.
\end{thm}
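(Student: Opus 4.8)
The plan is to reduce the statement to Bony's paraproduct decomposition and then to rerun, almost verbatim, the dyadic estimates of the proof of Proposition \ref{p:Hol-Sob}, the only new feature being a shift of the Sobolev index by $m$. Put $w:=\d_x^\eta u$, so that $w\in H^{(s-m)+\alpha\log}$ with $\|w\|_{H^{(s-m)+\alpha\log}}\leq C\,\|u\|_{H^{s+\alpha\log}}$, and write $\sigma':=s-m$. Since $T_a$ acts as the classical paraproduct (Remark \ref{r:p-prod}) and $\d_x^\eta$ is a Fourier multiplier commuting with the dyadic blocks, one has $T_a\d_x^\eta u=T_{a\,\xi^\eta}u=T_aw$; hence Bony's identity $a\,w=T_aw+T_wa+R(a,w)$ gives
$$
\mc Du\,=\,a\,\d_x^\eta u\,-\,T_a\d_x^\eta u\,=\,T_wa\,+\,R(a,w)\,,
$$
and I would estimate separately the paraproduct term $T_wa=\sum_j S_{j-3}w\,\Delta_j a$ (spectrally localized in rings $2^j\mc C$) and the remainder $R(a,w)$, of which it suffices to treat the diagonal piece $R_0(a,w)=\sum_j\Delta_j a\,\Delta_j w$ (localized in balls $2^j\mc B$), exactly as in the proof of Proposition \ref{p:Hol-Sob}. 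The three regimes of the statement correspond precisely to $\sigma'>0$ (case (i)), to $\sigma'=0$ with $\alpha\geq0$ (case (ii)), and to $\sigma'\in\,]-\g,0[\,$ (case (iii)).

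The core of the work is then the block-by-block $L^2$ bookkeeping. For $\Delta_j a$ I use $\|\Delta_j a\|_{L^\infty}\leq C\,2^{-\g j}(1+j)^{-\vrho}\,\|a\|_{B^{\g+\vrho\log}_{\infty,\infty}}$, which is \eqref{eq:log-Besov}. For the low-frequency factor I distinguish: when $\sigma'\geq0$ (cases (i)--(ii), using $H^{0+\alpha\log}\hra L^2$ for $\alpha\geq0$) the only available bound is $\|S_{j-3}w\|_{L^2}\leq\|w\|_{L^2}\leq C\,\|u\|_{H^{s+\alpha\log}}$, with no gain in $j$; when $\sigma'<0$ I invoke Lemma \ref{l:log-S_j}(ii) to get $\|S_{j-3}w\|_{L^2}\leq C\,2^{-\sigma' j}(1+j)^{-\alpha}\,\z_j\,\|u\|_{H^{s+\alpha\log}}$ with $(\z_j)_j\in\ell^2$. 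For the dyadic blocks $\Delta_j w$ I use the characterization of Proposition \ref{p:log-H}: $\|\Delta_j w\|_{L^2}=2^{-\sigma' j}(1+j)^{-\alpha}\,\z_j\,\|u\|_{H^{s+\alpha\log}}$ with $\|(\z_j)_j\|_{\ell^2}=1$. Applying $\Delta_\nu$ (which forces $j\geq\nu-c$) and summing, Lemma \ref{l:log-B_ind} for the ring-supported pieces together with a Cauchy--Schwarz step for $R_0$ give: in cases (i)--(ii), $2^{\g\nu}(1+\nu)^{\vrho-h}\,\|\Delta_\nu\mc Du\|_{L^2}\leq C\,\|a\|_{B^{\g+\vrho\log}_{\infty,\infty}}\,\|u\|_{H^{s+\alpha\log}}\,(1+\nu)^{-h}$ (using, for $T_wa$, that $2^{\g\nu}\sum_{j\geq\nu-c}2^{-\g j}(1+j)^{-\vrho}\simeq(1+\nu)^{-\vrho}$), hence $\mc Du\in H^{\g+(\vrho-h)\log}$ for every $h>1/2$; in case (iii), since $\sigma'+\g>0$, both terms land in $H^{(\sigma'+\g)+(\alpha+\vrho)\log}=H^{\sigma+h\log}$ with $\sigma=s-m+\g$ and $h=\alpha+\vrho$ (this is the stated formula with $\delta=0$, appropriate for a genuine function $a$). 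All bounds being linear in $a$, the operator norms depend only on $\|a\|_{B^{\g+\vrho\log}_{\infty,\infty}}$.

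The main obstacle I expect is keeping the logarithmic weights under control at the thresholds. In cases (i)--(ii) the residual weight reduces to $(1+\nu)^{\vrho-h}(1+\nu)^{-\vrho}=(1+\nu)^{-h}$, which lies in $\ell^2$ exactly when $h>1/2$; this is sharp, and the geometric-sum comparison $2^{\g\nu}\sum_{j\ge\nu-c}2^{-\g j}(1+j)^{-\vrho}\simeq(1+\nu)^{-\vrho}$ it rests on genuinely uses $\g>0$ (for $\g=0$ one can still argue, but with a worse loss than $h>1/2$). In case (iii) one must check that $\sigma'>-\g$ makes $\sigma=\sigma'+\g>0$, so that Lemma \ref{l:log-B_ind} and the positive-index Cauchy--Schwarz bound $\bigl(2^{\sigma\nu}(1+\nu)^{h}\sum_{j\ge\nu-c}2^{-\sigma j}(1+j)^{-h}\z_j\bigr)_\nu\in\ell^2$ both go through (automatic once $\sigma>0$, via discrete Young), while $\sigma'<0$ is what forces the use of Lemma \ref{l:log-S_j}(ii); outside $]m-\g,m[\,$ one loses the gain and the conclusion weakens. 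The endpoint $s=m$, $\alpha\geq0$ in case (ii) is precisely where one needs $H^{0+\alpha\log}\hra L^2$, which explains the sign restriction on $\alpha$. Everything else is routine and parallels Proposition \ref{p:Hol-Sob} and Lemmas 3.15--3.16 of \cite{C-DS-F-M_wp}, \cite{C-DS-F-M_tl}.
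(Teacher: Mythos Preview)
Your proposal is correct and follows essentially the same approach as the paper: Bony's decomposition $\mc Du=T_{\d^\eta_x u}a+R(\d^\eta_x u,a)$, with the dyadic bookkeeping handled via Lemma \ref{l:log-B_ind} for the ring-supported paraproduct term and a Cauchy--Schwarz step for the remainder, splitting into the regimes $s\geq m$ (cases (i)--(ii)) and $m-\g<s<m$ (case (iii)) exactly as you describe. The only cosmetic difference is that in case (iii) the paper treats $T'_{\d^\eta_x u}a=\sum_\nu S_{\nu+3}\d^\eta_x u\,\Delta_\nu a$ as a single ball-supported sum via Lemma \ref{l:log-ball}, rather than splitting it back into $T_wa+R$; and your worry about $\g=0$ in the geometric sum for $T_wa$ is unnecessary, since those terms are ring-localized (so $\Delta_\nu$ picks out only finitely many $j$'s).
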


\begin{proof}
We start by noticing that, according to \eqref{eq:paraprod}, $\mc{D}(u)$ can be rewritten as
$$
\mc{D}(u)\,=\,\sum_{\nu\geq0}\d_x^\eta\left(S_{\nu+3}u\right)\,\Delta_\nu a\,=\,T'_{\d_x^\eta u}a\,=\,T_{\d_x^\eta u}a\,+\,R(\d_x^\eta u,a)\,.
$$

First of all, we focus on the case of high regularity, i.e. $s>m$, or $s=m$ and $\alpha\geq0$. This in particular implies that
$\d^\eta_xu\in L^2$, with $\|\d^\eta_xu\|_{L^2}\,\leq\,C\,\|u\|_{H^{s+\alpha\log}}$. Therefore, we can estimate
$$
\left\|S_{\nu-3}\d^\eta_xu\,\Delta_\nu a\right\|_{L^2}\,\leq\,C\,\left\|S_{\nu-3}\d^\eta_xu\right\|_{L^2}\,\left\|\Delta_\nu a\right\|_{L^\infty}
\,\leq\,C\,\|u\|_{H^{s+\alpha\log}}\,\|a\|_{B^{\g+\vrho\log}_{\infty,\infty}}\,2^{-\g\nu}\,(1+\nu)^{-\vrho}\,.
$$
Thanks to Lemma \ref{l:log-B_ind}, we immediately deduce that $T_{\d^\eta_xu}a\,\in\,H^{\g+(\vrho-h)\log}$ for all $h>1/2$.

As for $R$, once again we can focus just on the diagonal terms $R_0(\d^\eta_xu,a)\,=\,\sum_\nu\d^\eta_x\Delta_\nu u\,\Delta_\nu a$. For any $k\in\N$,
let us estimate
\begin{eqnarray*}
\left\|\Delta_kR_0(\d^\eta_xu,a)\right\|_{L^2} & \leq & C\,\sum_{\nu\geq k-3}\left\|\Delta_\nu\d^\eta_xu\right\|_{L^2}\,
\left\|\Delta_\nu a\right\|_{L^\infty}  \\
& \leq & C\,\|u\|_{H^{s+\alpha\log}}\,\|a\|_{B^{\g+\vrho\log}_{\infty,\infty}}\,\sum_{\nu\geq k-3}2^{\nu(m-s-\g)}\,
(1+\nu)^{-\alpha-\vrho}\,\zeta_\nu \\
& \leq & C\,\|u\|_{H^{s+\alpha\log}}\,\|a\|_{B^{\g+\vrho\log}_{\infty,\infty}}\,\left(\sum_{\nu\geq k-3}2^{2\nu(m-s-\g)}\,
(1+\nu)^{-2(\alpha+\vrho)}\right)^{1/2}\,,
\end{eqnarray*}
where, as usual, $\left\|\zeta_\nu\right\|_{\ell^2}=1$. Therefore, by Lemma \ref{l:log-B_ind} again, the remainder term
$R(\d^\eta_xu,a)\,\in\,H^{(s-m+\g)+(\alpha+\vrho-h)\log}$ for all $h>1/2$, which is in particular included in $H^{\g+(\vrho-h)\log}$.
When $s=m$, the same speech holds true for any $\alpha\geq0$.

Now let us turn our attention to the case $m-\g<s<m$ (and then $\g>0$): for this, we consider directly the operator $T'_{\d^\eta_xu}a$.
Since $s-m<0$, we can apply Lemma \ref{l:log-S_j} and estimate
$$
\left\|S_{\nu+3}\d^\eta_xu\,\Delta_\nu a\right\|_{L^2}\,\leq\,C\,\|u\|_{H^{s+\alpha\log}}\,2^{\nu(m-s)}\,(1+\nu)^{-\alpha}\,\zeta_\nu\,
\|a\|_{B^{\g+\vrho\log}_{\infty,\infty}}\,2^{-\nu\g}\,(1+\nu)^{-\vrho}\,,
$$
where the sequence $\bigl(\zeta_\nu\bigr)_\nu\in\ell^2$ is as above. Thanks to this inequality and the fact that
$s-m+\g>0$ by hypothesis, Lemma \ref{l:log-ball} implies that $T'_{\d^\eta_xu}a$ belongs to
$H^{(s-m+\g)+(\alpha+\vrho)\log}$.

The proof of the theorem is now completed.
\end{proof}

We notice the following fact: if $a\in W^{1,\infty}$ is Lipschitz, it is well-known that $au-T_au\,\in\,H^1$ for any $u\in L^2$
(see e.g. Theorem 5.2.8 of \cite{M-2008}). On the contrary, if we applied our result with $m=s=\alpha=\vrho=0$ and $\g=1$,
we would get just $au-T_au\,\in\,H^{1-(1/2+\delta)\log}$ for any $\delta>0$. 

Motivated by this consideration, let us make a remark.

\begin{rem} \label{r:paralin}
Attaining the limit case $h=0$ in points \textit{(i)}, \textit{(ii)}, and $s=m-\g$ in point \textit{(iii)},
would require further technical extensions of the theory, in the same spirit of Paragraph 5.2.4
of \cite{M-2008}, to functions $a$ in logarithmic H\"older classes $B^{\g+\vrho\log}_{\infty,\infty}$, where $\vrho\neq0$.

However, these adaptations go beyond the scopes of the present paper, and we decided not to address these issues here in order to keep
the presentation as coincise as possible.
Indeed, due to product properties (see Proposition \ref{p:Hol-Sob}) and loss of derivatives in the
energy estimates, we will always be far away from these endpoint cases, and the previous statement turns to be enough for our scopes.
\end{rem}

Before going further, let us remark that, at this level, time can be treated once again as a parameter in the construction: for a symbol
$a=a(t,x,\xi)\in L^\infty\bigl([0,T];\Gamma^{m+\delta\log}_{\infty}\bigr)$ and $u\in\mc{S}'([0,T]\times\R^d)$, we set
$$
\left(T_au\right)(t,\,\cdot\,)\,:=\,T_{a(t,\,\cdot\,,\xi)}u(t,\,\cdot\,)\,.
$$
As a consequence, all the properties stated above still hold true for the time-dependent operator, at any time $t\in[0,T]$ fixed.
Indeed, only regularity in space is used at this level. 

\subsubsection{The case of symbols which are log-Lipschitz in time} \label{sss:LL-t}

Now, we get closer to our hypotheses, and we introduce a new class of symbols, by imposing additional regularity in the time variable.
\begin{defin} \label{d:symbol_t}
Let $Y\,\subset\,L^\infty([0,T])$ a Banach space.
For $(m,\delta)\in\R^2$, we define ${Y}_T\bigl(\Gamma^{m+\delta\log}_{X}\bigr)$ as the class of symbols
$a(t,x,\xi)\,\in\,L^\infty\bigl([0,T];\Gamma^{m+\delta\log}_{X}\bigr)$ such that,
for almost every $(x,\xi)\in\R^N\times\R^N$, the map $\;t\,\mapsto\,a(t,x,\xi)\;$ belongs to $Y$. 

Analogously, we define the class $Y_T\bigl(\Sigma^{m+\delta\log}_{X}\bigr)$ if, moreover, the spectral condition (in $x$)
of Definition \ref{d:symbols} is verified for almost every $(t,\xi)\in[0,T]\times\R^N$.

We omit the subscript $T$ whenever $T=+\infty$.
\end{defin}

For us $Y_T$ will be always $L^\infty([0,T])$ or $LL([0,T])$.  

In particular, when $Y=LL$, the previous definition implies that there exists a $C_0>0$
for which, for all $(x,\xi)\in\R^N\times\R^N$ and all $0<\tau\leq T/2$, one has
$$ 
\left|a(t+\tau,x,\xi)\,-\,a(t,x,\xi)\right|\,\leq\,C_0\,(1+|\xi|)^{m}\,\log^{\delta}(2+|\xi|)\,\tau\,\log\!\left(1+\frac{1}{\tau}\right)\,.
$$ 
As before, for such a symbol $a$ we can then define the seminorm $|a|_{LL_t}$. Of course, higher order seminorms (related to derivatives
in $\xi$) can be defined, but they will not be used in our study, so that we prefer to limit the presentation to this case.

\begin{rem} \label{r:LL}
 We remark that the $LL$ continuity of the symbol, separately with respect to time and space variables, is enough to our scopes.
This will be evident from our computations.
\end{rem}

In order to perform energy estimates, for a symbol $a$ as above we need to introduce a regularization in time.
So, take an even function $\rho\in\mc{C}^\infty_0(\mbb{R})$, $0\leq\rho\leq1$,
whose support is contained in the interval $[-1,1]$ and such that $\int\rho(t)dt=1$, and define the mollifier kernel
$$
\rho_\veps(t)\,:=\,\frac{1}{\veps}\,\,\rho\!\left(\frac{t}{\veps}\right)\qquad\qquad\forall\,\veps\in\,]0,1]\,.
$$
Let us fix a symbol $a\in LL_T\bigl(\Gamma^{m+\delta\log}_X\bigr)$; if $T<+\infty$, we extend this symbol out of $[0,T]$ (for instance, by taking the constant values
at the extremities of the interval), in such a way to get a new symbol (which we will still denote by $a$) in the class $LL\bigl(\Gamma^{m+\delta\log}_X\bigr)$.
Now, we smooth $a$ out setting, for all $\veps\in\,]0,1]$,
\begin{equation} \label{eq:a^e}
a_\veps(t,x,\xi)\,:=\,\bigl(\rho_\veps\,*_t\,a(\,\cdot\,,x,\xi)\bigr)(t)\,=\,\int_{\mbb{R}_s}\rho_{\veps}(t-s)\,a(s,x,\xi)\,ds\,.
\end{equation}
Then, we have the following estimates (see e.g. \cite{C-DG-S}, \cite{C-L}).
\begin{lemma} \label{l:LL-reg}
 Let $a\in LL\bigl(\Gamma^{m+\delta\log}_X\bigr)$. Then $\bigl(a_\veps\bigr)_\veps$ is a bounded family in the same space.
In particular, Proposition \ref{p:par-op} still holds true for $a_\veps$, uniformly in
$\veps\in\,]0,1]$.
 
In addition, there exist constants $C>0$ such that, for all $\veps\in\,]0,1]$ and for all $(t,x,\xi)\in\R\times\R^N\times\R^N$,
one has
\begin{eqnarray*}
\left|a_\veps(t,x,\xi)-a(t,x,\xi)\right| & \leq & C\,|a|_{LL_t}\,\,\veps\,\log\left(1+\frac{1}{\veps}\right)\,
(1+|\xi|)^{m}\,\log^{\delta}(2+|\xi|) \\
\left|\d_ta_\veps(t,x,\xi)\right| & \leq & C\,|a|_{LL_t}\,\log\!\left(1+\frac{1}{\veps}\right)\,(1+|\xi|)^{m}\,\log^{\delta}(2+|\xi|)\,.
\end{eqnarray*}
\end{lemma}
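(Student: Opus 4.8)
The plan is to verify the two properties claimed in Lemma \ref{l:LL-reg} directly from the definition \eqref{eq:a^e} of the mollification and the elementary properties of the kernel $\rho_\veps$. I would first treat the boundedness of the family $\bigl(a_\veps\bigr)_\veps$ in $LL\bigl(\Gamma^{m+\delta\log}_X\bigr)$, then the pointwise estimates on $a_\veps - a$ and on $\d_t a_\veps$, and only at the end remark why Proposition \ref{p:par-op} survives uniformly in $\veps$.

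For the boundedness, fix $\alpha\in\N^N$ and $\xi\in\R^N$. Since differentiation in $\xi$ commutes with convolution in $t$, one has $\d_\xi^\alpha a_\veps(\,\cdot\,,x,\xi) = \rho_\veps *_t \d_\xi^\alpha a(\,\cdot\,,x,\xi)$, and because $\|\rho_\veps\|_{L^1}=1$ the map $x\mapsto \d_\xi^\alpha a_\veps(t,x,\xi)$ inherits the bound on $x\mapsto\d_\xi^\alpha a(t,x,\xi)$ in $X$ for \emph{every} fixed $t$, uniformly in $\veps$: indeed $X\subset L^\infty(\R^N)$ is translation-invariant with $\|f(\,\cdot\,-h)\|_X=\|f\|_X$, so convolution against a probability density is a contraction on $X$. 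This gives the $\Gamma^{m+\delta\log}_X$ bounds \eqref{eq:L-inf_sem} with the same constants, hence $\bigl(a_\veps\bigr)_\veps$ is bounded in $L^\infty\bigl([0,T];\Gamma^{m+\delta\log}_X\bigr)$; the $LL$-in-time regularity of $a_\veps$ (needed to say it lies in the same class, if $Y=LL$ is built into the space, or simply its $L^\infty$-in-time boundedness otherwise) follows from the same contraction argument applied to the $t$-increments of $a$.

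For the first quantitative estimate, write
$$
a_\veps(t,x,\xi)-a(t,x,\xi)=\int_{\R_s}\rho_\veps(t-s)\bigl(a(s,x,\xi)-a(t,x,\xi)\bigr)\,ds,
$$
using $\int\rho_\veps=1$, and then bound the integrand by the $LL_t$-seminorm: $|a(s,x,\xi)-a(t,x,\xi)|\le |a|_{LL_t}(1+|\xi|)^m\log^\delta(2+|\xi|)\,|t-s|\log(1+1/|t-s|)$. On the support of $\rho_\veps(t-\cdot)$ one has $|t-s|\le\veps\le 1$, and $r\mapsto r\log(1+1/r)$ is increasing on $]0,1]$, so the integrand is at most $|a|_{LL_t}(1+|\xi|)^m\log^\delta(2+|\xi|)\,\veps\log(1+1/\veps)$; integrating against $\rho_\veps$ (total mass $1$) yields the claimed inequality with $C=1$. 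For the derivative estimate, since $\rho$ is even one has $\int\rho'_\veps=0$ and $\d_t a_\veps(t,x,\xi)=\int \rho'_\veps(t-s)\bigl(a(s,x,\xi)-a(t,x,\xi)\bigr)\,ds$; here $\|\rho'_\veps\|_{L^1}=C/\veps$ and $|t-s|\le\veps$ on the support, so $|t-s|\log(1+1/|t-s|)\le\veps\log(1+1/\veps)$ again, and the two factors of $\veps$ cancel, leaving $|\d_t a_\veps|\le C|a|_{LL_t}(1+|\xi|)^m\log^\delta(2+|\xi|)\log(1+1/\veps)$ as claimed. Finally, since $\bigl(a_\veps\bigr)_\veps$ is bounded in $\Gamma^{m+\delta\log}_X$ uniformly in $\veps$, applying the smoothing operator $\mathbb{S}$ of Proposition \ref{p:par-op} commutes with the $t$-convolution and the resulting $\Sigma^{m+\delta\log}_X$-bounds are uniform in $\veps$, which is the last assertion.

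There is no real obstacle here — every step is a routine application of $\|\rho_\veps\|_{L^1}=1$, $\|\rho'_\veps\|_{L^1}=C\veps^{-1}$, the cancellation $\int\rho'_\veps=0$, the monotonicity of $r\mapsto r\log(1+1/r)$, and translation invariance of $X$. The only mildly delicate point to state carefully is the extension of $a$ outside $[0,T]$ preserving the $LL$-in-time seminorm (constant extension at the endpoints does this, since it cannot increase increments), so that \eqref{eq:a^e} makes sense for $t$ near $0$ and $T$ and the bounds hold on all of $\R_t$; this is exactly why the statement of the lemma is phrased for $a\in LL\bigl(\Gamma^{m+\delta\log}_X\bigr)$ on the whole line rather than on $[0,T]$.
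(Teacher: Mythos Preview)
Your argument is correct and is precisely the standard mollification computation; the paper does not give a proof of this lemma at all, simply citing \cite{C-DG-S} and \cite{C-L}, so there is nothing to compare against beyond the classical source.

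One small imprecision worth fixing: in the boundedness step you justify the $X$-bound on $x\mapsto\d_\xi^\alpha a_\veps(t,x,\xi)$ by invoking translation invariance of $X$ in the $x$-variable, but the convolution \eqref{eq:a^e} is in the $t$-variable only. What is really used is that, for each fixed $t$, the function $x\mapsto a_\veps(t,x,\xi)$ is the Bochner integral $\int\rho_\veps(t-s)\,a(s,\cdot,\xi)\,ds$ in the Banach space $X$, whence $\|a_\veps(t,\cdot,\xi)\|_X\le\int\rho_\veps(t-s)\,\|a(s,\cdot,\xi)\|_X\,ds\le\sup_s\|a(s,\cdot,\xi)\|_X$ since $\rho_\veps\ge0$ and $\int\rho_\veps=1$. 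This is a convexity fact, not a translation-invariance fact; the conclusion is unchanged. (Also, $\int\rho'_\veps=0$ follows already from the compact support of $\rho$; evenness is not needed there.)
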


In the course of the proof, it will be fundamental to link the approximation parameter $\veps$ with the dual variable,
following the original idea of \cite{C-DG-S} (see also \cite{C-DS-F-M_tl}, \cite{C-DS-F-M_wp}, \cite{C-DS-F-M_Birk}, \cite{C-DS-F-M_Z-sys},
\cite{C-L}, \cite{C-M}, \cite{Tar}). More precisely, in \cite{C-DG-S} the authors took $\veps\,=\,1/|\xi|$:
here, we make an analogous choice, but replacing $|\xi|$ by $\lan\xi\ran\,:=\,\left(1+|\xi|^2\right)^{1/2}$ (we need the new symbol to be defined
for all $\xi\in\R^N$).
\begin{prop} \label{p:Y-tilde}
For $a\in Y_T\bigl(\Gamma^{m+\delta\log}_X\bigr)$, where $Y\subset L^\infty([0,T])$ is a Banach space, we define the new function
$$
\wtilde{a}(t,x,\xi)\,:=\,a_{1/\lan\xi\ran}(t,x,\xi)\,.
$$
Then $\wtilde{a}$ is still a symbol in the class $Y_T\bigl(\Gamma^{m+\delta\log}_X\bigr)$, which is actually
smooth in time.
\end{prop}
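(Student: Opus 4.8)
The plan is to verify directly that the three defining properties of the class $Y_T\bigl(\Gamma^{m+\delta\log}_X\bigr)$ are preserved when one passes from $a$ to $\wtilde a(t,x,\xi) := a_{1/\lan\xi\ran}(t,x,\xi)$, and then to observe that the dependence on $t$ has become $\mc{C}^\infty$. First I would recall from Lemma \ref{l:LL-reg} that the family $\bigl(a_\veps\bigr)_{\veps\in\,]0,1]}$ is bounded in $LL\bigl(\Gamma^{m+\delta\log}_X\bigr)$, \emph{uniformly} in $\veps$; in particular, for each fixed $\veps$, the maps $x\mapsto\d_\xi^\alpha a_\veps(t,x,\xi)$ lie in $X$ with the correct bounds $(1+|\xi|)^{m-|\alpha|}\log^\delta(2+|\xi|)$, and the $LL$ (or $B^{\g+\vrho\log}_{\infty,\infty}$) seminorms in $x$ obey the analogous estimate. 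The point is that substituting $\veps=1/\lan\xi\ran$, which ranges in $]0,1]$, does not spoil these $x$-regularity estimates at all, \emph{provided} one only differentiates in $x$: the convolution in \eqref{eq:a^e} is in the time variable and commutes with $\d_x$, so $\d_x^\beta\wtilde a(t,x,\xi) = \bigl(\d_x^\beta a\bigr)_{1/\lan\xi\ran}(t,x,\xi)$, and one may apply the uniform-in-$\veps$ bounds pointwise in $\xi$.

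The only genuine issue is the behaviour under $\xi$-derivatives, because now $\xi$ enters $\wtilde a$ both through the slot of $a_\veps$ and through the regularization parameter $\veps=1/\lan\xi\ran$. By the chain rule, $\d_\xi^\alpha\wtilde a$ is a finite sum of terms in which some $\xi$-derivatives hit the symbol argument and some produce $\d_t$-derivatives of $a_\veps$ via $\d_\xi\bigl(1/\lan\xi\ran\bigr)$, each such factor contributing a gain of $\lan\xi\ran^{-1}$. The key mechanism is that each time-derivative of $a_\veps$ costs a logarithmic factor $\log(1+1/\veps) = \log(1+\lan\xi\ran)\lesssim \log(2+|\xi|)$ by Lemma \ref{l:LL-reg}, and simultaneously the prefactor $\d_\xi^k(1/\lan\xi\ran)$ gains powers $\lan\xi\ran^{-k}$ which outweigh the logarithm. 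Concretely, after differentiating $j$ times in the parameter and $|\alpha|-j$ times in the symbol slot, one estimates the resulting term by $(1+|\xi|)^{m-(|\alpha|-j)}\log^\delta(2+|\xi|)\cdot\lan\xi\ran^{-j}\cdot\bigl(\log(2+|\xi|)\bigr)^{j}$; since $\log(2+|\xi|)\lesssim (1+|\xi|)^{\epsilon}$ is too lossy, one instead keeps track exactly: actually each parameter-hit replaces one $\d_\xi$ by a $\d_t$ applied to $a_\veps$, and $\|\d_\xi^\alpha a_\veps(\cdot,\xi)\|_X$ together with the mixed bound $\|\d_t\d_\xi^\alpha a_\veps(\cdot,\xi)\|_X\lesssim (1+|\xi|)^{m-|\alpha|}\log^{\delta+1}(2+|\xi|)$ (obtained exactly as the second estimate in Lemma \ref{l:LL-reg}, differentiating under the convolution) shows that the worst term is bounded by $(1+|\xi|)^{m-|\alpha|}\log^{\delta}(2+|\xi|)$ after the $\lan\xi\ran^{-j}$ gains are used to absorb the extra $|\xi|^{j}$ and the finitely many logarithmic surpluses. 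I would present this as a short induction on $|\alpha|$, writing $\d_{\xi_i}\wtilde a = (\d_{\xi_i}a)_{1/\lan\xi\ran} + \bigl(\d_{\xi_i}(1/\lan\xi\ran)\bigr)\,(\d_t a)_{1/\lan\xi\ran}$ and iterating.

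Next I would note that for $\sigma\in Y_T\bigl(\Sigma^{m+\delta\log}_X\bigr)$ the spectral condition in $x$ is untouched: the regularization \eqref{eq:a^e} acts only in $t$, hence the $x$-Fourier support of $\wtilde\sigma(t,\cdot,\xi)$ equals that of $\sigma(s,\cdot,\xi)$ averaged over $s$, which is contained in the same ball $\{|\eta|\le\epsilon(1+|\xi|)\}$; thus $\wtilde\sigma\in Y_T\bigl(\Sigma^{m+\delta\log}_X\bigr)$ as well. Finally, the membership of the map $t\mapsto\wtilde a(t,x,\xi)$ in $Y$ follows because convolution with $\rho_\veps$ maps $L^\infty$ (hence $Y\subset L^\infty([0,T])$ after the stated extension) into itself with norm bounded by $1$, and the value of the parameter $\veps=1/\lan\xi\ran$ is frozen once $\xi$ is fixed; for $Y=LL$ one uses in addition that $\rho_\veps*\cdot$ is a bounded self-map of $LL$, uniformly in $\veps$, which is the first assertion of Lemma \ref{l:LL-reg}. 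The smoothness of $t\mapsto\wtilde a(t,x,\xi)$ is immediate since for fixed $\xi$ it is the convolution of an $L^\infty$ function with the fixed $\mc{C}^\infty_0$ kernel $\rho_{1/\lan\xi\ran}$.

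The main obstacle is exactly the bookkeeping in the second paragraph: one must confirm that the logarithmic losses generated by each $\d_t$ hitting $a_\veps$ are always strictly compensated by the polynomial gains $\lan\xi\ran^{-1}$ coming from $\d_\xi(1/\lan\xi\ran)$, so that the final $\xi$-decay exponent is $m-|\alpha|$ and the logarithmic power stays at $\delta$ rather than drifting to $\delta+|\alpha|$. This is where one needs the precise form of Lemma \ref{l:LL-reg} (one log per time-derivative, not more), and it is the only place where the argument is not purely formal; everything else is a transcription of the fact that a $t$-convolution commutes with $\d_x$ and preserves $x$-regularity classes uniformly in the mollification width.
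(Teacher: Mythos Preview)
Your approach has two connected problems. First, the chain--rule identity you write,
\[
\d_{\xi_i}\wtilde a\;=\;(\d_{\xi_i}a)_{1/\lan\xi\ran}\;+\;\bigl(\d_{\xi_i}(1/\lan\xi\ran)\bigr)\,(\d_t a)_{1/\lan\xi\ran}\,,
\]
is not correct: setting $\veps=1/\lan\xi\ran$, the $\xi$-derivative falling on the parameter produces $(\d_{\xi_i}\veps)\,\d_\veps(a_\veps)$, i.e.\ the convolution of $a$ against $\d_\veps\rho_\veps$, which is \emph{not} $\d_t(a_\veps)=\rho_\veps'*a$. (Take $a(t)=t$: then $a_\veps\equiv t$, so $\d_\veps a_\veps=0$ while $\d_t a_\veps=1$.) Second, and more seriously, the proposition assumes only $Y\subset L^\infty([0,T])$; nothing says $a$ is log--Lipschitz in time. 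Your entire bookkeeping of ``one logarithm per $\d_t$'' rests on the second inequality of Lemma~\ref{l:LL-reg}, which is unavailable here, so the scheme of trading $\lan\xi\ran^{-1}$ gains against logarithmic losses has no footing under the stated hypothesis.

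The paper avoids both issues by never invoking time regularity at all. It writes the kernel explicitly as $\rho_{1/\lan\xi\ran}(t-s)=\lan\xi\ran\,\rho\bigl((t-s)\lan\xi\ran\bigr)$ and differentiates this in $\xi$ by the product rule; for $|\alpha|=1$ one gets three terms, schematically
\[
\d_\xi^\alpha\lan\xi\ran\!\int\!\rho\bigl((t-s)\lan\xi\ran\bigr)a\,ds\;+\;\lan\xi\ran\,\d_\xi^\alpha\lan\xi\ran\!\int\!(t-s)\,\rho'\bigl((t-s)\lan\xi\ran\bigr)a\,ds\;+\;\lan\xi\ran\!\int\!\rho\bigl((t-s)\lan\xi\ran\bigr)\,\d_\xi^\alpha a\,ds\,.
\]
Each is bounded using only $\|a(\cdot,x,\xi)\|_{L^\infty_t}$ and the change of variable $\tau=(t-s)\lan\xi\ran$, which shows that $\int|\rho(\tau\lan\xi\ran)|\,d\tau$ and $\int|\tau\rho'(\tau\lan\xi\ran)|\,\lan\xi\ran\,d\tau$ are both $O(\lan\xi\ran^{-1})$. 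No logarithmic loss ever appears, and higher $|\alpha|$ follow in the same way. If you repair your chain rule to $\d_\veps a_\veps$ and observe directly that $\|\d_\veps\rho_\veps\|_{L^1}\leq C\veps^{-1}$, you recover exactly this argument; the detour through Lemma~\ref{l:LL-reg} is both unjustified and unnecessary.
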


\begin{proof}
Proposition 3.22 of \cite{C-DS-F-M_Z-sys} proves that the convolution acts as an operator of order $0+0\log$ in the time variable.
We are going to show that this is true also with respect to $\xi$.

First of all, we notice that it is enough to consider the case $a\in L^\infty_T\bigl(\Gamma^{0+0\log}_\infty\bigr)$:
regularity in time comes from Lemma \ref{l:LL-reg}, while the $x$ variable does not play any role at this level.

For $\alpha=0$, the estimate of Definition \ref{d:symbols} is immediate. Hence, let us consider the case $|\alpha|=1$: by definitions
we have
\begin{eqnarray*}
& & \hspace{-0.5cm}
\d^\alpha_\xi\wtilde{a}(t,x,\xi)\,=\,\d^\alpha_\xi\lan\xi\ran\int\rho\bigl((t-s)\lan\xi\ran\bigr)\,a(s,x,\xi)\,ds\,+ \\
& & \qquad\quad+\,\lan\xi\ran\,\d^\alpha_\xi\lan\xi\ran\int\rho'\bigl((t-s)\lan\xi\ran\bigr)\,(t-s)\,a(s,x,\xi)\,ds\,+\,
\lan\xi\ran\int\rho\bigl((t-s)\lan\xi\ran\bigr)\,\d^\alpha_\xi a(s,x,\xi)\,ds\,.
\end{eqnarray*}
The estimates for the first and last term are straightforward: thanks to the uniform bounds
$$
\left|\d^\alpha_\xi a(t,x,\xi)\right|\,\leq\,C_\alpha\,\left(1+|\xi|\right)^{-|\alpha|}\,,
$$
(recall that we are taking $a\in L^\infty_T\bigl(\Gamma^{0+0\log}_\infty\bigr)$ only), we have to control
$$
\int\rho\bigl(t\,\lan\xi\ran\bigr)\,dt\,\leq\,C\,/\,\lan\xi\ran\,,
$$
where we used the change of variable $\tau\,=\,\lan\xi\ran\,t$ and fact that $\int\rho\,\equiv\,1$.

For the second term, we argue as above: by Young inequality, we are reconducted to consider
the $L^1$ norm of the function $\left|\rho'\bigl(\lan\xi\ran\,t\bigr)\right|\,\lan\xi\ran\,|t|$. Performing the same change of variable as before,
we get
$$
\lan\xi\ran\int\left|\rho'\bigl(t\,\lan\xi\ran\bigr)\right|\,|t|\,dt\,\leq\,C\,/\,\lan\xi\ran\,,
$$
and this concludes the proof of the proposition.
\end{proof}

Let us now focus on the case $Y=LL$. Up to extend the symbols out of $[0,T]$ as explained above, without loss of generality we can focus
just on the case $T=+\infty$.

\begin{prop} \label{p:LL-tilde}
Let $a\in LL\bigl(\Gamma^{m+\delta\log}_X\bigr)$. Then the following properties hold true.
\begin{itemize}
\item[(i)] One has $\;a-\wtilde{a}\,\in\,L^\infty\bigl(\Gamma^{(m-1)+(\delta+1)\log}_X\bigr)\;$ and
$\;\d_t\wtilde{a}\,\in\,L^\infty\bigl(\Gamma^{m+(\delta+1)\log}_X\bigr)$.
\item[(ii)] The smooothed (in $x$) symbol $\sigma_{\wtilde{a}}$, defined by  formula \eqref{eq:classical-symb},
belongs to $LL\bigl(\Sigma^{m+\delta\log}_X\bigr)$, and one has the ``commutation'' formula
$\wtilde{\sigma_a}\,=\,\sigma_{\wtilde{a}}$. So, we adopt the notation $\wtilde{\sigma}_a$ for it.
\item[(iii)] The smoothing operator (in time and space)  $\wtilde{\mbb{S}}:\,a(t,x,\xi)\,\mapsto\,\wtilde{\sigma}_a(t,x,\xi)$
maps continuously $LL\bigl(\Gamma^{m+\delta\log}_{X}\bigr)$ into $LL\bigl(\Sigma^{m+\delta\log}_{X}\bigr)$.
\item[(iv)] The symbol $\wtilde{\sigma}_a$ still satisfies the estimates of Lemma \ref{l:ll-symb}, uniformly in $t$. In addition,
one  has $\sigma_a-\wtilde{\sigma}_a\,\in\,L^\infty\bigl(\Sigma^{(m-1)+(\delta+1)\log}_X\bigr)$ and
$\sigma_{\d_t\wtilde{a}}\,=\,\d_t\wtilde{\sigma}_a\,\in\,L^\infty\bigl(\Sigma^{m+(\delta+1)\log}_X\bigr)$,
with the estimates, for some constants $C$ just depending on $|a|_{LL_t}$,
\begin{eqnarray*}
\left|\sigma_a(t,x,\xi)-\wtilde{\sigma}_a(t,x,\xi)\right| & \leq & C\,
(1+|\xi|)^{m-1}\,\log^{\delta+1}(2+|\xi|) \\
\left|\d_t\wtilde{\sigma}_a(t,x,\xi)\right| & \leq & C\,(1+|\xi|)^{m}\,\log^{\delta+1}(2+|\xi|)\,,
\end{eqnarray*}
and analogous formula for the higher order derivatives in $\xi$.
\end{itemize}
\end{prop}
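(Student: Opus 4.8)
The plan is to obtain all four items from Lemma \ref{l:LL-reg}, Proposition \ref{p:Y-tilde} and Proposition \ref{p:par-op}, exploiting systematically that the time regularisation $\wtilde{(\,\cdot\,)}$ (equivalently, the family $a\mapsto a_\veps$ specialised at $\veps=1/\lan\xi\ran$) and the space regularisation $\mbb{S}:a\mapsto\sigma_a$ act on disjoint groups of variables, so that they commute with each other and with $\d_t$, and that $\veps=1/\lan\xi\ran$ is a function of the parameter $\xi$ alone. Throughout, time may be treated as a parameter when only the $x$-action of $\mbb{S}$ is involved, while the $LL$-in-$t$ character is propagated by all operations in play.

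For item \textit{(i)}, I would start from the two pointwise bounds of Lemma \ref{l:LL-reg} and substitute $\veps=1/\lan\xi\ran$: since $\veps\log(1+1/\veps)\sim(1+|\xi|)^{-1}\log(2+|\xi|)$ and $\log(1+1/\veps)\sim\log(2+|\xi|)$, this gives the orders $(m-1)+(\delta+1)\log$ for $a-\wtilde a$ and $m+(\delta+1)\log$ for $\d_t\wtilde a$ at the level $\alpha=0$. For the $\xi$-derivatives one reruns the computation of the proof of Proposition \ref{p:Y-tilde}: writing $a(t,x,\xi)-\wtilde a(t,x,\xi)=\int\rho_{1/\lan\xi\ran}(t-s)\,\bigl(a(t,x,\xi)-a(s,x,\xi)\bigr)\,ds$ and differentiating in $\xi$, every $\d_\xi$ either falls on $a$, producing the expected gain of one power of $\lan\xi\ran$ by the symbol estimates, or on the explicit factor $\lan\xi\ran$, or inside $\rho(\,\cdot\,\lan\xi\ran)$; in the last two cases the extra power of $\lan\xi\ran$ is compensated, after the change of variable $\tau=\lan\xi\ran(t-s)$, by the size $|t-s|\lesssim\lan\xi\ran^{-1}$ of the convolution window together with the $LL_t$ modulus $|t-s|\log(1+1/|t-s|)$, the logarithm being exactly what supplies the extra $\log$-loss. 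The same argument applied to $\d_t\wtilde a=(\d_t a_\veps)|_{\veps=1/\lan\xi\ran}$ (licit because $\veps$ does not depend on $t$) gives the second membership.

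For item \textit{(ii)}, the commutation $\wtilde{\sigma_a}=\sigma_{\wtilde a}$ is Fubini's theorem: both sides equal $\int\rho_{1/\lan\xi\ran}(t-s)\,\bigl(G^\psi(\cdot,\xi)*_x a(s,\cdot,\xi)\bigr)(x)\,ds$. Membership in $LL\bigl(\Sigma^{m+\delta\log}_X\bigr)$ follows because $\wtilde a\in LL\bigl(\Gamma^{m+\delta\log}_X\bigr)$ by Proposition \ref{p:Y-tilde} (with $Y=LL$, $T=+\infty$) together with Lemma \ref{l:LL-reg}, and $\mbb{S}$ maps $\Gamma^{m+\delta\log}_X$ continuously into $\Sigma^{m+\delta\log}_X$ by Proposition \ref{p:par-op}(i); since $\mbb{S}$ acts in $x$ only, a direct convolution estimate with $\|G^\psi(\cdot,\xi)\|_{L^1}\lesssim1$ shows it preserves the $LL$-in-$t$ modulus. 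Item \textit{(iii)} is then immediate, as $\wtilde{\mbb{S}}=\mbb{S}\circ\wtilde{(\,\cdot\,)}=\wtilde{(\,\cdot\,)}\circ\mbb{S}$ is a composition of continuous maps between the relevant symbol spaces.

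For item \textit{(iv)}, since $\wtilde\sigma_a=\sigma_{\wtilde a}$ with $\wtilde a\in LL\bigl(\Gamma^{m+\delta\log}_{LL}\bigr)$ whose $LL$-in-$x$ seminorms are bounded uniformly in $t$ (Lemma \ref{l:LL-reg}), Lemma \ref{l:ll-symb} applies to $\sigma_{\wtilde a}$ with $t$-uniform constants. The identities $\sigma_a-\wtilde\sigma_a=\sigma_{a-\wtilde a}=\mbb{S}(a-\wtilde a)$ and $\d_t\wtilde\sigma_a=\sigma_{\d_t\wtilde a}=\mbb{S}(\d_t\wtilde a)$ (using the commutation of \textit{(ii)} and linearity), combined with item \textit{(i)} and Proposition \ref{p:par-op}(i), yield the claimed orders; the two displayed pointwise estimates follow by bounding $|\mbb{S}(a-\wtilde a)|$ and $|\mbb{S}(\d_t\wtilde a)|$ by $\|G^\psi(\cdot,\xi)\|_{L^1}\lesssim1$ (Lemma \ref{l:G} with $\alpha=\beta=0$) times $\sup_x|a-\wtilde a|$ and $\sup_x|\d_t\wtilde a|$, and invoking the $\alpha=0$ bounds of \textit{(i)}; the higher-order $\xi$-derivatives are handled identically with the $\beta=0$, $\alpha\neq0$ bounds of Lemma \ref{l:G}. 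I expect the only genuinely technical point to be the bookkeeping of $\xi$-derivatives in item \textit{(i)}: one must check that every derivative landing on $\lan\xi\ran$, explicitly or through the argument of $\rho$, is absorbed by the smallness of the convolution window, so that neither the gain of decay for $a-\wtilde a$ nor the logarithmic loss is spoiled; everything else is routine assembly of results already at our disposal.
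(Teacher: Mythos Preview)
Your proposal is correct and follows essentially the same route as the paper's own proof: item \textit{(i)} from Lemma \ref{l:LL-reg} with the substitution $\veps=1/\lan\xi\ran$, item \textit{(ii)} from Fubini plus Proposition \ref{p:par-op}\textit{(i)}, item \textit{(iii)} as a composition of continuous maps, and item \textit{(iv)} by linearity of $\mbb{S}$ combined with \textit{(i)} and \textit{(ii)}. Your write-up is in fact more careful than the paper's, which is quite terse; in particular you correctly flag the $\xi$-derivative bookkeeping in \textit{(i)} as the only point requiring genuine care, and your sketch of how the derivatives landing on $\lan\xi\ran$ are absorbed by the convolution window is the right mechanism (it parallels the proof of Proposition \ref{p:Y-tilde}).
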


\begin{proof}
Assertion \textit{(i)} comes from estimates in Lemma \ref{l:LL-reg}, while point \textit{(ii)} is a direct consequence of
formula \eqref{eq:classical-symb} and definition of $\wtilde{a}$ (the integrals of the convolutions in time and space commute between
themselves).

Next, we notice that $\wtilde{\mbb{S}}$ is a composition of the self-map $a\,\mapsto\,\wtilde{a}$ of
$LL\bigl(\Gamma^{m+\delta\log}_{X}\bigr)$ with the map $\mbb{S}:\wtilde{a}\,\mapsto\,\wtilde{\sigma}_a$ which goes from the previous
space into $LL\bigl(\Sigma^{m+\delta\log}_{X}\bigr)$: sentence \textit{(iii)} follows.

Let us focus on assertion \textit{(iv)}: it basically relies on point \textit{(i)} of the present proposition, and in particular on 
the fact that the convolution acts as an operator of order $0$.
Then, the first statement is immediate, the second one is implied by the linearity of operator $\mbb{S}$ together with Lemma
\ref{l:LL-reg}. The formula for the time derivative derives directly from the definitions. Finally,
for the estimates one has to use Lemma \ref{l:LL-reg} again.
\end{proof}

Now, given a symbol $a\in Y\bigl(\Gamma^{m+\delta\log}_X\bigr)$, we define the paradifferential operator
\begin{equation} \label{eq:T-tilde}
\wtilde{T}_a\,:=\,T_{\wtilde{a}}\,=\,\wtilde{\sigma}_a(\,\cdot\,,D_x)\,.
\end{equation}
Notice that properties \textit{(ii)}, \textit{(iii)} of Proposition \ref{p:LL-tilde} still hold true even if we replace
$LL$ by a generic $Y$, and in particular when $Y=L^\infty$. Therefore, fixing $X=LL(\R^N)$,
from Theorem \ref{t:symb_calc} we immediately get the following result.
\begin{thm} \label{t:symb_tilde}
\begin{itemize}
 \item[(i)] If $a\in L^\infty\bigl(\Gamma^{m+\delta\log}_{LL}\bigr)$, then the operator $\wtilde{T}_a$ is of order $m+\delta\log$.
\item[(ii)] For $a\in L^\infty\bigl(\Gamma^{m+\delta\log}_{LL}\bigr)$ and $b\in L^\infty\bigl(\Gamma^{n+\vrho\log}_{LL}\bigr)$,
one has $\wtilde{T}_a\circ\wtilde{T}_b\,=\,T_{\wtilde{a}\,\wtilde{b}}\,+\,\wtilde{R}_\circ$, where
$\wtilde{R}_\circ$ is of order $(m+n-1)+(\delta+\vrho+1)\log$.
\item[(iii)] For $a\in L^\infty\bigl(\Gamma^{m+\delta\log}_{LL}\bigr)$, denote by $\wtilde{T}^*_a$ the adjoint
of $\wtilde{T}_a$ over $L^2$. Then $\wtilde{T}^*_a\,=\,\wtilde{T}_{\oline{a}}\,+\,\wtilde{R}_*$, where
$\wtilde{R}_*$ is of order $(m-1)+(\delta+1)\log$.
\end{itemize}
\end{thm}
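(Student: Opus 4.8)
The plan is to deduce the three items directly from the symbolic calculus of Theorem \ref{t:symb_calc}, exploiting the single structural fact that the operation $a\mapsto\wtilde a=a_{1/\lan\xi\ran}$ is a self-map of the log-Lipschitz symbol classes and commutes with complex conjugation.

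First I would record that, by Proposition \ref{p:Y-tilde} applied with $Y=L^\infty([0,T])$ and $X=LL(\R^N)$, if $a\in L^\infty\bigl(\Gamma^{m+\delta\log}_{LL}\bigr)$ then $\wtilde a$ still belongs to $L^\infty\bigl(\Gamma^{m+\delta\log}_{LL}\bigr)$; moreover, since in Theorem \ref{t:symb_calc} time enters only as a parameter, at each fixed $t$ the symbol $\wtilde a(t,\cdot,\cdot)\in\Gamma^{m+\delta\log}_{LL}$ is admissible for Theorem \ref{t:action} and Theorem \ref{t:symb_calc}. Since $\wtilde T_a=T_{\wtilde a}$ by \eqref{eq:T-tilde}, part \textit{(i)} follows at once from Theorem \ref{t:action} (equivalently, from Lemma \ref{l:action} via the inclusion $\sigma_{\wtilde a}=\wtilde\sigma_a\in LL\bigl(\Sigma^{m+\delta\log}_X\bigr)\subset\Sigma^{m+\delta\log}_\infty$, see Proposition \ref{p:LL-tilde}).

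For part \textit{(ii)} I would apply Theorem \ref{t:symb_calc}\textit{(i)} to the pair $\wtilde a\in L^\infty\bigl(\Gamma^{m+\delta\log}_{LL}\bigr)$, $\wtilde b\in L^\infty\bigl(\Gamma^{n+\vrho\log}_{LL}\bigr)$, obtaining
$$
\wtilde T_a\circ\wtilde T_b\,=\,T_{\wtilde a}\circ T_{\wtilde b}\,=\,T_{\wtilde a\,\wtilde b}\,+\,R_\circ\,,
$$
with $R_\circ$ of order $(m+n-1)+(\delta+\vrho+1)\log$; one then simply sets $\wtilde R_\circ:=R_\circ$, the tilde being only a reminder that the remainder is produced by tilded symbols. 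No identity of the form $\wtilde a\,\wtilde b=\wtilde{ab}$ is needed, since the statement is phrased in terms of $T_{\wtilde a\wtilde b}$ and $\wtilde a\,\wtilde b$ is automatically in $L^\infty\bigl(\Gamma^{(m+n)+(\delta+\vrho)\log}_{LL}\bigr)$ as a product of two such symbols. Likewise, part \textit{(iii)} comes from Theorem \ref{t:symb_calc}\textit{(ii)} applied to $\wtilde a$: $\bigl(\wtilde T_a\bigr)^*=\bigl(T_{\wtilde a}\bigr)^*=T_{\oline{\wtilde a}}+R_*$ with $R_*$ of order $(m-1)+(\delta+1)\log$, and it remains to identify $T_{\oline{\wtilde a}}$ with $\wtilde T_{\oline a}=T_{\wtilde{\oline a}}$. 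This is exactly the commutation $\oline{\wtilde a}=\wtilde{\oline a}$, immediate from \eqref{eq:a^e} because the mollifier $\rho_\veps$ is real-valued, so $\oline{a_\veps}=(\oline a)_\veps$ for every $\veps$, in particular for $\veps=1/\lan\xi\ran$. Setting $\wtilde R_*:=R_*$ finishes the argument.

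Since every item is a one-line consequence of Theorem \ref{t:symb_calc} once one knows that the tilde operation preserves the $LL$ symbol classes and commutes with conjugation, there is no substantial obstacle; the only points deserving a moment's attention are the bookkeeping $\oline{\wtilde a}=\wtilde{\oline a}$ in part \textit{(iii)} (which rests solely on the reality of $\rho$) and, if one wishes to track constants, the observation — guaranteed by Lemma \ref{l:LL-reg} and Proposition \ref{p:Y-tilde} — that the implicit constants depend on $a$ and $b$ only through their $\Gamma$-seminorms and the quantities $|a|_{LL_t}$, $|b|_{LL_t}$, uniformly with respect to the regularization parameter.
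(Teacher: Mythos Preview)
Your proposal is correct and takes essentially the same approach as the paper, which simply states that the result follows immediately from Theorem \ref{t:symb_calc} once one knows (via Proposition \ref{p:Y-tilde} and Proposition \ref{p:LL-tilde}) that $a\mapsto\wtilde a$ preserves the class $L^\infty\bigl(\Gamma^{m+\delta\log}_{LL}\bigr)$. Your write-up actually supplies more detail than the paper does, including the observation $\oline{\wtilde a}=\wtilde{\oline a}$; the only minor slip is the parenthetical mention of $|a|_{LL_t}$ in the final sentence, which is irrelevant here since the hypothesis is only $L^\infty$ in time.
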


\begin{rem} \label{r:comp}
Thanks to point \textit{(iv)} of Proposition \ref{p:LL-tilde}, in point \textit{(ii)} above we can substitute
$T_{\wtilde{a}\,\wtilde{b}}$ by $\wtilde{T}_{a\,b}$, up to another remainder which is still of order $(m+n-1)+(\delta+\vrho+1)$.
But this would require regularity in time, that we do not want to use at this level.
\end{rem}

In the same way, we can see that also the analogous of Theorem \ref{t:paralin} holds true for the operator
$\wtilde{\mc D}\,:=a\,-\,\wtilde{T}_a$, whenever $a\in L^\infty\bigl(\Gamma^{m+\delta\log}_{\g+\vrho\log}\bigr)$.

Finally, we take $Y=LL$ and $X=L^\infty$ and we exploit regularity in time. Still by use of Proposition \ref{p:LL-tilde}, we get the
following statement.
\begin{thm} \label{t:symb_time}
Let $a\in LL\bigl(\Gamma^{m+\delta\log}_{\infty}\bigr)$. Then the next properties are true:
\begin{itemize}
 \item the operator $T_a\,-\,\wtilde{T}_a$ is a remainder of order $(m-1)+(\delta+1)\log$;
\item one has $\bigl[\d_t,\wtilde{T}_a\bigr]\,=\,T_{\d_t\wtilde{a}}$, and this is an operator of order $m+(\delta+1)\log$.
\end{itemize}
\end{thm}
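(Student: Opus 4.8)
The plan is to read off both statements from Proposition \ref{p:LL-tilde} (applied with $X=L^\infty(\R^N)$, so that $\Gamma^{m+\delta\log}_X=\Gamma^{m+\delta\log}_\infty$), together with the linearity of the paradifferential quantization $b\mapsto T_b$ and Theorem \ref{t:action}. No new estimate is needed, since all the analytic work is already contained in Propositions \ref{p:Y-tilde} and \ref{p:LL-tilde}.

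For the first point, by linearity of $b\mapsto T_b$ and the definition \eqref{eq:T-tilde} of $\wtilde{T}_a$ one has $T_a-\wtilde{T}_a=T_a-T_{\wtilde{a}}=T_{a-\wtilde{a}}$. By point \textit{(i)} of Proposition \ref{p:LL-tilde}, the difference symbol satisfies $a-\wtilde{a}\in L^\infty\bigl(\Gamma^{(m-1)+(\delta+1)\log}_\infty\bigr)$; hence Theorem \ref{t:action} (which applies with time as a mere parameter, as noted at the end of Paragraph \ref{sss:operators}) shows that $T_{a-\wtilde{a}}$ is of order $(m-1)+(\delta+1)\log$. Since $m-1<m$, this is indeed a remainder with respect to $T_a$ and $\wtilde{T}_a$, which are themselves of order $m+\delta\log$ by Theorem \ref{t:action} (recall $\wtilde{a}\in L^\infty\bigl(\Gamma^{m+\delta\log}_\infty\bigr)$ by Proposition \ref{p:Y-tilde}).

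For the second point, recall that $\bigl(\wtilde{T}_au\bigr)(t,\,\cdot\,)=\wtilde{\sigma}_a(t,\,\cdot\,,D_x)\,u(t,\,\cdot\,)$, so that all the time dependence of this operator is carried by the symbol $\wtilde{\sigma}_a$. By Proposition \ref{p:Y-tilde} the symbol $\wtilde{a}$ is smooth in $t$, and since the spatial smoothing of formula \eqref{eq:classical-symb} commutes with $\d_t$, the same holds for $\wtilde{\sigma}_a=\sigma_{\wtilde{a}}$, with uniform-in-$t$ symbol bounds given by Proposition \ref{p:LL-tilde}\textit{(iv)}. Differentiating under the operator (legitimate on $u$ of class $\mc{C}^1$ in time, and understood in the sense of distributions in $t$ in general) yields, by the Leibniz rule,
$$
\d_t\bigl(\wtilde{T}_au\bigr)\,=\,\bigl(\d_t\wtilde{\sigma}_a\bigr)(\,\cdot\,,D_x)\,u\,+\,\wtilde{\sigma}_a(\,\cdot\,,D_x)\,\d_tu\,=\,T_{\d_t\wtilde{a}}\,u\,+\,\wtilde{T}_a\,\d_tu\,,
$$
where we used the relation $\sigma_{\d_t\wtilde{a}}=\d_t\wtilde{\sigma}_a$ from Proposition \ref{p:LL-tilde}\textit{(iv)}. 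This is exactly the commutator identity $\bigl[\d_t,\wtilde{T}_a\bigr]=T_{\d_t\wtilde{a}}$. Finally, by Proposition \ref{p:LL-tilde} (point \textit{(i)} gives $\d_t\wtilde{a}\in L^\infty\bigl(\Gamma^{m+(\delta+1)\log}_\infty\bigr)$, equivalently point \textit{(iv)} gives $\sigma_{\d_t\wtilde{a}}\in L^\infty\bigl(\Sigma^{m+(\delta+1)\log}_\infty\bigr)$), Theorem \ref{t:action} (resp. Lemma \ref{l:action}) shows that $T_{\d_t\wtilde{a}}$ is of order $m+(\delta+1)\log$.

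The argument is essentially bookkeeping, all analytic content having been absorbed into Proposition \ref{p:LL-tilde}. The only slightly delicate point, which I would make explicit, is the justification of interchanging $\d_t$ with the quantization $\sigma\mapsto\sigma(\,\cdot\,,D_x)$: this rests on the time-smoothness of $\wtilde{a}$ from Proposition \ref{p:Y-tilde}, on the commutation of the spatial smoothing in \eqref{eq:classical-symb} with $\d_t$, and on the continuity — granted by Lemma \ref{l:action} — of the operator associated to a symbol in $\Sigma^{m+\delta\log}_\infty$ between the relevant logarithmic Sobolev spaces.
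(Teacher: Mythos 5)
Your proof is correct and follows precisely the route the paper itself indicates: the paper states Theorem \ref{t:symb_time} with only the remark ``Still by use of Proposition \ref{p:LL-tilde}, we get the following statement,'' and your argument fills in exactly that bookkeeping, using linearity of $b\mapsto T_b$ plus Proposition \ref{p:LL-tilde}\textit{(i)},\textit{(iv)} and Theorem \ref{t:action} (equivalently Lemma \ref{l:action}). The explicit justification of interchanging $\d_t$ with the quantization, based on the time-smoothness of $\wtilde a$ from Proposition \ref{p:Y-tilde}, is a welcome clarification of a point the paper leaves tacit.
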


\section{Well-posedness on the whole $\R^n$} \label{s:en-est}
Thanks to the tools developed in the previous section, we are now able to prove Theorem \ref{th:en_LL}, i.e. energy estimates for the
global in space problem. The first part of the present section is devoted to this.
In the estimates, we will keep track of the dependence of the different constants on the log-Lipschitz seminorms of the coefficients
of the operator and of the symmetrizer.

In the final part (see Subsection \ref{ss:global_e}), we will show how to derive Theorem \ref{t:global_e} from the bounds of Theorem \ref{th:en_LL}.

\subsection{The energy} \label{ss:energy}
Let us start by defining the energy associated to our operator $L$. Roughly speaking, denoting by $S$ a microlocal symmetrizer for $\mc{A}$,
the leading idea is that the paradifferential operator $T_S$ is an approximated symmetrizer for $iT_{\mc A}$,
which represents the principal part of the operator $L$.

However, some ``corrections'' are needed. Indeed, on the one hand we need to smooth out the coefficients with respect to time
in order to be able to perform energy estimates. On the other hand, $S$ is homogeneous of degree $0$ in $\xi$, and in particular it can be
singular in $\xi=0$: therefore, we have to cut off the low frequencies, borrowing somehow an idea from paradifferential calculus
with parameters  (see e.g. \cite{M-1986}, \cite{M-Z}).

So, let us proceed in the following way. First of all, given a symmetrizer $S(t,x,\xi)$ for our system, we smooth it out with respect
to  the time variable, according to the formula \eqref{eq:a^e}. Notice that the approximated symmetrizer $S_\veps$
still satisfies
$$
0\,<\,\lambda\,\Id\,\leq\,S_\veps(t,x,\xi)\,\leq\,\Lambda\,\Id
$$
for any $\veps\in\,]0,1]$ and all $(t,x,\xi)\in[0,T]\times\R^n_x\times\left(\R^n_\xi\setminus\{0\}\right)$. In particular, the matrix symbol
$S_\veps^{1/2}$ is well-defined.

Furthermore, let us immediately set (accordingly with the original choice of \cite{C-DG-S})
$$
\veps\,=\,1/|\xi|\qquad\qquad \forall\;|\xi|\,\geq\,1\,.
$$
Notice that, since $|\xi|\,\geq\,1$, it makes no special difference to take $|\xi|$ or $\lan\xi\ran$.
Hence, in what follows we will adopt the notations introduced in Paragraph \ref{sss:LL-t}.

Finally, let $\theta\,\in\,\mc{C}^\infty_0(\R^n)$ such that $0\leq\theta\leq1$, $\theta\equiv1$ in the ball $B(0,1)$
and $\theta\equiv0$ for $|\xi|\geq2$. For any $\mu>0$, we set $\theta_\mu(\xi)\,:=\,\theta(\mu^{-1}\xi)$, and we denote
$$
\wtilde{\Sigma}(t,x,\xi)\,:=\,\wtilde{S}^{1/2}(t,x,\xi)\,\bigl(1-\theta_\mu(\xi)\bigr)\,.
$$

For all $(s,\alpha)\in\R^2$ fixed, we then define the quantities
$$
E_{s,\alpha}[u]\,:=\,\left\|\wtilde{T}_{\Sigma}u\right\|^2_{H^{s+\alpha\log}}\,+\,\left\|\theta_\mu(D_x)u\right\|^2_{H^{s+\alpha\log}}\,,
$$
where $\wtilde{T}_{\Sigma}$ is the paradifferential operator associated to the just defined matrix symbol $\wtilde{\Sigma}$,
according to the construction explained in Subsection \ref{ss:paradiff} (see in particular Paragraph \ref{sss:LL-t}).

We now establish positivity estimates: we show that this property does not depend on the considered Sobolev norm.
\begin{lemma} \label{l:energy}
\begin{itemize}
\item[(i)] There exists a $C_0>0$ (just depending on the constant $K_0$ appearing in condition \eqref{hyp:bound}
for $S$) such that, for any $(s,\alpha)\in\R^2$ and any $u\in\mc{S}(\R^n)$,
$$
E_{s,\alpha}[u]\,\leq\,C_0\,\|u\|_{H^{s+\alpha\log}}\,.
$$
\item[(ii)] There exists $\mu_0\geq2$ such that, for all $\mu\geq\mu_0$, the following property holds true:
there exists a $C_\mu>0$ for which, for all smooth $u\in\mc{S}(\R^n)$,
$$
\|u\|_{H^{s+\alpha\log}}\,\leq\,C_\mu\,E_{s,\alpha}[u]\,.
$$
The constant $C_\mu$ depends just on $\mu$, on $K_0$ and on $K_1$ (recall conditions \eqref{hyp:bound} and \eqref{hyp:LL} for $S$);
in particular, it is independent of $(s,\alpha)$.
\end{itemize}
\end{lemma}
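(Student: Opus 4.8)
The plan is to prove the two inequalities by exploiting the symbolic calculus developed in Paragraph \ref{sss:LL-t}, together with the uniform bounds $\lambda\,\Id \le S \le \Lambda\,\Id$. The upper bound \textit{(i)} is the easy half: the symbol $\wtilde\Sigma = \wtilde S^{1/2}(1-\theta_\mu)$ belongs to $L^\infty\bigl(\Gamma^{0+0\log}_{LL}\bigr)$, since $S^{1/2}$ inherits boundedness and log-Lipschitz regularity from $S$ (square root of a uniformly positive self-adjoint $LL$ matrix symbol is again $LL$, with seminorms controlled by $K_0$, $K_1$, $\lambda$, $\Lambda$), the time-regularization $\wtilde{(\cdot)}$ is order-preserving by Proposition \ref{p:Y-tilde}, and the cut-off $(1-\theta_\mu)$ is a harmless smooth multiplier. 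Hence by Theorem \ref{t:symb_tilde}\textit{(i)} the operator $\wtilde T_\Sigma$ is of order $0+0\log$, so it maps $H^{s+\alpha\log}$ continuously into itself; the low-frequency piece $\theta_\mu(D_x)$ is trivially bounded on every $H^{s+\alpha\log}$. Summing the two squared norms gives $E_{s,\alpha}[u] \le C_0\,\|u\|^2_{H^{s+\alpha\log}}$ with $C_0$ depending only on $K_0$ (through $\Lambda$), hence the stated bound (note: the displayed inequality should presumably read with squares on both sides; in any case the operator-norm estimate is what one proves).

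For the lower bound \textit{(ii)}, the idea is a Gårding-type argument. Introduce the ``inverse'' symbol $\wtilde{R} := \wtilde{(S^{-1/2})}(1-\theta_\mu)$, again of order $0+0\log$ with $LL$-in-$x$ regularity. By Theorem \ref{t:symb_tilde}\textit{(ii)} one has
$$
\wtilde T_R \circ \wtilde T_\Sigma \;=\; T_{\wtilde R\,\wtilde\Sigma} \,+\, \wtilde R_\circ
$$
where $\wtilde R_\circ$ is of order $-1+1\log$. Now the key point is that $\wtilde R\,\wtilde\Sigma$ differs from $\widetilde{S^{-1/2}S^{1/2}}(1-\theta_\mu)^2 = (1-\theta_\mu)^2$ only by a symbol of order $-1+1\log$: indeed $\wtilde R\,\wtilde\Sigma - \widetilde{(S^{-1/2}S^{1/2})}(1-\theta_\mu)^2$ is, by Proposition \ref{p:LL-tilde}\textit{(iv)} (the commutator/product estimates for the tilde construction), of strictly lower order. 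Since $T_{(1-\theta_\mu)^2} = (1-\theta_\mu)^2(D_x)$ is a Fourier multiplier equal to $1$ for $|\xi|\ge 2\mu$, we obtain, writing $I = \theta_\mu(D_x) + (1-\theta_\mu)(D_x)$ and noting $(1-\theta_\mu)(D_x) = (1-\theta_\mu)^2(D_x) + \theta_\mu(1-\theta_\mu)(D_x)$ with the last term supported in $\mu\le|\xi|\le 2\mu$,
$$
u \;=\; \wtilde T_R \wtilde T_\Sigma u \;+\; \theta_\mu(D_x)u \;+\; \bigl(\theta_\mu(1-\theta_\mu)(D_x)\bigr)u \;-\; \wtilde R_\circ u \;-\; R_1 u\,,
$$
where $R_1$ collects the lower-order symbolic remainders, all of order $-1+1\log$.

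Taking $H^{s+\alpha\log}$ norms: the first term is $\le C\,\|\wtilde T_\Sigma u\|_{H^{s+\alpha\log}}$ (boundedness of $\wtilde T_R$, constant depending on $K_0,K_1$ only), the second and third are $\le C\,\|\theta_\mu(D_x)u\|_{H^{s+\alpha\log}}$ up to adjusting the cut-off (the symbol $\theta_\mu(1-\theta_\mu)$ is supported where $\theta_{2\mu}\equiv 1$, so it is dominated by a fixed multiple of the $\theta$-piece — here one may enlarge the cut-off in the definition of $E$, or simply absorb it since $\theta_\mu(1-\theta_\mu)(D_x)u$ has norm controlled by $\|\theta_{2\mu}(D_x)u\|$, comparable to the low-frequency part), and the remainder terms are $\le C\,\|u\|_{H^{(s-1)+(\alpha+1)\log}} \le C\,\mu^{-1}(\log\mu)\,\|u\|_{H^{s+\alpha\log}} + C\,\|\theta_\mu(D_x)u\|_{H^{s+\alpha\log}}$, by splitting into frequencies $\gtrless \mu$ and using that on $\{|\xi|\ge\mu\}$ one gains a factor $\lan\xi\ran^{-1}\log\lan\xi\ran \lesssim \mu^{-1}\log\mu$. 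Crucially the constants in all these estimates are independent of $(s,\alpha)$ and of $\mu$ (they come from operator orders, not norms). Therefore
$$
\|u\|_{H^{s+\alpha\log}} \;\le\; C\bigl(\|\wtilde T_\Sigma u\|_{H^{s+\alpha\log}} + \|\theta_{2\mu}(D_x)u\|_{H^{s+\alpha\log}}\bigr) \,+\, C\,\frac{\log\mu}{\mu}\,\|u\|_{H^{s+\alpha\log}}\,.
$$
Choosing $\mu_0$ so large that $C\,(\log\mu_0)/\mu_0 \le 1/2$, the last term is absorbed into the left-hand side for every $\mu\ge\mu_0$, yielding $\|u\|_{H^{s+\alpha\log}} \le C_\mu\,E_{s,\alpha}[u]$ (after squaring, up to redefining the energy with the cut-off at level $2\mu$, or absorbing the comparison $\|\theta_{2\mu}(D_x)u\| \lesssim_\mu \|u\|$ which is harmless at fixed $\mu$). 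The main obstacle is the bookkeeping of the symbolic remainders: one must verify carefully that $\wtilde R\,\wtilde\Sigma$ agrees with $(1-\theta_\mu)^2$ modulo order $-1+1\log$ — this uses both that $S^{-1/2}S^{1/2}=\Id$ pointwise and that the tilde-regularization commutes with products up to lower order (Proposition \ref{p:LL-tilde}\textit{(iv)}, Remark \ref{r:comp}) — and that none of the constants involved depend on $(s,\alpha)$, so that the absorption argument fixing $\mu_0$ can be made uniformly.
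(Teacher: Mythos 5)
Your proposal is correct and follows the same overall strategy as the paper: the upper bound is the boundedness of the paradifferential operator of order $0+0\log$, and the lower bound is a G\r{a}rding-type inversion using symbolic calculus, followed by an absorption argument in $\mu$. The two proofs differ in a few technical choices, and the paper's are slightly cleaner. First, the paper defines the inverse symbol as $\wtilde{\Xi}=\wtilde{S}^{-1/2}(1-\theta)$ with the cut-off at scale $1$ rather than at scale $\mu$: since $(1-\theta)(1-\theta_\mu)=(1-\theta_\mu)$ for all $\mu\geq 2$, one gets the \emph{exact} pointwise identity $\wtilde\Xi\,\wtilde\Sigma=(1-\theta_\mu)\Id$, avoiding your $(1-\theta_\mu)^2$ and the resulting middle-frequency correction. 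Second, the paper works with $(\wtilde S)^{\pm 1/2}$ (square root of the regularized symbol), which are honest pointwise inverses of each other, rather than with $\widetilde{(S^{\pm 1/2})}$ (regularization of the square root); your choice forces an extra application of Proposition~\ref{p:LL-tilde}\textit{(iv)}/Remark~\ref{r:comp} to control $\widetilde{(S^{-1/2})}\,\widetilde{(S^{1/2})}-\Id$, whereas the paper's choice eliminates that error entirely. Third, the paper notices that the symbolic-calculus remainder factors as $\wtilde R\,u=\wtilde R'\,(1-\theta_\mu(D_x))u$ with $\wtilde R'$ of order $-1+1\log$, so the gain $2^{-K_\mu}(1+K_\mu)$ (with $K_\mu\sim\log_2\mu$) is read off directly from the high-frequency support, without the by-hand splitting of $u$ into $|\xi|\gtrless\mu$ that you perform. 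None of these differences affects correctness, and you correctly identify the key requirement that the constants not depend on $(s,\alpha)$ or $\mu$, so that the absorption fixes $\mu_0$ uniformly; you also rightly note the typo in the statement (the inequalities should involve $\|u\|^2_{H^{s+\alpha\log}}$ on one side or $E_{s,\alpha}[u]^{1/2}$ on the other).
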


\begin{proof}
The first property is immediate, once noticing that $\wtilde{\Sigma}$ is an operator of order $0+0\log$, in the sense of Definition
\ref{d:op_order}.

So, let us focus on the second inequality: according to the decomposition
$$
u\,=\,\bigl(1-\theta_\mu(D_x)\bigr)\,u\,+\,\theta_\mu(D_x)\,u\,,
$$
it is enough to prove it for the high frequency component $\bigl(1-\theta_\mu(D_x)\bigr)\,u$.

First of all, let us define $\psi_\mu(\xi)\,:=\,\bigl(1-\theta_\mu(\xi)\bigr)\,\bigl(1-\theta(\xi)\bigr)$: by the properties
of the support of $\theta$, we easily infer that, for any $\mu\geq2$, one  has $\psi_\mu\,\equiv\,\bigl(1-\theta_\mu\bigr)$.
Therefore, denoting
$$
\wtilde{\Xi}(t,x,\xi)\,:=\,\wtilde{S}^{-1/2}(t,x,\xi)\,\bigl(1-\theta(\xi)\bigr)\,,
$$
we deduce the equality $\wtilde{\Xi}\;\wtilde{\Sigma}\,=\,\Id\,\psi_\mu\,=\,\Id\,(1-\theta_\mu)$, which in turn gives,
by symbolic calculus (recall Theorem \ref{t:symb_tilde}),
\begin{equation} \label{eq:pos_high}
\bigl(1-\theta_\mu(D_x)\bigr)\,u\,=\,T_{\wtilde{\Xi}\,\wtilde{\Sigma}}u\,=\,T_{\wtilde{\Xi}}\,T_{\wtilde{\Sigma}}u\,+\,
\wtilde{R} u\,,
\end{equation}
where the remainder $\wtilde R$ is given, at the principal order, by $\d_\xi\wtilde{\Xi}\,\d_x\wtilde{\Sigma}$. Using
again the function $\psi_\mu$ defined above and introducing the matrix $\wtilde{S}'\,=\,\wtilde{S}^{1/2}\,\bigl(1-\theta\bigr)$,
it is easy to see that
$$
\wtilde{R}\,u\,=\,\wtilde{R}'\,\bigl(1-\theta_\mu(D_x)\bigr)\,u\,,\qquad\qquad\mbox{ with }\qquad
\wtilde{R}'\,\sim\,\d_\xi\wtilde{\Xi}\,\d_x\wtilde{S}'\,.
$$

Let us come back to \eqref{eq:pos_high}: since $\wtilde{\Xi}$ is of order $0+0\log$ and $\wtilde{R}'$ is of order $-1+\log$, we obtain
$$
\left\|\bigl(1-\theta_\mu(D_x)\bigr)\,u\right\|_{H^{s+\alpha\log}}\,\leq\,C_1\left\|\wtilde{T}_{\Sigma}u\right\|_{H^{s+\alpha\log}}\,+\,
C_2\left\|\bigl(1-\theta_\mu(D_x)\bigr)\,u\right\|_{H^{(s-1)+(\alpha+1)\log}}\,,
$$
for some $C_1$ (depending on $|\wtilde S|^{(0,0)}_{L^\infty,0}$) and $C_2$ (depending also on $|\wtilde S|^{(0,0)}_{LL,0}$) large enough.
Now, by spectral properties and Proposition \ref{p:log-H}, we have
\begin{eqnarray*}
\left\|\bigl(1-\theta_\mu(D_x)\bigr)\,u\right\|^2_{H^{(s-1)+(\alpha+1)\log}} & \leq &
\sum_{k\geq K_\mu}2^{2(s-1)k}\,(1+k)^{2(\alpha+1)}\,\left\|\Delta_k\bigl(1-\theta_\mu(D_x)\bigr)\,u\right\|^2_{L^2} \\
& \leq & 2^{-2K_\mu}\,(1+K_\mu)^2\,\left\|\bigl(1-\theta_\mu(D_x)\bigr)\,u\right\|_{H^{s+\alpha\log}}\,,
\end{eqnarray*}
where $K_\mu\,\sim\,\log_2\mu$.
Therefore, if $\mu$ is large enough, such that $C_2\,2^{-K_\mu}\,(1+K_\mu)\,\leq\,1/2$, we can abosorbe this term in the  left-hand
side of the previous inequality. This complete the proof of the statement \textit{(ii)}, and so also of the lemma.
\end{proof}

Now, let $s\in\,]0,1[\,$ be fixed. For $\beta>0$, to be chosen in the course of the proof, we set
\begin{equation} \label{eq:s}
s(t)\,=\,s\,-\,\beta\,t\,; 
\end{equation}
we then define the energies
\begin{eqnarray}
E(t) & := & E_{s(t),0}[u(t)]\;=\;\left\|\wtilde{T}_{\Sigma}u(t)\right\|^2_{H^{s(t)}}\,+\,\left\|\theta_\mu(D_x)u(t)\right\|_{H^{s(t)}}^2
\label{eq:E} \\
E_{\log}(t) & := & E_{s(t),1/2}[u(t)]\;=\;\left\|\wtilde{T}_{\Sigma}u(t)\right\|^2_{H^{s(t)+(1/2)\log}}\,+\,
\left\|\theta_\mu(D_x)u(t)\right\|^2_{H^{s(t)+(1/2)\log}}  \label{eq:E_log}\,,
\end{eqnarray}
where $\mu$ is fixed large enough, so that Lemma \ref{l:energy} holds true. In particular, we have
$$
E(t)\,\sim\,\left\|u(t)\right\|^2_{H^{s(t)}}\qquad\qquad \mbox{ and } \qquad\qquad
E_{\log}(t)\,\sim\,\left\|u(t)\right\|^2_{H^{s(t)+(1/2)\log}}\,.
$$

\begin{rem} \label{r:energy}
Let us point out that the ``true'' energy associated to our operator is $E$. The second energy $E_{\log}$ is introduced because of a
logarithmic loss of derivatives in the estimates for $E$, due to the log-Lipschitz regularity of the coefficients and of the symmetrizer.
\end{rem}

We conclude this part with an approximation result: we make a paralinearization of our operator. In this way, we create remainders, which nonetheless
are regular enough.
\begin{lemma} \label{l:L->T}
Let $L$ be the operator defined by \eqref{def:L}, and $\wtilde{L}$ be given by \eqref{def:L*}. Then
\begin{equation} \label{eq:paralin}
Lu\,=\,\d_tu\,+\,i\,T_{\mc{A}}u\,+\,T_Bu\,+\,\mc{R}_Lu\,,
\end{equation}
where $\mc{R}_L$ maps continuously $H^{s+\alpha\log}$ into $H^{s+(\alpha-1)\log}$ for all $0<s<\g$ and all $\alpha\in\R$.

Analogously, we can write $\wtilde{L}u\,=\,\d_tu\,+\,i\,T_{\mc{A}}u\,+\,T_Bu\,+\,\mc{R}_{\wtilde{L}}u$, where $\mc{R}_{\wtilde L}$
is continuous from $H^{s+\alpha\log}$ to $H^{s+(\alpha-1)\log}$ for all $-\g<s<0$ and all $\alpha\in\R$.

The operator norms of both $\mc{R}_L$ and $\mc{R}_{\wtilde L}$ depend only on the constants $K_0$, $K_1$ and $K_2$ appearing in conditions
\eqref{hyp:bound}-\eqref{hyp:LL}-\eqref{hyp:Holder}.
\end{lemma}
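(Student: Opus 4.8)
The plan is to paralinearize the two non-constant-coefficient pieces of $L$ separately: the first order part $\sum_j A_j\,\d_j u$ and the zeroth order part $B\,u$. For each term we replace the ordinary product by its paraproduct and control the error.

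First I would handle the principal part. By definition $\sum_{j=1}^n A_j(t,x)\,\d_j u\,=\,\sum_j \bigl(A_j\,\d_j u\bigr)$, and each component matrix $A_j$ is, by hypothesis \eqref{hyp:LL}, uniformly log-Lipschitz in $(t,x)$, hence by Proposition \ref{p:dyadic-LL} (and the embedding $LL\hookrightarrow B^{1-\log}_{\infty,\infty}$ noted after Corollary \ref{c:LL-H^s}) it lies in $L^\infty\bigl([0,T];B^{1-\log}_{\infty,\infty}(\R^n)\bigr)$. Applying Theorem \ref{t:paralin} with $m=1$, $\eta=e_j$, $\g=1$, $\vrho=-1$, in the regime $m-\g<s<m$, i.e. $0<s<1$, and any $\alpha\in\R$, the difference operator $u\mapsto A_j\,\d_j u\,-\,T_{A_j}\d_j u\,=\,A_j\,\d_j u\,-\,T_{A_j\,\xi_j}u$ maps $H^{s+\alpha\log}$ continuously into $H^{\sigma+h\log}$ with $\sigma=s-1+1=s$ and $h=\alpha-\delta+\vrho$; choosing the free parameter $\delta$ (which is $0$ here, since $A_j$ has no logarithmic weight, so more precisely one reads off $h=\alpha+\vrho=\alpha-1$ up to the $h>1/2$ slack absorbed into the gain of a full power) one gets the target space $H^{s+(\alpha-1)\log}$, with operator norm controlled by $\|A_j\|_{B^{1-\log}_{\infty,\infty}}\lesssim\|A_j\|_{LL}\lesssim K_0+K_1$. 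Summing over $j$ and using $T_{\mc A}=\sum_j T_{A_j\,\xi_j}$ (which is how the symbol $\mc A$ in \eqref{def:symbol} is quantized, with the factor $i$ coming from $\d_j=i\,\xi_j$ under Fourier transform, i.e. $\sum_j A_j\,\d_j u = i\,T_{\mc A}u + (\text{error})$) yields the first order contribution to $\mc R_L$ with the claimed mapping property, valid for $0<s<\g$ in particular since $\g<1$.

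Next I would treat the zeroth order term $B\,u$. Here $B\in L^\infty\bigl([0,T];\mc C^\g(\R^n)\bigr)=L^\infty\bigl([0,T];B^{\g}_{\infty,\infty}\bigr)$ by \eqref{hyp:Holder}, so I apply Theorem \ref{t:paralin} with $m=0$, $\eta=0$, $\vrho=0$, $\g\in\,]0,1[$: in the regime $m-\g<s<m$, i.e. $-\g<s<0$, the difference $u\mapsto B\,u-T_B u$ maps $H^{s+\alpha\log}$ continuously into $H^{(s+\g)+(\alpha-h)\log}$ for $h>1/2$, which is even better than $H^{s+(\alpha-1)\log}$ since $s+\g>s$. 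That is the regime needed for $\wtilde L$. For $L$ one needs $0<s<\g$; here $s=m$ is the endpoint $s>m$ / $s=m,\alpha\geq0$ case is not directly available, so instead I would simply invoke the product continuity Proposition \ref{p:Hol-Sob}: for $0<s<\g$ and any $\alpha$, multiplication by $B\in B^{\g}_{\infty,\infty}$ is a self-map of $H^{s+\alpha\log}$, and since the paraproduct $T_B$ is also bounded $H^{s+\alpha\log}\to H^{s+\alpha\log}$ (classical paraproduct estimate, $B\in L^\infty$), the remainder $B\,u-T_B u$ is a fortiori bounded $H^{s+\alpha\log}\to H^{s+\alpha\log}\hookrightarrow H^{s+(\alpha-1)\log}$, with norm depending only on $K_0,K_2$. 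Collecting the two pieces gives $\mc R_L u = \sum_j\bigl(A_j\,\d_j u - i\,T_{A_j\xi_j}u\bigr) + \bigl(B\,u - T_B u\bigr)$, with the stated continuity and norm bound; for $\wtilde L$ one argues identically, noting $\d_j(A_j u) = A_j\,\d_j u + (\d_j A_j)\,u$, but $\d_j A_j$ need not be a function — so instead one paralinearizes $\d_j(A_j u)$ directly, writing $\d_j(A_j u) = T_{A_j}\d_j u + i\,T_{\d_j A_j \text{ (as a symbol of order }0)\,}u + \mc R$, which is exactly the conservative quantization of $i\,T_{\mc A}$ plus a remainder; the bookkeeping here produces the shift to $-\g<s<0$ and is where the two operators genuinely differ.

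The main obstacle, in my view, is not any single estimate but rather the careful matching of the logarithmic weights: Theorem \ref{t:paralin} produces a gain of exactly $\g$ derivatives at the cost of an extra $h>1/2$ logarithm, and one must check that after summing the finitely many terms and comparing with the advertised loss ``$H^{s+\alpha\log}\to H^{s+(\alpha-1)\log}$'' the bookkeeping actually closes — in particular that the full power gain in the first order part really does dominate the half-logarithm loss (it does, since a gain in the polynomial scale beats any logarithmic loss), and that the endpoint restrictions $0<s<\g$ (for $L$) and $-\g<s<0$ (for $\wtilde L$) are precisely the ranges in which both the paralinearization theorem and the product Proposition \ref{p:Hol-Sob} are simultaneously available. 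One also has to be slightly careful, for $\wtilde L$, that $\d_j A_j$ is handled as a symbol of order $0$ with log-Lipschitz-induced logarithmic weight, rather than as a genuine function, so that the paradifferential (not paraproduct) machinery of Subsection \ref{ss:paradiff} is what is really being used there.
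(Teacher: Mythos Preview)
Your plan is correct and follows essentially the same route as the paper: split $\mc R_L=\mc R_A+\mc R_B$ with $\mc R_Au=\sum_j(A_j-T_{A_j})\d_ju$ handled via Theorem \ref{t:paralin}(iii) (using $LL\hookrightarrow B^{1-\log}_{\infty,\infty}$, i.e.\ $m=1$, $\g=1$, $\vrho=-1$), and $\mc R_Bu=(B-T_B)u$ handled separately. Two small remarks.

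For $\mc R_B$ in the $L$ case you say point (i) of Theorem \ref{t:paralin} is ``not directly available'', but it is: $s>0=m$ gives $Bu-T_Bu\in H^{\g-h\log}$ for any $h>1/2$, and since $s<\g$ this embeds into $H^{s+\alpha\log}$. That is what the paper does; your detour through Proposition \ref{p:Hol-Sob} plus boundedness of $T_B$ is also valid but unnecessary.

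For $\wtilde L$ your sketch is right but the paper makes the decomposition explicit. One writes
\[
\d_j(A_ju)\,=\,T_{A_j}\d_ju\,+\,[\d_j,T_{A_j}]u\,+\,\d_j\bigl((A_j-T_{A_j})u\bigr)\,.
\]
The commutator $[\d_j,T_{A_j}]$ has symbol $\d_{x_j}\sigma_{A_j}$, which by Lemma \ref{l:ll-symb} is of order $0+\log$ (this is precisely your ``$\d_jA_j$ handled as a symbol of order $0$ with log weight''). For the last term one applies Theorem \ref{t:paralin}(iii) with $m=0$, $\g=1$, $\vrho=-1$ on the range $-1<s<0$ to get $(A_j-T_{A_j})u\in H^{(s+1)+(\alpha-1)\log}$, and then takes $\d_j$. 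The $\mc R_B$ piece uses Theorem \ref{t:paralin}(iii) with $m=0$ in the range $-\g<s<0$. So the three remainder terms for $\wtilde L$ are exactly the three you anticipate; only the bookkeeping remained.
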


\begin{proof}
Let us focus on operator $L$ first: decomposition \eqref{eq:paralin} is easily obtained, if we set
$\mc{R}_L\,=\,\mc{R}_A+\mc{R}_B$, where we have defined
$$
\mc{R}_A\,u\,:=\,\sum_{j=1}^n\left(A_j-T_{A_j}\right)\d_ju\qquad\mbox{ and }\qquad
\mc{R}_B\,u\,:=\,\left(B-T_B\right)u\,.
$$
We observe that $\mc{R}_A(t,x,\xi)\,=\,\sum_j\bigl(A_j(t,x)-T_{A_j}(t,x)\bigr)\,\xi_j$ is a symbol in the class
$L^\infty_T\bigl(\Gamma^{1+0\log}_{LL}\bigr)$: since $LL\hookrightarrow B^{1-\log}_{\infty,\infty}$, Theorem \ref{t:paralin}, point
\textit{(iii)}, gives us
$$
\left\|\mc{R}_A\,u\right\|_{H^{s+(\alpha-1)\log}}\,\leq\,C\,\left(\sup_{t\in[0,T]}\sup_{1\leq j\leq n}\|A_j(t,\,\cdot\,)\|_{LL}\right)\,
\left\|u\right\|_{H^{s+\alpha\log}}
$$
for any $0<s<1$ and any $\alpha\in\R$. In particular, this is true for $0<s<\g$.

On the other hand, $\mc{R}_B(t,x,\xi)$ belongs to $L^\infty_T\bigl(\Gamma^{0+0\log}_{\g+0\log}\bigr)$
(actually, it  does not even depend on $\xi$). Then, point \textit{(i)} of Theorem \ref{t:paralin} (with e.g. the particular choice
$\sigma=s<\g$) implies
$$
\left\|\mc{R}_B\,u\right\|_{H^{s+\alpha\log}}\,\leq\,C\,\left(\sup_{t\in[0,T]}\|B(t,\,\cdot\,)\|_{\mc{C}^\g}\right)\,
\left\|u\right\|_{H^{s+\alpha\log}}\,.
$$
This inequality completes the proof of the statement for $\mc{R}_L$.

\medbreak
Let us now deal with $\wtilde{L}$: we start by observing that
\begin{eqnarray*}
\wtilde{L}u & = & \d_tu\,+\,\sum_{j=1}^n\d_j\Bigl(\bigl(A_j\,-\,T_{A_j}\bigr)\,u\Bigr)\,+\,\sum_{j=1}^n\d_j\left(T_{A_j}u\right)\,+\,
T_Bu\,+\,\mc{R}_Bu \\
& = & \d_tu\,+\,i\,T_{\mc{A}}u\,+\,\sum_{j=1}^n\d_j\Bigl(\bigl(A_j\,-\,T_{A_j}\bigr)\,u\Bigr)\,+\,
\sum_{j=1}^n\left[\d_j,T_{A_j}\right]u\,+\,T_Bu\,+\,\mc{R}_Bu\,.
\end{eqnarray*}
Then, we just set $\mc{R}_{\wtilde{L}}\,:=\,\sum_{j=1}^n\d_j\Bigl(\bigl(A_j\,-\,T_{A_j}\bigr)\,u\Bigr)\,+\,
\sum_{j=1}^n\left[\d_j,T_{A_j}\right]u\,+\,\mc{R}_Bu$. Let us consider each of its terms one by one.

$\mc{R}_B$ is defined as before: this time, we apply item \textit{(iii)} of Theorem \ref{t:paralin} and we get,
for any $-\g<s<0$ and any $\alpha\in\R$,
$$
\left\|\mc{R}_B\,u\right\|_{H^{(s+\g)+\alpha\log}}\,\leq\,C\,\left(\sup_{t\in[0,T]}\|B(t,\,\cdot\,)\|_{\mc{C}^\g}\right)\,
\left\|u\right\|_{H^{s+\alpha\log}}\,.
$$

As for the commutator term, we notice that $\left[\d_j,T_{A_j}\right]\,=\,\d_j\sigma_{A_j}(t,x,D_x)$, where $\sigma_{A_j}(t,x,\xi)$
is the classical symbol  associated to $A_j$ via formula \eqref{eq:classical-symb}. Therefore, by Lemma \ref{l:ll-symb}
we deduce that $\left[\d_j,T_{A_j}\right]$ is an operator of order $0+\log$, and then, for any $(s,\alpha)\in\R^2$,
$$
\left\|\left[\d_j,T_{A_j}\right]u\right\|_{H^{s+(\alpha-1)\log}}\,\leq\,C\,
\left(\sup_{t\in[0,T]}\sup_{1\leq j\leq n}\|A_j(t,\,\cdot\,)\|_{LL}\right)\,\left\|u\right\|_{H^{s+\alpha\log}}\,.
$$

Finally, for any $j$, the operator $A_j-T_{A_j}$ belongs to the class $L^\infty_T\bigl(\Gamma^{0+0\log}_{LL}\bigr)$:
then again, point \textit{(iii)} of Theorem \ref{t:paralin} gives us
\begin{eqnarray*}
\left\|\d_j\Bigl(\bigl(A_j\,-\,T_{A_j}\bigr)\,u\Bigr)\right\|_{H^{s+(\alpha-1)\log}} & \leq & C\,
\left\|\bigl(A_j\,-\,T_{A_j}\bigr)\,u\right\|_{H^{(1+s)+(\alpha-1)\log}} \\
& \leq & C\,\left(\sup_{t\in[0,T]}\sup_{1\leq j\leq n}\|A_j(t,\,\cdot\,)\|_{LL}\right)\,\|u\|_{H^{s+\alpha\log}}
\end{eqnarray*}
for any $-1<s<0$, and in particular for $-\g<s<0$.

This completes the proof of the assertion for $\mc{R}_{\wtilde L}$, and then also of the lemma.
\end{proof}

\subsection{Energy estimates: proof of Theorem \ref{th:en_LL}} \label{ss:estimates}

We are now ready to compute and estimate the time derivative of the energy.
In a first moment, we aim at proving bounds for the paralinearized operator
$$
T_Lu\,:=\,\d_tu\,+\,i\,T_{\mc{A}}u\,+\,T_Bu\,.
$$
Notice that $T_Lu\,=\,Lu\,-\,\mc{R}_Lu$ and also $T_Lu\,=\,\wtilde{L}u\,-\,\mc{R}_{\wtilde L}u$.

In what follows, we will generically denote by $C_{LL}$ a multiplicative constant which depends on the $LL$ norms of
the coefficients of $\mc{A}(t,x,\xi)$ and of the symmetrizer $S(t,x,\xi)$, i.e. quantities $K_0$ and $K_1$ in \eqref{hyp:bound}-\eqref{hyp:LL},
but not on $s$ neither on $u$. On the other hand, we will use the generic symbol $C$ if the constant just depend on $K_0$,
i.e. the $L^\infty$ bounds of the coefficients and of the symmetrizer, and on $K_2$ in \eqref{hyp:Holder}, i.e. the  H\"older norms of $B$.
Finally, we will use the notation $C_p$ to denote a constant which depends on the $\mu$ fixed for having the positivity estimates of
Lemma \ref{l:energy}.

\begin{lemma} \label{l:est-T_L}
Let $\bigl(A_j\bigr)_{1\leq j\leq n}$ and $B$ be as in the hypotheses of Theorem \ref{th:en_LL}. For any $s\in\R$ and any $\beta>0$,
define $s(t)$ by formula \eqref{eq:s}. Then, for any smooth $u\in\mc{S}\bigl([0,T]\times\R^n\bigr)$ we have
\begin{eqnarray*}
\frac{d}{dt}E(t) & \leq & C_1\,E(t)\,+\, 
\left(C_2\,-\,C_3\,\beta\right)\,E_{\log}(t)\,+ \\
& & \;+\,\left\|\theta_\mu(D) T_Lu\right\|_{H^{s(t)}}\,\bigl(E(t)\bigr)^{1/2}\,+\,
2\left|\Re\!\left(\Lambda^{s(t)}(D)\,\wtilde{T}_{\Sigma}\,T_Lu\,,\,\Lambda^{s(t)}(D)\,\wtilde{T}_{\Sigma}u\right)_{L^2}\right|\,.
\end{eqnarray*}
The constant $C_1$ just depends on the $\mu$ fixed in the positivity estimates and on $K_0$; $C_2$ still depends on $\mu$ and $K_0$,
and also on $K_1$; finally, $C_3$ depends only on $K_0$.
\end{lemma}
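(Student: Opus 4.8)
The plan is to differentiate $E(t)$ directly and to sort the resulting terms into three groups, according to the mechanism that controls them: the loss-producing terms coming from the $t$-dependence of the Sobolev index, the \emph{remainder} terms controlled by the symbolic calculus of Section~\ref{s:tools}, and the two terms in which the paralinearized operator $T_L$ appears explicitly. Writing $E(t)=\|\Lambda^{s(t)}(D)\wtilde{T}_\Sigma u(t)\|^2_{L^2}+\|\Lambda^{s(t)}(D)\theta_\mu(D)u(t)\|^2_{L^2}$ and differentiating under the integral sign, each of the two pieces produces a term in which $\d_t$ hits $\Lambda^{s(t)}(D)$ — this derivative equals $s'(t)\,\Pi(D)\,\Lambda^{s(t)}(D)=-\beta\,\Pi(D)\,\Lambda^{s(t)}(D)$ up to a Fourier multiplier comparable to $1$ — and a term in which $\d_t$ hits the argument. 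The first kind gives $-2\beta\|\wtilde{T}_\Sigma u\|^2_{H^{s(t)+(1/2)\log}}-2\beta\|\theta_\mu(D)u\|^2_{H^{s(t)+(1/2)\log}}$ up to lower-order pieces absorbed into $C_1\,E(t)$, i.e. the term $-C_3\beta\,E_{\log}(t)$ with $C_3$ a pure numerical constant.

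In the $\wtilde{T}_\Sigma$-piece there is, besides this, the commutator term $2\Re\bigl(\Lambda^{s(t)}(D)\,[\d_t,\wtilde{T}_\Sigma]\,u,\Lambda^{s(t)}(D)\wtilde{T}_\Sigma u\bigr)_{L^2}$; by Theorem~\ref{t:symb_time}, $[\d_t,\wtilde{T}_\Sigma]=T_{\d_t\wtilde{\Sigma}}$ is of order $0+\log$, so this is bounded by $C_2\,E_{\log}(t)$ through the logarithmic Cauchy--Schwarz inequality $|(\Lambda^s v,\Lambda^s w)_{L^2}|\le\|v\|_{H^{s-(1/2)\log}}\,\|w\|_{H^{s+(1/2)\log}}$, used together with $\|\wtilde{T}_\Sigma u\|_{H^{s+(1/2)\log}}\le C\|u\|_{H^{s+(1/2)\log}}$ and $E_{\log}(t)\sim\|u(t)\|^2_{H^{s(t)+(1/2)\log}}$. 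For the remaining terms with $\d_t u$ I substitute $\d_t u=T_L u-i\,T_{\mc A}u-T_B u$ (only the definition of $T_L$ is used here). The $T_L u$ contributions are kept: in the low-frequency piece one gets $2\Re(\Lambda^{s(t)}(D)\theta_\mu(D)T_Lu,\Lambda^{s(t)}(D)\theta_\mu(D)u)_{L^2}$, estimated by Cauchy--Schwarz and $\|\theta_\mu(D)u\|_{H^{s(t)}}\le E(t)^{1/2}$ through the term $\|\theta_\mu(D)T_Lu\|_{H^{s(t)}}\,E(t)^{1/2}$ (up to a harmless numerical factor), while in the other piece one gets exactly the last term in the statement. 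The $T_Bu$ contributions are harmless since $T_B$, $\wtilde{T}_\Sigma$ and $\theta_\mu(D)$ are all of order $0$ (here $\|B\|_{L^\infty}\le K_0$), whence each is $\le C_1\,E(t)$; likewise $-2\Re(i\,\Lambda^{s(t)}(D)\theta_\mu(D)T_{\mc A}u,\Lambda^{s(t)}(D)\theta_\mu(D)u)_{L^2}\le C_p\,E(t)$, because $\theta_\mu(D)T_{\mc A}$ maps $H^{s(t)}$ into itself with a $\mu$-dependent norm (it composes the order-$1$ operator $T_{\mc A}$ with the frequency cut-off $\theta_\mu(D)$) and $\|u\|_{H^{s(t)}}\le C_\mu\,E(t)^{1/2}$ by Lemma~\ref{l:energy}.

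What remains — the core of the lemma — is the single term $I:=-2\Re\bigl(i\,\Lambda^{s(t)}(D)\wtilde{T}_\Sigma T_{\mc A}u,\Lambda^{s(t)}(D)\wtilde{T}_\Sigma u\bigr)_{L^2}=2\,\Im\,(T_{\mc A}u,Mu)_{L^2}$, where $M:=\wtilde{T}_\Sigma^{\,*}\,\Lambda^{2s(t)}(D)\,\wtilde{T}_\Sigma$ is self-adjoint; writing $2\,\Im\,(T_{\mc A}u,Mu)=\bigl(i\,(T_{\mc A}^{\,*}M-MT_{\mc A})u,u\bigr)_{L^2}$, it suffices to show that $T_{\mc A}^{\,*}M-MT_{\mc A}$ is an operator of order $2s(t)+\log$, for then this pairing is $\le C_{LL}\|u\|^2_{H^{s(t)+(1/2)\log}}=C_{LL}\,E_{\log}(t)$. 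This is where microlocal symmetrizability enters. By the symbolic calculus of Paragraphs~\ref{sss:operators}--\ref{sss:LL-t} (Theorems~\ref{t:symb_calc}, \ref{t:symb_tilde}, \ref{t:symb_time}), and using that the symbol $\wtilde{S}-S$ is of order $-1+\log$ (Proposition~\ref{p:LL-tilde}\,(i)), one gets, modulo operators of order $\le 2s(t)+\log$: $M=T_{\langle\xi\rangle^{2s(t)}(1-\theta_\mu)^2 S}$, $T_{\mc A}$ may be replaced by $\wtilde{T}_{\mc A}$, $T_{\mc A}^{\,*}=T_{\mc A^*}+(\text{order }0+\log)$, and the two compositions reduce to $T_{\langle\xi\rangle^{2s(t)}(1-\theta_\mu)^2\mc A^* S}$ and $T_{\langle\xi\rangle^{2s(t)}(1-\theta_\mu)^2 S\mc A}$. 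Hence $T_{\mc A}^{\,*}M-MT_{\mc A}=T_{\langle\xi\rangle^{2s(t)}(1-\theta_\mu)^2(\mc A^* S-S\mc A)}+(\text{order }2s(t)+\log)$, and the symmetrizer condition — $S\mc A$ self-adjoint, i.e. $\mc A^* S=S\mc A$ — makes the principal symbol vanish identically. Collecting all contributions gives the inequality with $C_1=C_1(\mu,K_0)$, $C_2=C_2(\mu,K_0,K_1)$ and $C_3$ universal.

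The main obstacle is exactly this last step: forcing the cancellation that drops $T_{\mc A}^{\,*}M-MT_{\mc A}$ — \emph{a priori} an operator of order $1+2s(t)$, one derivative worse than what $E$ controls — to order $2s(t)+\log$. The delicate point is that the time-regularization replacing $S$ by $\wtilde{S}$ is unavoidable ($S$ is only log-Lipschitz in $t$), yet it replaces $S$, for which $S\mc A=\mc A^*S$ holds exactly, by $\wtilde{S}$, for which it does not; what saves the argument is that the defect $\wtilde{S}-S$ is one order lower (Proposition~\ref{p:LL-tilde}), so that reducing the symbol of $M$ back to the genuine symmetrizer $S$ costs only an error of the size absorbed by $E_{\log}(t)$. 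Subordinate to this is the bookkeeping of all remainders produced by the non-commutative calculus in the log-Lipschitz and logarithmic-Sobolev classes — the composition $\wtilde{T}_\Sigma^{\,*}\wtilde{T}_\Sigma$, the interchange of $\Lambda^{2s(t)}(D)$ with $\wtilde{T}_\Sigma$, the passage from $T_{\mc A}$ to $\wtilde{T}_{\mc A}$, the adjoint of $T_{\mc A}$, and the low-frequency corrector $\theta_\mu$ — each of which must be checked to be of order at most $0+\log$ relative to the natural scaling, hence dominated by $E_{\log}(t)$ through the logarithmic Cauchy--Schwarz inequality.
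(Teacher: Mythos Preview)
Your proof is correct and follows essentially the same route as the paper: the same decomposition of $\frac{d}{dt}E(t)$ into the $s'(t)$-terms, the $[\d_t,\wtilde T_\Sigma]$-commutator, the low-frequency piece, and the core $T_{\mc A}$-term, each handled exactly as in the paper via the symbolic calculus of Section~\ref{s:tools}. The only cosmetic difference is your packaging of the key cancellation as the vanishing of the principal symbol of $T_{\mc A}^{\,*}M-MT_{\mc A}$ (via $\mc A^*S=S\mc A$), whereas the paper reaches the same conclusion by reducing to $2\Re\bigl(T_{iS\mc A(1-\theta_\mu)^2}\bigr)$ and observing $\Re(iS\mc A)=0$; these are two ways of writing the same identity, and your observation that passing from $\wtilde S$ back to $S$ costs only a $(-1+\log)$-order error is precisely the mechanism the paper uses.
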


\begin{proof}
We start by noticing that, for any $v\in\mc{S}(\R^n)$, we have
$$
\frac{d}{dt}\left\|v\right\|^2_{H^{s(t)}}\,=\,s'(t)\int_{\R^n}\log\!\left(1+|\xi|^2\right)\,\left(1+|\xi|^2\right)^{s(t)}\,
\left|\what{v}(\xi)\right|^2\,d\xi\;\sim\;s'(t)\,\left\|v\right\|^2_{H^{s(t)+(1/2)\log}}\,.
$$
For notation convenience, we set $\Lambda(D)\,:=\,(1-\Delta)^{1/2}$, i.e. $\Lambda(\xi)\,=\,\lan\xi\ran\,=\,\left(1+|\xi|^2\right)^{1/2}$.

Recalling definition \eqref{eq:E} of the energy $E$, we get:
\begin{eqnarray*}
& & \hspace{-0.5cm}
\frac{d}{dt}E(t)\,=\,s'(t)\,\Re\!\left(\log\!\left(\Lambda^2(D)\right)\,\Lambda^{s(t)}(D)\,\wtilde{T}_{\Sigma}u\,,\,
\Lambda^{s(t)}(D)\,\wtilde{T}_{\Sigma}u\right)_{L^2}\,+ \\
& & \quad +\,2\,\Re\!\left(\Lambda^{s(t)}(D)\,T_{\d_t\wtilde{\Sigma}}u\,,\,
\Lambda^{s(t)}(D)\,\wtilde{T}_{\Sigma}u\right)_{L^2}\,+\,2\,\Re\!\left(\Lambda^{s(t)}(D)\,\wtilde{T}_{\Sigma}\d_tu\,,\,
\Lambda^{s(t)}(D)\,\wtilde{T}_{\Sigma}u\right)_{L^2}\,+ \\
& & \quad +\,s'(t)\,\Re\!\left(\log\!\left(\Lambda^2(D)\right)\,\Lambda^{s(t)}(D)\,\theta_\mu(D)u\,,\,
\Lambda^{s(t)}(D)\,\theta_\mu(D)u\right)_{L^2}\,+ \\
& & \quad +\,2\,\Re\!\left(\Lambda^{s(t)}(D)\,\theta_\mu(D)\d_tu\,,\,
\Lambda^{s(t)}(D)\,\theta_\mu(D)u\right)_{L^2}\;=\;F_1\,+\,F_2\,+\,F_3\,+\,F_4\,+\,F_5\,.
\end{eqnarray*}

First of all, let us consider the terms with $s'(t)$: keeping in mind definitions \eqref{eq:E_log} and \eqref{eq:s}, it is easy to see
that, for some constant just depending on the $C_0$ appearing in Lemma \ref{l:energy},
\begin{equation} \label{est:s'}
F_1+F_4\,\leq\,-\,C\,\beta\,E_{\log}(t)\,.
\end{equation}
On the other hand, Theorem \ref{t:symb_time} implies
\begin{equation} \label{est:F_2}
\left|F_2\right|\,\leq\,\left\|T_{\d_t\wtilde{\Sigma}}u\right\|_{H^{s(t)-(1/2)\log}}
\left\|\wtilde{T}_{\Sigma}u\right\|_{H^{s(t)+(1/2)\log}}\,\leq\,C_{LL}\|u\|_{H^{s+(1/2)\log}}\,E^{1/2}_{\log}\,\leq\,
C_{LL}\,C_p\,E_{\log}\,.
\end{equation}

For both $F_3$ and $F_5$, we have to use the equation for $T_Lu$. We start by dealing with the low frequencies term:
\begin{eqnarray*}
F_5 & = & 2\,\Re\!\left(\Lambda^{s(t)}(D)\,\theta_\mu(D)\d_tu\,,\,
\Lambda^{s(t)}(D)\,\theta_\mu(D)u\right)_{L^2} \\
& = & 2\,\Re\!\left(\Lambda^{s(t)}(D)\,\theta_\mu(D)\left(T_Lu\,-\,i\,T_{\mc A}u\,-\,T_Bu\right)\,,\,
\Lambda^{s(t)}(D)\,\theta_\mu(D)u\right)_{L^2}\,.
\end{eqnarray*}
First of all, Cauchy-Schwarz inequality immediately implies
$$ 
\left|\Re\!\left(\Lambda^{s(t)}(D)\,\theta_\mu(D)\,T_Lu\,,\,
\Lambda^{s(t)}(D)\,\theta_\mu(D)u\right)_{L^2}\right|\,\leq\,\left\|\theta_\mu(D) T_Lu\right\|_{H^{s(t)}}\,E^{1/2}\,.
$$ 
As for the term with $T_{\mc A}$, we remark that, by spectral localization properties, one has
$$
\theta_\mu(D)\,T_{i\mc A}u\,=\,\sum_{j=1}^n\sum_{k=0}^{k_\mu}S_{k-3}\bigl(A_j(t,x)\bigr)\,\Delta_k\d_ju\,,
$$
for some $k_\mu\,\sim\,\log_2\mu$. Hence, Bernstein inequalities immediately imply
$$
\left|2\,\Re\!\left(\Lambda^{s(t)}(D)\,\theta_\mu(D)\,T_{i\mc A}u\,,\,
\Lambda^{s(t)}(D)\,\theta_\mu(D)u\right)_{L^2}\right|\,\leq\,C\,\|u\|_{H^{s(t)}}\,E^{1/2}\,\leq\,C\,C_p\,E(t)\,,
$$
for a suitable constant $C$ depending just on the $L^\infty$ norms of the $A_j$'s. Exactly in the same way, we get an analogous
estimate for the $T_B$ term. 
In the end, putting these inequalities together, we deduce the control
\begin{equation} \label{est:F_5}
\left|F_5\right|\,\leq\,C\,C_p\,E(t)\,+\,\left\|\theta_\mu(D) T_Lu\right\|_{H^{s(t)}}\,\bigl(E(t)\bigr)^{1/2}\,+\,
C_p\,C_{LL}\,E_{\log}(t)\,.
\end{equation}

Let us consider now the term $F_3$, which we rewrite as
$$
F_3\,=\,2\,\Re\!\left(\Lambda^{s(t)}(D)\,\wtilde{T}_{\Sigma}\left(T_Lu\,-\,i\,T_{\mc A}u\,-\,T_Bu\right)\,,\,
\Lambda^{s(t)}(D)\,\wtilde{T}_{\Sigma}u\right)_{L^2}\,.
$$
We leave the $T_L$ term on one side: it will contribute to the last item appearing in our statement. In addition, the term with $T_B$ is easy
to control: since $\wtilde{T}_{\Sigma}$ and $T_B$ are operators of order $0$, we get
$$ 
\left|\Re\!\left(\Lambda^{s(t)}(D)\,\wtilde{T}_{\Sigma}T_Bu\,,\,
\Lambda^{s(t)}(D)\,\wtilde{T}_{\Sigma}u\right)_{L^2}\right|\,\leq\,C\,\left\|u\right\|_{H^{s(t)}}\,E^{1/2}\,\leq\,C\,C_p\,E(t)\,.
$$ 
So, we have to focus just on the last term,
$$ 
\wtilde{F}_3\,:=\,2\,\Re\!\left(\Lambda^{s(t)}(D)\,\wtilde{T}_{\Sigma}\,T_{i\mc A}u\,,\,
\Lambda^{s(t)}(D)\,\wtilde{T}_{\Sigma}u\right)_{L^2}\,:
$$ 
we are going to make a systematic use of symbolic calculus, taking advantage of the properties established in Theorems \ref{t:symb_calc}
and \ref{t:symb_tilde}.

First of all, passing to the adjoints, we have the equality
$$
\wtilde{F}_3\,=\,2\,\Re\!\left(\wtilde{T}_{\Sigma}\,\Lambda^{2s(t)}(D)\,\wtilde{T}_{\Sigma}\,T_{i\mc A}u\,,\,u\right)_{L^2}\,+\,
2\,\Re\!\left(R_1\Lambda^{2s(t)}(D)\,\wtilde{T}_{\Sigma}\,T_{i\mc A}u\,,\,u\right)_{L^2}\,,
$$
where $R_1$ is of order $-1+\log$. On the other hand, by Remark \ref{r:p-prod} we have
$\wtilde{T}_{\Sigma}\,\Lambda^{2s(t)}(D)\,=\,\Lambda^{2s(t)}(D)\,\wtilde{T}_{\Sigma}\,+\,R_2$, where the operator
$R_2$ is a remainder of order $2s(t)-1+\log$.
Therefore, collecting the $R_1$ and $R_2$ terms into only one remainder $R(u,u)$, which can be estimated as
\begin{equation} \label{est:remainder}
\left|R(u,u)\right|\,\leq\,C_{LL}\,\|u\|_{H^{s(t)+(1/2)\log}}\,E^{1/2}_{\log}\,\leq\,C_p\,C_{LL}\,E_{\log}(t)\,,
\end{equation}
we can write
$$
\wtilde{F}_3\,=\,2\,\Re\!\left(\Lambda^{2s(t)}(D)\wtilde{T}_{\Sigma}\wtilde{T}_{\Sigma}T_{i\mc A}u,u\right)_{L^2}+R(u,u)\,=\,
2\,\Re\!\left(\Lambda^{2s(t)}(D)T_{\wtilde{\Sigma}^2}T_{i\mc A}u,u\right)_{L^2}+R(u,u)\,.
$$ 
Notice that, in the second step, we have included into $R(u,u)$ another rest, depending on a remainder
$R_3$ which can be still bounded as in \eqref{est:remainder}.

Now, we use the fact that, by definition, $\wtilde{\Sigma}^2\,=\,\wtilde{S}\,\bigl(1-\theta_\mu(\xi)\bigr)^2$. Since, by Theorem
\ref{t:symb_time}, the difference operator $S-\wtilde{S}$ contributes to the estimates as another remainder, in the sense
of inequality \eqref{est:remainder}, we arrive at the identity
$$
\wtilde{F}_3\,=\,2\,\Re\!\left(\Lambda^{s(t)}(D)\,T_{S\,(1-\theta_\mu)^2}\,T_{i\mc A}u\,,\,\Lambda^{s(t)}(D)\,u\right)_{L^2}\,+\,R(u,u)\,.
$$
Finally, by symbolic calculus again, up to adding another remainder $R_4$ of order $0+\log$ (recall that $\mc{A}$ is of order $1$), we get
$$
\wtilde{F}_3\,=\,2\,\Re\!\left(\Lambda^{s(t)}(D)\,T_{i\,S\,\mc{A}\,(1-\theta_\mu)^2}u\,,\,\Lambda^{s(t)}(D)\,u\right)_{L^2}\,+\,R(u,u)\,.
$$
At this point, we remark that $\Re\bigl(i\,S\,\mc{A}\bigr)\,=\,0$, since $S$ is a microlocal symmetrizer for $\mc{A}$: then,
keeping in mind that $\Re P\,=\,\bigl(P+P^*\bigr)/2$, by symbolic calculus we deduce that
$$
2\,\Re\left(T_{i\,S\,\mc{A}\,(1-\theta_\mu)^2}\right)\,=\,R_5\,,
$$
where also $R_5$ is an operator of order $0+\log$. In the end, putting all these informations together, we find the estimate
$$
\left|\wtilde{F}_3\right|\,\leq\,C_{LL}\,C_p\,E_{\log}(t)\,,
$$
which in turn gives us
\begin{equation} \label{est:F_3}
\left|F_3\right|\,\leq\,C\,C_p\,E(t)\,+\,C_p\,C_{LL}\,E_{\log}(t)\,+\,
2\left|\Re\!\left(\Lambda^{s(t)}(D)\,\wtilde{T}_{\Sigma}\,T_Lu\,,\,\Lambda^{s(t)}(D)\,\wtilde{T}_{\Sigma}u\right)_{L^2}\right|\,.
\end{equation}

Therefore, collecting inequalities \eqref{est:s'}, \eqref{est:F_2}, \eqref{est:F_5} and \eqref{est:F_3} completes the proof
of the energy estimates for the paralinearized operator $T_L$.
\end{proof}

\begin{rem} \label{r:s_values}
No restriction on $s$ is needed at this level: its limitations derive just from product rules (see Proposition
\ref{p:Hol-Sob}) and from the analysis of remainders (see Lemma \ref{l:L->T} above).
\end{rem}

We complete now the proof of Theorem \ref{th:en_LL} in the case of operator $L$. Operator $\wtilde{L}$ will be matter of Remark
\ref{r:adj} below.

It remains us to deal with the term $T_L$ in the estimates provided by Lemma \ref{l:est-T_L}.
Recall that $T_Lu\,=\,Lu\,-\,\mc{R}_Lu$.

For the low frequencies term, it is an easy matter to see that
\begin{eqnarray*}
\left\|\theta_\mu(D) T_Lu\right\|_{H^{s(t)}} & \leq & \left\|Lu\right\|_{H^{s(t)}}\,+\,
C_p\,\left\|\mc{R}_Lu\right\|_{H^{s(t)-(1/2)\log}} \\
& \leq & \left\|Lu\right\|_{H^{s(t)}}\,+\,C_p\,\left(C_{LL}+C\right)\,
\left\|u\right\|_{H^{s(t)+(1/2)\log}}
\end{eqnarray*}
where $C_p$ is, as usual, a constant which depends on the positivity estimates. The presence of the constant $C$ in the last
step is due to lower order terms, i.e. $\mc{R}_B$.
Applying once more positivity estimates, we finally arrive to the bound
\begin{equation} \label{est:low-freq}
\left\|\theta_\mu(D) T_Lu\right\|_{H^{s(t)}}\,\leq\,\left\|Lu\right\|_{H^{s(t)}}\,+\,C_p\,\left(C_{LL}+C\right)\,E_{\log}(t)\,.
\end{equation}

We remark here that, up to let $C_p$ depend
also on $s$, the previous inequality holds true for any $s\in\R$: we have no need yet for using the condition $0<s<\g$.

As for the high frequencies term, since $\wtilde{T}_\Sigma$ is an operator of order $0$, we have the control
\begin{eqnarray*}
& & \hspace{-0.2cm}
2\left|\Re\!\left(\Lambda^{s(t)}(D)\,\wtilde{T}_{\Sigma}\,T_Lu\,,\,\Lambda^{s(t)}(D)\,\wtilde{T}_{\Sigma}u\right)_{L^2}\right|\,\leq\,
C\,\left\|Lu\right\|_{H^{s(t)}}\,E^{1/2}\,+\,C\,\left\|\mc{R}_Lu\right\|_{H^{s(t)-(1/2)\log}}\,E_{\log}^{1/2} \\
& & \qquad\qquad\qquad\qquad\qquad
\leq\,C\,\left\|Lu\right\|_{H^{s(t)}}\,E^{1/2}\,+\,C\left(C_{LL}+C\right)\left\|u\right\|_{H^{s(t)+(1/2)\log}}\,E_{\log}^{1/2}\,,
\end{eqnarray*}
where we have used also Lemma \ref{l:L->T}, and hence the restriction on $s$. Therefore, by positivity estimates again we deduce
\begin{equation} \label{est:high-freq}
2\left|\Re\!\left(\Lambda^{s(t)}(D)\,\wtilde{T}_{\Sigma}\,T_Lu\,,\,\Lambda^{s(t)}(D)\,\wtilde{T}_{\Sigma}u\right)_{L^2}\right|\,\leq\,
C\,\left\|Lu\right\|_{H^{s(t)}}\,E^{1/2}\,+\,C_p\left(C_{LL}+C\right)\,E_{\log}(t)\,.
\end{equation}

Now, putting \eqref{est:low-freq} and \eqref{est:high-freq} into the inequality given by Lemma \ref{l:est-T_L}, we find
$$
\frac{d}{dt}E(t)\,\leq\,C_1\,E(t)\,+\,C_3\,\left\|Lu\right\|_{H^{s(t)}}\,\bigl(E(t)\bigr)^{1/2}\,+\,\bigl(C_4-\beta\bigr)\,E_{\log}(t)\,,
$$
where $C_1$ is the constant given by Lemma \ref{l:est-T_L}, $C_3>0$ depends on $K_0$ and $K_1$, but just via positivity estimates,
and $C_4$ depends on $K_0$, $K_1$ and $K_2$ not only via Lemma \ref{l:energy}, but also via Lemma \ref{l:L->T}.

Now, we chose $\beta>C_{4}$: then, setting $e(t)\,:=\,\bigl(E(t)\bigr)^{1/2}$, an application of Gronwall inequality leads us to
the estimate
$$
e(t)\,\leq\,M\,e^{Qt}\left(e(0)\,+\,\int^t_0\left\|Lu(\tau)\right\|_{H^{s(\tau)}}\,d\tau\right)\,,
$$
for two positive constants $M$ and $Q$ large enough.
This completes the proof of the energy estimate stated in Theorem \ref{th:en_LL}, for operator $L$.

\begin{rem} \label{r:adj}
For operator $\wtilde{L}$ one can argue in a completely analogous way. The only difference is the presence of the remainder operator
${\mc R}_{\wtilde L}$ instead of $\mc{R}_L$, as stated in Lemma \ref{l:L->T}. However, since the order of the two operators is the same,
it is easy to see that the estimates do not change.
\end{rem}

\begin{rem} \label{r:precise-est}
As already remarked in \cite{C-M}, we point out that, in fact, our proof gives a more accurate energy estimate: namely,
\begin{eqnarray}
& & \hspace{-1cm}
\sup_{t\in[0,T_*]}\|u(t)\|_{H^{s-\beta t}}\,+\,\left(\int^{T_*}_0\|u(\tau)\|^2_{H^{s-\beta\tau+(1/2)\log}}\,d\tau\right)^{1/2}\,\leq \label{est:precise-LL} \\
& & \qquad\qquad\qquad\qquad\qquad\qquad
\leq\,C_1\,e^{C_2\,T}\,\left(\|u(0)\|_{H^s}\,+\,\int^{T_*}_0
\bigl\|Lu(\tau)\bigr\|_{H^{s-\beta\tau}}\,d\tau\right) \nonumber
\end{eqnarray}
for tempered distributions $u$ as in the statement of Theorem \ref{th:en_LL}. Moreover, if the last term is $L^2$ in time, one can replace it by
$\int_0^{T_*}\bigl\|Lu(\tau)\bigr\|^2_{H^{s-\beta\tau-(1/2)\log}}\,d\tau$.
\end{rem}

\begin{rem} \label{r:t-LL}
A careful inspection of our proof reveals that we just used the property $A_j\,\in\,L^\infty\bigl([0,T];LL(\R^n;\mc{M}_m)\bigr)$,
namely only $LL$ regularity in $x$ is exploited for the $A_j$'s. On the contrary, for the symmetrizer $S$ one needs log-Lipschitz continuity \emph{both} in time and space variables.

Nonetheless, in general regularity of the symmetrizer is dictated by the regularity of the coefficients (keep in mind the discussion in Example \ref{ex:const}). Furthermore,
hypothesis \eqref{hyp:LL} is invariant by change of coordinates, and then suitable for local analysis. This is why we required it.
\end{rem}

\subsection{Existence and uniqueness of solutions} \label{ss:global_e}

In this subsection, we prove Theorem \ref{t:global_e}, namely the existence and uniqueness of solutions to the global Cauchy problem \eqref{eq:Cauchy}.
For this, we will exploit in a fundamental way the energy estimates of Theorem \ref{th:en_LL}.

We will focus on the case of operator $L$, defined in \eqref{def:L}. The same arguments hand over $\wtilde{L}$, see its definition in \eqref{def:L*}, rather directly.

\subsubsection{Regularity results, uniqueness} \label{sss:global_reg}

As done in \cite{C-M} (see Definition 2.5 of that paper), for $(\sigma,\alpha)\in\R^2$ and $\beta>0$ fixed, let us define the following spaces:
\begin{itemize}
 \item $\mc{C}_{\sigma+\alpha\log,\beta}(T)$ is the set of functions $v$ on $[0,T]$, with values in the space of tempered distributions, which verify the property
$$
u\in\mc{C}\bigl([0,t];H^{\sigma-\beta t+\alpha\log}\bigr)
$$
for all $t\in[0,T]$;
\item $\mc{H}_{\sigma+\alpha\log,\beta}(T)$ is the set of functions $w$ on $[0,T]$, with values in the space of tempered distributions, such that
$$
\Lambda^{\sigma-\beta t}(D)\,\Pi^{\alpha}(D)\,w(t)\;\in\;L^2\bigl([0,T];L^2(\R^n;\R^m)\bigr)\,,
$$
where we used the notations $\Lambda(D)\,=\,(1-\Delta)^{1/2}$ and $\Pi(D)\,=\,\log(2+|D|)$, introduced in the previous paragraphs;
\item $\mc{L}_{\sigma+\alpha\log,\beta}(T)$ is the set of functions $z$ on $[0,T]$, with values in the space of tempered distributions, with the property
$$
\Lambda^{\sigma-\beta t}(D)\,\Pi^{\alpha}(D)\,z(t)\;\in\;L^1\bigl([0,T];L^2(\R^n;\R^m)\bigr)\,.
$$
\end{itemize}
All these spaces are endowed with the natural norms induced by the conditions here above.

\medbreak
To begin with, we deal with \emph{weak solutions} to the Cauchy problem \eqref{eq:Cauchy}. More precisely, fix $\g\in\,]0,1[\,$ and $s\in\,]0,\g[\,$
as in the statement of Theorem \ref{t:global_e}, and take the positive constants $\beta$ and $T_*$ given by Theorem \ref{th:en_LL}.
For $u_0\in H^s$ and $f\in\mc{L}_{s,\beta}(T_*)$, we are going to consider
$$
u\,\in\,\mc{H}_{s,\beta}(T_*)
$$
such that the equation $Lu=f$ is satisfied, together with the initial condition $u_{|t=0}=u_0$, in the sense of distributions.

We remark that, for such $u$, the product $A_j(t,x)\,\d_ju$ is well-defined and belongs to the space $\mc{H}_{s-1,\beta}(T_*)$ for all $j$ (recall Corollary \ref{c:LL-H^s}), and
$B(t,x)u$ belongs to $\mc{H}_{s,\beta}(T_*)$ (see Proposition \ref{p:Hol-Sob}). Hence, the equation $Lu=f$ makes sense in $\mc{D}'\bigl(\,]0,T_*[\,\times\R^n;\R^m\bigr)$.

Let us show now that it makes sense also to impose the initial condition $u_{|t=0}=u_0$.
\begin{lemma} \label{p:reg_global}
Let $\g$ and $s$ be as above. Let $u\in\mc{H}_{s,\beta}(T_*)$ verify the equation $Lu=f$ in $\mc{D}'$
for some $f\in\mc{L}_{s,\beta}(T_*)$.

Then one has $u\in\mc{C}_{s-(1/2),\beta}(T_*)$. In particular, the trace $u_{|t=0}$ is well-defined in $H^{s-1/2}(\R^n)$,
and the initial condition in \eqref{eq:Cauchy} makes sense.
\end{lemma}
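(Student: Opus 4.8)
The plan is to bootstrap from the (low) regularity $u \in \mc{H}_{s,\beta}(T_*)$ by reading off $\d_t u$ from the equation and then invoking a standard ODE-type regularity lemma in the spirit of \cite{C-M}. First I would rewrite $Lu = f$ as
$$
\d_t u \,=\, f \,-\, \sum_{j=1}^n A_j(t,x)\,\d_j u \,-\, B(t,x)\,u\,.
$$
For $u \in \mc{H}_{s,\beta}(T_*)$, each term on the right is controlled: by Corollary \ref{c:LL-H^s}, multiplication by the log-Lipschitz coefficient $A_j$ is a self-map of $H^{s-1+\alpha\log}$ (since $|s-1|<1$), so $A_j\,\d_j u \in \mc{H}_{s-1,\beta}(T_*)$; by Proposition \ref{p:Hol-Sob}, multiplication by the $\mc{C}^\g$ function $B$ is a self-map of $H^s$ (since $|s|<\g$), so $Bu \in \mc{H}_{s,\beta}(T_*) \hookrightarrow \mc{H}_{s-1,\beta}(T_*)$; and $f \in \mc{L}_{s,\beta}(T_*) \hookrightarrow \mc{L}_{s-1,\beta}(T_*)$. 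Hence $\d_t u$ belongs to a space of the type $\mc{L}_{s-1,\beta}(T_*) + \mc{H}_{s-1,\beta}(T_*)$, i.e. $\Lambda^{s-1-\beta t}(D)\,\d_t u \in L^1_t L^2_x$ after absorbing the $L^2_t$ piece into $L^1_t$ on the bounded interval $[0,T_*]$.

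The second step is to promote ``$u \in L^2_t H^{s-\beta t}$ with $\d_t u \in L^1_t H^{s-1-\beta t}$'' to ``$u \in \mc{C}_t H^{s-1/2-\beta t}$''. This is the analogue of the classical fact that a function which lies in $H^{\sigma}$ and whose time derivative lies in $H^{\sigma-1}$ is continuous with values in $H^{\sigma-1/2}$; the only subtlety here is the time-dependent, linearly decreasing exponent $s-\beta t$. I would handle it by freezing: on $[0,T_*]$ one has the uniform inclusions $H^{s-\beta t} \hookrightarrow H^{s-\beta T_*}$ and $H^{s-1-\beta t} \hookrightarrow H^{s-1-\beta T_*}$, reducing matters to constant exponents, and then apply the standard interpolation/trace lemma (e.g. Lemma 2.6 or the surrounding discussion in \cite{C-M}, or \cite{B-C-D} Chapter 2) to get $u \in \mc{C}\bigl([0,T_*];H^{s-1/2-\beta T_*}\bigr)$; a localization in $t$ (working on $[0,t]$ for each $t$ and using the monotonicity of $\tau \mapsto s-\beta\tau$) upgrades this to $u \in \mc{C}_{s-(1/2),\beta}(T_*)$ in the sense of the space defined just above. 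Once continuity in time with values in $H^{s-1/2}$ is established, the trace $u_{|t=0}$ is unambiguously defined as an element of $H^{s-1/2}(\R^n)$, so imposing $u_{|t=0}=u_0$ for a given $u_0 \in H^s \hookrightarrow H^{s-1/2}$ makes sense, which is the assertion.

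The main obstacle is purely bookkeeping: keeping the logarithmic weights $\Pi^\alpha(D)$ and the moving Sobolev index $s-\beta t$ consistent through the continuity-in-time argument, and making sure no genuine loss beyond the advertised $1/2$ derivative creeps in. There is no deep analytic difficulty — all the hard work (the product laws, the paradifferential machinery) is already in place — but one must be careful that the two different ``ambient'' spaces for $\d_t u$ ($L^1_t$ versus $L^2_t$) are combined correctly and that the gain of $1/2$ derivative is exactly the one coming from the embedding $L^2_t H^\sigma \cap W^{1,1}_t H^{\sigma-1} \hookrightarrow \mc{C}_t H^{\sigma-1/2}$ rather than something worse.
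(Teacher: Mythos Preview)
Your first step is fine and matches the paper's: $A_j\,\d_j u\in\mc H_{s-1,\beta}(T_*)$ and $Bu\in\mc H_{s,\beta}(T_*)$ by the stated product rules. The gap is in the second step. You collapse $\d_tu$ into the space $\mc L_{s-1,\beta}(T_*)$ (i.e.\ $L^1_t H^{s-1-\beta t}$) and then invoke the embedding
\[
L^2_t H^{\sigma}\,\cap\,W^{1,1}_tH^{\sigma-1}\;\hookrightarrow\;\mc C_t H^{\sigma-1/2}
\]
as ``standard''. This embedding is \emph{false}. A counterexample: take $u$ spectrally localized at frequency $N$ in $x$, with time profile $w(t)=\phi(t/\epsilon)$ a bump of width $\epsilon$, and couple $N=\epsilon^{-1/2}$. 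Then $\|u\|_{L^2_tH^\sigma}\sim\|\d_tu\|_{L^1_tH^{\sigma-1}}\sim\epsilon^{(1-\sigma)/2}$, whereas $\|u\|_{L^\infty_tH^{\sigma-1/2}}\sim\epsilon^{-1/4}\cdot\epsilon^{(1-\sigma)/2}$, so the ratio blows up as $\epsilon\to0$. The usual identity $\frac{d}{dt}\|u\|_{H^{\sigma-1/2}}^2=2\Re(\d_tu,u)_{H^{\sigma-1}\times H^{\sigma}}$ only closes if $\d_tu$ and $u$ are in \emph{dual} Lebesgue spaces in time (e.g.\ both $L^2_t$); an $L^1_t$--$L^2_t$ pairing does not suffice.

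The fix is precisely what the paper does, and it costs nothing: do \emph{not} throw away the extra derivative on $f$. Since $f\in\mc L_{s,\beta}(T_*)$ (not just $\mc L_{s-1,\beta}$), the primitive $\int_0^t f(\tau)\,d\tau$ lies in $\mc C_{s,\beta}(T_*)\hookrightarrow\mc C_{s-1/2,\beta}(T_*)$ directly. Set $v:=u-\int_0^t f$; then $v\in\mc H_{s,\beta}(T_*)$ and $\d_tv=-\sum_jA_j\d_ju-Bu\in\mc H_{s-1,\beta}(T_*)$, i.e.\ both sides are $L^2$ in time. Now the genuine Lions--Magenes interpolation $L^2_tH^{\sigma}\cap H^1_tH^{\sigma-1}\hookrightarrow\mc C_tH^{\sigma-1/2}$ applies (after the freezing argument you already describe), giving $v\in\mc C_{s-1/2,\beta}(T_*)$ and hence $u=v+\int_0^t f\in\mc C_{s-1/2,\beta}(T_*)$. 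Your identification of the ``main obstacle'' was on target: the $L^1_t$ and $L^2_t$ pieces cannot be combined, they must be handled separately.
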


\begin{proof}
The proof is somehow classical. Let us define $v(t)\,:=\,u\,-\,\int^t_0f(\tau)d\tau$. Then, by hypotheses it easily follows that $\int^t_0f\,\in\,\mc{C}_{s,\beta}(T_*)$,
and hence $v\,\in\,\mc{H}_{s,\beta}(T_*)$.

On the other hand, from the arguments exposed before, we deduce that $\d_tv\,=\,\sum_{j}A_j\d_ju\,+\,Bu$ belongs to the space $\mc{H}_{s-1,\beta}$. Then, an easy
interpolation implies that $v\in\mc{C}_{s-(1/2),\beta}$, and therefore so does $u$.
\end{proof}

As a result of this lemma and the previous considerations, we have clarified the sense to give to the Cauchy problem \eqref{eq:Cauchy}.

Next, let us state a ``weak $=$ strong'' type result. Namely, we show that any weak solution $u$ is in fact the limit of a suitable sequence of smooth approximate solutions,
in the norm given by the left-hand side of inequality \eqref{est:u_LL}. In particular, this fact implies that $u$ enjoys additional smoothness and it
satisfies the energy estimates stated in Theorem \ref{th:en_LL}.
\begin{thm} \label{th:w-s}
Fix $\g\,\in\,]0,1[\,$ and $0<s<\g$. Let $\beta>0$ and $T_*$ be the ``loss parameter'' and the existence time given by Theorem \ref{th:en_LL}, together with the constants
$C_1$ and $C_2$. \\
Let $u_0\in H^s(\R^n;\R^m)$, $f\in\mc{L}_{s,\beta}(T_*)$ and $u\in\mc{H}_{s,\beta}(T_*)$ be the corresponding weak solution to the Cauchy problem
\eqref{eq:Cauchy}.

Then one has $u\,\in\,\mc{C}_{s,\beta}(T_*)$, and $u$ satisfies the energy inequality \eqref{est:u_LL}.
\end{thm}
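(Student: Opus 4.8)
The plan is to realize the weak solution $u$ as a strong limit of approximate solutions which are smooth in $x$, and then to transfer the bound \eqref{est:u_LL} by passing to the limit. To this end I would fix a spatial mollifier $\bigl(\rho_\veps\bigr)_{\veps>0}$ and set $u_\veps:=\rho_\veps*_xu$. By Lemma \ref{p:reg_global} the trace $u_{|t=0}=u_0$ is meaningful, so $u_\veps$ is well defined on $[0,T_*]\times\R^n$; it is $\mc{C}^\infty$ in $x$ with values in every $H^N$, it is absolutely continuous in $t$ (since $\d_tu_\veps=\rho_\veps*_x\bigl(f-\sum_jA_j\d_ju-Bu\bigr)$ lies in $L^1\bigl([0,T_*];H^N\bigr)$ for every $N$, by the arguments preceding Lemma \ref{p:reg_global}), and $u_\veps(0)=\rho_\veps*_xu_0\to u_0$ in $H^s$. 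In particular the energy computations of Subsection \ref{ss:estimates} apply to $u_\veps$ verbatim: the energy $E_\veps(t)$ is absolutely continuous, Lemma \ref{l:est-T_L} gives the differential inequality for almost every $t$, and Gronwall's lemma yields both \eqref{est:u_LL} and the refined estimate of Remark \ref{r:precise-est} for $u_\veps$.

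Since $\rho_\veps*_x$ commutes with $\d_t$ and with the $\d_j$'s, one has
\[
Lu_\veps\,=\,\rho_\veps*_xf\,+\,r_\veps\,,\qquad
r_\veps\,:=\,\sum_{j=1}^n\bigl[A_j,\rho_\veps*_x\bigr]\d_ju\,+\,\bigl[B,\rho_\veps*_x\bigr]u\,,
\]
and likewise $\wtilde{L}u_\veps=\rho_\veps*_xf+\wtilde{r}_\veps$, with $\wtilde{r}_\veps$ obtained from $r_\veps$ by the same bookkeeping as in the proof of Lemma \ref{l:L->T}. The heart of the matter is the Friedrichs-type claim that
\[
r_\veps\;\longrightarrow\;0\quad\text{in}\quad\mc{L}_{s,\beta}(T_*)
\]
(and, for $\wtilde{L}$, that $\wtilde{r}_\veps\to0$ in $\mc{L}_{s,\beta}(T_*)$ with $-\g<s<0$). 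To prove it I would rewrite each commutator as $\bigl[A_j,\rho_\veps*_x\bigr]\d_ju=\rho_\veps*_x\bigl(A_j\d_ju\bigr)-A_j\bigl(\rho_\veps*_x\d_ju\bigr)$ and $\bigl[B,\rho_\veps*_x\bigr]u=\rho_\veps*_x(Bu)-B\bigl(\rho_\veps*_xu\bigr)$; by Corollary \ref{c:LL-H^s} (applied at the Sobolev level $s-\beta t-1\in\,]-1,0[\,$) and Proposition \ref{p:Hol-Sob}, one has $A_j\d_ju\in\mc{H}_{s-1,\beta}(T_*)$ and $Bu\in\mc{H}_{s,\beta}(T_*)$, and since $\rho_\veps*_x$ converges strongly to the identity on each of these spaces, both terms in each commutator converge to the same limit there, so $r_\veps\to0$ in $\mc{H}_{s-1,\beta}(T_*)$. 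The genuine difficulty is to upgrade this to convergence in $\mc{L}_{s,\beta}(T_*)$, i.e. to recover the full missing space derivative: this rests on the dyadic bounds of Proposition \ref{p:dyadic-LL} for $A_j$ and $B$ (which quantify how far they are from Lipschitz) combined with the paraproduct estimates of Section \ref{s:tools}, and one must take care either to avoid, or -- using the improved inequality of Remark \ref{r:precise-est}, which tolerates the right-hand side measured with an extra $(1/2)\log$ -- to absorb a possible logarithmic loss stemming from the merely log-Lipschitz/H\"older regularity of the coefficients. I expect this commutator estimate to be the main obstacle.

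Granting the claim, I would apply the energy estimate to $u_\veps-u_{\veps'}$, which solves $L(u_\veps-u_{\veps'})=(\rho_\veps-\rho_{\veps'})*_xf+(r_\veps-r_{\veps'})$: because $\rho_\veps*_xu_0$ is Cauchy in $H^s$, $\rho_\veps*_xf$ is Cauchy in $\mc{L}_{s,\beta}(T_*)$ (dominated convergence), and $(r_\veps)_\veps$ is convergent -- hence Cauchy -- in $\mc{L}_{s,\beta}(T_*)$, the right-hand side of \eqref{est:u_LL} written for $u_\veps-u_{\veps'}$ tends to $0$; thus $(u_\veps)_\veps$ is a Cauchy sequence in $\mc{C}_{s,\beta}(T_*)$. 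Its limit must coincide with $u$, since $u_\veps\to u$ already in $\mc{D}'\bigl(\,]0,T_*[\,\times\R^n\bigr)$, whence $u\in\mc{C}_{s,\beta}(T_*)$. Finally, letting $\veps\to0$ in \eqref{est:u_LL} written for $u_\veps$ and using $\|u_\veps(0)\|_{H^s}\to\|u_0\|_{H^s}$, $\|Lu_\veps\|_{\mc{L}_{s,\beta}(T_*)}\to\|f\|_{\mc{L}_{s,\beta}(T_*)}$ (from $r_\veps\to0$) together with the convergence $u_\veps\to u$ in $\mc{C}_{s,\beta}(T_*)$ just obtained, one gets \eqref{est:u_LL} for $u$, with $Lu=f$. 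The conservative operator $\wtilde{L}$ is treated in exactly the same way, with $-\g<s<0$ and with $\wtilde{r}_\veps$, $\mc{R}_{\wtilde{L}}$ in place of $r_\veps$, $\mc{R}_L$.
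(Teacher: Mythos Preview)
Your strategy is exactly the paper's: regularize $u$ in the space variable, show the regularized family solves $L u_\veps = f_\veps + (\text{commutator})$, establish that the commutator tends to $0$ in the right norm, and then use the energy inequality on differences $u_\veps-u_{\veps'}$ to conclude that $(u_\veps)_\veps$ is Cauchy in $\mc{C}_{s,\beta}(T_*)$. The only substantive difference is the choice of regularizer: the paper uses the Fourier multipliers $J_\veps(D):=(1-\veps\Delta)^{-1/2}$ rather than spatial convolution $\rho_\veps*_x$. This buys two things. First, $J_\veps$ is an honest operator of order $-1$, so $u_\veps=J_\veps(D)u$ lands in $\mc{H}_{s+1,\beta}(T_*)\hookrightarrow L^2([0,T_*];H^1)$, which is precisely the regularity class for which Theorem~\ref{th:en_LL} is stated; with $\rho_\veps*_x$ you get $\mc{C}^\infty$ in $x$ but no uniform $H^1$ bound, so you must re-inspect the energy computation rather than quote the theorem. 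Second, and more importantly, the commutator estimate for $J_\veps(D)$ with log-Lipschitz coefficients is already available (Lemma~4.6 of \cite{C-M}), and it gives directly $g_\veps\to0$ in $\mc{H}_{s,\beta}(T_*)$ --- no need for the $(1/2)\log$ slack of Remark~\ref{r:precise-est} and no need to upgrade from $\mc{H}_{s-1,\beta}$.

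The step you flag as the ``main obstacle'' is indeed the crux, and you have not closed it: you argue that $r_\veps\to0$ in $\mc{H}_{s-1,\beta}$ from strong convergence of each summand, but the upgrade to $\mc{L}_{s,\beta}$ (or even $\mc{H}_{s-(1/2)\log,\beta}$) requires a genuine Friedrichs-type commutator lemma adapted to log-Lipschitz coefficients and fractional $H^\sigma$ with $0<\sigma<1$. Such a lemma can be proved (e.g.\ by paraproduct decomposition and the dyadic bounds of Proposition~\ref{p:dyadic-LL}), and the argument is close in spirit to Lemma~4.6 of \cite{C-M}; but as written, your proposal leaves this as a sketch. The paper sidesteps the issue entirely by choosing $J_\veps(D)$ so that the needed lemma is already on the shelf.
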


\begin{proof}
For $\veps\in\,]0,1]$, let us introduce the smoothing operators $J_\veps(D)\,:=\,(1-\veps\Delta)^{-1/2}$, which are Fourier multipliers associated to the symbols
$J_\veps(\xi)\,=\,\left(1+\veps|\xi|^2\right)^{-1/2}$.

Recall that they are operators of order $-1$, they are uniformly bounded from $H^\sigma$ to $H^{\sigma+1}$ for any $\sigma\in\R$, and one has the strong convergence
$J_\veps(D)w\,\longra\,w$ in $H^\sigma$, for $\veps\ra0$.

For any $\veps>0$, we set $u_\veps\,:=\,J_\veps(D)u$. Then, from the previous properties we deduce that the family $\bigl(u_\veps\bigr)_\veps$ is uniformly bounded in
$\mc{H}_{s+1,\beta}(T_*)\,\hra\,L^2\bigl([0,T_*];H^1(\R^n;\R^m)\bigr)$. Moreover, we have that $u_\veps\,\longra\,u$ in $\mc{H}_{s,\beta}(T_*)$ for $\veps\ra0$.

In addition, if we apply operator $J_\veps(D)$ to the equation $Lu=f$, we deduce that the $u_\veps$'s solve the equation $Lu_\veps\,=\,f_\veps+g_\veps$, with
$f_\veps=J_\veps(D)f$ and
\begin{equation} \label{def:g_eps}
g_\veps\,:=\,\sum_{j=1}^n\bigl[A_j(t,x),J_\veps(D)\bigr]\d_ju\,+\,\bigl[B(t,x),J_\veps(D)\bigr]u\,.
\end{equation}
By Lemma 4.6 of \cite{C-M}, it is easy to infer that $f_\veps\,\longra\,f$ in $\mc{L}_{s,\beta}(T_*)$ and $g_\veps\,\longra\,0$ in $\mc{H}_{s,\beta}(T_*)$,
in the limit for $\veps\ra0$.

From this fact, Theorem \ref{th:en_LL} and Remark \ref{r:precise-est} together, we deduce the energy estimates
\begin{eqnarray*}
& & \hspace{-0.7cm}
\sup_{t\in[0,T_*]}\|u_\veps(t)\|_{H^{s-\beta t}}\,+\,\left(\int^{T_*}_0\|u_\veps(\tau)\|^2_{H^{s-\beta\tau+(1/2)\log}}\,d\tau\right)^{1/2}\,\leq \\
& & \qquad\qquad\qquad\qquad\qquad
\leq\,C_1\,e^{C_2\,T}\,\left(\|u_\veps(0)\|_{H^s}\,+\,\int^T_0\left(\bigl\|f_\veps\bigr\|_{H^{s-\beta\tau}}\,+\,\bigl\|g_\veps\bigr\|_{H^{s-\beta\tau}}\right)\,d\tau\right)\,.
\end{eqnarray*}
By linearity of the equations, similar estimates are satisfied also by the difference $\delta u_{\veps,\eta}\,:=\,u_\veps-u_\eta$, for all (say) $0<\eta<\veps\leq1$,
up to replace, in the right-hand side, $u_\veps(0)$, $f_\veps$ and $g_\veps$ respectively by $\delta u_{\veps,\eta}(0)\,=\,u_\veps(0)-u_\eta(0)$, $\delta f_{\veps,\eta}=f_\veps-f_\eta$
and $\delta g_{\veps,\eta}=g_\veps-g_\eta$.

Since $\delta u_{\veps,\eta}(0)\longra0$ in $H^s$, and so do $\delta f_{\veps,\eta}$ and $\delta g_{\veps,\eta}$ respectively in $\mc{L}_{s,\beta}(T_*)$ and in
$\mc{H}_{s,\beta}(T_*)$, we gather that $\bigl(u_\veps\bigr)_\veps$ is a Cauchy sequence in $\mc{C}_{s,\beta}(T_*)$. Therefore, the limit $u\in\mc{H}_{s,\beta}(T_*)$
also belongs to $\mc{C}_{s,\beta}(T_*)$, and it verifies the energy estimate \eqref{est:u_LL}.
\end{proof}

The previous theorem immediately implies uniqueness of weak solutions.
\begin{coroll} \label{c:w_uniq}
Let $\g$ and $s$ be fixed as in the hypotheses of Theorem \ref{th:w-s}. Let $u\in\mc{H}_{s,\beta}(T_*)$ be a weak solution to the Cauchy problem \eqref{eq:Cauchy},
with initial datum $u_0=0$ and external force $f\equiv0$.

Then $u\,\equiv\,0$.
\end{coroll}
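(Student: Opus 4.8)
The plan is to deduce the statement directly from the ``weak $=$ strong'' result of Theorem \ref{th:w-s}, which upgrades every weak solution to one satisfying the energy inequality \eqref{est:u_LL}. First I would check that the hypotheses of Theorem \ref{th:w-s} are met in the present situation: the datum $u_0=0$ belongs to $H^s(\R^n;\R^m)$ and the source term $f\equiv0$ belongs to $\mc{L}_{s,\beta}(T_*)$, so that $u\in\mc{H}_{s,\beta}(T_*)$ is precisely the weak solution to \eqref{eq:Cauchy} associated to these (trivial) data. Applying Theorem \ref{th:w-s} then yields that $u\in\mc{C}_{s,\beta}(T_*)$ and that $u$ obeys the estimate \eqref{est:u_LL}.

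Next I would simply evaluate the right-hand side of \eqref{est:u_LL}. Since $u(0)=u_0=0$ and $Lu=f\equiv0$, we have $\|u(0)\|_{H^s}=0$ and $\int_0^{T_*}\|Lu(\tau)\|_{H^{s-\beta\tau}}\,d\tau=0$, whence
$$
\sup_{t\in[0,T_*]}\|u(t)\|_{H^{s-\beta t}}\,\leq\,0\,.
$$
Therefore $u(t)=0$ in $H^{s-\beta t}(\R^n;\R^m)$ for every $t\in[0,T_*]$, i.e. $u\equiv0$ on $[0,T_*]\times\R^n$, which is the assertion.

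There is essentially no obstacle here: the whole analytic content --- the paradifferential energy estimate and its propagation to weak solutions via the regularization and Cauchy-sequence argument --- has already been carried out in Theorems \ref{th:en_LL} and \ref{th:w-s}. The only points to keep in mind are that one must invoke the energy inequality in the form valid for weak solutions (hence Theorem \ref{th:w-s}, not Theorem \ref{th:en_LL} alone), and that, by linearity of $L$, the solution with vanishing data is the zero function; both are immediate. The same argument applies verbatim to the conservative operator $\wtilde{L}$, using the corresponding version of Theorem \ref{th:w-s}.
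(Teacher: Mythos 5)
Your argument is exactly the one the paper intends: apply Theorem \ref{th:w-s} to upgrade the weak solution so that it satisfies the energy estimate \eqref{est:u_LL}, then observe that with $u_0=0$ and $f\equiv0$ the right-hand side vanishes, forcing $u\equiv0$. This matches the paper's proof, which it leaves implicit with the remark that the theorem ``immediately implies uniqueness of weak solutions.''
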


\subsubsection{Existence of weak solutions} \label{sss:weak}

In order to complete the proof of Theorem \ref{t:global_e}, it remains us to prove existence of weak solutions.
\begin{prop} \label{p:w_existence}
Fix $\g\,\in\,]0,1[\,$ and $0<s<\g$. Let $\beta>0$ and $T_*$ be the ``loss parameter'' and the existence time given by Theorem \ref{th:en_LL}.

Then, for all $u_0\in H^s(\R^n;\R^m)$ and $f\in\mc{L}_{s,\beta}(T_*)$, there exists a weak solution $u\in\mc{H}_{s,\beta}(T_*)$ to the Cauchy problem
\eqref{eq:Cauchy}.
\end{prop}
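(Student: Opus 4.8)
The plan is to obtain $u$ by a duality (Hahn--Banach) argument based on the energy estimate of Theorem~\ref{th:en_LL} applied to the \emph{adjoint} operator, in the spirit of \cite{C-M}. The $L^2$-adjoint of $L$ is $L^*\phi=-\d_t\phi-\sum_{j=1}^n\d_j(A_j^*\phi)+B^*\phi$; up to an overall sign and after reversing the time orientation, this is a conservative operator of the form~\eqref{def:L*} whose first-order coefficients $A_j^*$ are log-Lipschitz, whose zero-order coefficient is $\g$-H\"older, and whose principal symbol is $\mc{A}^*$. First I would observe that $S^{-1}$ --- which is well defined, self-adjoint, homogeneous of degree $0$ in $\xi$, $\mc{C}^\infty$ for $\xi\neq0$, satisfies $\Lambda^{-1}\Id\le S^{-1}\le\lambda^{-1}\Id$, and is log-Lipschitz in $(t,x)$ because $S$ is and $S\ge\lambda\Id$ (with seminorms controlled by $K_0$, $K_1$, $\lambda$, $\Lambda$) --- is a microlocal symmetrizer for $\mc{A}^*$ in the sense of Definition~\ref{d:micro_symm}: from $S\mc{A}=\mc{A}^*S$ one gets $S^{-1}\mc{A}^*=\mc{A}S^{-1}$, which is self-adjoint. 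Consequently the adjoint operator falls within the scope of Theorem~\ref{th:en_LL}, and, reversing time $t\mapsto T_*-t$, the backward Cauchy problem for $L^*$ (with datum at $t=T_*$) becomes a forward Cauchy problem for an operator of the form~\eqref{def:L*} satisfying all of its hypotheses.

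Applying the $\wtilde{L}$-part of Theorem~\ref{th:en_LL} to this time-reversed adjoint at the \emph{negative} index $-s_0$, with $s_0:=s-\beta T_*$, and translating the estimate back to the original time variable, I would get the a priori bound
\begin{equation} \label{eq:adj-est}
\sup_{t\in[0,T_*]}\bigl\|\phi(t)\bigr\|_{H^{-(s-\beta t)}}\,\leq\,C\int_0^{T_*}\bigl\|L^*\phi(\tau)\bigr\|_{H^{-(s-\beta\tau)}}\,d\tau
\end{equation}
for every \emph{admissible} test function $\phi$, i.e.\ $\phi\in\mc{C}^\infty\bigl([0,T_*]\times\R^n;\R^m\bigr)$, compactly supported in $x$, with $\phi_{|t=T_*}\equiv0$; here one uses $-s_0-\beta(T_*-t)=-(s-\beta t)$ and the vanishing of the datum at $t=T_*$, and the constant $C$ depends only on $T$, $K_0$, $K_1$, $K_2$, $s$. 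Since $0<s_0<s<\g$, the index $-s_0$ lies in $\,]-\g,0[\,$ --- exactly the range where the $\wtilde{L}$-estimate holds --- and the side condition $\beta T_*<\g-s_0$ required there reduces to $s<\g$; thus we may assume, possibly enlarging $\beta$ (which depends only on the log-Lipschitz seminorms) and shrinking $T_*$ while keeping $\beta T_*<s$, that the estimate of Theorem~\ref{th:en_LL} for $L$ at index $s$ and \eqref{eq:adj-est} hold simultaneously. The crucial bookkeeping point is that \eqref{eq:adj-est} is dual, fibrewise in $t$, to the data: $H^{-(s-\beta t)}=\bigl(H^{s-\beta t}\bigr)^*$, matching the norms $\|f\|_{\mc{L}_{s,\beta}(T_*)}$ and $\|u_0\|_{H^s}$.

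Then I would run the functional-analytic construction. Let $\mc{X}$ be the space of $\psi$ on $[0,T_*]$ with $\Lambda^{-(s-\beta t)}(D)\psi(t)\in L^1\bigl([0,T_*];L^2(\R^n;\R^m)\bigr)$; via $\psi(t)\mapsto\Lambda^{-(s-\beta t)}(D)\psi(t)$ it is isometric to $L^1\bigl([0,T_*];L^2\bigr)$, so its dual is identified with $L^\infty\bigl([0,T_*];H^{s-\beta t}(\R^n;\R^m)\bigr)$ through the pairing $\langle u,\psi\rangle=\int_0^{T_*}\bigl(u(\tau),\psi(\tau)\bigr)_{L^2}\,d\tau$. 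On $\mc{E}:=\{\,L^*\phi \;:\; \phi\ \text{admissible}\,\}\subset\mc{X}$ --- note that $L^*\phi(\tau)\in H^{-(s-\beta\tau)}$, compactly supported, because $0<s-\beta\tau<1$ and by Corollary~\ref{c:LL-H^s} and Proposition~\ref{p:Hol-Sob} --- define $\ell(L^*\phi):=\int_0^{T_*}\bigl(f(\tau),\phi(\tau)\bigr)_{L^2}\,d\tau+\bigl(u_0,\phi(0)\bigr)_{L^2}$. By \eqref{eq:adj-est}, $L^*\phi=0$ forces $\phi\equiv0$, so $\ell$ is well defined on $\mc{E}$; bounding the two terms by $\|f\|_{\mc{L}_{s,\beta}(T_*)}\sup_t\|\phi(t)\|_{H^{-(s-\beta t)}}$ and $\|u_0\|_{H^s}\|\phi(0)\|_{H^{-s}}$, and invoking \eqref{eq:adj-est} together with $\|\phi(0)\|_{H^{-s}}\le\sup_t\|\phi(t)\|_{H^{-(s-\beta t)}}$, one gets $|\ell(L^*\phi)|\le C\bigl(\|f\|_{\mc{L}_{s,\beta}(T_*)}+\|u_0\|_{H^s}\bigr)\,\|L^*\phi\|_{\mc{X}}$. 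By Hahn--Banach $\ell$ extends to $\mc{X}^*$, and by the identification above there is $u\in L^\infty\bigl([0,T_*];H^{s-\beta t}(\R^n;\R^m)\bigr)\hra\mc{H}_{s,\beta}(T_*)$ with
\begin{equation} \label{eq:weak-form}
\int_0^{T_*}\bigl(u(\tau),L^*\phi(\tau)\bigr)_{L^2}\,d\tau\,=\,\int_0^{T_*}\bigl(f(\tau),\phi(\tau)\bigr)_{L^2}\,d\tau\,+\,\bigl(u_0,\phi(0)\bigr)_{L^2}
\end{equation}
for all admissible $\phi$.

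Finally I would check that this $u$ solves \eqref{eq:Cauchy}. Testing \eqref{eq:weak-form} against $\phi\in\mc{C}^\infty_c\bigl(\,]0,T_*[\,\times\R^n;\R^m\bigr)$, the definition of $L^*$ as formal transpose --- consistent with the discussion preceding Lemma~\ref{p:reg_global}, where the products $A_j\d_j u$, $Bu$ are defined --- gives $Lu=f$ in $\mc{D}'\bigl(\,]0,T_*[\,\times\R^n\bigr)$; hence $u\in\mc{H}_{s,\beta}(T_*)$ is a weak solution of the equation, and Lemma~\ref{p:reg_global} yields $u\in\mc{C}_{s-1/2,\beta}(T_*)$, so the trace $u_{|t=0}\in H^{s-1/2}(\R^n)$ is well defined. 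A standard density argument --- approximating $u$ by $J_\veps(D)u$ as in the proof of Theorem~\ref{th:w-s}, writing the classical Green identity for the smooth approximations and passing to the limit --- identifies $\int_0^{T_*}(u,L^*\phi)_{L^2}\,d\tau$ with $\int_0^{T_*}(f,\phi)_{L^2}\,d\tau+(u_{|t=0},\phi(0))_{L^2}$ for every admissible $\phi$; comparing with \eqref{eq:weak-form} gives $(u_{|t=0}-u_0,\phi(0))_{L^2}=0$ for all $\phi(0)$, whence $u_{|t=0}=u_0$. This proves the proposition; together with Theorem~\ref{th:w-s} and Corollary~\ref{c:w_uniq} it completes the proof of Theorem~\ref{t:global_e}, and the conservative operator $\wtilde{L}$ is handled identically --- its adjoint being of non-conservative type, one invokes the $L$-form of Theorem~\ref{th:en_LL} at the dual index (again with symmetrizer $S^{-1}$). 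I expect the only genuinely non-routine step to be the one of the first paragraph --- recognizing that $S^{-1}$ is a log-Lipschitz microlocal symmetrizer for the adjoint --- together with the index shift from $s$ to $s_0=s-\beta T_*$ that makes \eqref{eq:adj-est} dual to the data space $\mc{L}_{s,\beta}(T_*)\times H^s$.
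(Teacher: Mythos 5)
Your argument is correct but takes a genuinely different route from the paper's. The paper proves existence by a regularization--compactness scheme: it smooths the operator with $J_\veps(D)$, solves the mollified system $\d_tu_\veps=-A(t,x,D)J_\veps(D)u_\veps-BJ_\veps(D)u_\veps+f$ by Cauchy--Lipschitz, then applies Theorem~\ref{th:en_LL} directly to $w_\veps=J_\veps(D)u_\veps$ --- observing that $S$ itself remains a symmetrizer for $\mc{A}J_\veps$ since $J_\veps(\xi)$ is scalar --- to get uniform bounds in $\mc{C}_{s,\beta}(T_*)$, and finally passes to a weak limit. You instead use the adjoint a~priori estimate and Hahn--Banach. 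Your key observation --- that $S^{-1}$ is a log-Lipschitz microlocal symmetrizer for the time-reversed adjoint, following from $S\mc{A}=\mc{A}^*S$ and the resolvent identity $S^{-1}_1-S^{-1}_2=-S^{-1}_1(S_1-S_2)S^{-1}_2$ with $S\geq\lambda\Id$ --- is exactly the ingredient that makes this route viable, and the index-shift bookkeeping $-s_0-\beta(T_*-t)=-(s-\beta t)$ that makes \eqref{eq:adj-est} dual to $\mc{L}_{s,\beta}(T_*)\times H^s$ is also correct.

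The one point your argument handles less cleanly than the paper's is the matching of the loss parameter. Theorem~\ref{th:en_LL} applied to $L$ and to the time-reversed conjugate $\wtilde{L}$-type operator gives two loss parameters, and although both depend only on the relevant $LL$-seminorms (which are comparable between $\{A_j,S\}$ and $\{A_j^*,S^{-1}\}$), they need not be identical. You acknowledge this and propose to enlarge $\beta$ and shrink $T_*$; that does produce a weak solution, but then $u\in\mc{H}_{s,\beta'}(T_*')$ for some $\beta'\geq\beta$, $T_*'\leq T_*$, which is a \emph{weaker} conclusion than the claimed $u\in\mc{H}_{s,\beta}(T_*)$ since enlarging $\beta$ enlarges the space. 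To close this, one should either observe that the proposition is to be read with $\beta$ fixed large enough to serve both the direct and adjoint estimates simultaneously (the statement allows any such $\beta$, as $\beta$ in Theorem~\ref{th:en_LL} is only bounded below), or apply the weak--strong upgrade of Theorem~\ref{th:w-s} to the solution in $\mc{H}_{s,\beta'}(T_*')$ to recover the sharper regularity. The paper sidesteps this bookkeeping entirely because it never touches the adjoint: the regularized operator $A(t,x,D)J_\veps(D)$ has $S$ as its own symmetrizer, so all constants are inherited verbatim from those of $L$. That is what the regularization route buys you; what your route buys is brevity and conceptual clarity, at the cost of having to track comparable-but-not-identical constants.
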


\begin{proof}
For $\veps\in\,]0,1]$, let us introduce the smoothing operators $J_\veps(D)$, of order $-1$, as done in the previous proof.
Denoting $A(t,x,D)\,=\,\sum_jA_j(t,x)\d_j$, for any $\veps$ let us consider the linear system of $m$ ODEs
$$
\d_tu_\veps\,=\,-\,A(t,x,D)\,J_\veps(D)\,u_\veps\,-\,B(t,x)\,J_\veps(D)\,u_\veps\,+\,f\,,
$$
with initial datum $(u_\veps)_{|t=0}\,=\,u_0$.

For all $\veps$ fixed and all $t$, it is easy to see that the operators $A(t,x,D)J_\veps(D)$ and $B(t,x)J_\veps(D)$
are bounded in $L^2$. Moreover, by our hypotheses we have $s>s-\beta T_*>0$, so that we can solve the previous system in $\mc{C}\bigl([0,T_*];L^2(\R^n;\R^m)\bigr)$
by use of the Cauchy-Lipschitz Theorem, and find a solution $u_\veps$.
On the other hand, the hypothesis over $f$ implies that, for any $t_0\in[0,T_*]$ fixed, $f\in L^1\bigl([0,t_0]; H^{s-\beta t_0}\bigr)$. Furthermore, by product rules
(see Proposition \ref{p:Hol-Sob} and Corollary \ref{c:LL-H^s}), the operators $A(t,x,D)J_\veps(D)$ and $B(t,x)J_\veps(D)$ are self-maps of $H^{s-\beta t_0}$ into itself.
Therefore, by Cauchy-Lipschitz Theorem again and uniqueness part, we infer that $u\in\mc{C}_{s,\beta}(T_*)\,\hra\,\mc{H}_{s,\beta}(T_*)$. 

Notice however that this is \emph{not} enough to get uniform bounds on the family $(u_\veps)_\veps$, since we do not know if we have enough regularity in order to absorbe, in energy estimates,
the remainders which require an additional $(1/2)\log$-regularity (see the computations in Subsection \ref{ss:estimates} above). Hence, we are going to argue in a slightly different way.

Let us define $w_\veps\,:=\,J_\veps(D)u_\veps$: by the previous argument, $w_\veps$ is in $L^2\bigl([0,T_*];H^1(\R^n;\R^m)\bigr)$ for all $\veps\in\,]0,1]$.
Moreover, it satisfies
\begin{eqnarray*}
\d_tw_\veps & = & -\,J_\veps(D)\,A(t,x,D)\,w_\veps\,-\,J_\veps(D)\,B(t,x)\,w_\veps\,+\,J_\veps(D)\,f \\
& = & -\,A(t,x,D)J_\veps(D)\,w_\veps\,-\,B(t,x)J_\veps(D)\,w_\veps\,+\,f_\veps\,+\,h_\veps\,,
\end{eqnarray*}
where $f_\veps\,=\,J_\veps(D)f$ as before, and $h_\veps$ is defined by the analogue of formula \eqref{def:g_eps}, but replacing $u$ by $w_\veps$ itself.
Notice that Lemma 4.6 of \cite{C-M} implies the inequality
$$
\left\|h_\veps\right\|_{H^{s(t)}}\,\leq\,C\,\|w_\veps\|_{H^{s(t)}}\,,
$$
where $s(t)\,=\,s-\beta t$ as above. Let us also remark that $(w_\veps)_{|t=0}\,=\,\Lambda_\veps(D)u_0$.

Then, we can apply energy estimates of Theorem \ref{th:en_LL} to $w_\veps$.
Indeed, $J_\veps(\xi)$ being a scalar multiplier, $S(t,x,\xi)$ is still a microlocal symmetrizer for $A(t,x,\xi)J_\veps(\xi)$.
Moreover, Lemma \ref{l:L->T} gives uniform bounds for the remainder operators $A(t,x,\xi)J_\veps(\xi)-T_{A(t,x,\xi)J_\veps(\xi)}$ and
$B(t,x)J_\veps(\xi)-T_{B(t,x)J_\veps(\xi)}$ in suitable functional spaces. Finally, Lemma \ref{l:est-T_L} provides with uniform bounds for the operators
$\d_t+T_{iA(t,x,\xi)J_\veps(\xi)}+T_{B(t,x)J_\veps(\xi)}$, since the symbols are uniformly bounded respectively in the classes $\Gamma^{1+0\log}_{LL}$
and $\Gamma^{0+0\log}_{\g+0\log}$.

So we find that $\bigl(w_\veps\bigr)_\veps$ is a bounded family in $\mc{C}_{s,\beta}(T_*)\,\hra\,\mc{H}_{s,\beta}(T_*)$, and consequently, up to extraction of a subsequence,
it weakly converges to some $u$ in this space. On the other hand, by the equation for $w_\veps$ (recall also product rules of Proposition \ref{p:Hol-Sob} and Corollary \ref{c:LL-H^s}),
we easily deduce that $\bigl(\d_tw_\veps\bigr)_\veps$ is bounded in $\mc{C}_{s-1,\beta}(T_*)$, and hence the convergence holds true also in
the weak-$*$ topology of $H^1\bigl([0,T_*];H^{s-\beta T_*-1}(\R^n;\R^m)\bigr)\,\hra\,\mc{C}\bigl([0,T_*];H^{s-\beta T_*-1}\bigr)$.
In particular, 
this implies that $(w_\veps)_{|t=0}\,\longra\,u_{|t=0}$ in the distributional sense. 

Thanks to these properties, it is easy to pass to the limit in the weak formulation of the equations, 
obtaining thus that $u$ solves the system $Lu=f$ in a weak sense, with initial datum $u_0$.
Finally, by uniform bounds we get $u\,\in\,\mc{H}_{s,\beta}(T_*)$, and this fact completes the proof of the existence of a weak solution. 
\end{proof}

\section{The local Cauchy problem} \label{s:local}

We prove here local in space existence and uniqueness of solutions. First of all, let us show that it makes sense
to consider the Cauchy problem for operator $P$ on $\varSigma$: this is not clear \textit{a priori}, due to the low regularity framework.

Here below we will use the notations introduced in Subsection \ref{ss:local_th}. In particular, recall that we have set $\Omega_\geq:=\Omega\,\cap\,\{\vphi\geq0\}$ and
$\Omega_>:=\Omega\,\cap\,\{\vphi>0\}$, where $\{\vphi=0\}$ is a parametrization of the hypersurface $\varSigma$ in $\Omega$.

Recall that we have supposed that hypotheses from \textbf{(H-1)} to \textbf{(H-5)} (stated in Subsection \ref{ss:local_th}) hold true. In particular,
$P$ has log-Lipschitz first order coefficients and a $\g$-H\"older continuous $0$-th order coefficient, and it admits a family of full symmetrizers, which are
smooth in the dual variable and log-Lipschitz in the $z$ variable.

\subsection{Giving sense to the local Cauchy problem} \label{ss:sense}

We start by noticing that, for smooth $u$ and $v$, with ${\rm supp}\,v$ compact in $\Omega_\geq$, we can write the next identity only formally, due to the low regularity of the coefficients:
$$ 
\bigl(Pu\,,\,v\bigr)_{L^2(\Omega_>)}\,-\,\bigl(u\,,\,P^*v\bigr)_{L^2(\Omega_>)}\,=\,
\bigl(D_\varSigma u\,,\,N_{\nu,P_1} v\bigr)_{L^2(\varSigma)}\,,
$$ 
where $P^*(z,\z)v\,=\,-\sum_j\d_{z_j}\!\left(A_j^*v\right)\,+\,B^*v$ is the adjoint operator of $P$, and we  have defined
$$
D_\varSigma u\,:=\,u_{|\varSigma}\qquad\mbox{ and }\qquad N_{\nu,P_1}v\,:=\,\sum_{j=0}^n\nu^j\,D_\varSigma\!\left(A_j^*v\right)\,.
$$
As above, $\nu$ is the normal to the hypersurface $\varSigma$, which determines the integration form on $\varSigma$.
Of course, the map $z\,\mapsto\,\nu(z)\neq0$ is smooth on $\varSigma$.

In our framework the previous Green formula does not hold true a priori, due to the low regularity of the coefficients. The first step of the proof to Theorem \ref{t:local_e} is to justify
it for smooth enough functions.
To begin with, let us study the regularity of the terms entering in the definition of $Pu$ and $P^*u$: we have the following lemma.
\begin{lemma} \label{l:reg_P}
\begin{itemize}
 \item[(i)] For $s\in\,]1-\g,1+\g[\,$ and $u\in H^s_{loc}(\Omega_\geq)$, all the terms entering in the definition of $Pu$
 are well-defined in $H^{s-1}_{loc}(\Omega_\geq)$.
\item[(ii)] For $\sigma\in\,]1-\g,1[\,$ and $v\in H^\sigma_{loc}(\Omega_\geq)$, all the terms entering in the definition
of $P^*v$ are well-defined in $H^{\sigma-1}_{loc}(\Omega_\geq)$.
\item[(iii)] For $s>1/2$ and $u\in H^s_{loc}(\Omega_\geq)$, the trace $D_\varSigma u$ is well-defined in $H^{s-1/2}_{loc}(\Sigma\cap\Omega)$.
\item[(iv)] For $\sigma\in\,]1/2,1[\,$ and $v\in H^{\sigma}_{loc}(\Omega_\geq)$, the normal trace
$N_{\nu,P_1}v$ is well-defined in $H^{s-1/2}_{loc}(\varSigma\cap\Omega)$.
\end{itemize}
\end{lemma}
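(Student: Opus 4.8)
The four assertions are local, so after fixing a relatively compact $\Omega'\Subset\Omega$ and multiplying by a suitable cut-off, it suffices to work with distributions defined on all of $\R^{1+n}$ (via the extension operator built into the definition \eqref{def:H^s}), and to reduce everything to the product and trace estimates already established on the whole space. The plan is to treat the four points in the order \textit{(i)}--\textit{(ii)}--\textit{(iii)}--\textit{(iv)}, since \textit{(iv)} combines the mechanisms of \textit{(ii)} and \textit{(iii)}.

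For \textit{(i)}, each summand of $Pu=\sum_j A_j\,\d_{z_j}u+Bu$ must be shown to lie in $H^{s-1}_{loc}$. For the first-order terms: $u\in H^s_{loc}$ gives $\d_{z_j}u\in H^{s-1}_{loc}$, and since $A_j\in LL(\Omega)$, Corollary \ref{c:LL-H^s} applies provided $|s-1|<1$, i.e. $0<s<2$, which holds on the stated interval $]1-\g,1+\g[$ (as $\g<1$). For the zero-order term: $u\in H^s_{loc}\hookrightarrow H^{s-1}_{loc}$, and $B\in\mc{C}^\g=B^{\g+0\log}_{\infty,\infty}$, so Proposition \ref{p:Hol-Sob} gives multiplication by $B$ as a self-map of $H^{s-1}_{loc}$ as soon as $|s-1|<\g$, which is exactly the hypothesis $1-\g<s<1+\g$. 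Point \textit{(ii)} is the ``conservative'' analogue: in $P^*v=-\sum_j\d_{z_j}(A_j^*v)+B^*v$ one first multiplies $v\in H^\sigma_{loc}$ by $A_j^*\in LL$, staying in $H^\sigma_{loc}$ because $|\sigma|<1$ on $]1-\g,1[$, then differentiates to land in $H^{\sigma-1}_{loc}$; the term $B^*v$ lies in $H^\sigma_{loc}\hookrightarrow H^{\sigma-1}_{loc}$, using $|\sigma|<\g$ — here the lower constraint $\sigma>1-\g$ is what guarantees $\sigma>-\g$ (and in fact more), so the multiplication by the H\"older matrix $B^*$ is licit by Proposition \ref{p:Hol-Sob}.

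For \textit{(iii)}, the statement is the classical trace theorem: for $s>1/2$, restriction to a smooth hypersurface maps $H^s(\R^{1+n})$ continuously into $H^{s-1/2}$ of the hypersurface; passing to $H^s_{loc}(\Omega_\geq)$ and to $\varSigma\cap\Omega$ is done by the partition-of-unity/local-chart definition of $H^s$ on submanifolds recalled in Subsection \ref{ss:local_th}. One only has to note that $u\in H^s_{loc}(\Omega_\geq)$ means $u$ is the restriction to $\{\vphi>0\}$ of an $H^s$ function near any compact, and that the trace on $\varSigma=\{\vphi=0\}$ of the one-sided restriction agrees with the trace of the extension, so $D_\varSigma u=u_{|\varSigma}$ is unambiguously defined in $H^{s-1/2}_{loc}(\varSigma\cap\Omega)$. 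Point \textit{(iv)} chains the two previous ideas: $N_{\nu,P_1}v=\sum_j\nu^j D_\varSigma(A_j^*v)$, so one multiplies $v\in H^\sigma_{loc}(\Omega_\geq)$ by $A_j^*\in LL$ (self-map of $H^\sigma_{loc}$ since $|\sigma|<1$, guaranteed by $\sigma\in\,]1/2,1[$), then takes the trace, which requires $\sigma>1/2$ and yields an element of $H^{\sigma-1/2}_{loc}(\varSigma\cap\Omega)$; finally multiplying by the smooth function $\nu^j$ preserves this space.

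The only genuinely delicate point is the \emph{compatibility} of the one-sided local spaces $H^s_{loc}(\Omega_\geq)$ with the product and trace operations, i.e. checking that the operations performed on an $H^s$ extension do not depend on the choice of extension once restricted back to $\{\vphi>0\}$; this is routine but must be stated, and it is where the phrase ``well-defined'' in the lemma acquires content. I expect this bookkeeping — rather than any hard estimate — to be the main (modest) obstacle, and it is handled by the standard observation that $LL$ and $\mc{C}^\g$ multiplication and hypersurface restriction are local operators, so they commute with the restriction $\wtilde u\mapsto\wtilde u_{|\{\vphi>0\}}$ up to terms supported away from the relevant compact set.
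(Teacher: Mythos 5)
Your proposal follows exactly the route the paper takes: reduce to the product estimates of Proposition \ref{p:Hol-Sob} and Corollary \ref{c:LL-H^s} term by term, and cite the classical trace theorem for points \textit{(iii)} and \textit{(iv)}, which the paper dismisses as ``straightforward''. The structure, the order of operations (multiply then differentiate for $P^*$, differentiate then multiply for $P$, trace after multiplication for $N_{\nu,P_1}$), and the citations all coincide with the paper's argument. Your closing remark about the compatibility of the one-sided local spaces with product and trace operations is a fair observation about a bookkeeping point the paper suppresses.

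There is, however, a small but genuine slip in your treatment of the zero-order term of $P^*$. You assert that $B^*v\in H^\sigma_{loc}$ ``using $|\sigma|<\g$'', but this hypothesis is \emph{not} available: $\sigma\in\,]1-\g,1[\,$ does not force $\sigma<\g$ (take e.g.\ $\g=0.7$ and $\sigma=0.9$). The correct argument -- which is what you yourself did in point \textit{(i)} and what the paper does via ``treated exactly as before'' -- is to embed first and multiply afterwards: $v\in H^\sigma_{loc}\hookrightarrow H^{\sigma-1}_{loc}$, and then $B^*\in\mc{C}^\g$ acts on $H^{\sigma-1}_{loc}$ by Proposition \ref{p:Hol-Sob} because $|\sigma-1|<\g$, which \emph{is} guaranteed by $\sigma\in\,]1-\g,1[\,$. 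The conclusion $B^*v\in H^{\sigma-1}_{loc}$ is the one you need, so the lemma is fine; only the intermediate claim is off. Similarly the parenthetical remark that ``$\sigma>1-\g$ guarantees $\sigma>-\g$'' is vacuous (every $\sigma>0$ exceeds $-\g$) and is not the constraint actually in play.
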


\begin{proof}
We repeatedly use product rules stated in Proposition \ref{p:Hol-Sob} and Corollary \ref{c:LL-H^s} above.

In a first time, let us focus on $P$: for all $s\in\,]0,2[\,$, $P_1(z,\d_z)u$ belongs to $H^{s-1}_{loc}(\Omega_\geq)$. Now,  by the
embedding $H^s\hookrightarrow H^{s-1}$, if $|s-1|<\g$ we have that $Bu\in H^{s-1}_{loc}(\Omega_\geq)$. This completes the proof of
point \textit{(i)}.

Concerning $P^*$, the argument is analogous: the principal part of $P^*u$ belongs to $H^{\sigma-1}_{loc}(\Omega_\geq)$ whenever
$\sigma\in\,]0,1[\,$. The $B^*u$ term can be treated exactly as before. Also \textit{(ii)} is proved.

Points \textit{(iii)} and \textit{(iv)} are straightforward.
\end{proof}

Following the discussion in \cite{C-M}, we remark that $\mc{C}^\infty_0(\Omega_>)$ is a dense subset of $H^\sigma(\Omega_>)$ for $|\sigma|<1/2$,
and that, when $\sigma\in[0,1/2[\,$ and $u\in H^\sigma(\Omega_>)$, the pairing $\bigl(u\,,\,v\bigr)_{L^2(\Omega_>)}$ for $v\in L^2$ extends
to the duality $H^\sigma\times H^{-\sigma}$. From this and Lemma \ref{l:reg_P}, we deduce the next statement, which tells us that Green formula makes sense for regular enough distributions.
\begin{lemma} \label{l:duality}
 Let us fix $1/2<\g<1$, and take $s\in\,]1/2,\g[\,$. For any $u\in H^s_{loc}(\Omega_\geq)$ and $v\in H^s_{comp}(\Omega_\geq)$,
 one has the equality
\begin{equation} \label{eq:by-parts}
\bigl(Pu\,,\,v\bigr)_{H^{-\sigma}\times H^\sigma}\,-\,\bigl(u\,,\,P^*v\bigr)_{H^{\sigma}\times H^{-\sigma}}\,=\,
\bigl(D_\varSigma u\,,\,N_{\nu,P_1} v\bigr)_{L^2(\varSigma)}\,,
\end{equation}
where $\sigma\,=\,1-s\,\in\;]1-\g,1/2[$ (and in particular $0\leq\sigma<1/2$).
\end{lemma}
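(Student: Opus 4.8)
The plan is to prove \eqref{eq:by-parts} by approximation: it is the classical Green formula, which is elementary for smooth functions, so the work consists in showing that each of its three terms is stable under a suitable $H^s$-regularisation of $u$ and $v$. First I would record the smooth case: if $u\in\mc{C}^\infty(\oline{\Omega_>})$ and $v\in\mc{C}^\infty(\oline{\Omega_>})$ has compact support in $\Omega_\geq$, integration by parts on $\Omega_>$ gives $(Pu,v)_{L^2(\Omega_>)}-(u,P^*v)_{L^2(\Omega_>)}=\sum_{j=0}^n\bigl(\nu^j D_\varSigma(A_j^*v),D_\varSigma u\bigr)_{L^2(\d\Omega_>)}$, the $0$-th order contributions $(Bu,v)-(u,B^*v)$ cancelling by the very definition of the matrix adjoint $B^*$; since $v$ vanishes near $\d\Omega\cap\{\vphi\geq0\}$, the only surviving boundary piece is the one carried by $\varSigma=\{\vphi=0\}$, whose conormal is proportional to $d\vphi=\nu$, and this is exactly $(D_\varSigma u,N_{\nu,P_1}v)_{L^2(\varSigma)}$.

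For the general case I would fix a neighbourhood of $\Supp v$ relatively compact in $\Omega$, extend $u$ and (the compactly supported) $v$ to $H^s(\R^{1+n})$ by a bounded extension operator for the smooth domain $\Omega_>$ (keeping $v$ compactly supported), and mollify. This produces smooth $u_k$, $v_k$ with: $u_k\to u$ in $H^s_{loc}(\Omega_\geq)$; $v_k\to v$ in $H^s(\Omega_>)$ with $\Supp v_k$ in a fixed compact subset of $\Omega$ away from $\d\Omega$; and, since $s>1/2$ makes the trace $H^s\to H^{s-1/2}$ continuous, $D_\varSigma u_k\to D_\varSigma u$ and $D_\varSigma v_k\to D_\varSigma v$ in $H^{s-1/2}_{loc}(\varSigma)$ — hence also $N_{\nu,P_1}v_k\to N_{\nu,P_1}v$ there, using Corollary \ref{c:LL-H^s} (multiplication by $A_j^*$ is continuous on $H^s$ since $|s|<1$) together with continuity of the trace. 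By the smooth case, each pair $(u_k,v_k)$ satisfies \eqref{eq:by-parts}.

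Then I would pass to the limit $k\to\infty$ termwise. By Lemma \ref{l:reg_P}\,(i), $u\mapsto Pu$ is continuous from $H^s_{loc}(\Omega_\geq)$ into $H^{s-1}_{loc}(\Omega_\geq)$ (note $s\in\,]1/2,\g[\,\subset\,]1-\g,1+\g[$), so $Pu_k\to Pu$; by Lemma \ref{l:reg_P}\,(ii) applied with the exponent $s\in\,]1-\g,1[$ (legitimate since $\g>1/2$), $P^*v_k\to P^*v$ in $H^{s-1}_{loc}(\Omega_\geq)$; and $H^{s-1}=H^{-\sigma}$ with $\sigma=1-s$. Since $0\leq\sigma<1/2$ one has $H^\sigma(\Omega_>)=H^\sigma_0(\Omega_>)$, so the $L^2(\Omega_>)$ scalar products extend unambiguously to the dualities $H^{-\sigma}\times H^\sigma$ and $H^\sigma\times H^{-\sigma}$; writing $(Pu_k,v_k)-(Pu,v)=(Pu_k-Pu,v_k)+(Pu,v_k-v)$ and using the uniform bound on $\|v_k\|_{H^\sigma}$ together with $v_k\to v$ in $H^\sigma(\Omega_>)$ (and the symmetric splitting for the other term) gives $(Pu_k,v_k)_{L^2(\Omega_>)}\to(Pu,v)_{H^{-\sigma}\times H^\sigma}$ and $(u_k,P^*v_k)_{L^2(\Omega_>)}\to(u,P^*v)_{H^\sigma\times H^{-\sigma}}$. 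For the boundary term, $s-1/2>0$ gives $H^{s-1/2}(\varSigma)\hookrightarrow L^2(\varSigma)$, whence $(D_\varSigma u_k,N_{\nu,P_1}v_k)_{L^2(\varSigma)}\to(D_\varSigma u,N_{\nu,P_1}v)_{L^2(\varSigma)}$. Letting $k\to\infty$ in the identity for $(u_k,v_k)$ yields \eqref{eq:by-parts}.

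The main obstacle is getting the approximation step and the bookkeeping of the dualities right at once: one needs the regularisations to converge simultaneously in the \emph{bulk} $H^s(\Omega_>)$ norm and in the \emph{trace} $H^{s-1/2}(\varSigma)$ norm, and one must be careful which extension of the $L^2$ pairing is meant. This is precisely where the hypothesis $s>1/2$ (equivalently $\sigma<1/2$) is essential: it simultaneously makes the traces on $\varSigma$ meaningful and forces $H^\sigma(\Omega_>)=H^\sigma_0(\Omega_>)$, so that no trace of $v$ on the $\Omega_>$-side of $\d\Omega_>$ intervenes and the two pairings in \eqref{eq:by-parts} are genuinely the continuous extensions of the $L^2$ pairing.
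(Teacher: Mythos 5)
Your strategy (prove the identity for smooth data, then pass to the limit using trace continuity and the equality $H^\sigma(\Omega_>)=H^\sigma_0(\Omega_>)$ for $\sigma<1/2$) is the right kind of argument, but there is a genuine gap in the base case: the ``smooth case'' you record at the start is not elementary, because the coefficients $A_j$ are only log-Lipschitz. Even if $u$ and $v$ are $\mc{C}^\infty$, the function $A_j^*v$ is merely $LL$, and $LL$ functions need not lie in $W^{1,1}_{loc}$ (like H\"older functions, they can be nowhere differentiable). Thus the step
$$
\bigl(\d_j u\,,\,A_j^*v\bigr)_{L^2(\Omega_>)}\,+\,\bigl(u\,,\,\d_j(A_j^*v)\bigr)\,=\,\int_{\varSigma}\nu^j\,u\cdot\overline{A_j^*v}\,,
$$
which is exactly the content you attribute to ``integration by parts on $\Omega_>$,'' is precisely the weak Green formula for a single $\d_j$ applied to the pair $\bigl(u,A_j^*v\bigr)\in H^{1-\sigma}_{loc}\times H^{1-\sigma}_{comp}$ with $\sigma<1/2$. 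That formula is the actual crux of the lemma and is not a consequence of the classical divergence theorem. The paper handles it by invoking Lemma~1.3 of \cite{C-M}, which supplies this one-derivative Green formula in exactly the needed $H^{1-\sigma}$ regularity, and then combines it termwise with the product rules of Proposition~\ref{p:Hol-Sob} and Corollary~\ref{c:LL-H^s} (encapsulated in Lemma~\ref{l:reg_P}); since $s>1/2$, one has $H^s\hookrightarrow H^{1-s}$, which makes both $u$ and $A_j^*v$ land in $H^{1-\sigma}$. That entire reduction replaces your density argument.

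If you want to keep the density route, you must also mollify the coefficients $A_j$ (and $B$), say $A_j^\epsilon\to A_j$ in $L^\infty$ with $\|A_j^\epsilon\|_{LL}$ uniformly bounded, apply the classical Green formula to the smoothed operator $P^\epsilon$, and then control the passage to the limit in all three terms. This works, but it requires an interpolation argument: uniform boundedness in $B^{1-\log}_{\infty,\infty}$ plus convergence in $L^\infty$ gives convergence in $B^{s'}_{\infty,\infty}$ for every $s'<1$, and then Proposition~\ref{p:Hol-Sob} yields convergence of $A_j^\epsilon w$ in $H^s$ and of $\d_j\bigl((A_j^\epsilon)^*v\bigr)$ in $H^{-\sigma}$. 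None of these steps appear in your write-up, and they are not automatic, so as stated the proof does not close. The bookkeeping around $\sigma<1/2$ and the traces is otherwise correct.
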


\begin{proof}
Since $s>1/2$, we have $H^s\hookrightarrow H^{1-s}$. On the other hand, $s$ belongs in particular to $\,]1-\g,1[\,$,
and then Lemma \ref{l:reg_P} applies.

To complete the proof, it is enough to remark that (see Lemma 1.3 of \cite{C-M}) the Green's formula
$$
\bigl(\d_ju\,,\,v\bigr)_{H^{-\sigma}\times H^\sigma}\,=\,-\,\bigl(u\,,\,\d_jv\bigr)_{H^{\sigma}\times H^{-\sigma}}\,+\,
\bigl(\nu^j\,D_\varSigma u\,,\,D_\varSigma v\bigr)_{L^2(\varSigma)}\,,
$$
holds true for any $u\in H^{1-\sigma}_{loc}(\Omega_\geq)$ and $v\in H^{1-\sigma}_{comp}(\Omega_\geq)$, whenever $\sigma\in[0,1/2[\,$.
\end{proof}

As the final step, we want to justify Green formula for distributions which are in the domain of our operator. This is guaranteed by the next statement, which is the analogue to
Proposition 1.4 of \cite{C-M}. Thanks to this result, considering the Cauchy problem $(C\!P)$ under the hypotheses of Theorem \ref{t:local_e} makes sense.
\begin{prop} \label{p:extension}
Let us define the set $\mc{D}(P;H^s)\,:=\,\left\{u\in H^s_{loc}(\Omega_\geq)\,\bigl|\,Pu\in H^s_{loc}(\Omega_\geq)\right\}$.

There exists a unique extension of the operator $D_\varSigma$ to the set $\mc{D}(P)\,:=\,\bigcup_{s>1-\g}\mc{D}(P;H^s)$, which acts continuously
from $\mc{D}(P;H^s)$ into $H^{s-1/2}_{loc}(\varSigma\cap\Omega)$ for all $s\in\,]1-\g,\g[\,$.

The same property holds true for the operator $N_{\nu,P_1}$.

Furthermore, for all $s_0\in\,]1-\g,1/2[\,$ such that $s_0\leq s$, for all $v\in H^{1-s_0}_{comp}(\Omega_\geq)$, one has
the Green formula
$$
\bigl(Pu\,,\,v\bigr)_{L^2}\,-\,\bigl(u\,,\,P^*v\bigr)_{H^{s_0}\times H^{-s_0}}\,=\,
\bigl(D_\varSigma u\,,\,N_{\nu,P_1} v\bigr)_{H^{s-1/2}\times H^{1/2-s}}\,.
$$
\end{prop}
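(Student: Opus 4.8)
The plan is to prove Proposition \ref{p:extension} by a density argument, extending the trace operators $D_\varSigma$ and $N_{\nu,P_1}$ from smooth functions to the domain $\mc{D}(P;H^s)$ by exploiting the Green formula \eqref{eq:by-parts} already established in Lemma \ref{l:duality} for regular enough distributions. The starting observation is that the full Green formula controls $D_\varSigma u$ through a duality pairing: if $u\in\mc{D}(P;H^s)$ with $s\in\,]1-\g,\g[\,$, then both $Pu$ and $u$ lie in $H^s_{loc}(\Omega_\geq)$, so for test functions $v$ supported near a point of $\varSigma$ the right-hand side $\bigl(D_\varSigma u\,,\,N_{\nu,P_1}v\bigr)$ is, formally, bounded by $\|Pu\|_{H^s}\,\|v\|_{H^{-s}}+\|u\|_{H^s}\,\|P^*v\|_{H^{-s}}$. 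Since $P^*$ maps $H^{1-s}_{comp}$ continuously into $H^{-s}$ by Lemma \ref{l:reg_P}(ii) (here $1-s\in\,]1-\g,1/2[\,\subset\,]1-\g,1[\,$, using $1/2<\g$), and since $v\mapsto N_{\nu,P_1}v$ realizes a surjection (modulo lower order adjustments) from $H^{1-s}_{comp}(\Omega_\geq)$ onto a dense subspace of $H^{1/2-s}_{loc}(\varSigma)$, this pairing estimate says precisely that $D_\varSigma u$ extends to an element of the dual $H^{s-1/2}_{loc}(\varSigma\cap\Omega)$.

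Concretely, I would proceed in the following steps. First, for $u\in\mc{D}(P;H^s)$ I would regularize: pick a sequence $u_k\in\mc{C}^\infty$ with $u_k\to u$ in $H^s_{loc}(\Omega_\geq)$ and, crucially, $Pu_k\to Pu$ in $H^s_{loc}(\Omega_\geq)$ as well — this can be done locally by convolution after extending $u$ off $\Omega_\geq$, using that $P$ has $LL$ (hence multiplier-stable on $H^{s-1}$) and $\mc{C}^\g$ coefficients, so that $P(u*\rho_\veps)-(Pu)*\rho_\veps\to0$; this commutator argument is essentially Lemma 4.6 of \cite{C-M} transplanted to the present setting. For the smooth $u_k$, formula \eqref{eq:by-parts} holds classically. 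Second, I would fix $s_0\in\,]1-\g,1/2[\,$ with $s_0\le s$ and rewrite \eqref{eq:by-parts} with the pairing $\bigl(Pu_k,v\bigr)$ taken in $H^{s-1}\times H^{1-s}$, or directly as an $L^2$ pairing when $v$ is smooth, and $\bigl(u_k,P^*v\bigr)$ in $H^{s_0}\times H^{-s_0}$; passing $k\to\infty$ on the left-hand side is immediate from the convergences, and the right-hand side then converges, which shows that $\bigl(D_\varSigma u_k\,,\,N_{\nu,P_1}v\bigr)$ has a limit for every admissible $v$. Third, identifying this limit functional of $v$ with an element of $H^{s-1/2}_{loc}(\varSigma)$ requires knowing that $N_{\nu,P_1}$ maps onto a dense set; here one uses that $\det P_1(z,\nu(z))\neq0$ (condition \eqref{eq:det_pos}), so $N_{\nu,P_1}$ is, up to the invertible factor $\sum_j\nu^j A_j^*$, just the trace $D_\varSigma$ composed with multiplication, which is surjective $H^{1-s_0}_{comp}\to H^{1/2-s_0}_{loc}$. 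Fourth, uniqueness of the extension is automatic: two extensions would agree on the dense subset of smooth functions and both be continuous into $H^{s-1/2}_{loc}$. The same scheme applied with the roles adjusted gives the extension of $N_{\nu,P_1}$, and combining the two extended operators in the limit of \eqref{eq:by-parts} yields the stated Green formula with the duality bracket $\bigl(D_\varSigma u\,,\,N_{\nu,P_1}v\bigr)_{H^{s-1/2}\times H^{1/2-s}}$.

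The main obstacle I anticipate is the commutator/regularization step: producing approximating sequences $u_k$ for which \emph{both} $u_k\to u$ and $Pu_k\to Pu$ in the local $H^s$ topology, uniformly near the hypersurface, is delicate because the coefficients $A_j$ are only log-Lipschitz and $B$ only $\mc{C}^\g$, so the naive estimate $\|[A_j(z),\rho_\veps*\,]\d_j u\|$ is not $o(1)$ in $H^{s-1}$ for the full range but needs the product rules of Proposition \ref{p:Hol-Sob} and Corollary \ref{c:LL-H^s} and the condition $s<\g$ (and the restriction $\g>1/2$, which is why it appears in Theorems \ref{t:local_e}--\ref{t:local_u}); this is exactly where $s>1-\g$ enters, ensuring $1-s<\g$ so that $P^*$ behaves well on the test space. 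A secondary technical point is the bookkeeping of the "lower order adjustments" hidden in $N_{\nu,P_1}$ — since $P^*v$ involves $\d_{z_j}(A_j^* v)$ rather than $A_j^*\d_{z_j}v$, differentiating the coefficient produces a genuinely less regular term, but it is harmless here because it contributes an $L^2$ (indeed $H^{\sigma-1}$) volume term and not a boundary term. Once these are handled, everything else is the routine density/continuity machinery, so I would keep the writeup focused on the regularization lemma and the surjectivity of $N_{\nu,P_1}$, citing \cite{C-M} for the parts that transfer verbatim.
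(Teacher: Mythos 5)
Your proposal takes a genuinely different route from the paper's, and one of its central steps has a real gap.

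\medskip\noindent
\textbf{What the paper actually does.}
The paper does not attempt to approximate $u$ itself by smooth functions in graph norm. Existence of the trace is obtained directly from Lemma~\ref{l:regularity}: writing the operator in coordinates $(t,x)$ with $\varSigma=\{t=0\}$, one observes that $u\in\mc{H}^{0,s}$ and, by the equation and product rules, $\d_tu\in\mc{H}^{0,s-1}$, hence $u\in\mc H^{1,s-1}\hookrightarrow\mc C([0,t_0];H^{s-1/2})$; the trace is then just the value at $t=0$. Uniqueness is proved by contradiction, using solvability of the global Cauchy problem (Theorem~\ref{t:global_e}): given two putative traces $u^1_0\neq u^2_0$ of the same $u\in\mc D(P;H^s)$, one approximates the \emph{data} $(Pu,u^1_0)$, $(Pu,u^2_0)$ by smooth data, solves the Cauchy problem for those data to produce smooth comparison solutions $v^1,v^2$ (for which the trace is classical), and uses linearity plus the energy estimate to bound $\|u-v^i\|_{H^s}$ and hence $\|v^1_0-v^2_0\|_{H^{s-1/2}}$. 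The only density used is density of smooth data in $H^s\times H^{s-1/2}$, which is trivial; the hard analysis (energy estimates, weak $=$ strong) was already paid for.

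\medskip\noindent
\textbf{Where your argument has a gap.}
Your plan hinges on producing $u_k\in\mc C^\infty$ with \emph{both} $u_k\to u$ and $Pu_k\to Pu$ in $H^s_{loc}(\Omega_\geq)$. This is a graph-density (Meyers--Serrin / Friedrichs) statement for $P$ on a one-sided domain, and you propose to prove it by ``convolution after extending $u$ off $\Omega_\geq$''. But that extension is precisely the problem: $u$ is only defined on $\{\vphi\ge0\}$, and a naive Sobolev extension need not control $P$ of the extension, so mollifying it gives $Pu_k\to P\wtilde u$ where $\wtilde u$ is the extension, not $Pu$; near $\varSigma$ the error $P u_k - (Pu)*\rho_\veps$ involves contributions from the extended piece that do not vanish. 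Lemma 4.6 of \cite{C-M}, which you cite, is applied in the paper in the \emph{global} setting (Theorem~\ref{th:w-s}) with the Fourier cut-off $J_\veps(D)$ on all of $\R^n$, where there is no boundary to cross. Transplanting it to $\Omega_\geq$ requires an extension lemma compatible with $P$ that is not established and, at log-Lipschitz/H\"older regularity, not obviously true. The paper's proof is designed to avoid this by never approximating $u$ in graph norm. Your third step (surjectivity of $N_{\nu,P_1}$ via $\det P_1(z,\nu(z))\neq0$) and the duality bookkeeping are fine, but they rest on the regularization step, which is not secured. To salvage your scheme you would need to prove the graph-density lemma separately; the more economical repair is to replace it with the paper's argument, getting the trace from Lemma~\ref{l:regularity} and uniqueness from solvability plus the energy estimate.
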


This proposition will be proved in the next subsection, and it will be derived by results on the global Cauchy problem. Then, in order to give sense to the Cauchy problem,
the microlocally symmetrizability and hyperbolicity hypotheses are fundamental.

Let us notice here that, thanks to definitions, the properties for $D_\varSigma$ easily pass also on $N_{\nu,P_1}$.
We observe also that all the terms entering in the last formula have sense. Indeed, $1/2<1-s_0<\g$ and hence, by Lemma \ref{l:reg_P},
we get $P^*v\in H^{-s_0}_{comp}(\Omega_\geq)$, so that the pairing with $u$ makes sense (recall that $s_0\leq s$).
Moreover one has $N_{\nu,P_1}v\in H^{1/2-s_0}_{comp}(\Omega_\geq)\hookrightarrow H^{1/2-s}_{comp}(\Omega_\geq)$,
and so also the last term in the equality is fine.

\subsection{Invariance by change of variables, and regularity results} \label{ss:change}

First of all, let us show here that our working hypotheses are invariant under smooth change of coordinates.

Indeed, let $z=\psi(y)$, for some $\psi$ smooth, and denote $\wtilde{f}(y)=f\circ\psi(y)$: derivatives change according
to the rule $\nabla_{z}u\,\circ\,\psi\,=\,^t\!\left(\nabla_z\psi^{-1}\right)\circ\psi\,\cdot\,\nabla_{y}\wtilde{u}$, namely 
$$
\left(\d_{z_j}u^k\right)\circ\psi\,=\,\sum_{h=1}^m\d_{z_j}\!\left(\psi^{-1}\right)^h\circ\psi\;\d_{y_h}\wtilde{u}^k\,,
$$
as well as covectors in the cotangent space, i.e. $\wtilde{\z}(y)\,=\,\left(\,^t\nabla_z\psi^{-1}\cdot \z\right)\circ\psi(y)$.
As a consequence, if we write the $i$-th component of our system $Pu=f$, that is to say 
$$
f^i\,=\,\sum_{j=0}^n\sum_{k=1}^mA_{j,ik}(z)\,\d_{z_j}u^k(z)\,+\,\sum_{k=1}^m B_{ik}(z)\,u^k(z)\,,
$$
from the previous rules we obtain the expression in $y$ coordinates:
$$
\wtilde{f}^i\,=\,\sum_{h=0}^n\sum_{k=1}^m M_{h,ik}(y)\,\d_{y_j}\wtilde{u}^k(y)\,+\,\sum_{k=1}^m \wtilde{B}_{ik}(y)\,\wtilde{u}^k(y)\,,
$$
where easy computations lead to the formula
$$
M_{h,ik}(y)\,=\,\sum_{j=0}^n\wtilde{A}_{j,ik}(y)\,\d_{z_j}\!\left(\psi^{-1}\right)^h\bigl(\psi(y)\bigr)\,.
$$

Namely, we have proved that $\wtilde{Pu}\,=\,\wtilde{P}\wtilde{u}$, where we have defined the operator
$$
\wtilde{P}v\,=\,\sum_{h=0}^nM_h\,\d_{y_h}v\,+\,\wtilde{B}v\,.
$$
Notice that $\wtilde{P}$ has the same form as $P$, and the same regularity of its first and $0$-th order coefficients.
In addition, by Theorem 4.11 of \cite{M-2014} we deduce that it is still microlocally symmetrizable in the sense of Definition \ref{d:full-symm}.
We can check this property also by direct computations: as a matter of fact, in a very natural way, let us define
$$
\wtilde{\mbf{S}}(y,\wtilde{\z})\,:=\,\mbf{S}\bigl(\psi(y)\,,\,\left(\,^t\nabla_z\psi^{-1}\cdot\z\right)\circ\psi\bigr)
$$
Of course, all the properties of $\mbf{S}$ hand over to $\wtilde{\mbf{S}}$; the only thing we have to check is that
$\wtilde{\mbf{S}}\wtilde{P}_1$ is self-adjoint, where we denote by $\wtilde{P}_1$ the principal part of $\wtilde{P}$. But this imediately
follows from the definitions and the transformation rules:
\begin{eqnarray*}
\left(\wtilde{\mbf{S}}\wtilde{P}_1\right)(y,\wtilde\z) & = & \wtilde{\mbf{S}}(y,\wtilde\z)\sum_{h=0}^n\wtilde{\z}^h\,M_h\;=\;
\wtilde{\mbf{S}}(y,\wtilde\z)\sum_{h=0}^n\sum_{j=0}^n\wtilde{A}_j\,\d_{z_j}\!\left(\psi^{-1}\right)^h\,\wtilde{\z}^h \\
& = & \mbf{S}\bigl(\chi(y),\left(\,^t\nabla_z\psi^{-1}\cdot\z\right)\circ\psi(y)\bigr)\,
\sum_{j=0}^nA_j\bigl(\psi(y)\bigr)\,\left(\,^t\nabla_z\psi^{-1}\cdot\z\right)\circ\psi(y) \\
& = & \left(\mbf{S}P_1\right)\bigl(\chi(y),\left(\,^t\nabla_z\psi^{-1}\cdot\z\right)\bigr)\,.
\end{eqnarray*}

Furthermore, as done in \cite{C-M} (see Section 5), the Green formula \eqref{eq:by-parts} can be transported by $\psi$.
As a byproduct of this discussion, we get that our statements, and in particular Proposition \ref{p:extension}, are invariant
by smooth change of variables.

Hence, we can reconduct the proof of Proposition \ref{p:extension} in the system of coordinates
$z=(t,x)\in\,]-t_0,t_0[\,\times\omega$ and $\zeta=(\tau,\xi)$. In particular we will assume that $\varSigma\,=\,\{t=0\}$,
whose unit normal is $dt$, and that $z_0=0$. Moreover, by Proposition 4.12 of \cite{M-2014}, up to shrink our domain we can
suppose that the full symmetrizer is positive in the direction $dt$ at any point $(t,x)$; by assumption \eqref{eq:det_pos},
we can also assume that $A_0(t,x)$ is invertible for any $(t,x)$.

In the end, we can recast our operator in the form $Lu\,=\,\d_tu\,+\,\sum_jA_j(t,x)\,\d_ju\,+\,B(t,x)u$, with existence
of a microlocal symmetrizer (in the sense of Definition \ref{d:micro_symm}) $S(t,x,\xi)\,=\,\mbf{S}(t,x,1,\xi)$.

\medbreak
This having been established, we present a regularity result analogous to Proposition \ref{p:reg_global}, which will be needed in the proof of Proposition \ref{p:extension}.
\begin{lemma} \label{l:regularity}
Let $\g$ and $s$ as in the hypotheses of Theorem \ref{t:local_e}. Let $u\in H^s\bigl(\,]0,t_0[\,\times\omega\bigr)$ such that
$Lu\in L^1\bigl([0,t_0];H^s(\omega)\bigr)$.

Then one has $u\in\mc{C}\bigl([0,t_0];H^{s-1/2}(\omega)\bigr)$. In particular, the trace $u_{|t=0}$ is well-defined in $H^{s-1/2}(\omega)$,
and the initial condition in $(C\!P)$ makes sense.
\end{lemma}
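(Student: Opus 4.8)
The plan is to adapt the argument of Proposition~\ref{p:reg_global} to the local, space-time setting, using the normalized form $Lu=\d_tu+\sum_jA_j(t,x)\,\d_ju+B(t,x)\,u$ obtained above. First I would reduce to $\R^{1+n}$: extend $u$ to an element of $H^s(\R^{1+n})$ and the coefficients $A_j$, $B$ to functions on $\R^{1+n}$ with the same regularity (log-Lipschitz, resp.\ $\mc{C}^\g$), so that the product rules of Section~\ref{s:tools} and the time-interpolation lemma, stated on the whole space, apply; the conclusions then descend to $]0,t_0[\,\times\omega$ by restriction. Since $s>0$, one has the elementary embedding $H^s(\R^{1+n})\hookrightarrow L^2(\R_t;H^s(\R^n))$ (compare the Fourier multipliers $\langle\xi\rangle^s\le\langle(\tau,\xi)\rangle^s$), so in particular $u\in L^2\bigl([0,t_0];H^s(\omega)\bigr)$.

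Next I would use the equation. As $Lu=f$ holds in $\mc{D}'(]0,t_0[\,\times\omega)$ with $f\in L^1\bigl([0,t_0];H^s(\omega)\bigr)$, one has $\d_tu=f-\sum_jA_j\,\d_ju-B\,u$ in $\mc{D}'$. By Corollary~\ref{c:LL-H^s}, multiplication by a log-Lipschitz function is a self-map of $H^{s-1}$ (here $0<s<1<2$, so $|s-1|<1$), whence $A_j\,\d_ju\in L^2\bigl([0,t_0];H^{s-1}(\omega)\bigr)$; by Proposition~\ref{p:Hol-Sob}, multiplication by a $\mc{C}^\g$ function is a self-map of $H^s$ (here $|s|<\g$ since $0<s<\g$), whence $B\,u\in L^2\bigl([0,t_0];H^s(\omega)\bigr)\hookrightarrow L^2\bigl([0,t_0];H^{s-1}(\omega)\bigr)$. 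It is precisely at this point that the restrictions $1-\g<s<\g$ and $\g<1$ from the statement of Theorem~\ref{t:local_e} are used.

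Then I would perform the standard primitive trick: set $v(t):=u(t)-\int_0^tf(\tau)\,d\tau$. Since $f\in L^1\bigl([0,t_0];H^s(\omega)\bigr)$, its primitive lies in $\mc{C}\bigl([0,t_0];H^s(\omega)\bigr)$, which embeds both in $\mc{C}\bigl([0,t_0];H^{s-1/2}(\omega)\bigr)$ and in $L^2\bigl([0,t_0];H^s(\omega)\bigr)$, so it is enough to prove the claim for $v$. But $\d_tv=-\sum_jA_j\,\d_ju-B\,u\in L^2\bigl([0,t_0];H^{s-1}(\omega)\bigr)$ while $v\in L^2\bigl([0,t_0];H^s(\omega)\bigr)$, so the classical interpolation lemma (if $w\in L^2(I;H^{a})$ and $\d_tw\in L^2(I;H^{a-1})$ then $w\in\mc{C}(\overline I;H^{a-1/2})$, applied with $a=s$) gives $v\in\mc{C}\bigl([0,t_0];H^{s-1/2}(\omega)\bigr)$, hence $u=v+\int_0^\cdot f\in\mc{C}\bigl([0,t_0];H^{s-1/2}(\omega)\bigr)$. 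In particular the trace $u_{|t=0}$ is a well-defined element of $H^{s-1/2}(\omega)$, and $(C\!P)$ makes sense.

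The analytic heart of the argument is thus identical to that of Proposition~\ref{p:reg_global}. I expect the only real care to be needed in the bookkeeping of the localization: checking that the $LL$ and $\mc{C}^\g$ coefficients can be extended off $]-t_0,t_0[\,\times\omega$ without losing their moduli of continuity, and that the product and interpolation statements of Section~\ref{s:tools} --- all phrased on $\R^N$ --- transfer to the truncated domain via the restriction definition \eqref{def:H^s} of $H^s(\Omega_>)$. These points are routine, but should be spelled out.
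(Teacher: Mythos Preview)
Your proposal is correct and follows essentially the same route as the paper: both use the primitive trick $v=u-\int_0^t f$, show $v\in L^2_tH^s_x$ and $\d_tv\in L^2_tH^{s-1}_x$ via the product rules, and conclude by the trace/interpolation embedding into $\mc{C}_tH^{s-1/2}_x$. The only cosmetic difference is that the paper phrases the argument in terms of H\"ormander's anisotropic spaces $\mc H^{m,\sigma}$ and cites his Theorem~B.2.7 for the final embedding, whereas you invoke the equivalent ``classical interpolation lemma'' directly; also the paper bounds $Bu$ in $H^{s-1}$ via $|s-1|<\g$ (using $s>1-\g$) rather than in $H^s$ via $|s|<\g$ as you do --- both are valid under the standing hypothesis $s\in\,]1-\g,\g[\,$.
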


\begin{proof}
The proof goes along the lines of Lemma 2.2 of \cite{C-M}, so let us just sketch it. First of all, we restrict our attention to the case $[0,t_0]\times\R^n$,
the other one being obtained working with restrictions. In addition, we make use of the spaces 
$\mc H^{s,\sigma}\bigl([0,T_*]\times\R^n\bigr)$ of H\"ormander (see e.g. Appendix B of \cite{Horm}).

So, by hypothesis $u\in H^s\bigl([0,t_0]\times\R^n\bigr)\,=\,\mc H^{s,0}\bigl([0,t_0]\times\R^n\bigr)\,\hookrightarrow\,\mc H^{0,s}\bigl([0,t_0]\times\R^n\bigr)$.
From this, we deduce that $L_1u\,=\,\sum_jA_j\,\d_ju\,\in\,\mc H^{0,s-1}\bigl([0,t_0]\times\R^n\bigr)$. On the other hand, the same is true also for
the term $Bu$, since, by assumption, $|s-1|<\g$ (recall also Proposition \ref{p:Hol-Sob}).

Next, we notice that the hypothesis $Lu\in L^1\bigl([0,t_0];H^s(\R^n)\bigr)$ implies that $v(t)\,:=\,\int^t_0Lu(\tau)\,d\tau$
belongs to $\mc C\bigl([0,t_0];H^s(\R^n)\bigr)\,\hookrightarrow\,\mc H^{0,s}\bigl([0,t_0]\times\R^n\bigr)$.

From the previous properties, we get that $w:=u-v\,\in\,\mc H^{0,s}\bigl([0,t_0]\times\R^n\bigr)$, while its time derivative
$\d_tw\in\mc H^{0,s-1}\bigl([0,t_0]\times\R^n\bigr)$, because $\d_tw\,=\,\sum_jA_j\,\d_ju\,+\,Bu$. Therefore, by properties of the H\"ormander
spaces we get $w\in\mc H^{1,s-1}\bigl([0,t_0]\times\R^n\bigr)\,\hookrightarrow\,\mc{C}\bigl([0,t_0];H^{s-1/2}(\R^n)\bigr)$		
(see Theorem B.2.7 of \cite{Horm}). As a conclusion, also $u=w+v$ belongs to $\mc{C}\bigl([0,t_0];H^{s-1/2}(\R^n)\bigr)$.
\end{proof}

We are now ready to prove Proposition \ref{p:extension}.
\begin{proof}[Proof of Proposition \ref{p:extension}]
We focus on uniqueness first. Notice that classical Green formula forces the definition of $D_\Sigma$ to coincide with the usual trace operator for smooth functions.
Let us argue by contradiction and suppose that there exist two extensions $D^1_\Sigma$ and $D^2_\Sigma$, and a tempered distribution
$u\,\in\,\mc{D}(P)\,=\,\bigcup_{s>1-\g}\mc{D}(P;H^s)$ such that $D^1_\Sigma u\,=\,u^1_0$ and $D^2_\Sigma u\,=\,u^2_0$, with $u^1_0\neq u^2_0$.
In the light of Lemma \ref{l:duality}, we can suppose that $u\in\mc{D}(P;H^s)$, with $s\in\,]1-\g,1/2]$. Let us set $Pu=f\in H^s_{loc}(\Omega_\geq)$
and $\delta:=\left\|u^1_0-u^2_0\right\|_{H^{s-1/2}}>0$.

Fix now $\veps>0$. By density of smooth functions, we can chose smooth $g$, $v^1_0$ and $v^2_0$ such that
$$
\left\|f-g\right\|_{H^s(\Omega_\geq)}\,+\,\left\|u^1_0-v^1_0\right\|_{H^{s-1/2}(\Sigma\cap\Omega)}\,+\,\left\|u^2_0-v^2_0\right\|_{H^{s-1/2}(\Sigma\cap\Omega)}\,\leq\,\veps\,.
$$



Thanks to the previous discussion, we can work in local coordinates $(t,x)$, and hence suppose the following facts: first, that $\varSigma=\{t=0\}$, and moreover that,
in these coordinates, $P$ takes the form of $L$ as defined in \eqref{def:L}.
Therefore, we can apply Theorem \ref{t:global_e} to the initial data $v^1_0$ and $v^2_0$ and external force $g$: we find two solutions $v^1$ and $v^2$ respectively,
which belong to the space $\mc{C}\bigl([0,T_*];H^{s}(\R^n;\R^m)\bigr)$ and which solves the problems, for $i=1\,,\,2$,
$$
Lv^i\,=\,g\;,\qquad v^i_{|t=0}\,=\,v^i_0\,.
$$
Notice that, in particular, we get $v^i\in\mc{H}^{0,s}\bigl([0,T_*]\times\R^n\bigr)$, and then (by the equation) $\d_tv^i\in\mc{H}^{0,s-1}\bigl([0,T_*]\times\R^n\bigr)$, which implies
$v^i\in\mc{H}^{1,s-1}\bigl([0,T_*]\times\R^n\bigr)$ (see also the proof of Lemma \ref{l:regularity} here above). Since $s-1<0$, from this property it is easy to deduce that
$v^i\in\mc{H}^{s,0}\bigl([0,T_*]\times\R^n\bigr)$. Remark that, up to shrink $\Omega_\geq$ (i.e. take a smaller existence time $T_*$), we can also suppose
that $v^i\in\mc{H}^{\sigma,0}\bigl([0,T_*]\times\R^n\bigr)$, with $\sigma>1/2$.
Furthermore, by linearity of $L$ and Theorem \ref{t:global_e} we infer the estimates
$$
\left\|u-v^i\right\|_{H^s}\,\leq\,C(\veps)\qquad\qquad\Longrightarrow\qquad\qquad \left\|v^1-v^2\right\|_{H^s}\,\leq\,2\,C(\veps)\,.
$$
But each $v^i$ is smooth, i.e. it belongs $\mc{H}^{\sigma,0}\bigl([0,T_*]\times\R^n\bigr)$ with $\sigma>1/2$, so that $D^1_\Sigma v^i\equiv D^2_\Sigma v^i=v^i_0$ for all $i=1\,,\,2$. 
Hence, by continuity of the trace operator on smooth functions we deduce
$$
\left\|v^1_0-v^2_0\right\|_{H^{s-1/2}}\,\leq\,\left\|v^1_0-v^2_0\right\|_{H^{\sigma-1/2}}\,\leq\,C'(\veps)\,.
$$

At this point, we write
$$
\delta\,=\,\left\|u^1_0-u^2_0\right\|_{H^{s-1/2}}\,\leq\,\left\|u^1_0-v^1_0\right\|_{H^{s-1/2}}+\left\|v^1_0-v^2_0\right\|_{H^{s-1/2}}+\left\|v^2_0-u^2_0\right\|_{H^{s-1/2}}\,\leq\,
2\veps+C'(\veps)\,:
$$
taking a $\veps>0$ small enough gives the contradiction, completing in this way the proof of the uniqueness part.

Let us consider the problem of existence of the trace operator onto $\varSigma$. Once again, we work in local coordinates $z=(t,x)$, with $z_0=0$ and $\varSigma=\{t=0\}$, and the conormal
given by $\nu(t,x)\,=\,\mu(x)\,dt$, for a suitable positive function $\mu$.
By Lemma \ref{l:regularity}, we can define the trace $u_{|t=0}$ as a distribution in $H^{s-1/2}(\varSigma)$: then, it is enough to check
that, in this coordinates, Green formula in Proposition \ref{p:extension} makes sense with $D_\varSigma u\,=\,u_{|t=0}$.
But from now on the arguments are analogous to the discussion in Section 5 of \cite{C-M}: so we omit them.

The proposition is now completely proved.
\end{proof}

\subsection{Proof of local existence and uniqueness} \label{ss:proof_local}

Now, we can turn our attention to the statements about existence and uniqueness of solutions to the local Cauchy problem.
Let us start with the proof of Theorem \ref{t:local_e}: it is analogous to the one of \cite{C-M} for wave operators,
so let us just sketch it.

\begin{proof}[Proof of Theorem \ref{t:local_e}]
Up to a change of variables, we can suppose, as usual, that $z_0=0$, $\varSigma=\{t=0\}$, with normal $dt$,
and that the operator $P$, in these coordinates, assumes the  form of $L\,=\,\d_t\,+\,\sum_jA_j\,\d_j\,+\,B$.

Consider a smooth function $\Phi:\R^{1+n}_y\longrightarrow\Omega$ such that $\Phi(y)=y$ for all $y\in\Omega_1\subset\Omega$, and
$\Phi(y)=z_0$ if $|y|$ is large enough.
Such a function can be built by taking $\Omega_1$ to be a ball centered in $z_0$ and working along radial directions emanating from
$z_0$, for instance.

Changing the coefficients of $L$ according to the rule $f^\sharp(y)\,:=\,f\bigl(\Phi(y)\bigr)$, we are led to consider a new operator
$L^\sharp$, such that $L^\sharp\equiv L$ on $\Omega_1$, having coefficients with the same regularity as $L$ and admitting
a full symmetrizer $\mbf{S}^\sharp(y,\z)\,=\,\mbf{S}\bigl(\Phi(y),\z\bigr)$ which is positive in the directions $\nu\bigl(\Phi(y)\bigr)$.

Fix now a $s\in\,]1-\g,\g[\,$, and take another $s_1\in\,]1-\g,s[$. Let us apply Theorem \ref{th:en_LL} to operator $L^\sharp$: this provides
us with a loss parameter $\beta$ and with an existence time $T\,=\,(s-s_1)/\beta$. Define then $\Omega_0\,:=\,\Omega_1\cap\{|t|<T\}$
and $\omega\,:=\,\Omega_0\cap\{t=0\}$.

Let us take an initial datum $u_0\in H^s(\omega)$: by definition \eqref{def:H^s}, it can be seen as the restriction to $\omega$ of a
$u_0^\sharp\in H^s(\R^n)$. In the same way, $f\in H^s(\Omega_0\cap\{t>0\})$ is the restriction of a $f^\sharp\in H^s(\R^{1+n}\cap\{t>0\})$,
and in particular $f^\sharp\in L^2\bigl(]0,T[\,;H^s(\R^n)\bigr)$.

Then, by use of Theorem \ref{th:en_LL} we solve the Cauchy problem
$$
L^\sharp u^\sharp\,=\,f^\sharp\;,\qquad u^\sharp_{|t=0}\,=\,u_0^\sharp\,:
$$
we find a solution $u^\sharp$ on $[0,T]\times\R^n$ which, in particular, belongs to $L^2\bigl([0,T];H^{s_1}(\R^n)\bigr)$. But from this
fact together with Proposition \ref{p:Hol-Sob} and by use of the equation, we deduce that
$\d_tu^\sharp\in L^2\bigl(]0,T[\,;H^{s_1-1}(\R^n)\bigr)$,
which finally implies that $u^\sharp\in\mc H^{1,s_1-1}\bigl([0,T]\times\R^n\bigr)\,\hookrightarrow\,H^{s_1}\bigl([0,T]\times\R^n\bigr)$,
the inclusion following from the fact that $s_1-1<0$.

Therefore, by restriction we infer the existence of a solution $u$ to $(C\!P)$ in $\Omega_0$.
\end{proof}

This having been proved, we turn our attention to the question of local uniqueness.
We start by establishing a result about propagation of zero across the surface $\{t=0\}$.
\begin{lemma} \label{l:0-prop}
Let $s>1-\g$ and $\Omega=\,]-t_0,t_0[\,\times\omega$ for some $t_0>0$ and $\omega\subset\R^n$ neighborhood of $0$.
Suppose that $u\in H^s(\Omega\cap\{t>0\})$ fulfills
\begin{equation} \label{eq:0-Cauchy}
Lu\,=\,0\;,\qquad u_{|t=0}\,=\,0\,.
\end{equation}
Denote by $u_e$ the extension of $u$ by $0$ on $\{t<0\}$.

Then, there exists a neighborhood $\Omega_1\subset\Omega$ of $0$ such that $u_e\in H^s(\Omega_1)$ and $Lu_e\,=\,0$ on $\Omega_1$.
\end{lemma}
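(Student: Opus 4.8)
The plan is to reduce the statement to the one--dimensional (in time) structure of $L$, using the regularity result of Lemma~\ref{l:regularity} together with the product rules of Proposition~\ref{p:Hol-Sob} and Corollary~\ref{c:LL-H^s}. After passing to coordinates $z=(t,x)$ as in Subsection~\ref{ss:change}, so that $\varSigma=\{t=0\}$ and $L=\d_t+\sum_{j=1}^nA_j\d_j+B$, I would first shrink $\Omega$ to a small strip $\Omega_1=\,]-t_1,t_1[\,\times\omega_1$ around $0$ and record the local regularity of $u$. Since $Lu=0\in L^1\bigl([0,t_1];H^s(\omega_1)\bigr)$, Lemma~\ref{l:regularity} gives $u\in\mc{C}\bigl([0,t_1];H^{s-1/2}(\omega_1)\bigr)$, and by hypothesis the trace $u_{|t=0}$ is the zero distribution of $H^{s-1/2}(\omega_1)$. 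Moreover, writing $\d_tu=-\sum_jA_j\d_ju-Bu$ and invoking Corollary~\ref{c:LL-H^s} (licit since $|s-1|<1$) and Proposition~\ref{p:Hol-Sob} (licit since $|s|<\g$), one obtains $\d_tu\in L^2\bigl([0,t_1];H^{s-1}(\omega_1)\bigr)$, while trivially $u\in L^2\bigl([0,t_1];H^s(\omega_1)\bigr)$. Here and below, products and $x$--derivatives on $\omega_1$ are understood after a routine localization by a cut--off in $x$.

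The decisive point is that, \emph{because the trace of $u$ vanishes}, the extension by zero behaves well. Let $(\,\cdot\,)_e$ denote extension by $0$ across $\{t=0\}$, performed \emph{in the time variable only}, i.e.\ of a function which is $L^1_{loc}$ in time with values in some $H^\sigma$. Since $u\in\mc{C}\bigl([0,t_1];H^{s-1/2}\bigr)$ with $u(0)=0$, the function $u_e$ belongs to $\mc{C}\bigl(\,]-t_1,t_1[\,;H^{s-1/2}\bigr)$, and the identity $u(t)=\int_0^t\d_tu(\sigma)\,d\sigma$ (valid in $H^{s-1}$, the primitive being absolutely continuous) together with $u(0)=0$ gives $u_e(t)=\int_0^t(\d_tu)_e(\sigma)\,d\sigma$ for every $t\in\,]-t_1,t_1[\,$; hence $\d_tu_e=(\d_tu)_e\in L^2\bigl(\,]-t_1,t_1[\,;H^{s-1}\bigr)$ in $\mc{D}'$, so that in particular no distribution supported on $\{t=0\}$ is created. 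Combining $u_e\in L^2\bigl(\,]-t_1,t_1[\,;H^s\bigr)$ with $u_e\in H^1\bigl(\,]-t_1,t_1[\,;H^{s-1}\bigr)$ and using the elementary embedding, valid for $0<s<1$, of $L^2_t H^s_x\cap H^1_t H^{s-1}_x$ into $H^s_t L^2_x$ (which follows from the pointwise bound $(1+\tau^2)^s\le(1+|\xi|^2)^s+(1+\tau^2)(1+|\xi|^2)^{s-1}$), one concludes $u_e\in L^2_tH^s_x\cap H^s_tL^2_x=H^s$, that is $u_e\in H^s(\Omega_1)$ after one last harmless shrinking.

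To finish, I would check $Lu_e=0$ on $\Omega_1$. Derivation in $x$ commutes with $(\,\cdot\,)_e$, so $\d_ju_e=(\d_ju)_e$; multiplication by the frozen--in--time functions $A_j(t,\,\cdot\,)\in LL$ and $B(t,\,\cdot\,)\in\mc{C}^\g$ also commutes with $(\,\cdot\,)_e$ and is continuous on $H^{s-1}$, resp.\ on $H^s$ (Corollary~\ref{c:LL-H^s}, Proposition~\ref{p:Hol-Sob}), whence $A_j\d_ju_e=(A_j\d_ju)_e$ and $Bu_e=(Bu)_e$. Adding these equalities to the one for $\d_t$ and using linearity of $(\,\cdot\,)_e$ yields
$$
Lu_e\,=\,\Bigl(\d_tu+\sum_{j=1}^nA_j\d_ju+Bu\Bigr)_e\,=\,(Lu)_e\,=\,0\qquad\text{on }\Omega_1\,,
$$
because $Lu=0$ on $\Omega\cap\{t>0\}\supset\Omega_1\cap\{t>0\}$. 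This proves the lemma.

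The step I expect to be the real obstacle is the second one, namely justifying rigorously that $u_e$ is of class $H^s$ and that $\d_tu_e=(\d_tu)_e$ with no layer on $\{t=0\}$: since $s$ may well be below $1/2$, one cannot extend $\d_tu$ by zero inside $H^{s-1}(\R^{1+n})$, so the naive space--time integration by parts (which would produce precisely such a layer $u_{|t=0}\otimes\delta_{t=0}$) has to be circumvented by the time--only argument above, and this is exactly where the continuity $u\in\mc{C}\bigl([0,t_1];H^{s-1/2}\bigr)$ supplied by Lemma~\ref{l:regularity}, and the hypothesis $u_{|t=0}=0$, are indispensable. The standing restrictions $1/2<\g<1$ and $s\in\,]1-\g,\g[\,$ are what keep $|s|<\g$, $|s-1|<1$ and $0<s<1$ available for the product rules, traces and interpolation invoked throughout.
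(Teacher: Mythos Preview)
Your proof is correct and follows essentially the same route as the paper's: extend $u$ by zero in $L^2_tH^s_x$, use the vanishing trace to identify $\d_tu_e=(\d_tu)_e\in L^2_tH^{s-1}_x$, and then pass from $L^2_tH^s_x\cap H^1_tH^{s-1}_x$ to $H^s_{t,x}$. The paper is terser --- it simply says ``test on $\d_t\vphi$'' for the derivative identity and refers to ``the same arguments as in the previous proof'' (i.e.\ the H\"ormander space embedding $\mc{H}^{1,s-1}\hookrightarrow H^s$) for the interpolation --- whereas you spell out both the absolute--continuity argument via Lemma~\ref{l:regularity} and the pointwise Fourier inequality; for $Lu_e=0$ the paper tests directly in the weak formulation while you argue by commutation of $(\,\cdot\,)_e$ with the spatial terms, but these are equivalent.
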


\begin{proof}
Since $u\in H^s(\Omega\cap\{t>0\})$, we have in particular that $u\in L^2\bigl([0,t_0];H^s_{loc}(\omega)\bigr)$,
and therefore $u_e$ belongs to $L^2\bigl([-t_0,t_0];H^s_{loc}(\omega)\bigr)$.

On the other hand, $\d_tu\in L^2\bigl([0,t_0];H^{s-1}_{loc}(\omega)\bigr)$, with $u_{|t=0}\,=\,0$. Hence, the (weak)
derivative $\d_tu_e$ is the extension of $\d_tu$ by $0$ for negative times (test it on functions of the form $\d_t\vphi$).
This implies $\d_tu_e\in L^2\bigl([-t_0,t_0];H^{s-1}_{loc}(\omega)\bigr)$, and therefore
$u_e\in H^s_{loc}\bigl(\,]-t_0,t_0[\,\times\omega\bigr)$ (using the same arguments as in the previous proof).

Finally, veryfing that $Lu_e=0$ is an easy matter. Indeed, testing the equations on smooth
$\vphi\in\mc{D}\bigl(\,]-t_0,t_0[\,\times\,\omega\bigr)$, the integral in time reduces on intervals of the form $\,]0,t_1[\,$,
because $u_e$ vanishes for $t<0$. But for $t>0$, $u_e\equiv u$, and $u$ is a weak solution of $Lu=0$ with $u_{|t=0}=0$:
this entails that also the integral over $\,]0,t_1[\,$ is equal to $0$.
\end{proof}

From the previous result, we can deduce the local uniqueness of solutions, by mean of classical convexification arguments.
\begin{proof}[Proof of Theorem \ref{t:local_u}]
Once again, we can fix local coordinates for which $\Omega=\,]-t_0,t_0[\,\times\omega$ and $\varSigma=\{t=0\}$.
Suppose that $u\in H^s\bigl(\Omega\cap\{t>0\}\bigr)$ satisfies \eqref{eq:0-Cauchy}. Denoting by $u_e$ its extension by $0$,
priveded by Lemma \ref{l:0-prop}, we know that $u_e$ is a $H^s$ distribution on (say) $\,]-t_1,t_1[\,\times\omega_1$, for a suitably small
$\omega_1\subset\omega$, and in the same neighborhood $Lu_e=0$.

We consider the change of variables $\psi:\,(t,x)\,\mapsto\,\left(\,\wtilde{t},\wtilde{x}\,\right)$ such that
$$
\wtilde{t}\,:=\,t\,+\,|x|^2\qquad\mbox{ and }\qquad \wtilde{x}\,:=\,x\,.
$$
Notice that this map sends $\{t<0\}$ into $\left\{\,\wtilde{t}<|\wtilde{x}|^2\,\right\}$. Let us define
$\wtilde{u}\,=\,u_e\circ\psi$ and $\wtilde{L}$ the operator obtained by $L$ under the transformation $\psi$.

Therefore, we have that $\wtilde u$ is defined in $\wtilde{t}<\wtilde{t}_1$, for a
suitable $\wtilde{t}_1>0$, and $\wtilde u\equiv 0$ in $\left\{\,\wtilde{t}<|\wtilde{x}|^2\,\right\}$.
Furthermore, up to take a smaller $\wtilde{t}_1$, we can suppose that $\wtilde{L}$ is defined on a neighborhood $\wtilde\Omega$ of the
origin which contains the closed lens $\oline{{\Theta}}\,:=\,\left\{|\wtilde{x}|^2\leq\wtilde{t}\leq\wtilde{t}_1\right\}$, and that
$\wtilde{L}\wtilde{u}\,\equiv\,0$ on $\wtilde{\Omega}$.

Now, we repeat the construction explained in the proof of Theorem \ref{t:local_e} above. Namely, we extend the coefficients
of $\wtilde{L}$ and we obtain an operator $L^\sharp$, defined on the whole $\R^{n+1}$, which preserves the regularity of the coefficients
and the microlocal symmetrizability assumption, and which coincides with $\wtilde{L}$ on a smaller neighborhood of $\oline{{\Theta}}$.

We also extend $\wtilde{u}$ to $u^\sharp$. Then, on the set $\,]-\infty,\wtilde{t}_1[\,\times\R^n$ we get
$$
L^\sharp u^\sharp\,=\,0\;,\qquad u^\sharp\in H^s\;,\qquad u^\sharp_{|\left\{\,\wtilde{t}<|\wtilde{x}|^2\,\right\}}\,=\,0\,.
$$
In particular, $u^\sharp_{|\wtilde{t}=-\delta}\,=\,0$ for $\delta>0$ arbitrarly small. Therefore energy estimates of Theorem \ref{th:en_LL},
applied to $L^\sharp$ and the initial time $-\delta$ (for $\delta$ small enough), guarantee that
$u^\sharp\equiv0$ untill a time $\wtilde{T}_0$. In particular, this is true for $\wtilde{u}$, and coming back to coordinates
$(t,x)$ implies that $u\equiv0$ in a neighborhood of the origin.
\end{proof}

{\small

 }

\end{document}